\documentclass[a4paper,leqno,11pt]{amsart}

\usepackage{amsfonts,amssymb,verbatim,amsmath,amsthm,latexsym,textcomp,amscd}
\usepackage{latexsym,amsfonts,amssymb,epsfig,verbatim}
\usepackage{amsmath,amsthm,amssymb,latexsym,graphics,textcomp}
\usepackage{graphicx}
\usepackage{color}
\usepackage{url}
\usepackage{enumerate}
\usepackage[mathscr]{euscript}
\usepackage{mathtools} 
\usepackage{tikz}
\usetikzlibrary{cd}
\usepackage{dsfont}
\usetikzlibrary{matrix}
\usepackage{sseq}
\usepackage{hyperref}
\usepackage{xcolor}

\input xy
\xyoption{all}

\definecolor{maroon}{RGB}{128,0,0}

\DeclareMathOperator{\Hom}{Hom}
\DeclareMathOperator{\Ext}{Ext}

\theoremstyle{plain}
\newtheorem*{thma}{Theorem A}
\newtheorem*{thmb}{Theorem B}
\newtheorem*{thmc}{Theorem C}
\newtheorem*{thmd}{Theorem D}
\newtheorem*{thme}{Theorem E}
\newtheorem*{qn1}{Question I}
\newtheorem*{qn2}{Question II}
\newtheorem*{qn3}{Question III}
\newtheorem*{qn4}{Question IV}

\theoremstyle{definition}
\newtheorem{theorem}{Theorem}[section]
\newtheorem{prop}[theorem]{Proposition}
\newtheorem{lemma}[theorem]{Lemma}
\newtheorem{cor}[theorem]{Corollary}

\newtheorem{rmk}[theorem]{Remark}

\newtheorem{exam}[theorem]{Example}

\newtheorem{thm}[theorem]{Theorem}
\newtheorem{notation}[theorem]{Notation}

\theoremstyle{remark}
\swapnumbers
\newtheorem{ack}[theorem]{Acknowledgements}

\newcommand{\C}{{\mathbb C}}

\newcommand{\Hyp}{{\mathbb H}}

\newcommand{\Z}{{\mathbb{Z}}}

\newcommand{\E}{{\mathbb E}}
\newcommand{\W}{\mathbb W}

\newcommand{\Cok}{\mbox{Cok}}

\newcommand{\Top}{{\mbox{Top}}}

\newcommand{\F}{\mathbb F}

\newcommand\EE{{\mathcal E}}
\newcommand\FF{{\mathcal F}}

\newcommand\LL{{\mathcal L}}
\newcommand\MM{{\mathcal M}}
\newcommand\NN{{\mathcal N}}

\newcommand\PP{{\mathcal P}}

\newcommand\Sp{{\mathbb S}}

\newcommand{\G}{\mathbb G}
\newcommand{\even}{\tiny \mbox{even}}
\newcommand{\odd}{\tiny \mbox{odd}}
\newcommand{\oalpha}{\overline{\alpha}}
\newcommand{\obeta}{\overline{\beta}}
\newcommand{\Tel}{\mbox{Tel}}

\newcommand\PMF{{\PP\kern-2pt\MM\FF}}

\newcommand\PML{{\PP\kern-2pt\MM\LL}}

\newcommand\tr{\operatorname{tr}}

\newcommand{\fsubd}{\mathrel{{\scriptstyle\searrow}\kern-1ex^d\kern0.5ex}}
\newcommand{\bsubd}{\mathrel{{\scriptstyle\swarrow}\kern-1.6ex^d\kern0.8ex}}
\newcommand{\fsubeq}{\mathrel{\raise-.7ex\hbox{$\overset{\searrow}{=}$}}}
\newcommand{\bsubeq}{\mathrel{\raise-.7ex\hbox{$\overset{\swarrow}{=}$}}}

\newcommand{\tsh}[1]{\left\{\kern-.9ex\left\{#1\right\}\kern-.9ex\right\}}

\newcommand{\Ker}{\mbox{Ker}}
\newcommand{\HFPSS}{\mathit{HFPSS}}
\newcommand{\AHSS}{\mathit{AHSS}}

\newcommand{\pt}{\mbox{pt}}
\newcommand{\Cone}{\mbox{Cone}}
\newcommand{\Coker}{\mbox{Coker}}

\renewcommand{\Im}{\operatorname{Im}}

\makeatletter
\let\c@figure\c@theorem
\makeatother

\numberwithin{equation}{section}
\newenvironment{myeq}[1][]
{\stepcounter{theorem}\begin{equation}\tag{\thetheorem}{#1}}
	{\end{equation}}

\newenvironment{meq*}[1][]
{\stepcounter{theorem}\begin{equation*}\tag{\thetheorem}{#1}}
	{\end{equation*}}
\newtheorem{subsec}[theorem]{}
\newenvironment{mysubsection}[2][]
{\begin{subsec}\begin{upshape}\begin{bfseries}{#2.}
			\end{bfseries}{#1}}
		{\end{upshape}\end{subsec}}

\title[EO-theory of projective spaces]{Exotic structures on hyperbolic manifolds via the EO-theory of  Projective Spaces}

\author{Samik Basu}
\address{Stat-Math Unit\\ Indian Statistical Institute\\ Kolkata 700108\\ India}
\email{samik.basu2@gmail.com}
\author{Akashdwip Biswas}
\address{Stat-Math Unit\\ Indian Statistical Institute\\ Kolkata 700108\\ India}
\email{akashdwip1995@gmail.com}
\keywords{Complex projective spaces, hyperbolic manifolds, Morava $E$-theory, homotopy fixed point spectral sequence, smooth structures.}

\subjclass[2020]{Primary 57R55, 55P42; Secondary 	55N20, 57R60}
\date{\today}
\hypersetup{
	colorlinks,
	citecolor=red,
	filecolor=black,
	linkcolor=blue,
	urlcolor=black
}

\begin{document}
\maketitle

\begin{abstract}
Farrell and Jones showed that negatively curved manifolds in dimension $\geq 5$ are topologically rigid in the sense that homotopy equivalence implies homeomorphism. In the smooth category, the result does not hold even for hyperbolic manifolds, and there are negatively curved manifolds $N$ homotopy equivalent to a given hyperbolic manifold $M$ but not diffeomorphic. For complex hyperbolic manifolds, the analogous construction is demonstrated only in dimensions of the form $8n+2$. In this paper, we prove this result in many other dimensions. The main approach involves the use of $EO$-theory of complex projective spaces to construct suitable examples of exotic spheres. These theories may be viewed as a version of real $K$-theory, and are defined at each prime $p$. The computation also yields positive results about the existence of   free smooth $S^1$ and $S^3$ actions on exotic spheres which do not bound parallelizable manifolds.
\end{abstract}

%\tableofcontents

\section{Introduction}
Hyperbolic manifolds possess strong rigidity properties which restricts the possibility of encountering exotic structures within them. The Mostow rigidity theorem \cite{Mos68} states that in dimensions $\geq 3$ these manifolds are determined from their fundamental group, that is, any isomorphism $\pi_1(M) \to \pi_1(N)$ for hyperbolic manifolds $M$ and $N$ is induced by a unique isometry $M\to N$.  The theorem was extended to rank one locally symmetric  spaces \cite{Mos73} and the ``compact" hypothesis was generalized  to the ``finite volume" case in  \cite{Pra73}. There are also similar rigidity phenomena for complex hyperbolic manifolds. In fact, the analogue for negatively curved K\"{a}hler manifolds was observed independently in \cite{Her91} and \cite{YZ91}. 

There is a topological version of the story which has been studied in great detail by Farrell and Jones \cite{FJ89b,FJ89c}. One of the topological rigidity results proved by them is that if $M$ is a closed connected manifold of dimension $\geq 5$ which is negatively curved, and $N$  is a closed manifold which is homotopy equivalent to $M$, then $N$ is homeomorphic to $M$ \cite{FJ90}. On the other hand, in the smooth category, Farrell and Jones constructed exotic smooth structures on real hyperbolic manifolds \cite{FJ89} in the following sense. Let $\Theta_m=\{\Sigma_1,\dots, \Sigma_k\}$ be the collection of exotic spheres \cite{KM63} of dimension $m$. For any real hyperbolic manifold $M$ of dimension $m$, there exists a finite cover $\hat{M}$ satisfying \\
1) The manifolds $\hat{M}\# \Sigma_1, \dots, \hat{M}\# \Sigma_k$ are pairwise non-diffeomorphic. \\
2) For each $\delta >0$, each of $\hat{M}\#\Sigma_1,\dots, \hat{M}\# \Sigma_k$ has a Riemannian metric whose sectional curvatures lie in the interval $(-1-\delta, -1+\delta)$. 

In the complex case, Farrell and Jones \cite{FJ94} proved an analogous result in complex dimension $m=4n+1$. They constructed manifolds $M$ and $N$ of dimension $2m$, with $M$ complex hyperbolic, $M$ and $N$ homeomorphic but not diffeomorphic, and for any $\epsilon >0$, $N$ has a negatively curved metric whose sectional curvatures lie in the interval $[-4-\epsilon, -1+\epsilon]$. The quaternionic case was also considered in \cite{AF04}, and they proved results in quaternionic dimension $2$, $4$, and $5$. For these values of $m$, there are manifolds $M$ and $N$ of dimension $4m$, such that $M$ is homeomorphic to $N$ but not diffeomorphic, $M$ is quaternionic hyperbolic, and $N$ possesses a negatively curved metric. The case of the Cayley projective plane was proved in \cite{AF03}. In this paper, we prove the following results which demonstrate the existence of exotic phenomena in certain other dimensions.
\begin{thma}  (Theorem \ref{exstrcxhyp} and Theorem  \ref{exstrquathyp}) \\
a)  Let $m= 36t+s$ for $s\in \{5, 10, 20\}$ and $ t\geq 0$, or 
\[m=(r+kp)(p(p-1)-1), \mbox{ where } \begin{cases} k\in \{1,\dots, r-1\} & \mbox{ if } r\leq \frac{p-1}{2}\\
 k\in \{1,\dots r-2\} & \mbox{ if }  \frac{p-1}{2}< r \leq p-1, \end{cases} \]  
for a prime $p\geq 5$. There are manifolds $M$ and $N$ of dimension $2m$, such that $M$ is complex hyperbolic, $M$ and $N$ are homeomorphic but not diffeomorphic, and $N$ supports a negatively curved Riemannian metric whose sectional curvatures may be chosen to lie in $[-4-\epsilon, -1+\epsilon]$ (for any given $\epsilon >0$). \\
b) Let $m\in \{ 18t +5, 18t+10\}$ or $m=(\frac{r+kp}{2})(p(p-1)-1)$ for a prime $p\geq 5$ and $r<p$, with  $ k\in \{1,\dots r-2\}$, and $k+r$ is even. There are manifolds $M$ and $N$ of dimension $4m$, such that, $M$ is quaternionic hyperbolic, $M$ and $N$ are not diffeomorphic, and $N$ supports a negatively curved Riemannian metric. 
\end{thma}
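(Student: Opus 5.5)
The plan follows the Farrell--Jones strategy of \cite{FJ94} and \cite{AF04}, and reduces Theorem A to a question about exotic spheres. Let $M^{2m}$ be a closed complex hyperbolic manifold and $\Sigma$ a homotopy sphere. Farrell and Jones show that, after passing to a suitable finite cover $\hat M$, the connected sum $\hat M\#\Sigma$ carries a metric with sectional curvatures in $[-4-\epsilon,-1+\epsilon]$. It is always homeomorphic to $\hat M$. It fails to be diffeomorphic to $\hat M$ provided $\Sigma$ is not in the kernel of the map induced by the degree one collapse $\hat M\to S^{2m}$ together with the tangential data. Concretely, it suffices to find $\Sigma\in\Theta_{2m}$ that does not bound a parallelizable manifold and that survives under the map
\[
\Theta_{2m}\to [\hat M, Top/O] .
\]
Farrell--Jones reduce this, through a tangential map $\hat M\to \C P^m$ of nonzero degree (or of degree one on top cells, after taking covers), to nontriviality of $\Sigma$ in $[\C P^m, Top/O]$ under the collapse $\C P^m\to S^{2m}$. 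The quaternionic case is the same with $\Hyp P^m$ and dimension $4m$.

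So the key step is the following claim: there is an element $\alpha\in \pi_{2m}(\Sp)$ detected in coker $J$ (hence giving an exotic sphere not bounding a parallelizable manifold) whose image under the collapse map
\[
\pi_{2m}(\Sp)\to \{\C P^m,\Sp\}
\]
is nonzero, and which moreover remains nonzero in $[\C P^m, Top/O]$. Equivalently, $\alpha$ must not extend over $\C P^{m-1}$, i.e.\ it is not in the image of the attaching-map composition from $\{\Sigma \C P^{m-1},\Sp\}$. To detect this I would map to $EO$-theory at a prime $p$: the $p=3$ case (via $EO_2$, essentially $tmf$) should produce the families $m=36t+s$ and $18t+5,18t+10$. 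The $p\geq5$ case, with $EO_{p-1}$ and the $\beta$-family classes $\beta_{i}$ in stems of the form $2i(p^2-p-1)-2$ type, should produce the dimensions $(r+kp)(p(p-1)-1)$. I would first choose $\alpha$ whose Hurewicz image in $\pi_*EO$ is a nonzero class in positive filtration of the homotopy fixed point spectral sequence. Then I would compute $EO^*(\C P^m)$ and $EO^*(\Hyp P^m)$ via the HFPSS starting from $H^*(G;E^*(\C P^m))$, where $E^*(\C P^m)=E^*[x]/x^{m+1}$ and $G$ acts through the formal group. The aim is to show that the top-cell class $x^m\cdot\alpha$ survives and is not hit by a differential or an extension. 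The restrictions on $k$ and $r$ in Theorem A should come precisely from the $G$-module structure on the generators $x^j$, i.e.\ the $\mathbb{F}_p[C_p]$-module decomposition.

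The main obstacle is this HFPSS computation. I must show that the class detecting $\alpha$ on the top cell of $\C P^m$ is not killed by differentials originating from the lower cells. The hardest part is controlling the $C_p$-action on $E_*(\C P^m)$ through the $p$-series of the formal group, so as to identify which $x^j$ generate free versus trivial summands. I would try first to compute mod $(p,u_1,\dots)$ and filtration-wise, using the cofibre sequence $\C P^{m-1}\to\C P^m\to S^{2m}$ and naturality with the sphere's HFPSS. Finally, I would check that the detected class lies in coker $J$, so that it gives a nontrivial element of $\Theta_{2m}/bP_{2m+1}$, and that passing to $Top/O$ from $\Sp$ loses nothing in these degrees at odd primes.
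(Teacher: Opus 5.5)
Your outer reductions match the paper's. Proposition \ref{fjthm} (with Aravinda--Farrell and Okun for $\Hyp P^m$) reduces Theorem A to exhibiting $\Sigma\notin I_c(\C P^m)$, resp.\ $I_c(\Hyp P^m)$. Proposition \ref{cokerjinertcpn} then reduces this to $q^*\tilde\xi\neq 0$ in $[\C P^m,\Sp]$, which is tested by $h_\iota$ into $EO_{p-1}^*(\C P^m)$. The gap is that the whole content of the theorem sits in the step you defer as the ``main obstacle'', and both concrete guesses you make about it point the wrong way.

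The first problem is the choice of detecting classes. At $p\geq 5$ the only even-degree classes the paper has in the $EO_{p-1}$-Hurewicz image are $\beta^s=h_\iota(\beta_1^s)$ with $s\leq (p-1)^2$. The top dimensions in the statement are exactly $|\beta_1^s|=2s(p(p-1)-1)$ with $s=r+kp$. The family $\beta_i$, $i\geq 2$, plays no role, and you have no Hurewicz information for it. At $p=3$ the indispensable input is Belmont--Shimomura (Theorem \ref{hureo2}): the classes $\Delta^{3t}\beta^i$, $1\leq i\leq 4$, are Hurewicz images. The cases $i=1,2,4$ give $2m=72t+10,\,72t+20,\,72t+40$, and $4m=72t+20,\,72t+40$ for $\Hyp P^m$.

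The second, more serious problem is where the restrictions on $k$ and $r$ come from. They do not come from a free-versus-trivial decomposition of the $G$-module $E_{p-1}^*(\C P^m)$. The paper never computes that action through the formal group. The missing idea is cellular:
\begin{itemize}
\item After the Adams splitting $\Sigma^\infty_+\C P^\infty_p\simeq\bigvee_i P_i$, the formula $\mathcal{P}^1(x^j)=jx^{j+p-1}$ shows that the cells of each $P_i$ are linked by $\alpha_1$ in chains. These chains break exactly above exponents divisible by $p$.
\item Hence the top cell of $\C P^m$ is the top cell of an iterated cone $C^{[\ell]}$ with $\ell=p-1-(m\bmod p)$.
\item Only $\ell=p-1$ is free. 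For $\ell\leq p-2$ the $E_2$-pages of $EO_{p-1}\wedge C^{[\ell]}$ all have the same shape: one even and one odd generator over $\F_p[\Delta^{\pm1},\beta]$, modulo filtration-zero transfer and $p$-multiple classes. On these, $\beta$ acts injectively, so the relevant $\beta$-torsion is invisible at $E_2$.
\end{itemize}
The answer is decided by differentials. Write $n=p-1$. The filtration spectral sequence gives $E_2$; its differentials are forced by attaching maps (Proposition \ref{diffl-att}). An induction along $C^{[\ell-1]}\to C^{[\ell]}\to S^{2\ell n}$ then uses the differentials of $EO_n$, the extension \eqref{extnalpheon} and the Toda brackets \eqref{todabreon}. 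It produces $d_{2(\ell+1)n+1}$ and $d_{2n^2-2\ell n+1}$ (Proposition \ref{hfpssc[k]}). These show the top-cell class of $C^{[\ell]}$ is annihilated exactly by $\beta^{n^2-\ell n+1}$.

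Thus $\beta^s x^m\neq 0$ if and only if $s\leq n\cdot(m\bmod p)$, and this single inequality is the source of all the arithmetic conditions. Your suggested induction over $\C P^{m-1}\to\C P^m\to S^{2m}$ only moves the problem into an unknown connecting map out of $EO_n^*(\C P^{m-1})$. The splitting is what localizes it to one $C^{[\ell]}$.

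One caution for the final translation. Since $m=s(p(p-1)-1)\equiv -s\pmod p$, for $s=r+kp$ the inequality reads $r+k\leq p-1$. As far as I can check, this disagrees with the printed ranges when $r\geq\frac{p+3}{2}$. For example, with $p=5$ and $s=9$ one gets $m=171\equiv 1\pmod 5$, and $x^{171}$ is already killed by $\beta^5$. The cause is that Corollary \ref{detectbeta} uses the residue of $s$ in place of that of $m$.
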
 

\vspace*{0.2cm}

 In the complex case, comparing with the results of \cite{FJ94}, we observe that the results in dimension $2m$ for $m\in \{36t + 10, 36t+20\}$, and those $(r+kp)(p(p-1)-1)$ such that $4\nmid k+pr +1$ are new. In the quaternionic case, the results are mostly new as it was proved earlier only in quaternionic dimensions $2$, $4$, and $5$. In both the complex and the quaternionic cases, the smooth structure is changed by taking the connected sum with an exotic sphere. This operation changes the smooth structure on the chosen complex hyperbolic manifold (respectively quaternionic hyperbolic manifold) if the connected sum operation also changes the smooth structure on $\C P^m$ (respectively $\Hyp P^m$).  

\vspace*{0.2cm}

\begin{mysubsection}{The inertia groups} The collection $\Theta_m$ of smooth manifolds homeomorphic to $S^m$ is a group under connected sum. Given a smooth manifold $M$ of dimension $m$, the inertia group $I(M)$ is the subgroup of $\Theta_m$ comprising those $\Sigma$ that satisfy the condition $M\# \Sigma$ is diffeomorphic to $M$. One may also impose criteria on the diffeomorphism from $M\# \Sigma$ to $M$ and obtain : 
The homotopy inertia group $I_h(M)$ (respectively, concordance inertia group $I_c(M)$) consists of $\Sigma \in I(M)$ (respectively, $\Sigma \in I_c(M)$) such that the diffeomorphism is homotopic (respectively, concordant) to the identity. The computations for inertia groups are usually involved and there are hardly any systematic methods available to approach such a question. The usual method uses the infinite loop space $\Top/O$ and the formula \cite{KS77}
\begin{myeq}\label{ichtp}
 I_c(M)= \Ker \Big( \Theta_m \cong [S^m, \Top/O] \xrightarrow{q^\ast} [M,\Top/O]\Big),
\end{myeq}
where $q: M\to S^m$ is the map which quotients out the $(m-1)$-skeleton.

For projective spaces, most of the known computations are in low dimensions \cite{Kaw68}, where explicit formulas for attaching maps are useful in determining the concordance inertia group. In the case of $\C P^m$, one has $I_c(\C P^m) = I_h(\C P^m)=I(\C P^m)$ \cite{Kas15}, while for $\Hyp P^m$ one has $I_c(\Hyp P^m)= I_h(\Hyp P^m)$ but these are not equal to $I(\Hyp P^m)$ \cite{BK22}. In low dimensions, computations for the inertia group of $\C P^m$ are available in \cite{Kaw68}. For the inertia group in high dimensions, the results are usually of the following type\\
1) There is a sequence of exotic spheres that lie outside the inertia group. This is done for the dimensions $8n+2$ in \cite{FJ94}. \\
2) There is a sequence of exotic spheres that lie inside the inertia group. This is done in \cite{BK18} and \cite{BK22}.\\
 The approach in this paper yields results of the type 1). We prove
 \begin{thmb}  (Theorem \ref{inertcpn} and Theorem \ref{inerthpn}) \\
a) Let $m\in \{36t+5, 36t + 10, 36t+20\}$. There are $2$ exotic spheres $\Sigma_1$ and $\Sigma_2$ of dimension $2m$ that lie outside $I(\C P^m)$. \\
b) Suppose 
\[m=(r+kp)(p(p-1)-1), \mbox{ where } \begin{cases} k\in \{1,\dots, r-1\} & \mbox{ if } r\leq \frac{p-1}{2}\\
 k\in \{1,\dots r-2\} & \mbox{ if } r> \frac{p-1}{2}, \end{cases} \]  
for $p\geq 5$. There are $(p-1)$ exotic spheres $\Sigma_1, \dots, \Sigma_{p-1}$ of dimension $2m$ that lie outside $I(\C P^m)$. \\
c) Let $n\in\{18t+5,18t+10\}$. There are $2$ exotic spheres $\Sigma_1$ and $\Sigma_2$ of dimension $4n$ that lie outside $I_c(\Hyp P^m)$. \\
d) Let $m=(\frac{r+kp}{2})(p(p-1)-1)$ for a prime $p\geq 5$ and $r<p$, where  $ k\in \{1,\dots r-2\}$, and $k+r$ is even.   There are $p-1$ exotic spheres $\Sigma_1, \dots, \Sigma_{p-1}$ of dimension $4m$ that lie outside $ I_c(\Hyp P^m)$.
\end{thmb}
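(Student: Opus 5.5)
We use the formula \eqref{ichtp} together with $I_c(\C P^m)=I(\C P^m)$ from \cite{Kas15}. For $\Hyp P^m$ we only need $I_c$. So it is enough to produce elements $\Sigma\in\Theta_{2m}$ (respectively $\Theta_{4m}$) with $q^\ast\Sigma\neq 0$ in $[\C P^m,\Top/O]$ (respectively $[\Hyp P^m,\Top/O]$). We will detect these classes through a map to a cohomology theory. Since $\Top/O$ and $G/O$ agree up to low-dimensional corrections, we map to $\Sp$-cohomology. The plan is to pick $\Sigma$ in the image of the cokernel of $J$, that is, $\Sigma$ not bounding a parallelizable manifold. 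Such a $\Sigma$ corresponds to a class $x\in\pi_{2m}\Sp$ outside the image of $J$. Its image in $[\C P^m,\Top/O]$ is then controlled by the stable cohomotopy class $q^\ast x\in\pi^0(\C P^m)$ modulo image-of-$J$ contributions. Using the fibration $SO\to SG\to G/O$, it suffices to show that $q^\ast x$ is nonzero and not in the image of $J$ on $\widetilde{KO}^{-1}$-type classes. We will detect this by mapping to the theory $EO$, that is, $EO_2$ at $p=3$ for the $36t+s$ and $18t+s$ cases, and $EO_{p-1}$ at $p\ge5$. The image of $J$ is invisible (or explicitly controlled) there, because the relevant classes in $\pi_\ast EO$ come from the $\beta$-family. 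The candidates are $x=\beta_1^{j}$ times suitable elements (at $p=3$, products with the periodicity class $\Delta^{3t}$), or the $\beta_{r+kp}$-type elements at $p\geq 5$. Their degrees $2m$ match $(r+kp)\cdot 2(p(p-1)-1)$ up to the shift, and the image of $x$ in $\pi_{2m}EO$ is nonzero.

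The key steps are as follows. (1) Compute $EO^\ast(\C P^m)$ and $EO^\ast(\Hyp P^m)$, or at least the relevant top-cell part, via the homotopy fixed point spectral sequence for the finite group action (the $\HFPSS$). Start from $E^\ast(\C P^m)=E^\ast[[x]]/x^{m+1}$ with the group acting through the formal group law. (2) Identify the image of the top-cell class $q^\ast$ and show that the image of $x$ survives in $EO^{0}(\C P^m)$. Concretely, show that it is not hit by a differential and not killed by the attaching maps of the lower cells. Equivalently, show that in the cofibre sequence $\C P^{m-1}\to\C P^m\to S^{2m}$ the class $x$ is not in the image of the boundary map $EO^{-1}(\C P^{m-1})\to EO^{0}(S^{2m})$. (3) Conclude that $q^\ast x\neq0$ in $\pi^0(\C P^m)$. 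Then check that it does not come from $[\C P^m,SO]$, since classes from $KO$ map to zero or to $v_1$-periodic classes in $EO$. This gives $\Sigma\notin I(\C P^m)$. Counting the independent multiples ($2$ at $p=3$, $p-1$ at $p\ge5$, coming from the $\Z/p$-multiples of a nontrivial class) gives the stated number of exotic spheres. (4) Repeat for $\Hyp P^m$ using the restriction $\C P^{2m+1}\to\Hyp P^m$, or the analogous HFPSS computation on $E^\ast(\Hyp P^m)$. The parity condition on $k+r$ and the halved dimensions arise because only degrees divisible by $4$ occur there.

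The main obstacle is step (2): showing that the boundary map from $EO^{-1}(\C P^{m-1})$ misses the chosen class. This needs explicit knowledge of the $\HFPSS$ differentials on $\C P^{m-1}$, which are governed by the transfer/norm and the action on the coordinate $x$. I would first compute the $E_2$-page as group cohomology of the group $G$ with coefficients in $E_\ast[[x]]/x^{m}$. Then I would show that in the relevant total degree the $E_2$-term of $\C P^{m-1}$ is concentrated in filtration $0$, which forces the boundary to land in filtration $0$, whereas $x$ has positive filtration. The numerical restrictions on $r$ and $k$ are exactly what makes this vanishing hold, via weight considerations: the characters of $\mathbb{F}_{p^{n}}^\times$ acting on $x^i$ must avoid the weight of $x$.
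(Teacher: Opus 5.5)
\textbf{The gap is in step (2).} This step carries all the content of the statement, including every numerical restriction, and the mechanism you propose for it does not work.

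\emph{Parity.} Since $E_{p-1}^\ast(\C P^{m-1})$ is concentrated in even degrees, the HFPSS $E_2$-term for $\widetilde{EO}^{-1}(\C P^{m-1})$ lies entirely in \emph{odd} filtrations $s\geq 1$. It is never concentrated in filtration $0$.

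\emph{Filtration can jump.} More fundamentally, a map of spectral sequences can only raise filtration. So no information about the filtrations of classes in $\widetilde{EO}^{-1}(\C P^{m-1})$ prevents the boundary $\widetilde{EO}^{-1}(\C P^{m-1})\to\widetilde{EO}^{0}(S^{2m})$ from hitting $h_\iota(\tilde\xi)\,\epsilon_{2m}$, which sits in higher filtration. Such jumps are exactly what truncates the $\beta$-towers on the top cell.
\begin{itemize}
\item At $p=3$, the top two cells of $\C P^{3k+1}$ form $\Sigma^{6k-2}C(\alpha_1)$. The hidden extension $\alpha b=\beta^3$ means the boundary sends the filtration-$1$ class $b\,\epsilon_{6k-2}$ to the filtration-$6$ class $\beta^3\epsilon_{6k+2}$. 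This is why $\beta^3x_{(3k+1)}=0$ while $\beta^2x_{(3k+1)}\neq 0$.
\item At $p\geq 5$, the truncation comes from the long differentials $d_{2n^2-2kn+1}(\Delta^{j+p-k-1}\kappa_{2nk,\alpha})\simeq\beta^{n^2-kn+1}\Delta^j\iota_0$ in $EO_n\wedge C^{[k]}$, which go from filtration $1$ to filtration $2(n^2-kn+1)$.
\end{itemize}
Weights (the Adams splitting) tell you which cells are linked by $\alpha_1$. The bound on the surviving powers of $\beta$ comes from these differentials and extensions, not from the $E_2$-term.

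What you actually need is the exact power of $\beta$ annihilating the top-cell class. The paper obtains it as follows:
\begin{itemize}
\item split $\Sigma^\infty_+\C P^\infty_p$ into summands $P_i$ whose skeleta are iterated $\alpha_1$-cones $C^{[k]}$, using $\PP^1x^i=ix^{i+p-1}$;
\item compute the HFPSS for $EO_n\wedge C^{[k]}$ completely (Proposition \ref{hfpssc[k]}), which gives Corollary \ref{powbetanonzero};
\item at $p=3$, use the explicit $EO_2^\ast(\C P^m)$ and the top-cell splittings of Theorem \ref{hurcp3}.
\end{itemize}

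\textbf{Your detected classes also need correcting.} At $p\geq 5$, the relevant element of $\pi_{2m}^s$ is $\beta_1^{r+kp}$, since $2m=(r+kp)|\beta_1|$; its image $\beta^{r+kp}$ is nonzero because $r+kp\le(p-1)^2$. A $\beta_{r+kp}$ lives in a different degree. At $p=3$, you cannot form products with $\Delta^{3t}$ in $\pi_\ast^s$: $\Delta^{3t}$ is not even in the Hurewicz image. The needed input is that $\Delta^{3t}\beta^i$ lies in the Hurewicz image, by Belmont--Shimomura (Theorem \ref{hureo2}, \eqref{hurewiczeo2}), used with $i=1,2,4$ for $m=36t+5,\,36t+10,\,36t+20$ respectively.

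\textbf{Your other reductions are sound, but two justifications should be replaced.}
\begin{itemize}
\item Passing from $\Top/O$ to stable cohomotopy uses only the trivial direction: a nonzero image in $[\C P^m,G/O]$ forces a nonzero class in $[\C P^m,\Top/O]$. You do not need that $\Top/O$ and $G/O$ ``agree''.
\item The image of $J$ is harmless because $\widetilde{KO}^{-1}(\C P^m)_{(p)}=0$ for odd $p$, as all cells are even. The $v_1$-periodicity argument is wrong, since $\alpha_1\in\Im(J)$ maps to $\alpha\neq 0$.
\end{itemize}

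Your step (4) is a cleaner route than the paper's. The composite $\C P^{2m}\to\Hyp P^m\to S^{4m}$ is, up to sign, the top-cell collapse of $\C P^{2m}$, so nonvanishing on $\C P^{2m}$ gives nonvanishing on $\Hyp P^m$. Since $2m$ lies in the families of (a) and (b), parts (c) and (d) follow without computing $EO_n^\ast(\Hyp P^m)$. This helps only once (a) and (b) are actually proved.
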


\vspace*{0.2cm}

These results are mostly new in the sense that results for these dimensions were not proved before. For $m=36t+5\equiv 1\pmod{4}$ or $m=(r+kp)(p(p-1)-1)$ with $4\mid k+pr+1$, the dimension matches with the results of \cite{FJ94}. However, here we show that there is a copy of $\F_p \subset \Theta_{2m}$ that maps non-trivially to $[\C P^m,\Top/O]$, and thus, the exotic spheres obtained here are different.  
\end{mysubsection}

\vspace*{0.2cm}

\begin{mysubsection}{Free smooth actions} 
The computations of inertia groups also yield results about free smooth $S^1$ and $S^3$ actions on exotic spheres. Hsiang \cite{Hsi66} observed that the exotic spheres coming from the $bP$-factor possess free smooth $S^1$-actions. Among the spheres arising from the $\Cok(J)$ factor, the question has been extensively studied in the unpublished work of Brumfiel \cite{Bru69}. From this approach, such a sphere $\Sigma$ of dimension $2k+1$, represented by a map $S^{2k+1} \to \Top/O$ lies in the image of the composite
\begin{myeq} \label{s1act}
S^{2k+1} \to S^{2k+1}/S^1 \simeq \C P^k \to \Top/O.
\end{myeq}
For $S^3$-actions, the argument is similar, and we have $\Sigma$ of dimension $4k+3$, represented by $S^{4k+3} \to \Top/O$ lies in the image of 
\begin{myeq}\label{s3act}
 S^{4k+3} \to S^{4k+3}/S^3 \simeq \Hyp P^k \to \Top/O.
\end{myeq}
This was used in \cite{BK22} to construct free smooth $S^3$-actions on exotic $15$-spheres, and in \cite{BQ26} to provide infinite families of non-$bP$-spheres with free smooth $S^1$ and $S^3$-actions. We show that our computations also yield analogous results, with a few new results.
\begin{thmc}  (Theorem \ref{smactexotic})
a) Let $l\in \{72t + 13, 72t+23, 72t+43\}$. There exists an exotic non-$bP$-sphere $\Sigma$ of dimension $l$ which has a free smooth  $S^1$-action. 
\\
b) Let $l=72t + 23$ or $l=72t+43$. There exists an exotic non-$bP$-sphere $\Sigma$ of dimension $l$ which has a free smooth $S^3$-action. 
\end{thmc}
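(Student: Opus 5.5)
The plan is to use Brumfiel's criterion (\ref{s1act}) and (\ref{s3act}). An exotic sphere $\Sigma\in\Theta_l$, viewed as a class in $\pi_l(\Top/O)$, admits a free smooth $S^1$-action (resp.\ $S^3$-action) as soon as it lies in the image of $q^*\colon[\C P^k,\Top/O]\to[S^{2k+1},\Top/O]$ induced by the Hopf quotient $S^{2k+1}\to\C P^k$, with $l=2k+1$ (resp.\ of the map induced by $S^{4k+3}\to\Hyp P^k$, with $l=4k+3$). To get a non-$bP$-sphere, we will work $3$-locally. We will produce a class in $\pi_l(\Sp)_{(3)}$ lying in $\Cok(J)$, and then pass to $\Top/O$ through the map $\Cok(J)\subset\pi_*(\Sp)\to\pi_*(\Top/O)$, which is injective on the $\Cok(J)$ summand.

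First we will identify the relevant $3$-primary class. For $l=72t+13$ the candidate is $\beta_1\alpha_1$-type products ($|\beta_1|=10$, $|\alpha_1|=3$) multiplied by the $72$-periodic element detected in $EO_2$-theory (the $\Delta^{3}$-periodicity, or its image in $tmf_{(3)}$). For $l=72t+23$ and $72t+43$ we will use the analogous $\beta$-family classes $\beta_1^2\alpha_1$ and $\beta_1^4\alpha_1$ times the periodic family. These are exactly the degrees $2m+3$ with $m\in\{36t+5,36t+10,36t+20\}$, which shows the link with Theorem B(a).

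Second, we will use the $EO$-computation of $\C P^m$ done for Theorem \ref{inertcpn}. The attaching map of the top cell of $\C P^{k+1}$ is the Hopf map $S^{2k+1}\to\C P^k$. So in the cofibre sequence $S^{2k+1}\to\C P^k\to\C P^{k+1}\to S^{2k+2}$ the composite $S^{2k+1}\to\C P^k\to\Top/O$ computes a Toda-bracket type extension. We want a map $f\colon\C P^k\to\Top/O$, concretely a stable map $\C P^k\to\Sp_{(3)}$ followed by $\Sp\to\Top/O$ and built from a class on a low cell, whose restriction along the Hopf map is our $\Sigma$. Equivalently, we need the stable class to lie in the image of the transfer $\Sigma^{\infty-1}\C P^k_+ \to$ (or its dual), or to be detected by the boundary in the $EO$-based homotopy fixed point spectral sequence of $\C P^k$. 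We will show this detection: in the HFPSS for $EO\wedge \C P^k$ the differential that kills the top-cell generator in the computation of Theorem B(a) exhibits the target class as the image of the attaching map. This nontriviality is what both sections prove, read in the dual direction.

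For $S^3$ we run the same argument with $\Hyp P^k$, $l=4k+3$, using the Theorem \ref{inerthpn} computation. Only $l\equiv 23,43 \pmod{72}$ survive, because these are the cases where $k=(l-3)/4$ is an integer compatible with the $\Hyp P$ computation for $n\in\{18t+5,18t+10\}$.

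The main obstacle is the second step. We must show that the specific Hopf-attaching-map composite hits a nonzero element of $\Cok(J)$, and not merely that some map is nontrivial. We will try to prove this first by naturality of the HFPSS under $S^{2k+1}\to\C P^k$, comparing the $EO$-Hurewicz image of $\Sigma$ with the boundary class.
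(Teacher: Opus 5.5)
Your overall frame is the paper's: Brumfiel's criterion, a $3$-primary class in $\Cok(J)$, and detection of the Hopf composite by $EO_2$-cohomology of $\C P^m$. But the proposal stops exactly where the proof has to happen, and two steps do not go through as written.

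\textbf{The passage to $\Top/O$.} There is no map $\Sp\to\Top/O$. A stable class $\tilde\chi\in[\C P^m,\Sp]\cong[\C P^m,G]$ only gives a class in $[\C P^m,G/O]$. To get a map $\C P^m\to\Top/O$ whose restriction along $\eta_{\C P^m}$ maps to $\eta_{\C P^m}^\ast\tilde\chi$ in $\pi_{2m+1}(G/O)$, the paper uses Brumfiel's result. It says that $[\C P^m,G]\to[\C P^m,G/O]$ is injective and has the same image as $[\C P^m,\Top/O]$.

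\textbf{The detection step.} You need an explicit $\chi\in h_\iota([\C P^m,\Sp])$ with $\eta_{\C P^m}^\ast\chi\neq 0$; no transfer or Toda bracket is involved. For $l=72t+13$ the paper takes $m=36t+6$ and $\chi=\Delta^{3t}\beta\, x_{(36t+5)}$ from Theorem \ref{hurcp3}. This is the Hurewicz class $\Delta^{3t}\beta$ pulled back along the $3$-local collapse of $\C P^m$ onto the $x^{m-1}$-cell. Since $\PP^1x^{m-1}=2x^{m+1}$, the top cell of $\C P^{m+1}$ attaches to that cell by $\pm\alpha_1$. Hence $\eta_{\C P^m}^\ast\chi=\pm\Delta^{3t}\alpha\beta\neq0$ in $\pi_{72t+13}EO_2$, and this class lies in the Hurewicz image by Theorem \ref{hureo2}. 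Also, $\Delta^3$ is not in the Hurewicz image, so ``multiplying by the $72$-periodic element'' must mean choosing a sphere class detected by $\Delta^{3t}\alpha\beta$, e.g.\ $\alpha_1\beta_{9t'+1}$ when $t=2t'$.

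\textbf{The cases $l=72t+23$ and $l=72t+43$.} Here your candidates cannot be detected this way. In $\pi_\ast EO_2$ one has $\alpha\beta^2=0$, since it is $d_5(\Delta)$ by \eqref{dhfpsseo2}. In $\pi_\ast^s$ one already has $\alpha_1\beta_1^4=0$, because $\alpha_1\beta_1^3=0$. More decisively, by \eqref{eo2htpygp} the odd-degree homotopy of $EO_2$ lies only in degrees $\equiv 3,13,27,37\pmod{72}$, so $\pi_{72t+23}EO_2=\pi_{72t+43}EO_2=0$. Consequently, in these dimensions every class of $EO_2^0(\C P^m)$, resp.\ $EO_2^0(\Hyp P^m)$, extends over the next projective space. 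No naturality argument in the $EO_2$ homotopy fixed point spectral sequence can then show the Hopf composite is nonzero.

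The paper's own choices $\chi=\Delta^{3t}\beta^2x_{(36t+10)}$ and $\chi=\Delta^{3t}\beta^4x_{(36t+20)}$ hit the same wall. Their images under $\eta^\ast$, or under $\nu^\ast$ on $\Hyp P^{18t+5}$ and $\Hyp P^{18t+10}$, are $\pm\Delta^{3t}\alpha\beta^2$ and $\pm\Delta^{3t}\alpha\beta^4$, and both are zero. The filtration-spectral-sequence differential $d_4(\beta^2x^{3k+1})=\alpha\beta^2x^{3k+3}$, which is the kind of ``differential killing the generator'' your heuristic relies on, is not an obstruction in homotopy. The corresponding obstruction $\pm\Delta^{3t}\alpha\beta^2$ is itself a $d_5$-boundary, namely $d_5(\Delta^{3t+1})$.

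So this method establishes only the case $l=72t+13$ of (a). For $72t+23$, $72t+43$, and hence all of (b), you would have to detect the composite in $\pi_\ast^s$ directly or with a different theory. For $l=43$ the natural candidate $\pm\alpha_1\beta_1^4$ is already zero.
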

\end{mysubsection}	

\vspace*{0.2cm}

Now, we describe our approach to prove the results described above. The underlying questions are about proving the triviality and non-triviality of various classes in $[X,\Top/O]$ where $X$ is either a sphere, a complex projective space, or a quaternionic projective space \eqref{ichtp}, \eqref{s1act}, \eqref{s3act}. As the results featuring in this paper are about non-$bP$-spheres, we consider the image under the map $\Top/O \to G/O$. Further we have the map $G \to G/O$ and the fact that the homotopy groups of $G$ are the stable homotopy groups of spheres. Thus, we inspect the following analogues of \eqref{ichtp}, \eqref{s1act}, \eqref{s3act} for which we may apply methods from stable homotopy theory.  
\begin{qn1}
Find classes $\chi \in \pi_{2m}^s$ such that the image of $\chi$ in the composite 
\[\C P^m \xrightarrow{q} S^{2m} \xrightarrow{\chi} \Sp, \]
is non-trivial.
\end{qn1}

\begin{qn2}
Find classes $\chi \in \pi_{4m}^s$ such that the image of $\chi$ in the composite 
\[\Hyp P^m \xrightarrow{q} S^{4m} \xrightarrow{\chi} \Sp, \]
is non-trivial.
\end{qn2}

\begin{qn3}
Find classes $\chi \in [\C P^m, \Sp]$ such that the image of $\chi$ in the composite 
\[S^{2m+1} \xrightarrow{\eta_{\C P^m}} \C P^m \xrightarrow{\chi}  \Sp, \]
is non-trivial.
\end{qn3}

\begin{qn4}
Find classes $\chi \in [\Hyp P^m, \Sp]$ such that the image of $\chi$ in the composite 
\[S^{4m+3} \xrightarrow{\nu_{\Hyp P^m}} \Hyp P^m \xrightarrow{\chi}  \Sp, \]
is non-trivial.
\end{qn4}

\vspace*{0.2cm}

The method we use to approach these questions is : find a suitable cohomology theory $E$, where the classes in consideration here map non-trivially under the unit map $\Sp \to E$. The choice of $E$ is critical as it is supposed to detect certain homotopy groups of spheres. A suitable example is real $K$-theory $KO$, which detects the class $\mu_{8n+2}$ \cite{Ada66} in $\pi_{8n+2}^s$.  

In this paper the cohomology theories considered by us are the higher $K$-theories of Hopkins and Miller. The Morava $E$-theory specta $E_n$ are a Landweber-exact cohomology theory (for every prime $p$) with homotopy groups given as \cite{Rez98}
\[ \pi_\ast E_n \cong W(\F_{p^n})[[u_1,\dots,u_{n-1}]][u^{\pm}],\]
where $W(\F_{p^n})$ is the ring of $p$-typical Witt vectors of $\F_{p^n}$, $|u_i|=0$, $|u|=2$. This is an $\E_\infty$-ring spectrum \cite{Rez98, GH04}, and it has an action of the extended Morava stabilizer group $\G_n = \Sp_n \rtimes Gal(\F_{p^n}/\F_p)$. The unit map $\Sp \to E_n$ factors through the homotopy fixed points of $E_n$ under this action. The entire homotopy fixed point spectrum is usually computationally difficult but the homotopy fixed points for finite subgroups are more manageable. These are also useful in detecting cohomology classes. In this paper, we consider $n=p-1$ for $p\geq 3$, and $E_n^{hG}$ where $G$ is the maximal finite subgroup of $\G_n$. Suggestively, the homotopy fixed point spectrum is denoted $EO_n$. We provide more details in \S \ref{eop-1}.   
 
For $p=2$ and $n=1$, the spectrum $E_1 \simeq K_2^{\wedge}$, the $2$-completed complex $K$-theory spectrum. The finite subgroup in this case is $C_2$ which acts on $K_2^\wedge$ by complex conjugation. The homotopy fixed points spectrum in this case is $KO_2^\wedge$, the $2$-completed real $K$-theory spectrum. The spectra $EO_{p-1}$ are in this sense ``higher" analogues of real $K$-theory. 

From the point of view of Questions I-IV, we need several inputs. Firstly, we require a computation of the homotopy groups of $EO_{p-1}$ which is available from \cite{Nav99, Nav10}.  Secondly, we need to identify classes in the Hurewicz image $h_\iota : \pi_\ast(\Sp)\to \pi_\ast(EO_{p-1})$. At the prime $3$, this follows from the results of \cite{BS23}, while for $p\geq 5$, we rely on the relevantly weaker results in \cite{Nav99}. Thirdly, we require a computation of $EO_{p-1}^\ast(\C P^m)$ and $EO_{p-1}^\ast(\Hyp P^m)$. In this paper, we provide a complete calculation of these cohomology rings.  

\vspace*{0.2cm}

\begin{mysubsection}{Homotopy fixed point spectral sequence} 
The main technique to compute $EO_{p-1}$-homology and cohomology is the homotopy fixed point spectral sequence \cite{Dug03}
\[ E_2^{s,t} = H^s(G; \pi_{s+t}(E_{p-1}\wedge X)) \implies \pi_t(EO_{p-1} \wedge X).\]
For $X =S^0$, the $E_2$-page of this spectral sequence has been computed by Hopkins and Miller (see Theorem \ref{hmcomp}). There is an exact sequence 
\[\pi_t E_{p-1} \xrightarrow{tr} H^s(G; \pi_t(E_{p-1})) \to \F_p[\alpha,\beta, \Delta^{\pm 1}]/(\alpha^2), \]
where $|\Delta|= (0, 2p(p-1)^2)$, $|\alpha|= (1,2p-2)$, $|\beta| = (2, 2p(p-1)-2)$. The homotopy fixed point spectral sequence receives a map from the Adams Novikov spectral sequence \cite{Rav78}
\[\Ext^{s,t}_{BP_\ast BP}(BP_\ast, BP_\ast(X)) \to   H^s(G; \pi_{s+t}(E_{p-1}\wedge X)), \] 
which is used to show that $\alpha$ and $\beta$ are the images of $\alpha_1$ and $\beta_1$ respectively \cite{Nav99}, and thus, they lie in the Hurewicz image. The differentials of the homotopy fixed point spectral sequence are also determined by this map. There are precisely two non-trivial differentials $d_{2p-1}$ and $d_{2(p-1)^2+1}$. Toda's computations \cite{Tod67, Tod68}  imply that $\alpha_1\beta_1^p =0$, and $\beta_1^{(p-1)^2+1} =0$. Now using $h_\iota(\alpha_1)=\alpha$ and $h_\iota(\beta_1)=\beta$, one only obtains that $\F_p[\alpha,\beta]/(\alpha^2, \alpha \beta^{p-1}, \beta^{(p-1)^2+1})$ lies in the image of $h_\iota$. Thus, given a prime $p$, $h_\iota$ contains elements in a small range of degrees. At the prime $3$, the computations of \cite{BS23} determine the precise Hurewicz image which also contains infinite families. One has 
\begin{myeq}\label{hurewiczeo2}
\begin{aligned}
& h_\iota(\alpha_1)=\alpha,\\
& h_\iota(\beta_1^{i-1}\beta_{9t+1}) = \Delta^{6t} \beta^i \quad t\geq 0, ~ 1\leq i \leq 4, \\
& h_\iota(\alpha_1\beta_{9t+3/3})= \Delta^{6t}\beta b, \\
& h_\iota(\beta_1^{i-1}\beta_{9t+6/3}) = \Delta^{6t+3} \beta^i \quad t\geq 0, ~ 1\leq i \leq 4, \\
& h_\iota(\alpha_1 \beta_{9t+7})= \Delta^{6t+3}\beta b.
\end{aligned}
\end{myeq}
\end{mysubsection} 

\vspace*{0.2cm} 

\begin{mysubsection}{The homotopy fixed point spectral sequence for even cell complexes}
The projective spaces $\C P^m$ and $\Hyp P^m$ are CW complexes with even dimensional cells. For a CW complex $X$ with even cells, the homotopy fixed point spectral sequence has a trigraded spectral sequence  converging to its $E_2$-term (see Theorem \ref{FSS})
%\begin{myeq}\label{filtspseq}
\[\EE^{s,t,r} = H^s\Big(G; \pi_{s+t}\big(E_{p-1}\wedge \frac{X^{(r)}}{X^{(r-1)}}\big)\Big) \implies H^s(G; \pi_{s+t}(E_{p-1} \wedge X)).\]
%\end{myeq}
This type of spectral sequence is fairly common in computations in stable homotopy theory. In the case where the computations are made over a Hopf algebroid this is called the algebraic Atiyah-Hirzebruch spectral sequence. In this paper, we call this the {\it Filtration spectral sequence}. We prove some properties of the  filtration spectral sequence in Theorem \ref{FSS}, Proposition \ref{diffl-att}. 
%1) The filtration spectral sequence for the $EO_{p-1}$-cohomology of $X$ is multiplicative. \\
%2)  Suppose $\gamma\in E_2\mbox{-page}$ of the homotopy fixed point spectral sequence of $X^{(r)}/X^{(r-1)})$ is represented in \eqref{filtspseq} by $y\in \EE_2^{0,t,r}$. Also suppose $\zeta\in E_2\mbox{-page}$ of the homotopy fixed point spectral sequence for $X^{(r+m)}/X^{(r+m-1)}$ is represented in \eqref{filtspseq} by $z \in \EE_2^{1,t-1,r+m}$. Assume that $d_i(\alpha)=0$ for $i<m$,  $d_m(\gamma)=\zeta$ in the Atiyah-Hirzebruch spectral sequence of $X$ for $EO_{p-1}$, and $d_i(y)=0$ for $i<m$ in the spectral sequence \eqref{filtspseq}. Then,
%	 $d_m(y)=z$ in the filtration spectral sequence. 
%
%\vspace*{0.2cm} 

The first example where we apply this setup is the computation of $KO^\ast(\C P^m)$ (see Theorem \ref{compko}). The $C_2$-action on $K$-theory is by complex conjugation, and $K^{hC_2} \simeq KO$. We  write down the precise action of $C_2$ on $K^\ast(\C P^m)$, and compute the filtration spectral sequence, and obtain the following expression, which matches the answer in \cite{Yam07}.  
\[\scalebox{0.9}{$KO^\ast(\C P^m)) \cong 	 \begin{cases}  \frac{\Z[t^{\pm 2},\alpha, \eta]}{(\alpha^2 - 4 t^2,\alpha\eta, 2\eta, \eta^3)} \oplus \Z\{t^s y^k, t^sy^j u\} \mbox{ for } 1\leq k \leq \lfloor\frac{m}{2}\rfloor, ~ 1\leq j <\lfloor\frac{m}{2}\rfloor,&\mbox{ if } m \mbox{ is even}\\
 \frac{\Z[t^{\pm 2},\alpha,\eta]}{(\alpha^2-4t^2,\alpha\eta, \eta^3, 2\eta)}\{1,z\} \oplus \Z\{t^s y^k, t^sy^j u\} \mbox{ for }  1\leq k \leq \lfloor\frac{m}{2}\rfloor,~1\leq j < \lfloor\frac{m}{2}\rfloor,  &\mbox{ if } m \mbox{ is odd}. \end{cases}$}  \] 
The ring $\frac{\Z[t^{\pm 2},\alpha, \eta]}{(\alpha^2 - 4 t^2,\alpha\eta, 2\eta, \eta^3)} \cong KO^\ast(\pt)$, and for $1\leq k \leq \lfloor \frac{m}{2}\rfloor$, $\oplus_{s\in \Z}\Z\{ t^s y^k, t^l u y^{k-1}\}$ is a $KO^\ast(\pt)$-submodule isomorphic to $K^\ast(\pt)$. This last identification is related to Wood's theorem that $KO\wedge C(\eta) \simeq K$. When $n= \infty$, one may also compare this with the splitting  \cite[\S 15]{GM95}
\begin{myeq}\label{kowedgecpinfty}
 KO\wedge \C P^\infty_+ \simeq KO\vee \bigvee_{k\geq 1} \Sigma^{4k-2} K.
\end{myeq}

\end{mysubsection}

\vspace*{0.2cm}

\begin{mysubsection}{$EO$-theories of projective spaces}
The splitting in \eqref{kowedgecpinfty} has been generalized to the $EO$-case in \cite{BC22}. They work in the more general situation of $C_p$ acting on $E_\Gamma$ where the underlying formal group law of $\Gamma$ is of a height that is divisible by $p-1$ ($p$ an odd prime), and considering $EO_\Gamma := E_\Gamma^{hC_p}$, and prove
\begin{myeq}\label{eoncpsplit}
EO_\Gamma \wedge \C P^\infty_+ \simeq \MM \vee \FF,
\end{myeq}
where $\MM$ is a compact $EO_\Gamma$-module, and $\FF$ is a wedge of copies of $E_\Gamma$. In this paper, we consider $n=p-1$, and the maximal finite subgroup $G$ instead of $C_p$. The following result follows from the computations in Theorem \ref{eo2cpinfty} and Theorem \ref{spliteoncpinfty}.
\begin{myeq}\label{eop-1split}
EO_{p-1} \wedge \C P^\infty_+ \simeq EO_{p-1} \vee \Big(\bigvee_{i=1}^{p-2} \Sigma^{2i} EO_{p-1} \wedge C^{[i]} \Big) \vee \bigvee_{j\geq 0} \Sigma^{2jp+2(p-1)} E_{p-1}^{h(G/C_p)} 
\end{myeq}
The first two factors in the splitting above forms the compact $EO_{p-1}$-module analogous to $\MM$ in \eqref{eoncpsplit} and the last factor forms the module analogous to $\FF$. The complex $C^{[i]}$ for $i\leq p-1$, is defined as the cellular spectrum with cells in degrees $0, 2p-2, 2(2p-2), \dots , i(2p-2)$ where the attaching map of the $j(2p-2)$-cell onto the $(j-1)(2p-2)$-cell is given by an unit multiple of $\alpha_1$. These criteria do not uniquely define the complex $C^{[i]}$, and the notation stands for any cellular spectrum satisfying this stated condition (see \S \ref{itercones} and \eqref{defc2alph} for a precise definition). The complexes $C^{[i]}$ bear a close resemblence to the $X_i$ of \cite[\S 2]{Cha20} which are in fact uniquely defined. The computations in \cite[Corollary 3.5]{Cha20} imply the following analogue of Wood's theorem for $EO_{p-1}$ 
\[ EO_{p-1} \wedge C^{[p-1]} \simeq E_{p-1}^{h(G/C_p)}.\]
\begin{exam}
Let $p=3$. Here there are three possible $C^{[i]}$s, namely, $S^0$, $C^{[1]}$ and $C^{[2]}$. The homotopy groups of $EO_2$ \eqref{eo2htpygp} are given by 
\[\pi_\ast(EO_2) \cong \frac{\F_3[\Delta^{\pm 3}, \alpha,\beta,b]}{(\alpha^2, \alpha\beta^2, b\beta^2, \alpha b -\beta^3, \beta^5)} \pmod{M_3}.\]
We also derive explicit expressions for ${EO_2}_\ast(C^{[1]})$ \eqref{einftyhfpsseo2calph} and ${EO_2}_\ast (C^{[2]})$ \eqref{einftyhfpsseo2c[2]}.
\[{EO_2}_\ast (C^{[1]})\cong \frac{{EO_2}_\ast \{\iota_0,[\Delta \iota_0], \kappa_7\}}{(\alpha\iota_0, \beta^4 \kappa_7, \beta^3 \iota_0, \beta^3 [\Delta \iota_0], \alpha \kappa_7 -\beta \iota_0, b \kappa_7 - \beta [\Delta \iota_0]) } \pmod{M_3,\chi_4}.\] 
\[{EO_2}_\ast (C^{[2]})\cong \frac{{EO_2}_\ast }{(\alpha, \beta, b) }\{\iota_0, [3\iota_4], [3\iota_8]\} \pmod{M_3}.\] 
\end{exam}

\vspace*{0.2cm}

We determine the $EO_{p-1}$-cohomology of $\C P^\infty$ in terms of generators and relations, and in order to do this, we also compute  $\pi_\ast(EO_{p-1}\wedge C^{[k]})$ explicitly (Proposition \ref{hfpssc[k]}).  
\begin{thmd}(Theorem \ref{eoncpinfty})
The $EO_{p-1}$-cohomology of $\C P^\infty$ is generated over $\pi_\ast EO_{p-1}$ by the classes 
\begin{align*}
&x^{jp} \mbox{ for } j\geq 1 \mbox{ of (cohomological) degree } 2jp ,\\
&x_{(kp,\Delta^i)} \mbox{ for } 1\leq i\leq k \leq p-2  \mbox{ of degree } 2kp - 2ip(p-1)^2, \mbox{ and }\\
& x_{(k,\Delta^i \alpha)} \mbox{ for } 1\leq k \leq p-2, 0\leq i \leq p-k-2, \mbox{ of degree } 2k - (2p-3) -2ip(p-1)^2.
\end{align*} 
The classes $x^{jp}$ are annihilated by $\beta$ for $j\geq p-1$, and $x^{kp}$ and $x_{(kp,\Delta^i)}$ are annihilated by $\beta^{(p-1)(p-1-k)+1}$ if $k<p-1$. The classes $x_{(k,\Delta^i\alpha)}$ are annihilated by $\beta^{(k+1)(p-1)}$.   
\end{thmd}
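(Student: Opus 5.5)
The plan is to deduce Theorem D from the splitting \eqref{eop-1split}, together with the computation of $\pi_\ast(EO_{p-1}\wedge C^{[k]})$ in Proposition \ref{hfpssc[k]}, and then to identify the resulting summands with named cohomology classes using the filtration spectral sequence of Theorem \ref{FSS}.

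\textbf{Step 1: reduce to module summands.} Dualize the splitting. Since $\C P^\infty_+$ is a colimit of finite even complexes, we compute $EO_{p-1}^\ast(\C P^m)$ for each $m$ and pass to the limit; the $\lim^1$ term vanishes because each summand is eventually constant in $m$. This gives $EO_{p-1}^\ast(\C P^\infty)$ as a product of the following pieces.
\begin{itemize}
\item A copy of $\pi_\ast EO_{p-1}$, generated by $1$.
\item For $1\le k\le p-2$, the groups $EO_{p-1}^{\ast}(\Sigma^{2k}C^{[k]})$. By Spanier--Whitehead duality these are $\pi_{-\ast}(EO_{p-1}\wedge D\Sigma^{2k}C^{[k]})$, and $DC^{[k]}$ is again a complex of the same type, with cells in degrees $0,-(2p-2),\dots,-k(2p-2)$.
\item Free summands $\pi_\ast E_{p-1}^{h(G/C_p)}$, shifted by $2jp+2(p-1)$.
\end{itemize}

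\textbf{Step 2: name the generators.} In the filtration spectral sequence, the cells of $\C P^\infty$ in degrees $2k, 2k+2(p-1), \dots$ assemble into the $C^{[k]}$-summand. Its top cell, of degree $2kp$ (that is, $2k+k(2p-2)$), carries $x^{kp}$. Proposition \ref{hfpssc[k]} expresses $\pi_\ast(EO_{p-1}\wedge C^{[k]})$ through generators of two kinds.
\begin{itemize}
\item Classes $[\Delta^i\iota]$. These are $G$-invariants on the top cell that are not divisible by lower ones; they become $x_{(kp,\Delta^i)}$ for $1\le i\le k$.
\item Classes of the form $\kappa$. These are transfer-type classes supported on the bottom cell and linked by $\alpha$, like $\kappa_7$ when $p=3$; they become $x_{(k,\Delta^i\alpha)}$, of degree $2k-(2p-3)-2ip(p-1)^2$.
\end{itemize}
For $k\ge p-1$, Wood's theorem (Chatham, \cite[Corollary 3.5]{Cha20}) gives $EO_{p-1}\wedge C^{[p-1]}\simeq E_{p-1}^{h(G/C_p)}$. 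The homotopy of this spectrum is $\beta$-torsion free in the relevant sense, with $\beta$ acting as zero. This makes the generator $x^{jp}$ account for each such summand, and gives $\beta x^{jp}=0$ for $j\ge p-1$.

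\textbf{Step 3: check the ranges and annihilators.} The ranges of $i$ should come from the $\beta$-torsion bounds in Proposition \ref{hfpssc[k]}. In the homotopy fixed point spectral sequence of $C^{[k]}$, the differential $d_{2(p-1)^2+1}$ kills $\beta^{(p-1)(p-1-k)+1}$ times the top class. This replaces the relation $\beta^{(p-1)^2+1}=0$ of the sphere, since the attaching maps by $\alpha_1$ shorten the $\beta$-tower by $(p-1)k$. Similarly, the $\alpha$-linked classes die at $\beta^{(k+1)(p-1)}$ because of the relation $\alpha\beta^{p-1}=0$ iterated across $k+1$ cells. The only real work is checking that degrees match after dualizing, and that the $\Delta$-periodicity $\Delta^{p}$ accounts for the stated ranges $0\le i\le p-k-2$ and $1\le i\le k$.

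\textbf{Main obstacle.} The hardest step is showing that the abstract summands of the splitting are generated by the specific classes $x^{jp}$, $x_{(kp,\Delta^i)}$ and $x_{(k,\Delta^i\alpha)}$. This requires resolving hidden extensions in the filtration spectral sequence. I would first use multiplicativity (Theorem \ref{FSS}) together with Proposition \ref{diffl-att}: a $d$-differential hitting a class on a cell $2(p-1)$ higher reflects an $\alpha_1$ attaching map. This lets the $E_\infty$ classes be traced to the named ones and pins down the extensions.
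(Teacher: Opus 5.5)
There is a genuine gap: Step 1 is circular. Your reduction rests on the splitting \eqref{eop-1split}, but in the paper that splitting is a \emph{consequence} of the statement you are proving. Theorem \ref{spliteoncpinfty} is deduced from Theorem \ref{eoncpinfty}. Chatham's sparseness result only says that each $EO_n\wedge P_j$ is \emph{some} wedge of suspensions of $EO_n\wedge C^{[k]}$'s. Which ones occur is read off by comparing the powers of $\beta$ that annihilate $x^{kp}$ in Theorem D.

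The paper's proof uses no $EO_n$-module splitting. It uses the spectrum-level Adams splitting \eqref{splitcpodd} and the block structure \eqref{skelPi} only to import differentials:
\begin{itemize}
\item It runs the filtration spectral sequence for $H^\ast(G;E_n^\ast(\C P^\infty))$, with $d_{2n}(x)=\alpha x^p$ and $d_{2n^2}(\alpha x^{pj+n})\simeq\beta x^{p(j+n)}$ coming from the $C^{[k]}$, and obtains \eqref{fsscpinfty}.
\item It pulls back the homotopy fixed point differentials of Proposition \ref{hfpssc[k]} along the quotient maps onto the blocks, and reads off the permanent cycles.
\end{itemize}
If you want to keep your route, you must prove the splitting independently. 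For instance, you could show that the extensions between the blocks of \eqref{skelPi} split over $EO_n$. The connecting maps are odd-degree $EO_n$-module maps out of $\Sigma^{\mathrm{even}}EO_n\wedge C^{[p-1]}\simeq\Sigma^{\mathrm{even}}E_n^{h(G/C_p)}$. They are classified by odd homotopy of $E_n^{h(G/C_p)}\wedge Y$ with $Y$ a finite even complex, which vanishes.

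Even granting the splitting, the step you defer as the ``main obstacle'' is the substance of the theorem, and your Step 2 is wrong as stated. The splitting is only additive. The classes $x_{(kp,\Delta^i)}=[\Delta^i x^{kp}]$ and $x_{(k,\Delta^i\alpha)}=[\Delta^i\alpha x^k]$ are named in the multiplicative $E_2$-page $H^\ast(G;E_n^\ast(\C P^\infty))$, so you have to compute that page anyway.

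Moreover, ``generated'' in Theorem D must be read multiplicatively. Consider a summand $\Sigma^{\ast}E_n^{h(G/C_p)}$ containing $x^{jp}$, $j\ge p-1$:
\begin{itemize}
\item Its homotopy lies in filtration $0$ and contains $[\Delta^i x^{jp}]$ for every $i$.
\item Positive-filtration elements of $\pi_\ast EO_n$ act trivially on it.
\item Modulo $M_p$, $\pi_\ast EO_n$ contains $\Delta^{\pm p}$ but not $\Delta$.
\end{itemize}
So $[\Delta^i x^{jp}]$ with $p\nmid i$ is not a $\pi_\ast EO_n$-multiple of $x^{jp}$, contrary to your claim that $x^{jp}$ ``accounts for'' the summand. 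These classes are products such as $x_{(kp,\Delta^i)}\,x^{(j-k)p}$. You can only see such products through the multiplicativity of the filtration spectral sequence, which is exactly the computation described above.

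Smaller points:
\begin{itemize}
\item In Step 3 the truncation $\beta^{n^2-kn+1}\Delta^j\iota_0=0$ comes from $d_{2n^2-2kn+1}$ on $\Delta^{j+p-k-1}\kappa_{2nk,\alpha}$, not from $d_{2(p-1)^2+1}$.
\item The bound $\beta^{(k+1)n}$ on the $\kappa$-classes comes from $d_{2(k+1)n+1}(\Delta^j\iota_0)$. You should simply cite Proposition \ref{hfpssc[k]} for both.
\item The $\kappa$-classes are not transfer-type; those are the $[p\iota_{2jn}]$.
\item $\pi_\ast E_n^{h(G/C_p)}$ is entirely $\beta$-torsion, not $\beta$-torsion free.
\end{itemize}
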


\vspace*{0.2cm}

We refer to \eqref{noteoncp} for the notations above. The analogue of Theorem D for quaternionic projective spaces involves only the generators in degrees that are multiples of $4$ (Theorem \ref{eonhpm}), and this implies the following splitting (Theorem \ref{spliteonhpinfty})
\begin{myeq} \label{eohpinfty}
EO_{p-1} \wedge \Hyp P^\infty_+ \simeq EO_{p-1} \vee \Big( \bigvee_{i=1}^{\frac{p-3}{2}} \Sigma^{4i} EO_{p-1} \wedge C^{[2i]} \Big) \vee \bigvee_{j\geq 0} \Sigma^{4jp + 2(p-1)} E_{p-1}^{h(G/C_p)}. 
\end{myeq}
Note that at $p=3$, in \eqref{eohpinfty} the middle term does not appear and we have  
\[EO_2\wedge \Hyp P^\infty_+ \simeq EO_2 \vee \bigvee_{j\geq 0} \Sigma^{12j+4} E_2^{h(G/C_3)}.\]

We now describe $EO_{p-1}^\ast(\C P^m)$ and $EO_{p-1}^\ast(\Hyp P^m)$ for finite $m$. One observes that the splittings in \eqref{eop-1split} and \eqref{eohpinfty} are skeletal in the sense that we obtain a splitting of $EO_{p-1} \wedge \C P^m$ (respectively $EO_{p-1}\wedge \Hyp P^m$) by taking the $(2m)$-skeleton (respectively $(4m)$-skeleton) of each summand. Now applying $F_{\scalebox{.65}{$EO_{p-1}-\mbox{mod}$}}(-,EO_{p-1})$ together with the observation that the Spanier-Whitehead dual of $C^{[i]}$ is a suspension of a $C^{[i]}$ one obtains splittings of $F(\C P^m, EO_{p-1})$ (Theorem \ref{spliteoncpm}) and $F(\Hyp P^m, EO_{p-1})$ (Theorem \ref{spliteonhpinfty}) into wedges of suspensions of $EO_{p-1}\wedge C^{[i]}$. 
The following describes how $EO_{p-1}^\ast(\C P^m)$ and $EO_{p-1}^\ast(\Hyp P^m)$ looks like for sufficiently large $m$, which feed into the proofs of Theorem A, Theorem B and Theorem C. 
\begin{thme} (Theorem \ref{eoncpm} and Theorem \ref{eonhpm}) \\
a) For $m\geq p(p-1)$, the $EO_{p-1}$-cohomology of $\C P^m$ 
is generated over $\pi_\ast EO_{p-1}$ by the generators of $EO_{p-1}^\ast(\C P^\infty)$ in cohomological degree $\leq 2m$ and the classes
\begin{align*}
&x^{k} \mbox{ for } m-p+1< k \leq m, \mbox{ } k\neq 0 \bmod{p}, \mbox{ and }\\
%&x_{(k-[k]_p(p-1),\Delta^i\)} \mbox{ for } 1\leq i\leq k \leq p-2  \mbox{ of degree } 2kp - 2ip(p-1)^2, \mbox{ and }\\
& x_{(k-[k]_p(p-1),\Delta^i \alpha)} \mbox{ for } m-p+1< k \leq m, \mbox{ } k\neq 0 \bmod{p}, \mbox{ } 0\leq i \leq p-[k]_p-2.
\end{align*} 
The classes $x^k$ are annihilated by $\beta^{(p-1)(p-1-[k]_p)+1}$ and the classes $x_{(k-[k]_p(p-1),\Delta^i \alpha)}$ are annihilated by $\beta^{([k]_p+1)(p-1)}$. \\
b)  For $m\geq \frac{p(p-1)}{2}$, the $EO_{p-1}$-cohomology of $\Hyp P^m$ 
is generated over $\pi_\ast EO_{p-1}$ by the generators of $EO_{p-1}^\ast(\Hyp P^\infty)$ in cohomological degree $\leq 4m$ and the classes
\begin{align*}
&y^{k} \mbox{ for } m-\frac{p-1}{2}< k \leq m, \mbox{ } k\neq 0 \bmod{p}, \mbox{ and }\\
%&x_{(k-[k]_p(p-1),\Delta^i\)} \mbox{ for } 1\leq i\leq k \leq p-2  \mbox{ of degree } 2kp - 2ip(p-1)^2, \mbox{ and }\\
& y_{(k-[2k]_p\frac{(p-1)}{2},\Delta^i \alpha)} \mbox{ for } m-\frac{p-1}{2}< k \leq m, \mbox{ } k\neq 0 \bmod{p}, \mbox{ } 0\leq i \leq p-[2k]_p-2.
\end{align*} 
The classes $y^k$ are annihilated by $\beta^{(p-1)(p-1-[2k]_p)+1}$ and the classes $y_{(k-[2k]_p\frac{(p-1)}{2},\Delta^i \alpha)}$ are annihilated by $\beta^{([2k]_p+1)(p-1)}$. 
\end{thme}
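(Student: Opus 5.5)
The plan is to deduce Theorem E from the skeletal splittings \eqref{eop-1split} and \eqref{eohpinfty} by dualizing, rather than by running the homotopy fixed point spectral sequence for $\C P^m$ from scratch. I treat part a); part b) is the same argument with $\Hyp P^m$ and \eqref{eohpinfty}, where every degree is doubled and $[k]_p$ is replaced by $[2k]_p$.

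\textbf{Step 1: truncate the splitting.} Since \eqref{eop-1split} is skeletal, applying it to the $2m$-skeleton gives
\[ EO_{p-1}\wedge \C P^m_+ \simeq \bigvee (\text{$2m$-skeleta of the summands}). \]
The summands are:
\begin{itemize}
\item $EO_{p-1}$ itself;
\item the pieces $\Sigma^{2i}EO_{p-1}\wedge C^{[i]}$, which lie entirely below degree $2p(p-1)\le 2m$ and so survive intact;
\item the pieces $\Sigma^{2jp+2(p-1)}E_{p-1}^{h(G/C_p)} \simeq \Sigma^{2jp+2(p-1)}EO_{p-1}\wedge C^{[p-1]}$, the latter equivalence being the Wood-type identification from \cite{Cha20}.
\end{itemize}
In the $C^{[p-1]}$ model the $EO_{p-1}$-cells sit in degrees $2jp+2(p-1)+l(2p-2)$ for $0\le l\le p-1$. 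For $m\ge p(p-1)$, exactly one block is cut by the $2m$-skeleton, namely the one with $2jp+2(p-1)\le 2m$ whose top cell exceeds $2m$. Its truncation is $\Sigma^{2jp+2(p-1)}EO_{p-1}\wedge C^{[l_0]}$ for the largest admissible $l_0$. So $EO_{p-1}\wedge\C P^m_+$ is the untruncated $\C P^\infty$ part in degrees $\le 2m$, plus at most one truncated block $\Sigma^{N}EO_{p-1}\wedge C^{[l_0]}$ with $l_0<p-1$. The bookkeeping should show that this block carries the cells $x^k$ with $m-p+1<k\le m$ and $k\not\equiv 0 \pmod p$, and that $l_0$ is governed by $[k]_p$.

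\textbf{Step 2: dualize.} Applying $F_{EO_{p-1}\text{-mod}}(-,EO_{p-1})$ and using that $D C^{[i]}\simeq \Sigma^{-i(2p-2)}C^{[i]}$, each summand becomes a suspension of $EO_{p-1}\wedge C^{[i]}$. Hence $EO_{p-1}^\ast(\C P^m)$ is the corresponding direct sum of groups $\pi_\ast(EO_{p-1}\wedge C^{[i]})$.

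\textbf{Step 3: read off generators and annihilators.} Proposition \ref{hfpssc[k]} lists these homotopy groups. For $C^{[i]}$ with $i<p-1$, the generators are a bottom class annihilated by $\beta^{(p-1)(p-1-i)+1}$ and $\alpha$-type classes $\Delta^{i'}\alpha$-multiples, $0\le i'\le p-i-2$, annihilated by $\beta^{(i+1)(p-1)}$.
\begin{itemize}
\item For the untruncated part, this reproduces Theorem D restricted to degrees $\le 2m$.
\item For the truncated block, with $i=[k]_p$, the bottom class gives $x^k$ and the $\alpha$-type classes give $x_{(k-[k]_p(p-1),\Delta^i\alpha)}$. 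The index shift $k-[k]_p(p-1)$ records the bottom cell of the block after dualizing, in the notation of \eqref{noteoncp}.
\end{itemize}
The annihilation exponents are then those of Proposition \ref{hfpssc[k]} for $C^{[[k]_p]}$.

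\textbf{Main obstacle.} I expect the hardest step to be matching the abstract dual cells with the named classes $x^k$ and $x_{(\cdot,\Delta^i\alpha)}$, and confirming that the range $m-p+1<k\le m$ with $k\not\equiv 0\pmod p$ is exactly what the truncation produces. Truncating $C^{[p-1]}$ must yield a genuine $C^{[l_0]}$, which holds because the attaching maps are $\alpha_1$-multiples cell by cell. To pin down the indexing, I would check against the restriction $EO_{p-1}^\ast(\C P^\infty)\to EO_{p-1}^\ast(\C P^m)$ and against the filtration spectral sequence of Theorem \ref{FSS}. In that spectral sequence the cells above $2m$ are removed, so the $d$'s supported there (Proposition \ref{diffl-att}) disappear. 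This frees the classes $x^k$ that were previously non-permanent, or hit, and so confirms the new generators in the top $p-1$ cells.
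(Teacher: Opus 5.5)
Your Step 1 rests on an input that is unproved and, relative to the paper, circular. The input is that the $EO_{p-1}$-module splitting \eqref{eop-1split} restricts to $\C P^m$ by ``taking $2m$-skeleta of the summands''.

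\textbf{Why the restriction is not available.} That splitting is an abstract equivalence of $EO_{p-1}$-modules. The summands $E_{p-1}^{h(G/C_p)}$ have no intrinsic skeleta, and a wedge decomposition of $EO_{p-1}\wedge\C P^\infty_+$ does not by itself induce one of $EO_{p-1}\wedge\C P^m_+$. The introduction's remark that the splitting is skeletal is an output of the paper, not an input. In the body, Theorem \ref{spliteoncpm} is proved by applying Chatham's result to $\C P^m$ and then identifying the summands ``by comparing the power of $\beta$ which annihilates $x^{kp}$ and $x^s$''. That is, it uses Theorem \ref{eoncpm}, the statement you are proving.

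\textbf{What the paper does instead.} The consistency check you sketch at the end is essentially the paper's proof. It runs the filtration spectral sequence for $\C P^m$, using $d_{2n}(x)=\alpha x^p$ and $d_{2n^2}(\alpha x^{pj+n})\simeq\beta x^{p(j+n)}$ (with $n=p-1$) and deleting the cells above $2m$; this gives \eqref{fsscpmlarge}. As in the $p=3$ case, it then imports the differentials and extensions of Proposition \ref{hfpssc[k]} by naturality. For the old classes this is along $\C P^m\to\C P^\infty$; for the new ones it is along the top quotients of the $P_i^{(2m)}$ from \eqref{skelPi}.

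\textbf{How to make your route independent.} You must prove the finite splitting yourself, and parity suffices. $E_{p-1}^{h(G/C_p)}$ is a homotopy fixed point spectrum for a group of order prime to $p$. So $\pi_\ast(E_{p-1}^{h(G/C_p)}\wedge X)$ is even for every finite even-cell $X$. Hence every odd-degree $EO_{p-1}$-module map, in either direction, between a suspension of $EO_{p-1}\wedge C^{[p-1]}\simeq E_{p-1}^{h(G/C_p)}$ and a suspension of any $EO_{p-1}\wedge C^{[j]}$ is null.

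Now fix $i$ and put $L=P_i^{(2ip)}\simeq\Sigma^{2i}C^{[i]}$.
\begin{enumerate}
\item By parity, $EO_{p-1}\wedge(P_i^{(2m)}/L)$ splits as the full blocks (copies of $E_{p-1}^{h(G/C_p)}$) wedge the truncated top block.
\item The boundary map of $L\to P_i^{(2m)}\to P_i^{(2m)}/L$ is the restriction of the boundary map for a larger skeleton $P_i^{(2M)}$ in which the top block is completed.
\item After smashing with $EO_{p-1}$, the source of that larger boundary map is a wedge of suspensions of $E_{p-1}^{h(G/C_p)}$, so it is null by parity.
\end{enumerate}
With this supplied you have a genuinely different proof, which yields Theorem \ref{spliteoncpm} at the same time rather than as a corollary.

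\textbf{Your block count is wrong.} It is not true that exactly one block is cut. In your indexing, block $j$ is cut exactly when $jp+p-1\le m<(j+p-1)p$. There are $p-2$ or $p-1$ such $j$, at most one in each Adams summand $P_i$. The reason is that the top cells $x^k$, $m-p+1<k\le m$, lie in distinct summands, since $x^k$ is a cell of $P_i$ iff $k\equiv i\pmod{p-1}$. A single $C^{[l_0]}$ could not carry these consecutive powers anyway, since its cells are $2(p-1)$ apart.

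The truncated block with top cell $x^k$, $k\not\equiv 0\pmod p$, is $\Sigma^{2(k-[k]_p(p-1))}C^{[[k]_p]}$. Its cells are $x^{k-[k]_p(p-1)},\dots,x^k$, spaced $p-1$ apart. For example, with $p=5$ and $m=20$ the truncated blocks are $\{x^9,x^{13},x^{17}\}$, $\{x^{14},x^{18}\}$ and $\{x^{19}\}$.

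\textbf{Step 3 after the correction.} Step 3 then does give the statement. After dualizing, the top cell $x^k$ carries the even classes, which are killed by $\beta^{(p-1)(p-1-[k]_p)+1}$. The bottom cell carries the $\Delta^i\alpha$-classes, which are killed by $\beta^{([k]_p+1)(p-1)}$. The case $k\equiv -1\pmod p$ is the free summand on $x^{[\frac{m+1}{p}]p-1}$. Proposition \ref{hfpssc[k]} also produces even classes $\Delta^j\iota_0$ that your Step 3 omits. They give the generators $x_{(k,\Delta^j)}$, $1\le j\le [k]_p$, recorded in Theorem \ref{eoncpm}.
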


\vspace*{0.2cm}

The expressions in Theorem E also lead to a direct expression of $EO_{p-1}^\ast(\C P^m)$ and $EO_{p-1}^\ast(\Hyp P^m)$ in terms of generators and relations. These usually form a long list and are cumbersome to write down. In the example below, we write down the expressions at $p=3$.  
\begin{exam}
We now demonstrate the explicit formula at $p=3$. We have (Theorem \ref{eo2cpinfty})
\[EO_2^\ast(\C P^\infty) \cong \frac{\frac{{EO_2}_\ast [z, z_{(\Delta)}]}{((\alpha,\beta^3,\beta z) z, (\alpha, \beta z, \beta^3) z_{(\Delta)}, z_{(\Delta)}^3-\Delta^3z^3 )}\{1, x_{(\alpha)}, x_{(1,3)}, x_{(2,3)}\}}{ (\alpha x_{(\alpha)} - \beta z, bz-\beta^2 x_{(\alpha)}, bx_{(\alpha)}-\beta z_{(\Delta)}, (\beta^4,z) x_{(\alpha)}, (\alpha,\beta)x_{(1,3)}, (\alpha, \beta)x_{(2,3)}  )} ,\]
where the cohomological degree of the generators are given by $|z|=6$, $|x_{(\alpha)}|=-1$, $|x_{(1,3) }|=2$, $|x_{(2,3)}|=4$, and $|z_{(\Delta)}|=-18$. 
For $\Hyp P^\infty$, we have (Theorem \ref{eo2hpn}) 
\[EO_2^\ast(\Hyp P^\infty) \cong \frac{\frac{{EO_2}_\ast [y]}{((\alpha, \beta) y)}\{1, y_{(\Delta)},y_{(\Delta^2)}, x_{(2,3)}, x_{(4,3)}\}}{  ((\alpha, \beta ) y_{(\Delta)},(\alpha,\beta) y_{(\Delta^2)},  (\alpha,\beta)x_{(2,3)}, (\alpha, \beta)x_{(4,3)}  )} ,\]
with $|y|=12$, $|y_{(\Delta)}|=-12$, $|y_{(\Delta^2)}|=-36$, $|x_{(2,3)}|=4$, and $|x_{(4,3)}|=8$. In the case of finite $m$, it is simplest to write the formula of $EO_2^\ast(\C P^m)$ as a module over $EO_2^\ast(\C P^\infty)$. We have the following formulas (Theorem \ref{eo2cpn})
\[ EO_2^\ast(\C P^{3k}) \cong \frac{\frac{EO_2^\ast(\C P^\infty)}{(z^{k+1},z^k x_{(1,3)}, z^k x_{(2,3)})}\{1,x_{(3k-1)}\}}{(z^{k-1}x_{(2,3)}-3x_{(3k-1)}, x_{(1,3)}x_{(3k-1)}-3z^k, (z,z_{(\Delta)}, x_{(\alpha)}, x_{(2,3)})x_{(3k-1)}) },\]
\[EO_2^\ast(\C P^{3k+1}) \cong \frac{\frac{EO_2^\ast(\C P^\infty)}{(z^{k+1}, z^k x_{(2,3)})}\{1,x_{(3k-1,\alpha)}, x_{(3k+1)},x_{(3k+1,\Delta)}\}}{\begin{pmatrix} 
\alpha x_{(3k-1, \alpha)}-\beta x_{(3k+1)}, &  (\beta^4,z, I) x_{(3k-1,\alpha)}, &  (\alpha, z, \beta^3,I) x_{(3k+1)},  \\  
bx_{(3k-1,\alpha)} - \beta x_{(3k+1,\Delta)}, &z^k x_{(1,3)} - 3 x_{(3k+1)}, &(\alpha,z,b,\beta^3,I)x_{(3k+1,\Delta)},\\
bx_{(3k+1)}-\beta^2 x_{(3k-1,\alpha)}
\end{pmatrix}} ,\]
\[EO_2^\ast(\C P^{3k+2}) \cong \frac{\frac{EO_2^\ast(\C P^\infty)}{(z^{k+1})}\{1,x_{(3k-1,\alpha)}, x_{(3k+1)},x_{(3k+1,\Delta)}, x_{(3k+2)} \}}{\resizebox{.8\hsize}{!}{$\begin{pmatrix} 
\alpha x_{(3k-1, \alpha)}-\beta x_{(3k+1)}, &  (\beta^4,I) x_{(3k-1,\alpha)}, &  (\alpha, z, \beta^3, z_{(\Delta)}, x_{(2,3)}) x_{(3k+1)},  \\  
bx_{(3k-1,\alpha)} - \beta x_{(3k+1,\Delta)}, &z^k x_{(1,3)} - 3 x_{(3k+1)}, &(\alpha,z,b,\beta^3, z_{(\Delta)}, x_{(2,3)})x_{(3k+1,\Delta)},\\
 zx_{(3k-1,\alpha)}-\alpha x_{(3k+2)}, & (z,I) x_{(3k+2)}, & z^kx_{(2,3)}-3 x_{(3k+2)}, \\
x_{(\alpha)}x_{(3k+1)}-\alpha x_{(3k+2)}, & x_{(\alpha)} x_{(3k+1,\Delta)} - b x_{(3k+2)}, & x_{(1,3)} x_{(3k+1)}-3 x_{(3k+2)}  \\
bx_{(3k+1)}-\beta^2 x_{(3k-1,\alpha)}
\end{pmatrix}$}}  ,\]
where $I=(x_{(\alpha)}, x_{(1,3)},x_{(2,3)}, z_{(\Delta)}) \subset EO_2^\ast(\C P^\infty)$. 
The $EO_2$-cohomology of $\Hyp P^m$ for $m\geq 3$ is described by (Theorem \ref{eo2hpn})
\[ EO_2^\ast(\Hyp P^{3k}) \cong\frac{EO_2^\ast(\Hyp P^\infty)}{(y^{k+1},y^k x_{(2,3)}, y^k x_{(4,3)})},\]
\[ EO_2^\ast(\Hyp P^{3k+1}) \cong \frac{\frac{EO_2^\ast(\Hyp P^\infty)}{(y^{k+1}, y^k x_{(4,3)})}\{1,x_{(6k+2)}\}}{((y,I)x_{(6k+2)}, y^k x_{(2,3)} - 3 x_{(6k+2)})} ,\]
\[EO_2^\ast(\Hyp P^{3k+2}) \cong \frac{\frac{EO_2^\ast(\Hyp P^\infty)}{(y^{k+1}, y^k x_{(2,3)})}\{1,x_{(6k+2,\alpha)}, x_{(6k+4)},x_{(6k+4,\Delta)}\}}{\begin{pmatrix} 
\alpha x_{(6k+2, \alpha)}-\beta x_{(6k+4)}, &  (\beta^4,y, I) x_{(6k+2,\alpha)}, &  (\alpha,   \beta^3,y, I) x_{(6k+4)},  \\  
bx_{(6k+2,\alpha)} - \beta x_{(6k+4,\Delta)}, &y^k x_{(2,3)} - 3 x_{(6k+2)}, &(\alpha,b,\beta^3, y, I)x_{(6k+4,\Delta)},\\
 y^k x_{(4,3)}-3 x_{(6k+4)} , & bx_{(6k+4)}-\beta^2 x_{(6k+2,\alpha)} 
\end{pmatrix}}  ,\]
where $I=(y_{(\Delta)}, y_{(\Delta^2)}, x_{(2,3)},x_{(4,3)} ) \subset EO_2^\ast(\Hyp P^\infty)$. 
\end{exam}
\end{mysubsection}

\vspace*{0.2cm}

\begin{notation} Throughout the document, we use the following notations.
\begin{itemize}
\item All manifolds are assumed to be smooth, compact, and without boundary, unless otherwise stated.
\item The notation $\F_p$ stands for the field with $p$ elements for a prime $p$, and $\Z_p$ for the $p$-adic integers.
\item The notation $n$ stands for $p-1$ where $p$ is a prime. 
\item Write $[k]_p=(p-1)-k+[\frac{k}{p}]$ which equals $(p-1)-s$, where $s$ is the remainder when $k$ is divided by $p$.
\item The notations $O = \varinjlim O(m)$, $\Top=\varinjlim \Top(m)$, and $G=\varinjlim G(m)$ stands for the usual direct limits of the group of orthogonal transformations, homeomorphisms, and homotopy equivalences respectively. 
\item $\Theta_m$ is the collection of exotic spheres which is a group under connected sum. 
\item For a smooth manifold $M$,  
\begin{align*}
& I(M) = \mbox{ The inertia group of } M,\\
& I_h(M) = \mbox{ The homotopy inertia group of } M,\\
& I_c(M) = \mbox{ The concordance inertia group of } M.
\end{align*}
\item The usual quotient map $S^{2m+1} \to \C P^m$ is denoted by $\eta_{\C P^m}$, and the usual quotient map $S^{4m+3}\to \Hyp P^m$ is denoted by $\nu_{\Hyp P^m}$. 
\item The notation $\Sp$ stands for the sphere spectrum.  
\item For spectra $X $, $Y$, $F(X,Y)$ denotes the function spectrum.  
\item For a spectrum $X$, the Spanier-Whitehead dual $F(X,\Sp)$ of $X$ is denoted by $D(X)$. 
\item We denote the Hurewicz homomorphism $\pi_\ast(\Sp) \to \pi_\ast(EO_{p-1})$ by $h_\iota$. For a spectrum $X$, $[X,\Sp]\to [X,EO_{p-1}]$ is also denoted by $h_\iota$. 
\item We use the abbreviated forms :  HFPSS for the homotopy fixed point spectral sequence, AHSS for the Atiyah Hirzebruch spectral sequence, and FSS for the filtration spectral sequence. 
\item The notations $E_\ast ( EO_{p-1}\wedge X)$ and $E_\ast (EO_{p-1}^\ast(X))$ stands for the homotopy fixed point spectral sequences converging to ${EO_{p-1}}_\ast(X)$ and $EO_{p-1}^\ast(X)$ respectively. 
\item The notations $\EE_\ast ( EO_{p-1}\wedge X)$ and $\EE_\ast (EO_{p-1}^\ast(X))$ stands for the filtration  spectral sequences converging to $E_2(EO_{p-1}\wedge X)$ and $E_2(EO_{p-1}^\ast(X))$ respectively. 
\item The notation $C^{[i]}$ stands for a cellular spectrum  with cells in degrees $2j(p-1)$ for $0\leq j \leq i$, and that the attaching map of the cell in degree $2j(p-1)$ onto the cell in degree $2(j-1)(p-1)$ (by quotienting out the lower skeleton) is given by a unit multiple of $\alpha_1$. 
\end{itemize}
\end{notation}

\vspace*{0.2cm}

\begin{mysubsection}{Organization}
In \S \ref{spseqcons}, we discuss the relevant background on the homotopy fixed point spectral sequence, the filtration spectral sequence, and then use them in computations for real $K$-theory. The $3$-primary calculations of $EO_{p-1}^\ast(\C P^m)$ and $EO_{p-1}^\ast(\Hyp P^m)$ are discussed in \S \ref{higherkth3}. In \S \ref{higherkthgeq5}, we discuss the analogues at primes $\geq 5$. In \S \ref{applnexotic}, we point out consequences for exotic smooth structures on hyperbolic manifolds, and also for free smooth actions on exotic spheres. 
\end{mysubsection}

\vspace*{0.2cm}

\begin{ack}
The first author would like to thank Prasit Bhattacharya, Hans-Werner Henn, and Ramesh Kasilingam for useful conversations. The research of the first author was supported by ANRF grant ARG/2025/000434/MS.
\end{ack}

\vspace*{0.2cm}

	\section{The real K-theory of complex projective spaces}\label{spseqcons}
	We demonstrate a computation of the $KO$-cohomology of  $\C P^{m}$ using the homotopy fixed point spectral sequence, and write it as a ring with generators and relations, reproving the results of Yamaguchi \cite{Yam07}. This describes our method which we will later apply for the computation of the $EO$-theory of $\C P^m$.   The computation involves an interaction between the Atiyah Hirzebruch spectral sequence, the homotopy fixed point spectral sequence, and the filtration spectral sequence. This idea is described in the diagram below.
	\[\begin{tikzcd}
		{H^{*}\Big(X;H^{*}(C_2;KU^{*}(pt))}\Big) &&& {H^{*}(X;KO^{*}(pt))} \\
		\\
		{H^{*}(C_2;KU^{*}(X))} &&& {KO^{*}(X)}
		\arrow[Rightarrow, from=1-1, to=1-4]
		\arrow["FSS"', Rightarrow, from=1-1, to=3-1]
		\arrow["\AHSS", Rightarrow, from=1-4, to=3-4]
		\arrow["\HFPSS"', Rightarrow, from=3-1, to=3-4]
	\end{tikzcd}\]
	
The homotopy fixed point spectral sequence [HFPSS] is induced by an equivariant cellular filtration of $EG$. A cellular filtration of $X$ induces the Atiyah Hirzebruch spectral sequence [AHSS], as well as a filtration on the homotopy fixed points. This induces a filtration spectral sequence [FSS] on group cohomology that converges to the $E_2$-page of the homotopy fixed point spectral sequence.

	\begin{mysubsection}{Homotopy fixed point spectral sequence}
	The homotopy fixed point spectrum of a  $G$-spectrum $E$, which is denoted by $E^{hG}$, is defined as the $G$-fixed point spectrum of $F(EG_{+},E)$, where $EG$ is a contractible free $G$-space, and $F(*,*)$ denotes the function spectrum. A cellular filtration on $EG$ gives a tower of spectra whose homotopy limit is $E^{hG}=F(EG_{+},E)^{G}$. From this we obtain a spectral sequence, whose $2^{nd}$-page is independent of the choice of filtration. This spectral sequence converges conditionally to $({E^{hG}})^*(pt)$. 
	\begin{thm} \label{HFPSS}
For a topological space $X$, there is a conditionally convergent spectral sequence of the  form
%\begin{myeq}\label{hfpss} 
\[E_{2}^{s,t}=H^{s}(G;\pi_{t+s}\big(F(X,E)\big)) \implies \pi_t\big(F(X,E^{hG})\big).\]
%\end{myeq}
This spectral sequence satisfies the following properties \\ 
(1)  $d_r : E_{r}^{s,t}\to E_{r}^{s+r,t-1}$. \\ 
(2) The spectral sequence is multiplicative. \\
(3) The product on $E_2$-term is the usual cup product of group cohomology. 
	\end{thm}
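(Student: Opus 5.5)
The plan is to construct the spectral sequence from a tower and identify its pages. I will use the standard skeletal filtration of $EG$ and work throughout with the $G$-spectrum $F(X,E)$, with $G$ acting through $E$ and trivially on $X$. There is an equivalence $F(X,E^{hG})\simeq F(X,E)^{hG}$, because $F(X,-)$ commutes with $F(EG_+,-)$ and with taking $G$-fixed points. So it suffices to treat the case of a general $G$-spectrum $E' = F(X,E)$ with $\pi_\ast E' = E^{-\ast}(X)$.

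The skeleta $EG^{(s)}$ give a tower
\[ F(EG_+,E')^G \to \cdots \to F(EG^{(s)}_+,E')^G \to F(EG^{(s-1)}_+,E')^G \to \cdots \]
whose homotopy limit is $E'^{hG}$, since $EG = \operatorname{colim} EG^{(s)}$. The fibres are $F(EG^{(s)}/EG^{(s-1)},E')^G$. Because $EG^{(s)}/EG^{(s-1)}$ is a wedge of free $G$-cells $G_+\wedge S^s$, each fibre is a product of $s$-fold desuspensions of $E'$, and its homotopy is $\Hom_G(C_s(EG),\pi_\ast E')$, where $C_\ast(EG)$ is the cellular chain complex, a free $\Z[G]$-resolution of $\Z$. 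The exact couple coming from the long exact sequences of the tower has $E_1^{s,t}=\Hom_G(C_s,\pi_{t+s}E')$. I will check that $d_1$ is the map induced by the cellular boundary. This gives $E_2^{s,t}=H^s(G;\pi_{t+s}E')$, independent of the chosen filtration because any two free resolutions are chain homotopy equivalent. With this indexing $d_r$ raises $s$ by $r$ and lowers $t$ by one, which is property (1). Conditional convergence in Boardman's sense holds because the target is by construction the homotopy limit of the tower.

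For multiplicativity, I will assume $E$ is a homotopy ring $G$-spectrum with $G$ acting through ring maps. Then $F(X,E)$ is a ring spectrum via the diagonal of $X$. The product on $F(EG_+,E')$ comes from the diagonal $EG\to EG\times EG$ together with a cellular approximation $\Delta: EG\to EG\times EG$ carrying $EG^{(n)}$ into $\bigcup_{i+j=n}EG^{(i)}\times EG^{(j)}$. This gives pairings of towers, and the standard pairing theorem for spectral sequences of filtered spectra then yields a multiplicative structure with the Leibniz rule for every $d_r$, which is property (2).

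On $E_1$ the pairing is the cochain-level product induced by the diagonal approximation on $C_\ast(EG)$ tensored with the product on $\pi_\ast E'$. That is exactly the definition of the cup product in group cohomology, so property (3) follows. The main obstacle is this multiplicative part, because one has to make the pairing of towers compatible with filtrations up to coherent homotopy. I would handle it by citing the general pairing results for towers (e.g.\ Dugger's multiplicative structures on homotopy spectral sequences \cite{Dug03}) rather than re-deriving them.
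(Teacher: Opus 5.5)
Your proposal is correct and follows the same route as the paper: the paper builds the spectral sequence from the tower attached to a cellular filtration of $EG$, and for convergence, multiplicativity and the identification of the $E_2$-product with the cup product it simply cites \cite[Theorem 6.1]{Dug03}, exactly as you do. Your reduction $F(X,E^{hG})\simeq F(X,E)^{hG}$ and your explicit identification of $E_1$, $d_1$ and the cellular diagonal only spell out details that the paper leaves to that reference.
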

	For a detailed proof we refer the reader to \cite[Theorem 6.1]{Dug03}.
	
\vspace*{0.2cm}

	As an example, we recall from \cite{HS14} the well-known computation of this spectral sequence for the $C_2$-equivariant spectrum $KU$, where $C_2$ is acting by complex conjugation. Since $KU^{hC_2}\simeq KO$, the spectral sequence takes the form 
	\[E_{2}^{s,t}=H^{s}(C_2;\pi_{t+s}(KU)) \implies \pi_t(KO).\]

The first step is to compute the action of $C_2$ on $\pi_\ast(KU)\cong \Z[\beta^{\pm}]$ with the Bott element $|\beta|=2$. Under the identification $\pi_{2}(KU) \cong \widetilde{KU}^0(S^2)$, $\beta= L-1$, where $L$ is the canonical line bundle over $\C P^1\cong S^2$. Writing $C_2=\langle \sigma \mid \sigma^2 =1 \rangle$, we directly calculate using the above identification that the $C_2$-action given by $\sigma(\beta)=-\beta$.  
		
Recall that the free $\Z[C_2]$-resolution $C_{*}$ of $\Z$ may be written as an algebra 
\[\Z[\sigma][u,v]/(\sigma^{2}-1,v^2), ~|u|=2,~|v|=1, ~d(u)=(1+\sigma)v,~ d(v)=1-\sigma.\]
The co-product on $C_\ast$ may be described as \cite[XII.7]{CE99}
\[\Delta(v)=v \otimes \sigma+ 1 \otimes v, \Delta(u)=u \otimes 1 + v \otimes \sigma v + 1 \otimes u.\]  
 The group cohomology $H^{*}(C_2;\pi_\ast KU)$ may be computed as the homology of the complex          $\Hom_{\Z[C_2]}(C_{*},\pi_\ast KU)$. The algebra structure comes from the co-product on $C_\ast$ and the algebra structure on $\pi_\ast KU$. The generating cocycles are 
  \[t= \beta^2 \in H^{0}(C_2;\pi_4KU), ~\eta=v\beta \in H^{1}(C_2;\pi_2 KU).\] 
 One then observes that 
 \begin{myeq} \label{gpcohku}
H^{s}(C_2;\pi_{t+s} KU)=\Z[t^{\pm},\eta]/(2\eta), ~ |t|=(0, 4), ~ |\eta|=(1,1).\end{myeq}
		The evident sparsity of this spectral sequence shows the first non-trivial differential is $d_3$ and it is shown in \cite{HS14} that $d_3(t)=\eta^{3},d_3(\eta)=0$. The multiplicative structure determines effect of $d_3$ completely. The spectral sequence collapses at the $E_4$-page for degree reasons, we have 
\[E_{\infty}^{*,*}=\Z[t^{\pm 2}]\{1,2t,\eta,\eta^{2}\}/(2\eta, 2 \eta^2).\] 
		Figure \ref{kohtpyfixpt} describes this spectral sequence.
	\begin{figure}
\centering	
	\begin{tikzpicture}
		\begin{scriptsize}
			% Define grid parameters
			\def\gridsize{10} % Number of squares per side
			\def\squaresize{0.5} % Size of each square in cm
			
			% Calculate total size
			\pgfmathsetmacro\totalsize{\gridsize*\squaresize}
			\pgfmathsetmacro\halfsize{\squaresize/2}
			% Draw the grid
			\draw[step=\squaresize cm,gray,very thin] (-6,0) grid (\totalsize,\totalsize);
			
			% Add coordinate labels (optional)
			\foreach \x in {-11,-10,...,10}
			\node at (\x*\squaresize-\halfsize,-0.3) {\x};
			\foreach \y in {0,1,...,9}
			\node at (-6.3,\y*\squaresize+\halfsize) {\y};

\node[draw, rectangle, minimum size=6pt, inner sep=0pt] at (-4.2,-.85) { };
\node[] at ( -3,-.85)[font=\tiny]{$= \mbox{ a copy of } \Z$};		

\fill(-4.2,-1.25)circle(1.2pt) { };
\node[] at ( -3,-1.25)[font=\tiny]{$= \mbox{ a copy of } \F_2$};		

%\fill(-1.17,-0.5)circle(1.2pt) { };
%\fill(-.65,-0.15)circle(1.2pt) { };
\draw[-,black] (-1.15,-1.05) -- (-.65,-.55); 
\node[] at ( 0.8,-.85)[font=\tiny]{$= \mbox{ multiplication by } \eta$};				

			\foreach \x in {-8,-4,0,4,8} {
				\node[draw, rectangle, minimum size=5pt, inner sep=0pt] at (\x*\squaresize-\halfsize,0.3) {};
			}
			
			\foreach \y in {1,2,3,4,5,6,7,8,9} {
				\fill (-4.25+\y*\squaresize,\y*\squaresize+\halfsize) circle(1.5pt) {};
				\fill (-2.25+\y*\squaresize,\y*\squaresize+\halfsize) circle(1.5pt) {};
				\fill (-.25+\y*\squaresize,\y*\squaresize+\halfsize) circle(1.5pt) {};
				\fill (-6.25+\y*\squaresize,\y*\squaresize+\halfsize) circle(1.5pt) {};
			}
			\foreach \y in {1,2,3,4,5,6} {
				\fill (1.75+\y*\squaresize,\y*\squaresize+\halfsize) circle(1.5pt) {};
			}
			
			\foreach \y in {1,2} {
				\fill (3.75+\y*\squaresize,\y*\squaresize+\halfsize) circle(1.5pt) {};
			}
			\foreach \y in {5,6,7,8,9} {				
				\fill (-8.25+\y*\squaresize,\y*\squaresize+\halfsize) circle(1.5pt) {};
			}
			\foreach \y in {9} {
				\fill (-10.25+\y*\squaresize,\y*\squaresize+\halfsize) circle(1.5pt) {};
			}
			
			\foreach \y in {0,4,8} {
				\draw[-, solid, thick] (-4.25,\y*\squaresize+\halfsize) to (-3.75,\y*\squaresize+\squaresize+\halfsize) ;
				\draw[-, solid, thick] (-2.25,\y*\squaresize+\halfsize) to (-1.75,\y*\squaresize+\squaresize+\halfsize) ;
				\draw[-, solid, thick] (-.25,\y*\squaresize+\halfsize) to (.25,\y*\squaresize+\squaresize+\halfsize) ;
				\draw[-, solid, thick] (1.75,\y*\squaresize+\halfsize) to (2.25,\y*\squaresize+\squaresize+\halfsize) ;
				\draw[-, solid, thick] (3.75,\y*\squaresize+\halfsize) to (4.25,\y*\squaresize+\squaresize+\halfsize) ;
			}
			
			\foreach \y in {1,5} {
				\draw[-, solid, thick] (-5.75,\y*\squaresize+\halfsize) to (-5.25,\y*\squaresize+\squaresize+\halfsize) ;
				\draw[-, solid, thick] (-3.75,\y*\squaresize+\halfsize) to (-3.25,\y*\squaresize+\squaresize+\halfsize) ;
				\draw[-, solid, thick] (-1.75,\y*\squaresize+\halfsize) to (-1.25,\y*\squaresize+\squaresize+\halfsize) ;
				\draw[-, solid, thick] (.25,\y*\squaresize+\halfsize) to (.75,\y*\squaresize+\squaresize+\halfsize) ;
				\draw[-, solid, thick] (2.25,\y*\squaresize+\halfsize) to (2.75,\y*\squaresize+\squaresize+\halfsize) ;
				\draw[-, solid, thick] (4.25,\y*\squaresize+\halfsize) to (4.75,\y*\squaresize+\squaresize+\halfsize) ;
			}
			
			\foreach \y in {2,6} {
				\draw[-, solid, thick] (-5.25,\y*\squaresize+\halfsize) to (-4.75,\y*\squaresize+\squaresize+\halfsize) ;
				\draw[-, solid, thick] (-3.25,\y*\squaresize+\halfsize) to (-2.75,\y*\squaresize+\squaresize+\halfsize) ;
				\draw[-, solid, thick] (-1.25,\y*\squaresize+\halfsize) to (-.75,\y*\squaresize+\squaresize+\halfsize) ;
				\draw[-, solid, thick] (.75,\y*\squaresize+\halfsize) to (1.25,\y*\squaresize+\squaresize+\halfsize) ;
				\draw[-, solid, thick] (2.75,\y*\squaresize+\halfsize) to (3.25,\y*\squaresize+\squaresize+\halfsize) ;
				%\draw[-, solid, thick] (4.25,\y*\squaresize+\halfsize) to (4.75,\y*\squaresize+\squaresize+\halfsize) ;
			}
			\foreach \y in {3,7} {
				\draw[-, solid, thick] (-4.75,\y*\squaresize+\halfsize) to (-4.25,\y*\squaresize+\squaresize+\halfsize) ;
				\draw[-, solid, thick] (-2.75,\y*\squaresize+\halfsize) to (-2.25,\y*\squaresize+\squaresize+\halfsize) ;
				\draw[-, solid, thick] (-.75,\y*\squaresize+\halfsize) to (-.25,\y*\squaresize+\squaresize+\halfsize) ;
				\draw[-, solid, thick] (1.25,\y*\squaresize+\halfsize) to (1.75,\y*\squaresize+\squaresize+\halfsize) ;
				\draw[-, solid, thick] (3.25,\y*\squaresize+\halfsize) to (3.75,\y*\squaresize+\squaresize+\halfsize) ;
				%\draw[-, solid, thick] (4.25,\y*\squaresize+\halfsize) to (4.75,\y*\squaresize+\squaresize+\halfsize) ;
			}
			
			\foreach \y in {0,1,2,3,4,5,6} {
				\draw[->,blue] (1.75+\y*\squaresize,\y*\squaresize+\halfsize) -- (1.25+\y*\squaresize,1.5+\y*\squaresize+\halfsize); 
			}
			\foreach \y in {0,1,2,3,4,5,6} {
				\draw[->,blue] (-2.25+\y*\squaresize,\y*\squaresize+\halfsize) -- (-2.75+\y*\squaresize,1.5+\y*\squaresize+\halfsize); 
			}
			\foreach \y in {2,3,4,5,6} {
				\draw[->,blue] (-6.25+\y*\squaresize,\y*\squaresize+\halfsize) -- (-6.75+\y*\squaresize,1.5+\y*\squaresize+\halfsize); 
			}
			
		\end{scriptsize}
	\end{tikzpicture}
\caption{ The homotopy fixed point spectral sequence of $KO$. The blue arrows describe $d_3$.}
\label{kohtpyfixpt}
\end{figure}
	
	\end{mysubsection}

\vspace*{0.2cm}	

	\begin{mysubsection}{Filtration spectral sequence}
Let $E$ be a ring $G$-spectrum (which is a ring spectrum such that $G$ acts by algebra maps)  and $X$ be a CW complex with skeleta $X^{(k)}$. We observe that the homotopy fixed point spectral sequence gets an induced filtration, and yields multiplicative trigraded spectral sequences which converge to the pages of the homotopy fixed point spectral sequence. For a fixed cellular filtration of $EG= \bigcup EG^{(s)}$, we have the bifiltered object 
\[ Y_{s,r} = F^G((EG^{(s)} \times X^{(r)})_+, E),\]
and the skeletal inclusions give two maps 
\[i_{s,r} : Y_{s,r} \to Y_{s-1,r}, ~ j_{s,r} : Y_{s,r} \to Y_{s,r-1},\]
with the property 
%\begin{myeq}\label{ijcomm}
\[j_{s-1,r}\circ i_{s,r} = i_{s,r-1}\circ j_{s,r}.\]
%\end{myeq}
 We denote the fibers of the above maps as follows 
\[ C_{s,r}^{(1)} = \mbox{Fib}(i_{s,r})  \simeq F^G(EG^{(s)}/EG^{(s-1)} \wedge X^{(r)}_+,E),\]
\[ C_{s,r}^{(2)} = \mbox{Fib}(j_{s,r})  \simeq F^G(EG^{(s)}_+ \wedge X^{(r)}/X^{(r-1)},E),\]
 and 
 \begin{align*}
  C_{s,r} &= \mbox{Fib}(i_{s,r}:C_{s,r}^{(2)} \to C_{s-1,r}^{(2)}) \\
        &\simeq \mbox{Fib}(j_{s,r} : C_{s,r}^{(1)} \to C_{s,r-1}^{(1)}) \\
        & \simeq F^G(EG^{(s)}/EG^{(s-1)} \wedge X^{(r)}/X^{(r-1)},E).
 \end{align*}
 We then obtain a square of spectral sequences  
 \begin{myeq} \label{fsse1sq}
 \begin{tikzcd}
		E_1^{s,t,r} = \pi_{t}(C_{s,r}) &&& E_1^{t,r} = \pi_{t}F(\frac{X^{(r)}}{X^{(r-1)}} ,E^{hG}) \\
		\\
		E_1^{s,t} = \pi_tF^G(\frac{EG^{(s)}}{EG^{(s-1)}}\wedge X_+,E) &&&  \pi_{t}F(X_+,E^{hG})
		\arrow["\HFPSS(X^{(r)}/X^{(r-1)})",Rightarrow, from=1-1, to=1-4]
		\arrow["\AHSS( E)", Rightarrow, from=1-1, to=3-1]
		\arrow["\AHSS (E^{hG})", Rightarrow, from=1-4, to=3-4]
		\arrow["\HFPSS(X_+)"', Rightarrow, from=3-1, to=3-4]
	\end{tikzcd}
	\end{myeq}
 The top row and the bottom row of \eqref{fsse1sq} are suitable homotopy fixed point spectral sequences. 
 \begin{thm}\label{FSSE1}
		The spectral sequences in \eqref{fsse1sq} are conditionally convergent and multiplicative %have the following properties \\
	\end{thm}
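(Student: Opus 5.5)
Each of the four spectral sequences in \eqref{fsse1sq} comes from a tower, or a filtered object, built out of the bifiltered spectra $Y_{s,r}$. The plan is to treat the two rows and the two columns separately. In each case we identify the spectral sequence with the one attached to a specific tower whose limit is the stated target, and deduce conditional convergence in Boardman's sense. We then obtain multiplicativity from a pairing of towers induced by the diagonal maps $EG\to EG\times EG$ and $X\to X\times X$, together with the ring structure on $E$.

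\emph{Convergence.} The bottom row is the tower $\{F^G((EG^{(s)}\times X)_+,E)\}_s$. Its limit is $F^G((EG\times X)_+,E)\simeq F(X_+,E)^{hG}\simeq F(X_+,E^{hG})$, and the last equivalence holds because $G$ acts trivially on $X$. The top row is the same construction for $X^{(r)}/X^{(r-1)}$, which is a finite wedge of spheres when $X$ is of finite type. In both cases the $E_1$-term is $\pi_t$ of the layers $C^{(1)}_{s}$, and $\lim^1$-type conditional convergence is exactly Theorem \ref{HFPSS} (\cite[Theorem 6.1]{Dug03}). For the columns we filter by skeleta of $X$, fixing $G$-cell filtration $s$ (left column) or using $E^{hG}$ (right column). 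The limit of $\{F(X^{(r)}_+,-)\}_r$ is $F(X_+,-)$, since $X=\operatorname{colim}X^{(r)}$ and $F(-,Z)$ turns colimits into limits. Hence the skeletal AHSS converges conditionally by Boardman's criterion, as the tower has contractible colimit in negative filtration and the correct limit. The left column needs one extra check: $F^G(EG^{(s)}/EG^{(s-1)}\wedge -,E)$ preserves these limits. This holds because $EG^{(s)}/EG^{(s-1)}$ is a finite wedge of free $G$-cells when $G$ is finite, so the functor is a finite product of copies of $F(S^{s}\wedge -,E)$.

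\emph{Multiplicativity.} We choose cellular approximations of the diagonals: $EG\to EG\times EG$ with $EG^{(s)}\to\bigcup_{a+b=s}EG^{(a)}\times EG^{(b)}$, and similarly for $X$. These give pairings $Y_{a,r}\wedge Y_{b,r'}\to Y_{a+b,r+r'}$, after composing with the ring multiplication $E\wedge E\to E$, which is $G$-equivariant because $G$ acts by algebra maps. The pairings are compatible with both maps $i$ and $j$. A pairing of towers (in the sense of Douady/Massey, or Dugger's pairings of towers) induces a multiplicative structure on the associated spectral sequences, and the Leibniz rule holds on every page. Taking one filtration at a time gives the row and column products. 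On $E_1$-terms the product is the external product on cellular cochains tensored with the product on $\pi_\ast E$. Hence on $E_2$ of the rows it is the cup product in group cohomology, as in Theorem \ref{HFPSS}(3).

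\emph{Main obstacle.} The delicate point is the equivariant cellular approximation of the diagonal on $EG\times X$. We need it to respect \emph{both} filtrations at once, so that the same pairing serves all four spectral sequences compatibly. We would handle this by using the product filtration on $EG\times X$ and the bar-construction model of $EG$, whose diagonal is strictly cellular via the Alexander–Whitney map. We then apply cellular approximation only in the $X$-direction, which is non-equivariant since $G$ acts trivially on $X$. This keeps the $G$-equivariance automatic and yields compatibility of the pairings with $i_{s,r}$ and $j_{s,r}$.
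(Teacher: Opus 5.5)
Your convergence argument is correct, and it covers something the paper leaves implicit. The paper's proof is a one-line citation of \cite{Dug03} for multiplicativity, and it says nothing about convergence. Each row and column is the spectral sequence of a tower starting at $\ast$ whose homotopy limit is the stated target, exactly as you argue. Your extra check for the left column is harmless but unnecessary: $F^G(A\wedge -,E)$ turns $X=\operatorname{colim}_r X^{(r)}$ into a limit for any $A$.

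Your multiplicativity argument follows the paper's route (cellular diagonal, ring structure on $E$, Dugger's theorem on pairings), but the pairing you write down does not exist. The $Y_{s,r}=F^G((EG^{(s)}\times X^{(r)})_+,E)$ form \emph{restriction} towers.

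\begin{itemize}
\item To multiply $f\in Y_{a,r}$ and $g\in Y_{b,r'}$ into $Y_{a+b,r+r'}$, you would need the approximate diagonal to carry $EG^{(a+b)}\times X^{(r+r')}$ into $(EG^{(a)}\times X^{(r)})\times(EG^{(b)}\times X^{(r')})$.
\item A cellular diagonal only lands in $\bigcup_{a'+b'=a+b}EG^{(a')}\times EG^{(b')}$, and similarly in the $X$-direction. That union contains pieces such as $EG^{(a+b)}\times EG^{(0)}$.
\item A single map doing what you need for all $(a,b)$ at once would have to send $EG$ into the discrete set $EG^{(0)}\times EG^{(0)}$, which is impossible equivariantly.
\end{itemize}

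On restriction towers the diagonal only gives $Y_a\wedge Y_b\to Y_{\min(a,b)}$. The additive pairing lives on the fibre filtration, which produces the same spectral sequences. Set $W=EG\times X$, $L_{s,r}=EG^{(s-1)}\times X\cup EG\times X^{(r-1)}$ and $\mathcal F_{s,r}=F^G\big(W/L_{s,r},E\big)$. The one-variable versions are $F^G(EG/EG^{(s-1)}\wedge X_+,E)$ and $F(X/X^{(r-1)},E^{hG})$.

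If $a'+b'=a+b-1$, then $a'\le a-1$ or $b'\le b-1$. So a cellular diagonal preserving both filtrations sends $L_{a+b,r+r'}$ into $L_{a,r}\times W\cup W\times L_{b,r'}$. It therefore induces $W/L_{a+b,r+r'}\to W/L_{a,r}\wedge W/L_{b,r'}$, and hence a pairing $\mathcal F_{a,r}\wedge\mathcal F_{b,r'}\to\mathcal F_{a+b,r+r'}$ compatible with the filtration maps. This is the input Dugger's theorem needs (equivalently, a pairing of Cartan--Eilenberg systems on the relative groups). With this correction your argument goes through for all four spectral sequences, including the products across different $r$ in the top row.

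What you call the main obstacle is not one, and your remedy is slightly off. The diagonal of a CW complex is not cellular once there are cells of positive dimension: it carries an open $n$-cell $e$ into the open $2n$-cell $e\times e$. Alexander--Whitney only describes a cellular approximation on chains. No special model is needed. For finite $G$, $EG\times EG$ with the diagonal action is a free $G$-CW complex, so equivariant cellular approximation gives a $G$-cellular map $G$-homotopic to the diagonal. Its product with a non-equivariant cellular approximation of the diagonal of $X$ preserves both filtrations.
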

	
	\begin{proof}
	The multiplicative structure on the spectral sequences follow from \cite{Dug03}.  
	\end{proof}

\vspace*{0.2cm}
 
 We now upgrade the square of spectral sequences \eqref{fsse1sq} to the $E_2$-page. Let $\EE_2^{s,t,r}$ be defined by 
 \[ \EE_2^{s,t,r}= \frac{\Ker\Big(d_1^{\HFPSS} : \pi_t(C_{s,r}) \to \pi_{t-1}(C_{s+1,r})\Big)}{\Im\Big(d_1^{\HFPSS} : \pi_{t+1}(C_{s-1,r})\to \pi_t(C_{s,r})\Big)}\cong H^s\Big(G; \pi_{t+s}\big(F({X^{(r)}}/{X^{(r-1)}},E)\big)\Big).\]
 This is the $E_2$-page of the spectral sequence HFPSS$(X^{(r)}/X^{(r-1)})$. We also write 
 \begin{align*}
 E_2^{s,t} &= \frac{\Ker\Big(d_1^{\HFPSS} : \pi_t(F^G(\frac{EG^{(s)}}{EG^{(s-1)}}\wedge X_+,E)) \to \pi_{t-1}(F^G(\frac{EG^{(s+1)}}{EG^{(s)}}\wedge X_+,E))\Big)}{\Im\Big(d_1^{\HFPSS} : \pi_{t+1}(F^G(\frac{EG^{(s-1)}}{EG^{(s-2)}}\wedge X_+,E))\to \pi_t(F^G(\frac{EG^{(s)}}{EG^{(s-1)}}\wedge X_+,E))\Big)} \\
   & \cong H^s\Big(G; \pi_{t+s}\big(F(X_+,E)\big)\Big),
 \end{align*}
 which is the $E_2$-page of the spectral sequence HFPSS$(X_+)$. We now assume that $E$ is an even ring spectrum (that is, the homotopy groups of $E$ are concentrated in even degrees) and that the cellular filtration of $X$ has cells only in even degrees. Under these hypotheses, we show that Theorem \ref{FSSE1} implies that AHSS$(E)$ induces a spectral sequence 
 \[\EE_2^{s,t,r} \implies \EE_2^{s,t}.\]
 This will be called the filtration spectral sequence (FSS). 
	\begin{thm}\label{FSS}
Suppose that $E$ is an even ring spectrum, and that $X$ has cells only in even degrees. In such a situation, there is a multiplicative conditionally convergent spectral sequence  
$$ \mbox{FSS} : \EE_2^{s,t,r} = H^{s}(G;E^{-t-s}(X^{(r)}/X^{(r-1)})) \implies H^{s}(G;E^{-t-s}(X))$$
 with differential $d_m: \EE_{m}^{s,t,r} \rightarrow \EE_{m}^{s+1,t-1,r+m}$. 
	\end{thm}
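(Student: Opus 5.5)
We would build the filtration spectral sequence directly from the bifiltered tower $Y_{s,r}$. We fix the cohomological degree $s$ and pass to the $E_2$-page of the homotopy fixed point spectral sequence. Concretely, we regard $d_1^{\HFPSS}$ as giving, for each $r$, a cochain complex
\[
\pi_\ast(C_{\bullet,r}) \cong \Hom_{\Z[G]}\big(C_\bullet(EG), E^{\ast}(X^{(r)}/X^{(r-1)})\big),
\]
where $C_\bullet(EG)$ is the cellular chain complex of $EG$, a free $\Z[G]$-resolution of $\Z$. Similarly, $\pi_\ast$ of the bottom-left corner of \eqref{fsse1sq} gives $\Hom_{\Z[G]}(C_\bullet(EG), E^\ast(X))$.

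The key input is evenness. Since $E$ is even and $X$ has only even cells, the Atiyah–Hirzebruch spectral sequence for $E^\ast(X)$ collapses at $E_1$: all groups $E^\ast(X^{(r)}/X^{(r-1)})$ sit in even total degree, and every differential changes total degree by one. The same holds for every finite skeleton, and Mittag-Leffler holds for the limit since the restriction maps are surjective. Hence $E^\ast(X)$ carries a finite-in-each-degree, complete, exhaustive filtration
\[
F^r E^\ast(X) = \Ker\big(E^\ast(X) \to E^\ast(X^{(r-1)})\big),
\]
with associated graded $\bigoplus_r E^\ast(X^{(r)}/X^{(r-1)})$. Each graded piece is a $G$-module and each $F^r$ is $G$-stable, because $G$ acts on $E$ and acts trivially on $X$.

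We then apply $\Hom_{\Z[G]}(C_\bullet(EG),-)$, which is exact on short exact sequences of $G$-modules because $C_\bullet(EG)$ is degreewise free. This produces a filtered cochain complex $\Hom_{\Z[G]}(C_\bullet(EG), F^r E^\ast(X))$ whose associated graded is $\bigoplus_r \Hom_{\Z[G]}(C_\bullet(EG), E^\ast(X^{(r)}/X^{(r-1)}))$. The spectral sequence of this filtered complex has:
\begin{itemize}
\item $\EE_1$ given by the graded pieces;
\item $\EE_2$ given by $H^s(G; E^{-t-s}(X^{(r)}/X^{(r-1)}))$ after taking $d_1^{\HFPSS}$ cohomology first, matching the definition of $\EE_2^{s,t,r}$ above;
\item abutment $H^s(G; E^{-t-s}(X))$.
\end{itemize}
The connecting maps raise group-cohomological degree $s$ by one, so $d_m$ lowers $t$ by one and raises $r$ by $m$, which gives the stated bidegree $d_m\colon \EE_m^{s,t,r}\to \EE_m^{s+1,t-1,r+m}$. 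One has to be careful with indexing, since this is really the algebraic Atiyah–Hirzebruch spectral sequence of a filtered module. Equivalently, the spectral sequence can be obtained from the exact couple formed by the long exact sequences in group cohomology attached to $0\to F^{r+1}\to F^r\to \mathrm{gr}^r\to 0$.

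Conditional convergence follows from completeness and exhaustiveness of the filtration: $\lim_r F^r = 0$ and $\lim^1_r F^r = 0$ by Mittag-Leffler, and these properties are preserved by the $\Hom$ complex. We would invoke Boardman's criterion, or note that in each fixed degree only finitely many $r$ contribute when $X$ is of finite type.

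Multiplicativity is the step we expect to be the main obstacle. The plan is to use a diagonal approximation $C_\bullet(EG)\to C_\bullet(EG)\otimes C_\bullet(EG)$ together with the cellular diagonal of $X$, which gives the multiplicativity of the skeletal filtration $F^rF^{r'}\subset F^{r+r'}$ on $E^\ast(X)$. Cup products then make the filtered $\Hom$ complex a filtered differential graded algebra up to homotopy, and the standard Leibniz argument applies. To make the cochain-level product compatible with $\Hom_{\Z[G]}$, we would use Theorem \ref{FSSE1}, the multiplicativity of the spectral sequences in \eqref{fsse1sq} from \cite{Dug03}, transported through the identification on $\EE_2$. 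We would also check that the resulting product on $\EE_2$ is the cup product $H^\ast(G;\mathrm{gr}) \otimes H^\ast(G;\mathrm{gr})\to H^\ast(G;\mathrm{gr})$ induced by the cellular pairing $X^{(r)}/X^{(r-1)}\wedge X^{(r')}/X^{(r'-1)}$.
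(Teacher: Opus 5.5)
Your proposal is correct and follows essentially the paper's route. Evenness collapses the Atiyah--Hirzebruch spectral sequence and gives short exact sequences of $\Z[G]$-modules, whose long exact sequences in group cohomology form the exact couple, and multiplicativity comes from regarding $C^\ast(G;E^\ast(X))$, filtered by the skeletal filtration $F^r$, as a filtered DGA. The step you flag as the main obstacle is easier than you expect: since $G$ acts on $E^\ast(X)$ by ring maps and $F^rF^{r'}\subset F^{r+r'}$, the cup product (for instance via the bar resolution) makes $C^\ast(G;E^\ast(X))$ an honest filtered DGA, not one up to homotopy, so \cite[Theorem 2.14]{Mcc01} applies directly and no transport of Dugger's multiplicativity from \eqref{fsse1sq} is needed. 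Separately, drop the aside that only finitely many $r$ contribute in each degree (and the phrase ``finite-in-each-degree''): for periodic $E$ such as $E_n$ and $X=\C P^\infty$, every even $r$ contributes to $E^0(X)$, so your completeness argument is the one that actually carries the convergence claim.
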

	
\begin{proof}
As $E$ is an even spectrum, and $X$ is a CW complex with only even cells, the spectral sequence AHSS$(E)$ in \eqref{fsse1sq} degenerates at the $E^1$-page. The same criterion implies that the sequence
\[ 0 \to E^\ast(X^{(r)}/X^{(r-1)}) \to E^\ast(X^{(r)}) \to E^\ast(X^{(r-1)})\to 0\]
is a short exact sequence of $\Z[G]$-modules. One obtains long exact sequences on group cohomology leading to the exact couple
\[\xymatrix{ \bigoplus_{s,r} H^s(G;E^\ast(X^{(r)}) \ar[rr] &&   \bigoplus_{s,r} H^s(G;E^\ast(X^{(r)})  \ar[ld] \\ 
& \bigoplus_{s,r} H^s(G;E^\ast(X^{(r)}/X^{(r-1)}). \ar[lu]}\] 
This gives us the conditionally convergent spectral sequence 
\[\EE_2^{s,t,r} = H^{s}(G;E^{-t-s}(X^{(r)}/X^{(r-1)})) \implies H^{s}(G;E^{-t-s}(X)).\]
As the differentials in AHSS$(E)$ are $0$, this spectral sequence is obtained from the differential graded algebra $C^\ast(G; E^\ast(X))$ that is filtered by the inclusions $E^\ast(X/X^{(r)}) \to E^\ast(X)$ as in \cite[Theorem 6.1]{Dug03}. The differential equals the differential on the cochains  in group cohomology so it follows that this is a filtered differential graded algebra in the sense of \cite[Theorem 2.14]{Mcc01}. It follows that this spectral sequence is multiplicative. 
\end{proof}	
	
\vspace*{0.2cm}

Recall that for a $G$-module $M$, 
\[ H^0(G;M) \cong M^{G} = \{ m\in M \mid g.m=m ~\forall~ g\in G\}.\]
We also fix the filtration $EG= \cup_{s\geq 0} EG^{(s)}$ such that $EG^{(0)}$ is the discrete space $G$, and $EG^{(1)}/EG^{(0)}$ is a free $G$-CW-complex whose underlying space is a wedge of circles.
The condition that $X$ has only even cells implies that among the maps 
\[[S^t \wedge EG^{(1)}_+\wedge X,E]^G \to [S^t \wedge X,E],\quad [S^t  \wedge \frac{EG^{(1)}}{EG^{(0)}}\wedge  X, E]^G \xrightarrow{j_1} [S^t \wedge EG^{(1)}_+\wedge X,E]^G,\]
one is an isomorphism, and the other is $0$.  
\begin{prop} \label{diffl-att}
 Suppose $\alpha\in E_1^{t,r}$ is represented by $a$ in $\EE_2^{0,t,r}$ in HFPSS($X^{(r)}/X^{(r-1)})$, and that $\beta\in E_1^{t-1,r+m}$ is represented by $b$ in $\EE_2^{1,t-1,r+m}$ in HFPSS($X^{(r+m)}/X^{(r+m-1)})$. Assume that $d_i(\alpha)=0$ for $i<m$,  $d_m(\alpha)=\beta$ in the spectral sequence AHSS($E^{hG}$), and $d_i(a)=0$ for $i<m$ in the spectral sequence FSS. Then,
	 $d_m(a)=b$ in the spectral sequence FSS.
\end{prop}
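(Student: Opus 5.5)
We will compare both spectral sequences at the chain level, using the bifiltered tower $Y_{s,r}=F^G((EG^{(s)}\times X^{(r)})_+,E)$ and the square \eqref{fsse1sq}. The AHSS$(E^{hG})$ differential $d_m(\alpha)=\beta$ is computed from the skeletal filtration of $X$ alone, so it is governed by the cofibre sequences $X^{(r+m-1)}/X^{(r-1)}\to X^{(r+m)}/X^{(r-1)}\to X^{(r+m)}/X^{(r+m-1)}$. Similarly, the FSS differential $d_m(a)$ is computed from the short exact sequences
\[0\to E^\ast(X^{(k)}/X^{(k-1)})\to E^\ast(X^{(k)})\to E^\ast(X^{(k-1)})\to 0\]
used in the proof of Theorem \ref{FSS}. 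It is therefore a connecting homomorphism in group cohomology, of bidegree $(s,r)\mapsto(s+1,r+m)$. The strategy is to show that both differentials are images of one and the same element of $\pi_\ast$ of the $(s,r)$-bifiltered pieces, namely the one coming from $C_{s,r}$ for $s\in\{0,1\}$.

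\textbf{Step 1.} Restrict to the subquotient $W=X^{(r+m)}/X^{(r-1)}$. By naturality, both differentials may be computed in $W$. The hypotheses $d_i(\alpha)=0$ and $d_i(a)=0$ for $i<m$ then say that $\alpha$ lifts to a class $\tilde\alpha\in\pi_t F(W^{(r+m-1)},E^{hG})$, and $a$ lifts to a class $\tilde a\in H^0(G;E^\ast(W^{(r+m-1)}))$. The differential $d_m(\alpha)$ is the image of $\tilde\alpha$ under the boundary map $\delta$ to $\pi_{t-1}F(X^{(r+m)}/X^{(r+m-1)},E^{hG})$. The differential $d_m(a)$ is the image of $\tilde a$ under the Bockstein $H^0(G;-)\to H^1(G;-)$ of the short exact sequence above.

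\textbf{Step 2.} We use the HFPSS edge behaviour. Since $\tilde\alpha$ is detected in filtration $0$ of HFPSS$(W^{(r+m-1)})$ by $\tilde a$ (we choose lifts compatibly, using the naturality of the square \eqref{fsse1sq}), the class $\delta(\tilde\alpha)$ is detected in HFPSS$(X^{(r+m)}/X^{(r+m-1)})$ in filtration $\ge 1$. We then identify its filtration-$1$ representative with the Bockstein of $\tilde a$. This is the standard fact that a map of spectra inducing the zero map on $E$-cohomology but a nonsplit extension of $G$-modules raises HFPSS filtration by one, with the extension class as its $E_2$ representative.

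To prove this fact, we work with $EG^{(1)}$ and the remark preceding the proposition. Because all cells are even, one of the maps $[S^t\wedge EG^{(1)}_+\wedge X,E]^G\to[S^t\wedge X,E]$ and $j_1$ is an isomorphism and the other is zero. We take $\delta(\tilde\alpha)$, restrict it to $EG^{(1)}$, and lift it through $j_1$ to $EG^{(1)}/EG^{(0)}$, a wedge of free $G$-circles. Its component on the circles is given by $g\mapsto g\tilde a'-\tilde a'$, where $\tilde a'$ is a non-invariant lift of $\tilde a$ to $E^\ast(W)$. That formula is exactly the $1$-cocycle representing the connecting map.

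\textbf{Step 3.} By hypothesis $\beta$ is represented by $b$ in $\EE_2^{1,t-1,r+m}$, so the filtration-$1$ representative of $d_m(\alpha)=\beta$ is $b$. It follows that $d_m(a)=b$ in FSS, modulo the indeterminacy that is already killed at $\EE_m$.

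I expect the main obstacle to be Step 2: making the cocycle identification precise, and checking that the lift choices on the two sides are compatible. In particular, one must ensure that the cocycle obtained is well defined modulo the images of earlier FSS differentials. I would first try to settle this concretely using the $EG^{(1)}$ description, and only fall back on the abstract exact couple if that fails.
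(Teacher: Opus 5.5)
Your proposal is correct and follows essentially the same route as the paper. The paper also takes the lift $\hat\beta$ of $\alpha$ to the larger skeleton in $E^{hG}$-cohomology and uses $\phi(\hat\beta)$ as the $G$-invariant lift of $a$ that computes the FSS differential; it then identifies the group-cohomology connecting map $\delta(\phi(\hat\beta))$ with the $EG^{(1)}$-level representative $b$ of $\beta=\partial\hat\beta$ by computing the obstruction to extending $\psi(\hat\beta)$ over $EG^{(1)}\times X^{(r+m)}$ in two ways (diagram \eqref{diffl}), and your explicit cocycle $g\mapsto g\tilde a'-\tilde a'$ and passage to $W=X^{(r+m)}/X^{(r-1)}$ are only a more explicit rendering of that diagram chase.
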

\begin{proof}
Recall that $H^0(G; E^\ast(Y)) \cong E^\ast(Y)^G$. The fact that $\alpha \in E_1^{t,r}$ is represented by $a\in \EE_2^{0,t,r}= E^{-t}(X^{(r)}/X^{(r-1)})^G$ means that  the map 
$ \phi : E^{hG} \to E$ sends $\alpha$ to $a$. The fact that $E$ is even, and that $X$ has only even dimensional cells implies that $t$ is even (otherwise both $a$ and $b$ are forced to be $0$).  The formula $d_m(\alpha) = \beta$ means that under 
\[ {E^{hG}}^{-t}(X^{(r)}/X^{(r-1)}) \rightarrow {E^{hG}}^{-t}(X^{(r)}) \leftarrow {E^{hG}}^{-t}(X^{(r+m)})\rightarrow {E^{hG}}^{-t+1}(X^{(r+m+1)}/X^{(r+m)})\]
there is a class $\hat{\beta}\in {E^{hG}}^{-t}(X^{(r+m)})$ which maps to $\beta \in {E^{hG}}^{-t+1}(X^{(r+m+1)}/X^{(r+m)})$   and maps to the image of $\alpha$ in ${E^{hG}}^{-t}(X^{(r)}) $.  The fact that $\beta$ is represented by $b\in \EE_2^{1,t-1,r+m}$ means that the map $\phi$ sends $\beta$ to $0$. Denote the map $E^{hG} \to F^G(EG^{(1)}_+,E)$ by $\psi$. This means that $\psi(\beta)=j_1(b)$. Consider the map 
$\partial :\Sigma^{-1} X^{(r+m+1)}/X^{(r+m)}\to X^{(r+m)}.$
We observe that in diagram \eqref{diffl}, $\overline{\phi(\hat{\beta})}=b$. 
\begin{myeq}\label{diffl}
\scalebox{0.68}{$\xymatrix{ S^t \wedge \big(\Sigma^{-1} \frac{X^{(r+m+1)}}{X^{(r+m)}}\big) \wedge \big(\Sigma^{-1} \frac{EG^{(1)}}{EG^{(0)}}\big) \ar[r] \ar[d] &S^t \wedge X^{(r+m)}_+ \wedge \big(\Sigma^{-1} \frac{EG^{(1)}}{EG^{(0)}}\big) \ar[r] \ar[d]  & S^t \wedge X^{(r+m+1)}_+ \wedge \big(\Sigma^{-1} \frac{EG^{(1)}}{EG^{(0)}}\big) \ar[r] \ar[d]  & S^t \wedge  \frac{X^{(r+m+1)}}{X^{(r+m)}} \wedge \big(\Sigma^{-1} \frac{EG^{(1)}}{EG^{(0)}}\big)  \ar@{-->}[ddd]^{\overline{\phi(\hat{\beta})}} \\
S^t \wedge \big(\Sigma^{-1} \frac{X^{(r+m+1)}}{X^{(r+m)}}\big) \wedge EG^{(0)}_+\ar[d] \ar[r] & S^t \wedge X^{(r+m)}_+ \wedge  EG^{(0)}_+  \ar[r] \ar[d] \ar@/^2.25pc/[rrdd]^-{\phi(\hat{\beta})} & S^t \wedge X^{(r+m+1)}_+ \wedge  EG^{(0)}_+ \ar[d] \ar@/^2pc/[rdd] \\
S^t \wedge \big(\Sigma^{-1} \frac{X^{(r+m+1)}}{X^{(r+m)}}\big) \wedge EG^{(1)}_+\ar[d] \ar[r] & S^t \wedge X^{(r+m)}_+ \wedge  EG^{(1)}_+  \ar[r]\ar[rrd]^{\psi(\hat{\beta})} & S^t \wedge X^{(r+m+1)}_+ \wedge  EG^{(1)}_+\\ 
S^t \wedge \big( \frac{X^{(r+m+1)}}{X^{(r+m)}}\big) \wedge \big(\Sigma^{-1}\frac{EG^{(1)}}{EG^{(0)}}\big) \ar@{-->}[rrr]_{\psi(\partial(\hat{\beta}))=b} &&& E.  } $} 
\end{myeq}
As $E$ is even, and $X$ has only even cells, we have 
\[ [S^t \wedge \Sigma^{-1} \frac{X^{(r+m+1)}}{X^{(r+m)}} \wedge EG^{(1)}_+,E]^{G} \cong [S^t \wedge  \frac{X^{(r+m+1)}}{X^{(r+m)}} \wedge \Sigma^{-1}\frac{EG^{(1)}}{EG^{(0)}}, E]^G.\]
 The bottom row of \eqref{diffl} writes $\psi(\partial(\hat{\beta}))$ as the obstruction to extending $\psi(\hat{\beta})$ to $S^t\wedge X^{(r+m+1)}_+ \wedge EG^{(1)}_+$. On the other hand, the same obstruction is realized as the right vertical arrow of the diagram, implying
\[\delta(\phi(\hat{\beta}))= \psi(\partial(\hat{\beta}))=\partial (\psi(\hat{\beta}))=b.\]
 Thus, in the sequence of maps
\[\scalebox{0.85}{$H^0(G; E^{-t}(X^{(r)}/X^{(r-1)})) \to H^0(G; E^{-t}(X^{(r)})) \leftarrow H^0(G; E^{-t}(X^{(r+m)})) \xrightarrow{\delta} H^1(G; E^{-t}(X^{(r+m+1)}/X^{(r+m)}))$},\]
 $\phi(\hat{\beta})\in H^0(G; E^{-t}(X^{(r+m)})$ maps to $b \in H^1(G; E^{-t}(X^{(r+m+1)}/X^{(r+m)}))$   and maps to the image of $a\in H^0(G; E^{-t}(X^{(r)}/X^{(r-1)})) $ in $H^0(G; E^{-t}(X^{(r)}))  $. Hence, $d_m(a) = b$.
\end{proof}

\vspace*{0.2cm}

We now note that the filtration spectral sequence FSS is obtained via a filtration of differential graded algebras. We write $H^\ast(G;E^\ast(X))$ as the cohomology of the DGA $C^\ast(G;E^\ast(X))$. Under the hypothesis that $E$ is even, and $X$ is a CW complex with cells in even degrees, we have 
\[ C^\ast(G; E^\ast(X)) \cong \varprojlim_m C^\ast(G;E^\ast(X^{(m)})),\]
which a limit of DGAs. This means that the results of \cite[\S 4]{May69} (see also \cite[A1.4.10 - A1.4.12]{Rav86}) applies to this situation. 
	\end{mysubsection}
	
\vspace*{0.2cm}

	\begin{mysubsection}{The filtration spectral sequence for the $K$-theory of $\C P^m$}
	We apply  Theorem \ref{FSS} to the case where $E=KU$ and $X=\mathbb{C}P^m_{+}$. The structure of $KU^*(\C P^m)$ may be understood from the fact that $KU$ is a complex oriented cohomology theory.	Recall that such a theory $E$ is given by a homotopy commutative ring spectrum $E$ with an orientation class $x$ in ${\tilde{E^{2}}}(\C P^{\infty})$ such that when restricted to $\mathbb{C}P^1$, it becomes a generator of free rank one ${\pi_{0}}(E)$-module $\tilde{E^2}(\mathbb{C}P^1)=\tilde{E^2}(S^2)$. Then, we have the following \cite{Ada74},
	\[E^{*}(\mathbb{C}P^{\infty})=E^{*}(pt)[[x]],\quad E^{*}(\mathbb{C}P^{m})=E^{*}(pt)[x]/(x^{m+1}) .\]
	For $KU$, a choice of the  orientation class is given by $x=(L-1)\beta^{-1}$, where $L$ denotes the canonical line bundle over $\C P^{\infty}$. The filtration spectral sequence
\[\EE_2^{s,t,r} = H^{s}(C_2;KU^{-t-s}((\C P^m)^{(r)}/(\C P^m)^{(r-1)})) \implies H^{s}(C_2;KU^{-t-s}(\C P^m))\]
satisfies 
\[\EE_2^{s,t,r} = \begin{cases} 
H^{s}(C_2;KU^{-t-s}(S^{2k})) & \mbox{ if } r=2k \mbox{ for } k\leq n\\
0 & \mbox{ if } r \mbox{ is odd or } > 2n. \end{cases} \]
Using the multiplicative structure and the formula \eqref{gpcohku}, we have the isomorphism
\[ H^{s}(C_2;KU^{-t-s}(S^{2k})) \cong \Sigma^{-2k} H^{s}(C_2;KU^{-t-s}(\pt))\{x^k\} \cong \Z[t^{\pm}, \eta]/(2\eta)\{x^k\}.\]
Thus we have 
\[\EE_2^{s,t,r}\cong \Z[t^{\pm},\eta,x]/(2\eta, x^{n+1}), ~ |t|=(0,4,0), ~ |\eta|= (1,1,0), ~ |x|= (0,-2,2).\]
	
We now compute $d(x)$ in the filtration spectral sequence. Note that $\C P^2 \simeq \Cone(\eta : S^3 \to S^2)$, and the Hopf map $\eta$ is detected in $\pi_1 KO$. This implies by Proposition \ref{diffl-att} that $d_2(x) = \eta x^2$ in the filtration spectral sequence, and we have the following Proposition using the multiplicative structure.
\begin{prop}\label{diffl-fss-ku}
	In the filtration spectral sequence, 
		$d_2(a x^k)=\eta a x^{k+1} $ when $k$ is odd, $d_2(a x^k)=0$ when $k$ is even.
	\end{prop}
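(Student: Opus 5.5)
The plan is to derive the formula from the single computation $d_2(x)=\eta x^2$ together with multiplicativity of the filtration spectral sequence (Theorem \ref{FSS}). First I would settle the differentials on the coefficients $a\in \Z[t^{\pm},\eta]/(2\eta)$, which sit in filtration $r=0$. Since $\C P^m_+$ splits off the basepoint $S^0$, the coefficient ring $H^\ast(C_2;KU^\ast(\pt))$ is a retract of the $E_2$-page. A differential $d_m$ raises the filtration $r$ by $m$, so it cannot land in filtration $0$. Hence every $a$ is a permanent cycle, and we may work entirely with powers of $x$.

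Next I would confirm $d_2(x)=\eta x^2$ through Proposition \ref{diffl-att}. The $2$-skeleton of $\C P^m$ is $S^2$, with the $4$-cell attached by $\eta$. In $\AHSS(KO)$ for $\C P^2$, the class of $x$ on the bottom cell therefore supports $d_2(x)=\eta x^2$, because $\eta$ is detected in $\pi_1KO$. Here $\eta x^2$ lies in $s=1$, which is exactly the situation the proposition requires. The class $x=(L-1)\beta^{-1}$ is $C_2$-invariant up to the sign convention, or one may use $tx$ or similar in the right degrees, so it defines an element of $\EE_2^{0,\ast,2}$. The hypothesis $d_1=0$ holds because the FSS has no odd filtrations. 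The proposition then gives $d_2(x)=\eta x^2$, and naturality under $\C P^2\subset \C P^m$ transfers this to $\C P^m$.

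Finally I would apply the Leibniz rule. On the $E_2$-page, $\EE_2\cong \Z[t^{\pm},\eta,x]/(2\eta,x^{m+1})$, and by induction on $k$,
\[d_2(x^k)=k\,\eta x^{k+1}.\]
Because $2\eta=0$, this equals $\eta x^{k+1}$ when $k$ is odd and $0$ when $k$ is even. Multiplying by the cycle $a$ gives $d_2(ax^k)=\pm a\eta x^{k+1}$, and the sign is irrelevant mod $2$. This is the claimed formula.

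The main obstacle is the sign and invariance bookkeeping: checking that $x$, rather than some twist of it, really is $C_2$-fixed. Since $\sigma(\beta)=-\beta$ and $\sigma(L)=\bar L$, we get $\sigma(x)=(\bar L-1)(-\beta)^{-1}$, which differs from $x$ by higher-order terms. If needed, I would replace $x$ by a $C_2$-invariant orientation such as $\beta^{-1}(L-\bar L)/2$, or alternatively work with classes in the correct $t$-degree. The fact that the associated graded identification is insensitive to higher-filtration corrections then makes the $E_2$ description, and hence the argument, unchanged.
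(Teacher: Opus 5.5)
Your main line is the paper's proof. You use $\C P^2\simeq\Cone(\eta)$, with $\eta$ detected in $\pi_1KO$, to get $d_2(x)=\eta x^2$ from Proposition~\ref{diffl-att}. The Leibniz rule then gives $d_2(x^k)=k\eta x^{k+1}$, which is the claim because $2\eta=0$. One small correction on the coefficients: $a$ is a permanent cycle by naturality along $\C P^m_+\to S^0$, whose filtration spectral sequence lies entirely in $r=0$ and so has no differentials. The observation that differentials cannot land in filtration $0$ only says $a$ is never hit, not that $a$ is a cycle.

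Your last paragraph treats as an obstacle what is actually the content of the proposition, and the fix you propose would destroy the result. Proposition~\ref{diffl-att} only needs $x$ to define a class in $\EE_2^{0,\ast,2}=H^0(C_2;\widetilde{KU}^{\ast}(S^2))$. The restriction of $x$ to the bottom cell is genuinely invariant, with no sign issue, since $\sigma$ negates both $(L-1)|_{S^2}=\beta$ and $\beta^{-1}$.

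A $C_2$-invariant lift of this class to $\widetilde{KU}^2(\C P^2)$, however, cannot exist. By the paper's alternative computation, $\sigma(x)-x=-\beta x^2$ on $\C P^2$. Any other lift $x+c\beta x^2$ with $c\in\Z$ has defect $-(1+2c)\beta x^2\neq 0$. This non-invariance is exactly what $d_2(x)=\eta x^2$ records, so an invariant orientation would force $d_2(x)=0$. Consistently, your candidate $\beta^{-1}(L-\bar L)/2$ is not an integral class, since $L-\bar L=2\beta x-\beta^2x^2+\cdots$.
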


\vspace*{0.2cm}

\begin{exam}
There is an alternate method to make the same computation. In this case we may directly compute the $C_2$-action on  $KU^{*}(\C P^{m})$. Write $C_2=\langle \sigma\mid \sigma^2= 1\rangle$. Then $\sigma$ sends $L$ to its conjugate bundle $\bar{L}$. Therefore, we have
	%\begin{align*}
	\[	\sigma (x) =(-\beta^{-1})(\bar{L}-1)	%&=\beta^{-1}(1-\frac{1}{L})\\
		=\beta^{-1}(\frac{L-1}{L})
		=x(\frac{1}{L-1+1})
	%	&=x[1-(L-1)+(L-1^2)\cdots]\\
		=\sum_{i\geq 0}(-1)^{i}(\beta)^{i}(x)^{i+1}. \]
	%\end{align*}
The differentials are computed by applying $\delta$ given by the formula $\delta(\tau)=\sigma( \tau) - \tau$ and looking at the first skeleton where the image is non-trivial. Doing this process for $x$ yields 
\[ \delta(x) = \sum_{i\geq 1} (-1)^i 	\beta^i x^{i+1},\]
and so it follows that $d_2(x)=\eta x^2$.
\end{exam}

\vspace*{0.2cm}

	Now we use Proposition \ref{diffl-fss-ku} to determine the $E_2$-page of homotopy fixed point spectral sequence. 
	Note that under $d_2$ all  $\eta$-multiples of $x^k$ either belong to the image of $d_2$ (if $k$ is even) or map non-trivially under $d_2$ (if $k$ is odd), except for the case $k=m$ and $m$ is odd. Thus, only $t^{ s} x^{2j}$ and $2 t^{ s} x^{2j-1}$ survives to $\EE_3$ among multiples of $x^k$ for $0<k<m$.  In summary the $\EE_3$-page of the filtration sequence has the following form : 
	\[\EE_{3}= \begin{cases}  \Z[t^\pm,\eta]/(2\eta) \oplus \Z[2t^{ s}x^{2k-1}] \oplus \Z[t^{ s} x^{2k}] & k\leq \frac{m}{2}, \mbox{ if } m \mbox{ is even}\\
 \Z[t^\pm,\eta]/(2\eta)\{1,x^m\} \oplus \Z[2t^{ s}x^{2k-1}] \oplus \Z[t^{ s} x^{2k}] & k\leq \frac{m}{2}, \mbox{ if } m \mbox{ is odd}. \end{cases}  \] 
Note for degree reasons, there are no further differentials if $m$ is even as other than multiples of $x^0$, the non-trivial elements lie in the $s=0$ line. If $m$ is odd, by degree reasons, the only possible differentials must be of the form $d(x^j)=\eta^r t^s x^m$ if $j$ is even or $d(2x^j)= \eta^r t^s x^m$ if $j$ is odd. 
In this case, let $i: \mathbb{C}P^m \rightarrow \mathbb{C}P^{m+1}$ denote the inclusion,  and note that $i$ induces a map between fitration spectral sequences,
	\[\EE_r(\C P^{m+1}) \rightarrow \EE_r(\C P^m).\] 
The map takes the class $x$ to $x$, and also preserves the multiplicative structure. Further,	from the computation of the differentials in the even case,  we see that the classes $x^j$ ($j$ even) and $2x_j$ ($j$ odd) are images of permanent cycles. Therefore, even in this case, there are no more non-trivial differentials.  
In summary, we have the following result.	
\begin{prop} \label{einffss-ko}
		The $E_{\infty}$-page of the filtration spectral sequence for the $C_2$-action on the $K$-theory of  $\C P^m$ is given by 
	\[\EE_\infty (K(\C P^m)) = \begin{cases}  \Z[t^\pm,\eta]/(2\eta) \oplus \Z[2t^{ s}x^{2k-1}] \oplus \Z[t^{ s} x^{2k}] & k\leq \frac{m}{2}, \mbox{ if } m \mbox{ is even}\\
 \Z[t^\pm,\eta]/(2\eta)\{1,x^m\} \oplus \Z[2t^{ s}x^{2k-1}] \oplus \Z[t^{ s} x^{2k}] & k\leq \frac{m}{2}, \mbox{ if } m \mbox{ is odd}. \end{cases}  \] 
	\end{prop}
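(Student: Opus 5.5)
The plan is to run the filtration spectral sequence of Theorem \ref{FSS} for the $C_2$-action on $KU^\ast(\C P^m)$ with $\EE_2 \cong \Z[t^{\pm},\eta,x]/(2\eta,x^{m+1})$, and to settle the differentials in two stages: $d_2$ by Proposition \ref{diffl-fss-ku}, and everything beyond $d_2$ by degree arguments plus naturality in $m$.

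First compute $\EE_3$ as the homology of $d_2$. By Proposition \ref{diffl-fss-ku} and the Leibniz rule, $d_2(t^s\eta^j x^k)=t^s\eta^{j+1}x^{k+1}$ for $k$ odd and is zero for $k$ even. On the $s=0$ line, $t^sx^{2k}$ is a cycle, while $t^sx^{2k-1}$ is not a cycle, although $2t^sx^{2k-1}$ is, since $2\eta=0$. In positive filtration $s\geq 1$, with $0<k<m$, each $\eta^jt^sx^k$ ($j\geq1$) is either hit, when $k$ is even, because it equals $d_2(\eta^{j-1}t^sx^{k-1})$, or supports a nonzero $d_2$, when $k$ is odd, since $\eta^{j+1}x^{k+1}\neq0$ for $k+1\leq m$. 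The exceptions are the column $k=0$, where $\eta^j t^s$ is a cycle and is not hit because no $x^{-1}$ exists, and, for $m$ odd, the column $k=m$. There $\eta^j x^m$ is a cycle, since $x^{m+1}=0$, and it is not hit because $m-1$ is even, so $d_2$ vanishes on column $m-1$. This gives exactly the stated $\EE_3$.

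Next show that $\EE_3=\EE_\infty$. The differentials $d_r$ raise $s$ by one. Apart from the columns $k=0$ and, for $m$ odd, $k=m$, every class sits on the line $s=0$ and can only be hit from $s=-1$, which is impossible. Classes in column $0$ cannot support differentials into positive filtration $r$, because $1,t^{\pm1},\eta$ come from the point: they are images of permanent classes under $\C P^m\to \pt$, or equivalently the unit is a permanent cycle and multiplicativity applies. For $m$ even, the remaining targets of a differential leaving $s=0$ would have to lie in positive filtration $s=1$ of a column $k>0$, and there are none, so the spectral sequence collapses. For $m$ odd, the only possible differentials are $d_r(t^sx^{2j})$ or $d_r(2t^sx^{2j-1})$ landing in $\eta^{\ast}t^{\ast}x^m$. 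To exclude them, use the restriction $\C P^{m+1}\to\C P^m$. It induces a map of multiplicative filtration spectral sequences sending $x\mapsto x$. Since $m+1$ is even, the classes $t^sx^{2j}$ and $2t^sx^{2j-1}$ with $2j, 2j-1\leq m$ are permanent cycles in $\EE(\C P^{m+1})$, so their images are permanent cycles in $\EE(\C P^m)$, and the collapse follows.

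The main obstacle is making the naturality argument airtight. One has to check that the restriction map is compatible on every page, which holds because it comes from a map of filtered DGAs $C^\ast(C_2;KU^\ast(\C P^{m+1}))\to C^\ast(C_2;KU^\ast(\C P^m))$. One also has to check that a permanent cycle maps to a permanent cycle representing the same named class. The remaining work is bookkeeping of tridegrees, in particular verifying that no $\eta$-tower in column $0$ is disturbed.
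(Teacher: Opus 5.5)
Your proposal is correct and follows the paper's argument. You compute $d_2$ from Proposition \ref{diffl-fss-ku} and the Leibniz rule. For $m$ even, the sequence collapses because everything outside column $0$ lies on the $s=0$ line. For $m$ odd, restriction along $\C P^m\to\C P^{m+1}$ shows the only candidate sources $t^sx^{2j}$, $2t^sx^{2j-1}$ are images of permanent cycles.

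You also justify that the column-$0$ classes are permanent, using the map from the point, i.e.\ the $H^\ast(C_2;KU^\ast)$-module structure; the unit being a cycle alone would not suffice. The paper leaves this step implicit.
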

	\end{mysubsection}

\vspace*{0.2cm}
	
	\begin{mysubsection}{Calculation of $KO^{*}(\mathbb{C}P^{m})$}
	The calculation of filtration spectral sequence in Proposition \ref{einffss-ko} provides us the $E_2$-page of the homotopy fixed point spectral sequence. In this section we will compute all the differentials of the homotopy fixed point spectral sequence finally computing $KO^{*}(\C P^m)$. We use the following notation 
\[ y \in E_2^{0,-4} \mbox{ denotes the class } x^2 \mbox{ in } \EE_\infty, \mbox{ and satisfies } y^{\lfloor \frac{m}{2}\rfloor+1}=0.\]
\[ u \in E_2^{0,-2} \mbox{ denotes the class } 2x \mbox{ in } \EE_\infty., \mbox{ and satisfies } u^2=4y.\]
\[ z \in E_2^{0,-2m} \mbox{ denotes the class } x^m \mbox{ in } \EE_\infty \mbox{ if } m \mbox{ is odd}, \mbox{ and satisfies } 2z= y^{\lfloor \frac{m}{2}\rfloor}u.\]
Now we have the following formula for the $E_2$-page of the homotopy fixed point spectral sequence.
\begin{prop} \label{e2ss-ko}
		The $E_{2}$-page of the homotopy fixed point  spectral sequence for the $C_2$-action on the $K$-theory of  $\C P^m$ is given by 
	\[E_2 (K(\C P^m)) = \begin{cases}  \frac{\Z[t^\pm,\eta,y,u]}{(2\eta,y^{\lfloor \frac{m}{2}\rfloor+1},\eta y, \eta u, u^2-4y)} , &\mbox{ if } m \mbox{ is even}\\
\frac{ \Z[t^\pm,\eta,y,u,z]}{(2\eta,y^{\lfloor \frac{m}{2}\rfloor+1},\eta y, \eta u, u^2-4y, 2z-y^{\lfloor\frac{m}{2}\rfloor} u)}, &\mbox{ if } m \mbox{ is odd}. \end{cases}  \] 
	\end{prop}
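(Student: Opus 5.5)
The plan is to read off the ring $E_2(K(\C P^m))=H^\ast(C_2;KU^\ast(\C P^m))$ from the $\EE_\infty$-page in Proposition \ref{einffss-ko}. The additive structure is already known there. What remains is to settle the extension problems: both the additive ones (whether the filtration splits) and the multiplicative ones.

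\textbf{Generators.} The classes $1$, $t^{\pm1}$ and $\eta$ come from the bottom cell via $\C P^m\to\pt$, so they generate a copy of $H^\ast(C_2;\pi_\ast KU)=\Z[t^{\pm},\eta]/(2\eta)$ by \eqref{gpcohku}. Next I produce honest $C_2$-invariant lifts in $KU^\ast(\C P^m)=KU^\ast[x]/(x^{m+1})$:
\begin{itemize}
\item For $y$, take $x\sigma(x)$, which is invariant and congruent to $x^2$ modulo higher filtration.
\item For $u$, take $x+\sigma(x)$, which is invariant and equals $2x$ modulo higher filtration. Using $\sigma(x)=x-\beta x^2+\cdots$, its leading term is $2x$ up to corrections divisible by $\beta$.
\item For $m$ odd, $z=x^m$ is invariant because it is the top class.
\end{itemize}
Since $H^0=$ invariants and all other $\EE_\infty$ classes with $r>0$ lie on the $s=0$ line, these lifts together with the bottom-cell classes generate.

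\textbf{Relations.}
\begin{itemize}
\item $2\eta=0$ is inherited from \eqref{gpcohku}.
\item $\eta y=\eta u=0$: the classes $\eta y$ and $\eta u$ lie in $H^1$ with positive cellular filtration. Proposition \ref{einffss-ko} shows $\EE_\infty^{1,\ast,r}=0$ for $r>0$, except possibly the top class $\eta^j x^m$ when $m$ is odd. In that exceptional case I check directly that $\eta u$ and $\eta y$ are $d_2$-boundaries or otherwise vanish. Concretely, in the group cochain complex the product $\eta\cdot(x+\sigma x)$ is the coboundary of $\beta x$ up to sign.
\item $u^2=4y$: the difference $u^2-4y=(x-\sigma x)^2$ is an invariant class. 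Since $x-\sigma x=\beta x^2(\ldots)$, this difference is divisible by $\beta^2$. Comparing filtration and degree in $\EE_\infty$, and using the invertible class $t=\beta^2$, it has to be expressible in terms of higher powers of $y$. So $u^2=4y$ holds modulo higher filtration. To get exact equality, I would redefine $y:=u^2/4$ directly, and check that $u^2/4=\frac14(x+\sigma x)^2$ is integral and invariant with leading term $x^2$. That means choosing $y=\bigl((x+\sigma x)/2\bigr)^2$ in $KU^\ast\otimes\Z[1/2]$ and verifying integrality using the explicit formula for $\sigma(x)$.
\item $y^{\lfloor m/2\rfloor+1}=0$ holds by filtration, since the cell exceeds $2m$.
\item For $m$ odd, $2z=y^{\lfloor m/2\rfloor}u$ holds because both sides are invariant classes supported on the top cell with leading coefficient $2x^m$, and the top cell carries nothing higher.
\end{itemize}

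\textbf{Completeness.} To show there are no further relations, I compare ranks and torsion in each tridegree against $\EE_\infty$.

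\textbf{Main obstacle.} The hardest step is the exact multiplicative extension $u^2=4y$ (with compatible choices of $u$ and $y$), and correspondingly $2z=y^{\lfloor m/2\rfloor}u$. My first attempt is to fix $y:=u^2/4$ and verify integrality through the formula for $\sigma(x)$.
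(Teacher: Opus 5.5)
The step that fails is the one you flag as the main obstacle, and it cannot be repaired, because the relation $u^2=4y$ does not hold exactly. With your lifts and $\sigma(x)=x/(1+\beta x)$ one computes
\[
y=x\sigma(x)=\frac{x^2}{1+\beta x},\qquad u=x+\sigma(x)=\frac{x(2+\beta x)}{1+\beta x},\qquad u^2=4y+t\,y^2 .
\]
So your correction term $(x-\sigma x)^2$ is precisely $ty^2$. Its leading term $tx^4$ generates a $\Z$-summand of $\EE_\infty$ once $m\geq 4$. Hence $u^2/4=y+\tfrac14 ty^2$ is not integral, and the redefinition $y:=u^2/4$ is unavailable.

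No other choice of lifts helps either. By comparing leading terms with $\EE_\infty$, every class in the bidegree of $u$ has the form $u\,h(ty)+c\,t^{\lfloor m/2\rfloor}z$ with $h\in\Z[T]$, and $c=0$ for $m$ even. The $z$-term does not contribute to squares, since $uz=z^2=0$ by filtration. The classes $t^jy^{j+1}$, $0\le j<\lfloor m/2\rfloor$, form a $\Z$-basis of the positive-filtration part of the bidegree of $y$. Hence $(u\,h(ty))^2=y(4+ty)h(ty)^2$ has coefficient $h(0)^2+8h(0)h'(0)$ on $ty^2$, which is odd for any lift of $2x$ (that is, $h(0)=1$).

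So no lift of $2x$ squares even into $2H^0(C_2;KU^\ast(\C P^m))$. The relation $u^2=4y$ holds only in $\EE_\infty$, i.e. modulo higher filtration. That is also all the paper's own argument gives, since it simply reads the ring off Proposition \ref{einffss-ko}. The correct extension is $u^2=4y+ty^2$.

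Your other verifications are fine. In particular, $2z=y^{\lfloor m/2\rfloor}u$ holds exactly for these lifts, and $\eta u=0$ because $\beta(x+\sigma x)=(1-\sigma)(\beta x)$.

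Your completeness step would also fail as planned, because the displayed presentations are too large. For $m$ even they leave $y^{\lfloor m/2\rfloor}u\neq 0$. For $m$ odd they leave $z^2$, $zy$, $zu\neq 0$; for instance $zu$ is a nonzero $2$-torsion class there, and none of these follows from the listed relations. All of these classes vanish in $H^0(C_2;KU^\ast(\C P^m))$, since they lie in cellular filtration $>2m$. A rank comparison with $\EE_\infty$ in each bidegree will therefore force you to add these relations and to replace $u^2=4y$ by $u^2=4y+ty^2$. Only for $m\leq 3$, where $y^2=0$, is $u^2=4y$ itself correct.
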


\vspace*{0.2cm}

As in the homotopy fixed point spectral sequence for $K$-theory, we have $d_3(t)= \eta^3$. This means $2t$ and $t^2$ survives to $E_4$. Let $\alpha$ denote the class $2t$. Therefore, we have the following formula for the $E_4$-page
\[\scalebox{0.9}{$E_4(K(\C P^m)) \cong 	 \begin{cases}  \frac{\Z[t^{\pm 2},\alpha, \eta]}{(\alpha^2 - 4 t^2, \alpha\eta, 2\eta, \eta^3)} \oplus \Z\{t^s y^k, t^sy^j u\} \mbox{ for } 1\leq k \leq \lfloor\frac{m}{2}\rfloor, ~ 1\leq j <\lfloor\frac{m}{2}\rfloor,&\mbox{ if } m \mbox{ is even}\\
 \frac{\Z[t^{\pm 2},\alpha,\eta]}{(\alpha^2-4t^2, \alpha\eta, \eta^3, 2\eta)}\{1,z\} \oplus \Z\{t^s y^k, t^sy^j u\} \mbox{ for }  1\leq k \leq \lfloor\frac{m}{2}\rfloor,~1\leq j < \lfloor\frac{m}{2}\rfloor,  &\mbox{ if } m \mbox{ is odd}. \end{cases}$}  \] 
Now all the non-trivial classes in this spectral sequence  lies in  $E_4^{\leq 2,\ast}$. This means that there are no further non-zero differentials and $E_4=E_\infty$. We may now read off the computation of $KO^\ast(\C P^m)$ from the $E_\infty$-page. 
\begin{thm}\label{compko}
The real $K$-theory of $\C P^m$ is given by 
\[\scalebox{0.9}{$KO^\ast(\C P^m)) \cong 	 \begin{cases}  \frac{\Z[t^{\pm 2},\alpha, \eta]}{(\alpha^2 - 4 t^2,\alpha\eta, 2\eta, \eta^3)} \oplus \Z\{t^s y^k, t^sy^j u\} \mbox{ for } 1\leq k \leq \lfloor\frac{m}{2}\rfloor, ~ 1\leq j <\lfloor\frac{m}{2}\rfloor,&\mbox{ if } m \mbox{ is even}\\
 \frac{\Z[t^{\pm 2},\alpha,\eta]}{(\alpha^2-4t^2,\alpha\eta, \eta^3, 2\eta)}\{1,z\} \oplus \Z\{t^s y^k, t^sy^j u\} \mbox{ for }  1\leq k \leq \lfloor\frac{m}{2}\rfloor,~1\leq j < \lfloor\frac{m}{2}\rfloor,  &\mbox{ if } m \mbox{ is odd}. \end{cases}$}  \] 
The ring $\frac{\Z[t^{\pm 2},\alpha, \eta]}{(\alpha^2 - 4 t^2,\alpha\eta, 2\eta, \eta^3)} \cong KO^\ast(\pt)$, and for $1\leq k \leq \lfloor \frac{m}{2}\rfloor$, $\oplus_{s\in \Z}\Z\{ t^s y^k, t^l u y^{k-1}\}$ is a $KO^\ast(\pt)$-submodule isomorphic to $K^\ast(\pt)$. 
\end{thm}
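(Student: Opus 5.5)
The plan is to read off $KO^\ast(\C P^m)$ from the $E_4=E_\infty$-page of the homotopy fixed point spectral sequence computed just above. That page comes from the $E_2$-page of Proposition \ref{e2ss-ko}, which in turn was obtained from the $E_\infty$-page of the filtration spectral sequence (Proposition \ref{einffss-ko}). The work falls into four steps: collapse, the additive identification, the multiplicative identification, and the identification of the summands.

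\textbf{Step 1 (collapse).} I first verify that $d_3$ is the only non-trivial differential. The restriction map $KO^\ast(\C P^m)\to KO^\ast(\pt)$ along the basepoint is split by the projection $\C P^m\to \pt$. The $KO$-differentials on the summand $\Z[t^\pm,\eta]/(2\eta)$ are therefore those of Figure \ref{kohtpyfixpt}, namely $d_3(t)=\eta^3$. For $m$ odd the same pattern holds on $\Z[t^\pm,\eta]/(2\eta)\{z\}$, since $z$ is a permanent cycle: it comes from the top cell, and the cofibre sequence $\C P^{m-1}\to\C P^m\to S^{2m}$ splits $KO$-locally in that summand because $\eta z=\eta x^m$ survives.

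All remaining classes $t^sy^k$ and $t^sy^ju$ lie on the $s=0$ line. Any differential on them would land in filtration $\geq 3$, and the $\eta$-towers have been killed by $\eta y=\eta u=0$. Hence they are permanent cycles. After $d_3$, everything lives in filtrations $0$, $1$ and $2$, so $E_4=E_\infty$. Conditional convergence together with this horizontal vanishing line gives strong convergence.

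\textbf{Step 2 (additive identification).} I then resolve extensions. In the $\Z[t^{\pm2},\alpha,\eta]$-part the extensions are exactly those of $KO_\ast$, which are already known. In the free part there are no hidden extensions: the classes are torsion-free and sit in filtration $0$, while the torsion lies only in the $\eta$-multiples of $1$ and $z$.

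\textbf{Step 3 (ring structure).} The relations $u^2=4y$ and $2z=y^{\lfloor m/2\rfloor}u$ hold already in the $E_2$-page. They lift to $KO^\ast(\C P^m)$ because the map to $KU^\ast(\C P^m)=\Z[\beta^\pm][x]/(x^{m+1})$ is injective on the torsion-free part. Concretely, $y\mapsto x^2t$-twisted appropriately and $u\mapsto 2x\beta$. So relations among torsion-free classes can be checked in $KU$, where they are the evident identities. Products involving $\eta$ with $y$ or $u$ vanish for filtration reasons, since there is nothing in positive filtration in those degrees.

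\textbf{Step 4 (identification of the summands).} Finally, for each $k$ the $KO^\ast(\pt)$-submodule spanned by $t^sy^k$ and $t^suy^{k-1}$ is identified with $K^\ast(\pt)$. The point is that it is torsion-free of rank one in every even degree and zero in odd degrees. Moreover, $\alpha=2t$ acts by multiplication by $2t$, which sends $t^s y^k$ to $2t^{s+1}y^k$, matching $K_\ast$ as a $KO_\ast$-module. This is consistent with the counting from Wood's theorem.

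The main obstacle is Step 2 for $m$ odd near the top cell. There one must rule out an exotic extension between $z$ and $y^{\lfloor m/2\rfloor}u$, and I expect this to be handled by comparing with $\C P^{m+1}$ via the inclusion, as in the filtration spectral sequence argument.
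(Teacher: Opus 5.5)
This is essentially the paper's proof: it applies $d_3(t)=\eta^3$ with multiplicativity to the $E_2$-page of Proposition~\ref{e2ss-ko}, notes that $E_4$ is concentrated in filtrations $\leq 2$ so $E_4=E_\infty$, and reads off the answer; your Steps 2--4 only make explicit the bookkeeping the paper leaves implicit. Two small points: for $m$ odd, degree counting alone does not rule out $d_3(y^ju)=\eta^3t^{(m-3)/2-j}z$, so the permanence of $y^ju$ should be argued from $\eta y^ju=0$ together with the injectivity of $\eta$ on the positive-filtration part of $E_3=E_2$; and the extension you flag between $z$ and $y^{\lfloor m/2\rfloor}u$ is not a real issue, since $2z=y^{\lfloor m/2\rfloor}u$ already holds in filtration $0$ and the reduced $E_\infty$-page has nothing in positive filtration in degree $-2m$.
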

\end{mysubsection}

	\vspace*{0.2cm}
	
	\section{Higher Real-K theory of complex projective spaces at the prime $3$} \label{higherkth3}

Our main computations involve the $E$-cohomology of $\C P^m$ where $E$ is the homotopy fixed points of a finite group acting on the Morava $E$-theory spectrum. In this section, we compute this at the prime $3$ which is slightly easier than the computation at $p\geq 5$. 

\begin{mysubsection}{The homotopy groups of $EO_{p-1}$} \label{eop-1} Recall that the coefficient ring for the Morava $E$-theory $E_n$ is  
\[{E_n}_\ast \cong \W(\F_{p^n})[[u_1,\cdots, u_{n-1}]][u^{\pm}], \quad |u_i|=0,\quad |u|=2,\]
where $\W(\F_{p^n})$ is the ring of $p$-typical Witt vectors in $\F_{p^n}$ \cite{Ser79}. This is an $\E_\infty$-ring spectrum \cite{Rez98, GH04}, and possesses an action of the (extended) Morava stabilizer group $\G_n = \Sp_n\rtimes Gal(\F_{p^n}/\F_p)$ by $\E_\infty$-maps. We restrict our attention to $n=p-1$ (and $p\geq 3$), and the group $G$ as the maximal finite subgroup of $\G_n$ containing an element of order $p$. One notes that $G$ is of order $p.(p-1)^3$ (see \cite{Hew95} for details on finite subgroups of $\Sp_n$), and it fits into an extension (see \cite[Proposition 20]{Hen07}) 
\[1\to C_p\rtimes C_{p-1}^2 \to G \to Gal(\F_{p^{p-1}}/\F_p)\to 1.\]
From now on, we refer to $EO_{p-1}$ as the homotopy fixed point spectrum $E_{p-1}^{hG}$, whose homotopy groups are computed via the homotopy fixed point spectral sequence. The $E_2$-term was computed by Hopkins and Miller (in unpublished literature). The formula for the $G$-action on ${E_{p-1}}_\ast$ may be understood from \cite{DH04}. Published accounts of computations of the $E_2$-page  are available in \cite{Nav99, Nav10} (see also \cite{HMS17}, and \cite{GHMR05} for $p=3$). 
\begin{theorem} (Hopkins-Miller) \label{hmcomp}
There is an exact sequence
\[ {E_{p-1}}_\ast \xrightarrow{\tr} H^\ast(G;{E_{p-1}}_\ast) \to \F_p[\alpha,\beta,\Delta^{\pm 1}]/(\alpha^2),\]
with $|\Delta|=(0,2pn^2)$, $|\alpha|=(1, 2p-3)$, $|\beta|= (2,2p(p-1)-2)$.
\end{theorem}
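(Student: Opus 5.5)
We compute $H^\ast(G;{E_{p-1}}_\ast)$ by restricting to the normal $p$-Sylow subgroup $C_p$ and taking invariants under the quotient $F = G/C_p$, which has order $(p-1)^3$ prime to $p$. The first reduction is the Lyndon--Hochschild--Serre spectral sequence for $1\to C_p\to G\to F\to 1$. Since $|F|$ is invertible in ${E_{p-1}}_\ast$ (a $\Z_p$-algebra), this collapses and gives
\[H^\ast(G;{E_{p-1}}_\ast)\cong H^\ast(C_p;{E_{p-1}}_\ast)^{F}.\]
So it suffices to understand $H^\ast(C_p;{E_{p-1}}_\ast)$ as an $F$-module.

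Next we need an explicit description of the $C_p$-action on ${E_{p-1}}_\ast$, which we take from \cite{DH04} (see also \cite{Nav99, Hen07}). After a change of coordinates, ${E_{p-1}}_0$ is a completion of $\W[\zeta]$-modules built from the regular representation $\W[C_p]$. Concretely, $u^{-1}$ generates, over $\W(\F_{p^{n}})$, a $C_p$-module isomorphic to $\W[\zeta_p]$, the augmentation-ideal type module of rank $p-1$, with $u_i$ related to $\zeta$-conjugates. Using this, we decompose ${E_{p-1}}_\ast$ as a $C_p$-module into a sum of induced (free) modules and copies of a single non-free indecomposable rank-$(p-1)$ lattice and its twists. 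On free summands the positive-degree cohomology vanishes, and $H^0$ is the image of the transfer/norm $\tr$. This accounts for the kernel of the second map and gives the exactness statement.

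On the non-free part we use the $C_p$-cohomology of $\Z_p[\zeta_p]$. Its cohomology is $\F_p$ in odd degrees and $0$ in positive even degrees, while its twist by the trivial module contributes $\F_p$ in even degrees. Combining these with a periodicity class gives, modulo transfers, a ring of the form $\F_p[a,b,\delta^{\pm1}]/(a^2)$, where $b$ is the periodicity generator of $H^2(C_p;\Z_p)$ times an appropriate power of $u$, $a$ is an odd class, and $\delta$ is a norm-type invariant unit. Taking $F$-invariants then cuts the periodicity down. The $C_{p-1}^2$ part acts on $u$ through characters and on $H^\ast(C_p;-)$ by scaling the generator, so the surviving monomials are those of total character zero. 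Checking which powers of $u$ and which cohomology classes are invariant yields $\alpha$ in bidegree $(1,2p-3)$, $\beta$ in $(2,2p(p-1)-2)$, and $\Delta$ as the smallest invariant unit, of internal degree $2p(p-1)^2=2pn^2$. Galois invariance can be arranged by choosing $\delta$ Galois-fixed.

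The main obstacle is the middle step: producing the explicit $C_p$-module structure on ${E_{p-1}}_\ast$ and verifying that, modulo free summands, it is the expected twisted $\W[\zeta_p]$-lattice in each degree. This is where the formal group of height $p-1$ with its automorphism of order $p$ (the Hopkins--Miller / Devinatz--Hopkins formulas) enters. I would first carry this out for the reduction mod $(p,u_1,\dots,u_{n-1})$ and then lift by a Nakayama-type argument. The final invariant bookkeeping is routine but must be done carefully to pin down the exact degree of $\Delta$.
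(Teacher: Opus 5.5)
The paper gives no proof of this statement. It is quoted from Hopkins--Miller, with \cite{Nav99, Nav10} (and \cite{GHMR05} at $p=3$) as published accounts, so there is no in-paper argument to compare against. Your outer frame matches those accounts: $H^\ast(G;{E_{p-1}}_\ast)\cong H^\ast(C_p;{E_{p-1}}_\ast)^F$ because $|F|$ is prime to $p$, followed by an invariant count. But the whole content of the theorem lies in the step you defer, and the route you propose for it cannot work.

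Write $E_t=\pi_tE_{p-1}$ and $\mathfrak m=(p,u_1,\dots,u_{n-1})$. The action of $\mathbb S_n$ on $E_\ast/\mathfrak m E_\ast=\F_{p^n}[u^{\pm1}]$ factors through the reduction $\mathbb S_n\to\F_{p^n}^\times$. Its kernel is pro-$p$ and so contains $C_p$. Hence $C_p$ acts trivially there, and each $E_t/\mathfrak mE_t$ is a one-dimensional trivial module. Consequently $H^\ast(C_p;E_\ast/\mathfrak m)\cong\F_{p^n}[u^{\pm1}]\otimes H^\ast(C_p;\F_p)$ is nonzero in every bidegree $(s,t)$ with $t$ even. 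By contrast, $H^s(C_p;E_t)$ for $s>0$ vanishes unless $t\equiv 0$ or $-2\pmod{2p}$. The free summands, which are what make the answer small, are invisible after this reduction. So no Nakayama-type lift from it can produce the decomposition.

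What is needed is the Hopkins--Miller description of ${E_{p-1}}_\ast$ as a $C_p$-algebra. You state half of it: in suitable coordinates the span $\rho$ of $u^{-1},u^{-1}u_1,\dots,u^{-1}u_{n-1}$ in $E_{-2}$ is a $C_p$-stable copy of $\W[\zeta_p]$. The other half is that ${E_{p-1}}_\ast$ is a completion of $S_{\W}(\rho)[N^{-1}]$, with $N\in E_{-2p}$ the norm of $u^{-1}$. The decomposition must then be derived degree by degree, in three steps.
\begin{enumerate}
\item $S^k(\rho)$ has non-free part $\W$, $\rho$, or $0$ according as $k\equiv 0$, $k\equiv 1$, or neither modulo $p$. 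It is the trivial lattice $\W$, not a ``twist'' of $\W[\zeta_p]$, that produces the classes with $s$ even.
\item Multiplication by $N$ is an isomorphism on these non-free parts, so Tate cohomology passes through the colimit defining the localization.
\item A limit argument handles the completion.
\end{enumerate}
This yields $\hat H^\ast(C_p;{E_{p-1}}_\ast)\cong\F_{p^n}[a,b^{\pm1},N^{\pm1}]/(a^2)$. Here $a\in\hat H^1(C_p;E_{-2})$ comes from $\rho$, and $b\in\hat H^2(C_p;E_0)$ is the image of a generator of $\hat H^2(C_p;\Z_p)$.

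Two corrections to your sketch at this stage. The coefficients are $\F_{p^n}$, not $\F_p$. And ``$b$ times a power of $u$'' is not defined, since $u$ is not invariant; the norm $N$ plays that role.

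Only then is the invariant bookkeeping meaningful: up to units, $\alpha=aN^{-1}$, $\beta=bN^{-(p-1)}$ and $\Delta=N^{-(p-1)^2}$. The degree of $\Delta$ comes from the prime-to-$p$ part of $G\cap\mathbb S_n$. This part embeds in $\F_{p^n}^\times$, so it is cyclic of order $(p-1)^2$, and it acts on $N$ through a character of order $(p-1)^2$. If you read $C_{p-1}^2$ literally as $C_{p-1}\times C_{p-1}$, a group of exponent $p-1$, then $N^{p-1}$ would already be invariant and you would get the wrong degree for $\Delta$. Galois descent finally replaces $\F_{p^n}$ by $\F_p$.
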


\vspace*{0.2cm}

There is a map from the the $E_2$-page of the Adams Novikov spectral sequence to homotopy fixed point spectral sequence, 
\[{{\Ext}}^{s,t}_{BP_*BP}(BP_{*},BP^{*}(X)) \rightarrow H^{s}(G;({E_{p-1}})^{t}(X)).\]  
When $X$ is the sphere spectrum this map sends $\alpha_{1}$  to $\alpha$, $\beta_1$  to $\beta$ and lastly $\beta_{p/p}$  to $\Delta \beta$ (see \cite{Rav78} or \cite[Chapter 6, \S 4]{Rav86}. It follows that $\alpha, \beta$ are permanent cycles as $\alpha_{1},\beta_1$ are so in the Adams Novikov spectral sequence. The differentials in the homotopy fixed point spectral sequence may then be computed as (see \cite[Chapter 7]{Nav99})
\begin{myeq}\label{diffhfpsseo}
d_{2p-1}(\Delta)= \alpha \beta^{p-1}, \quad d_{2(p-1)^2+1}(\Delta^{p-1} \alpha)= \beta^{(p-1)^2 +1}. 
\end{myeq}
The formula \eqref{diffhfpsseo} together with the fact that all the differentials are $0$ on $M_p= \Im(\tr)$, implies that there are no further differentials due to degree reasons. This gives us the $E_\infty$-page for the homotopy fixed point spectral sequence \cite[Chapter 7]{Nav99}. 
\begin{myeq}\label{einftyhfpsseo}
E_\infty^{\even, \ast} \cong \frac{\F_p[\Delta^{\pm p}][\beta]}{(\beta^{(p-1)^2+1})} \pmod{M_p}, \quad E_\infty^{\odd, \ast} \cong \frac{\alpha \F_p[\Delta^{\pm p}]\{1,\Delta,\dots, \Delta^{p-2}\}[\beta]}{(\beta^{p-1})}.
\end{myeq}
In the remainder of the section we restrict to $p=3$, so that in this case 
\[|\alpha|=(1,3), \quad |\beta|=(2,10),\quad |\Delta|=(0,24),\] 
 the differentials are given by (see Figure \ref{eo2htpyfixpt})
\begin{myeq}\label{dhfpsseo2}
d_5(\Delta)= \alpha \beta^2, \quad d_9(\Delta^2 \alpha)= \beta^5,
\end{myeq}
and the $E_\infty$-page is given by
 \begin{myeq} \label{eo2htpygp}
\begin{aligned}
&E_\infty^{\even, \ast} \cong \frac{\F_3[\Delta^{\pm 3}][\beta]}{(\beta^5)} \cong \F_3[\Delta^{\pm 3}]\{1,\beta, \beta^2,\beta^3,\beta^4\} \pmod{M_3}, \\
& E_\infty^{\odd, \ast} \cong \frac{\F_3[\Delta^{\pm 3}]\{\alpha,b\}[\beta]}{(\beta^2)} \cong \F_3[\Delta^{\pm 3}]\{\alpha, b, \alpha \beta, b \beta\},
\end{aligned}
\end{myeq}
where $b$ denotes the class $[\alpha\Delta]$. There is a multiplicative extension $\alpha b = \beta^3$. This may be deduced from the Toda bracket formula $\beta=\langle \alpha, \alpha, \alpha \rangle$ via the juggling formulas \cite[A1.4.6]{Rav86}.
\[\alpha b = \alpha \langle \alpha, \alpha, \beta^2 \rangle  =  \langle \alpha, \alpha, \alpha \rangle \beta^2 = \beta^3. \]
\begin{figure}
\centering
\begin{tikzpicture}
		\begin{scriptsize}
			% Define grid parameters
			\def\gridsize{27} % Number of squares per side
			\def\squaresize{0.35} % Size of each square in cm
			
			% Calculate total size
			\pgfmathsetmacro\totalsize{\gridsize*\squaresize}
                    
			\pgfmathsetmacro\halfsize{\squaresize/2}
			% Draw the grid
			\draw[step=\squaresize cm,gray,very thin] (-6,0) grid (\totalsize,14*\squaresize);
			
			% Add coordinate labels (optional)
%			\foreach \x in {-14,...,30}
%			\node at (\x*\squaresize-\halfsize,-0.3) {\tiny \x};
%		%	\foreach \y in {0,...,20}
			%\node at (-6.3,\y*\squaresize+\halfsize) {\y};
\node[draw, rectangle, minimum size=6pt, inner sep=0pt] at (-4.2,-0.45) { };
\node[] at ( -3,-0.45)[font=\tiny]{$= \mbox{ a copy of } \Z_3^{\wedge}$};		

\fill(-4.2,-0.75)circle(1.2pt) { };
\node[] at ( -3,-0.75)[font=\tiny]{$= \mbox{ a copy of } \F_3$};		

%\fill(-1.17,-0.5)circle(1.2pt) { };
%\fill(-.65,-0.15)circle(1.2pt) { };
\draw[-,black] (-1.17,-.5) -- (-.65,-.15); 
\node[] at ( 0.8,-0.45)[font=\tiny]{$= \mbox{ multiplication by } \alpha$};		

\draw[-,brown] (3.05,-.85) -- (4.8,-.15); 
\node[] at ( 6,-0.45)[font=\tiny]{$= \mbox{ multiplication by } \beta$};	
			\foreach \x in {0} {
				\node[draw, rectangle, minimum size=6pt, inner sep=0pt] at (\x*\squaresize-\halfsize,0.15) { };
\node[] at (\x*\squaresize -\halfsize+ 0.2,0.15)[font=\tiny]{$1$};
			}

			\foreach \x in {12}{
				\node[draw, rectangle, minimum size=6pt, inner sep=0pt] at (\x*\squaresize-\halfsize,0.15) [font=\tiny]{ };
\node[] at (\x*\squaresize -\halfsize+ 0.22,0.15)[font=\tiny]{$\Delta$};
		   }	
		   \foreach \x in {-12}{
		   	\node[draw, rectangle, minimum size=6pt, inner sep=0pt] at (\x*\squaresize-\halfsize,0.15)[font=\tiny] {};
\node[] at (\x*\squaresize -\halfsize+ 0.4,0.15)[font=\tiny]{$\Delta^{-1}$};
		   }
		   	\foreach \x in {24}{
		   		\node[draw, rectangle, minimum size=6pt, inner sep=0pt] at (\x*\squaresize-\halfsize,0.15) { };
\node[] at (\x*\squaresize -\halfsize+ 0.3,0.15)[font=\tiny]{$\Delta^2$};
		   	}
	\foreach \y in {1} {
		\fill (\y*\squaresize,\y*\squaresize+\halfsize) circle(1.2pt) {};
		\fill (4.2+\y*\squaresize,\y*\squaresize+\halfsize) circle(1.2pt) {};
		\fill (-4.2+\y*\squaresize,\y*\squaresize+\halfsize) circle(1.2pt) {};
		\fill (8.4+\y*\squaresize,\y*\squaresize+\halfsize) circle(1.2pt) {};
	}

\node[ ] at (.53,.52)[font=\tiny]{$\alpha$};
\draw[-,black] (-.175,.15) -- (.35,.525); 
\draw[-,black] (-4.2-.175,.15) -- (-4.2+.35,.525); 
\draw[-,black] (4.2-.175,.15) -- (4.2+.35,.525); 
\draw[-,black] (8.4-.175,.15) -- (8.4+.35,.525); 

	\foreach \y in {2}{
	\fill (2*\y*\squaresize+\halfsize,\y*\squaresize+\halfsize) circle(1.2pt) {};
	\fill (4.2+2*\y*\squaresize+\halfsize,\y*\squaresize+\halfsize) circle(1.2pt) {};
	\fill (-4.2+2*\y*\squaresize+\halfsize,\y*\squaresize+\halfsize) circle(1.2pt) {};}

\node[ ] at (1.75,.87)[font=\tiny]{$\beta$};
\draw[-,brown] (-.175,.15) -- (1.58,.87); 
\draw[-,brown] (-4.2-.175,.15) -- (-4.2+1.58,.875); 
\draw[-,brown] (4.2-.175,.15) -- (4.2+1.58,.875);

	\foreach \y in {3} {
		\fill (2*\y*\squaresize,\y*\squaresize+\halfsize) circle(1.2pt) {};
		\fill (4.2+2*\y*\squaresize,\y*\squaresize+\halfsize) circle(1.2pt) {};
		\fill (-4.2+2*\y*\squaresize,\y*\squaresize+\halfsize) circle(1.2pt) {};
			}

\draw[-,black] (1.58,.87) -- (2.1,1.22); 
\draw[-,black] (-2.62,.87) -- (-4.2+2.1,1.22); 
\draw[-,black] (4.2+1.58,.87) -- (4.2+2.1,1.22); 	

\draw[-,brown] (.35,.525) -- (2.1,1.22); 
\draw[-,brown] (-4.2+.35,.52) -- (-4.2+2.1,1.22); 
\draw[-,brown] (4.2+.35,.52) -- (4.2+2.1,1.22);

\foreach \y in {4}{
		\fill (2*\y*\squaresize+\squaresize+\halfsize,\y*\squaresize+\halfsize) circle(1.2pt) {};
		\fill (4.2+2*\y*\squaresize+\squaresize+\halfsize,\y*\squaresize+\halfsize) circle(1.2pt) {};
		\fill (-4.2+2*\y*\squaresize+\squaresize+\halfsize,\y*\squaresize+\halfsize) circle(1.2pt) {};
			\fill (-8.4+2*\y*\squaresize+\squaresize+\halfsize,\y*\squaresize+\halfsize) circle(1.2pt) {};
	}
			
\draw[-,brown] (1.58,.87) -- (3.32,1.57); 
\draw[-,brown] (-4.2+1.58,.87) -- (-4.2+3.32,1.57); 
\draw[-,brown] (4.2+1.58,.87) -- (4.2+3.32,1.57);

		\foreach \y in {5} {
		\fill (2*\y*\squaresize+\squaresize,\y*\squaresize+\halfsize) circle(1.2pt) {};
		\fill (4.2+2*\y*\squaresize + \squaresize,\y*\squaresize+\halfsize) circle(1.2pt) {};
		\fill (-4.2+2*\y*\squaresize+\squaresize,\y*\squaresize+\halfsize) circle(1.2pt) {};
		\fill (-8.4+2*\y*\squaresize+\squaresize,\y*\squaresize+\halfsize) circle(1.2pt) {};
		
	}

\draw[-,black] (3.32,1.57) -- (3.85,1.92); 
\draw[-,black] (-4.2+3.32,1.57) -- (-4.2+3.85,1.92); 
\draw[-,black] (4.2+3.32,1.57) -- (4.2+3.85,1.92);
\draw[-,black](-8.4+3.32, 1.57)--(-8.4+3.85,1.92); 	

\draw[-,brown] (2.1,1.22) -- (3.85,1.92); 
\draw[-,brown] (-4.2+2.1,1.22) -- (-4.2+3.85,1.92); 
\draw[-,brown] (4.2+2.1,1.22) -- (4.2+3.85,1.92);

		\foreach \y in {6}{
		\fill (2*\squaresize+\halfsize+2*\y*\squaresize,\y*\squaresize+\halfsize) circle(1.2pt) {};
\fill (4.2+2*\squaresize+\halfsize+2*\y*\squaresize,\y*\squaresize+\halfsize) circle(1.2pt) {};
	    \fill (-4.2+2*\squaresize+\halfsize+2*\y*\squaresize,\y*\squaresize+\halfsize) circle(1.2pt) {};
		\fill (-8.4+2*\squaresize+\halfsize+2*\y*\squaresize,\y*\squaresize+\halfsize) circle(1.2pt) {};
	}
	
\draw[-,brown] (3.32,1.57) -- (5.07,2.27); 
\draw[-,brown] (4.2+3.32,1.57) -- (4.2+5.07,2.27); 
\draw[-,brown] (-4.2+3.32,1.57) -- (-4.2+5.07,2.27); 
\draw[-,brown] (-8.4+3.32,1.57) -- (-8.4+5.07,2.27);

	\foreach \y in {7} {
		\fill (2*\squaresize+2*\y*\squaresize,\y*\squaresize+\halfsize) circle(1.2pt) {};
		\fill (-4.2+2*\squaresize+2*\y*\squaresize,\y*\squaresize+\halfsize) circle(1.2pt) {};
		\fill (-8.4+2*\squaresize+2*\y*\squaresize,\y*\squaresize+\halfsize) circle(1.2pt) {};	
	}
	
\draw[-,black] (5.07,2.27) -- (5.6,2.62); 
\draw[-,black] (-4.2+5.07,2.27) -- (-4.2+5.6,2.62); 
\draw[-,black](-8.4+5.07, 2.27)--(-8.4+5.6,2.62); 	

\draw[-,brown] (3.85,1.92) -- (5.6,2.62); 
\draw[-,brown] (-4.2+3.85,1.92) -- (-4.2+5.6,2.62); 
\draw[-,brown] (-8.4+3.85,1.92) -- (-8.4+5.6,2.62);

		\foreach \y in {8}{
		\fill (3.5*\squaresize+2*\y*\squaresize,\y*\squaresize+\halfsize) circle(1.2pt) {};
		\fill (-4.2+3.5*\squaresize+2*\y*\squaresize,\y*\squaresize+\halfsize) circle(1.2pt) {};
		\fill (-8.4+3.5*\squaresize+2*\y*\squaresize,\y*\squaresize+\halfsize) circle(1.2pt) {};
		\fill (-12.6+3.5*\squaresize+2*\y*\squaresize,\y*\squaresize+\halfsize) circle(1.2pt) {};
	}
\draw[-,brown] (5.07,2.27) -- (6.82,2.97); 
%\draw[-,brown] (4.2+3.32,1.57) -- (4.2+5.07,2.27); 
\draw[-,brown] (-4.2+5.07,2.27) -- (-4.2+6.82,2.97); 
\draw[-,brown] (-8.4+5.07,2.27) -- (-8.4+6.82,2.97);
	
		\foreach \y in {9} {
		\fill (3*\squaresize+2*\y*\squaresize,\y*\squaresize+\halfsize) circle(1.2pt) {};
		\fill (-4.2+3*\squaresize+2*\y*\squaresize,\y*\squaresize+\halfsize) circle(1.2pt) {};
		\fill (-8.4+3*\squaresize+2*\y*\squaresize,\y*\squaresize+\halfsize) circle(1.2pt) {};
		\fill (-12.6+3*\squaresize+2*\y*\squaresize,\y*\squaresize+\halfsize) circle(1.2pt) {};
	}
\draw[-,black] (6.82,2.97) -- (7.35,3.32); 
\draw[-,black] (-4.2+6.82,2.97) -- (-4.2+7.35,3.32); 
\draw[-,black](-8.4+6.82, 2.97)--(-8.4+7.35,3.32); 	
\draw[-,black](-12.6+6.82, 2.97)--(-12.6+7.35,3.32);

\draw[-,brown] (5.6,2.62) -- (7.35,3.32); 
\draw[-,brown] (-4.2+5.6,2.62) -- (-4.2+7.35,3.32); 
\draw[-,brown] (-8.4+5.6,2.62) -- (-8.4+7.35,3.32);	

	\foreach \y in {10}{
		\fill (4.5*\squaresize+2*\y*\squaresize,\y*\squaresize+\halfsize) circle(1.2pt) {};
		\fill (-4.2+4.5*\squaresize+2*\y*\squaresize,\y*\squaresize+\halfsize) circle(1.2pt) {};
		\fill (-8.4+4.5*\squaresize+2*\y*\squaresize,\y*\squaresize+\halfsize) circle(1.2pt) {};
		\fill (-12.6+4.5*\squaresize+2*\y*\squaresize,\y*\squaresize+\halfsize) circle(1.2pt) {};
	}
	
\draw[-,brown] (6.82,2.97) -- (8.57,3.67); 
\draw[-,brown] (-12.6+6.82,2.97) -- (-12.6+8.57,3.67); 
\draw[-,brown] (-4.2+6.82,2.97) -- (-4.2+8.57,3.67); 
\draw[-,brown] (-8.4+6.82,2.97) -- (-8.4+8.57,3.67);
	
	\foreach \y in {11} {
		\fill (4*\squaresize+2*\y*\squaresize,\y*\squaresize+\halfsize) circle(1.2pt) {};
		\fill (-4.2+4*\squaresize+2*\y*\squaresize,\y*\squaresize+\halfsize) circle(1.2pt) {};
		\fill (-8.4+4*\squaresize+2*\y*\squaresize,\y*\squaresize+\halfsize) circle(1.2pt) {};
		\fill (-12.6+4*\squaresize+2*\y*\squaresize,\y*\squaresize+\halfsize) circle(1.2pt) {};
	}
	
\draw[-,black] (8.57,3.67) -- (9.07,4.02); 
\draw[-,black] (-4.2+8.57,3.67) -- (-4.2+9.07,4.02); 
\draw[-,black](-8.4+8.57, 3.67)--(-8.4+9.07,4.02); 	
\draw[-,black](-12.6+8.57, 3.67)--(-12.6+9.07,4.02);

\draw[-,brown] (7.35,3.32) -- (9.07,4.02); 
\draw[-,brown] (-4.2+7.35,3.32) -- (-4.2+9.07,4.02); 
\draw[-,brown] (-8.4+7.35,3.32) -- (-8.4+9.07,4.02);	
\draw[-,brown] (-12.6+7.35,3.32) -- (-12.6+9.07,4.02);

\foreach \y in {12}{
		%\fill (-16.8+5.5*\squaresize+2*\y*\squaresize,\y*\squaresize+\halfsize) circle(1.2pt) {};
		\fill (-4.2+5.5*\squaresize+2*\y*\squaresize,\y*\squaresize+\halfsize) circle(1.2pt) {};
		\fill (-8.4+5.5*\squaresize+2*\y*\squaresize,\y*\squaresize+\halfsize) circle(1.2pt) {};
		\fill (-12.6+5.5*\squaresize+2*\y*\squaresize,\y*\squaresize+\halfsize) circle(1.2pt) {};
	}	

%\draw[-,brown] (6.82,2.97) -- (8.57,3.67); 
\draw[-,brown] (-12.6+8.57,3.67) -- (-12.6+10.32,4.37); 
\draw[-,brown] (-4.2+8.57,3.67) -- (-4.2+10.32,4.37); 
\draw[-,brown] (-8.4+8.57,3.67) -- (-8.4+10.32,4.37);

	\foreach \y in {13} {
		\fill (-16.8+5*\squaresize+2*\y*\squaresize,\y*\squaresize+\halfsize) circle(1.2pt) {};
		\fill (-4.2+5*\squaresize+2*\y*\squaresize,\y*\squaresize+\halfsize) circle(1.2pt) {};
		\fill (-8.4+5*\squaresize+2*\y*\squaresize,\y*\squaresize+\halfsize) circle(1.2pt) {};
		\fill (-12.6+5*\squaresize+2*\y*\squaresize,\y*\squaresize+\halfsize) circle(1.2pt) {};
	}
	
%\draw[-,black] (8.57,3.67) -- (9.07,4.02); 
\draw[-,black] (-4.2+10.32,4.37) -- (-4.2+10.84,4.72); 
\draw[-,black](-8.4+10.32, 4.37)--(-8.4+10.84,4.72); 	
\draw[-,black](-12.6+10.32, 4.37)--(-12.6+10.84,4.72);

%\draw[-,brown] (7.35,3.32) -- (9.07,4.02); 
\draw[-,brown] (-4.2+9.09,4.02) -- (-4.2+10.84,4.72); 
\draw[-,brown] (-8.4+9.09,4.02) -- (-8.4+10.84,4.72);	
\draw[-,brown] (-12.6+9.09,4.02) -- (-12.6+10.84,4.72);

	\foreach \y in {0,2,4,6} {
		\draw[->,blue] (4.03+2.5*\y*\squaresize,\y*\squaresize+\halfsize) -- (3.85+2.5*\y*\squaresize,1.75+\y*\squaresize+\halfsize); 
	}
\foreach \y in {0} {
		\draw[->,blue] (4.2+4.03+2.5*\y*\squaresize,\y*\squaresize+\halfsize) -- (4.2+3.85+2.5*\y*\squaresize,1.75+\y*\squaresize+\halfsize); 
	}
\foreach \y in {0,2,4,6,8} {
		\draw[->,blue] (-8.4+4.03+2.5*\y*\squaresize,\y*\squaresize+\halfsize) -- (-8.4+3.85+2.5*\y*\squaresize,1.75+\y*\squaresize+\halfsize); 
	}
\foreach \y in {4,6,8} {
		\draw[->,blue] (-12.6+4.03+2.5*\y*\squaresize,\y*\squaresize+\halfsize) -- (-12.6+3.85+2.5*\y*\squaresize,1.75+\y*\squaresize+\halfsize); 
	}

	\foreach \y in {1} {
		\draw[->,red] (7.87+2.5*\y*\squaresize,\y*\squaresize+\halfsize) -- (7.7+2.5*\y*\squaresize,3.15+\y*\squaresize+\halfsize); 
	}

\foreach \y in {1,3} {
		\draw[->,red] (-12.6+7.87+2.5*\y*\squaresize,\y*\squaresize+\halfsize) -- (-12.6+ 7.7+ 2.5*\y*\squaresize, 3.15+\y*\squaresize+\halfsize); 
	}

%	\foreach \y in {10} {
%		\draw[->,blue] (-9.5+\y*\squaresize,\y*\squaresize+\halfsize) -- (-10.05+\y*\squaresize,2.4+\y*\squaresize+\halfsize); 
%	}

		\end{scriptsize}
	\end{tikzpicture}
\caption{ The homotopy fixed point spectral sequence of $EO_2$. The blue arrows describe $d_5$, and the red arrows describe $d_9$.}
\label{eo2htpyfixpt}
\end{figure}

\end{mysubsection}

\vspace*{0.2cm}

	\begin{mysubsection}{The Hurewicz image for $EO_2$} Let $\rho : \pi_\ast(\Sp) \to \pi_\ast(EO_2)$ be the Hurewicz homomorphism. The image of $\rho$ is precisely computed in \cite{BS23} (also stated without proof in \cite{Hen14}). In order to read off the image of $\rho$ from these references, one must recall the link between $EO_2$ and $tmf$, namely, $EO_2 \simeq L_{K(2)}TMF$ at the prime $3$ (see for example \cite[Remark 6.6.11]{Beh20}. We note this in the following theorem
\begin{thm}\label{hureo2}
The image of $\rho : \pi_\ast(\Sp_3^\land) \to \pi_\ast(EO_2)$ consists of the $\Z_3^\wedge$ generated by $1$ in degree $0$, and the classes 
\[\alpha, \quad \Delta^{3t}\beta^i \mbox{ for } 1\leq i \leq 4, \quad \Delta^{3t}\alpha \beta, \quad \Delta^{3t}\beta b,\]
for $t\geq 0$.
\end{thm}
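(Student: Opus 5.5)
The plan is to deduce the statement from the known computation of the Hurewicz image of $tmf$ at $p=3$ in \cite{BS23}. We transport that result along the identification $EO_2\simeq L_{K(2)}TMF$ \cite[Remark 6.6.11]{Beh20}. The unit $\Sp\to EO_2$ factors as $\Sp\to tmf\to TMF\to L_{K(2)}TMF\simeq EO_2$, so the image of $\rho$ is the image of the $tmf$-Hurewicz image under $\pi_\ast tmf\to\pi_\ast EO_2$. Two things must then be checked: which classes of the $tmf$-Hurewicz image survive nontrivially to $EO_2$, and that nothing else is hit.

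\textbf{Step 1: recall the $tmf$ Hurewicz image.} At $p=3$, \cite{BS23} shows that the Hurewicz image of $tmf$ consists of:
\begin{itemize}
\item $\alpha_1$;
\item the classes $\beta_1^i$ for $1\le i\le 4$;
\item $\alpha_1\beta_1$;
\item their $\Delta^{3t}$-periodic lifts, detected by $\beta_1^{i-1}\beta_{9t+1}$, $\beta_1^{i-1}\beta_{9t+6/3}$, $\alpha_1\beta_{9t+3/3}$, $\alpha_1\beta_{9t+7}$ in the Adams--Novikov spectral sequence, as listed in \eqref{hurewiczeo2}.
\end{itemize}
These lie in the $\Delta^{3}$-power-torsion-free part of $\pi_\ast tmf$, i.e.\ in the classes coming from $\F_3[\Delta^3]$ multiples in the $tmf$ spectral sequence.

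\textbf{Step 2: map the classes to $EO_2$.} We compare the ANSS for $\Sp$ with the HFPSS of $EO_2$ via the map $\Ext_{BP_\ast BP}\to H^\ast(G;{E_2}_\ast)$. This map sends $\alpha_1\mapsto\alpha$, $\beta_1\mapsto\beta$ and $\beta_{3/3}\mapsto\Delta\beta$, and more generally sends the listed $\beta$-families to $\Delta^{3t}\beta$, $\Delta^{3t+?}\beta b$, and so on. On $E_\infty$-pages, using \eqref{eo2htpygp}, the images are $\Delta^{3t}\beta^i$, $\Delta^{3t}\alpha\beta$ and $\Delta^{3t}\beta b$, which are nonzero permanent classes. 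Nonvanishing in homotopy holds because these $E_\infty$ classes are nonzero and lie in positive filtration, so they cannot be hit. The unit $1$ generates the $\Z_3^\wedge$ in degree $0$.

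\textbf{Step 3: exclude other classes.} Any element of the image must come from $\pi_\ast\Sp$, which maps through $\pi_\ast tmf$. The map $\pi_\ast tmf\to\pi_\ast TMF$ inverts $\Delta^3$, and $K(2)$-localization is an isomorphism on the torsion in the relevant range. Hence the image is exactly the image of the $tmf$-Hurewicz classes. Classes such as $\Delta^{3t}b$ or $\Delta^{3t}\alpha$ for $t>0$ are not in the $tmf$-Hurewicz image by \cite{BS23}, so they are excluded.

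\textbf{Main obstacle.} The delicate point is the bookkeeping between the $\beta_{s/j}$ names and the $E_\infty$ classes $\Delta^{3t}\beta b$, including possible hidden extensions. I would handle this by degree matching: each relevant stem of $\pi_\ast EO_2$ has a single $\F_3$ in that filtration, so the identification is forced once the images are shown to be nonzero.
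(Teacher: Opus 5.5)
Your proposal takes the same route as the paper, which also just reads the answer off from the $3$-primary $tmf$ Hurewicz image of \cite{BS23} via the factorization of the unit through $tmf$ and the identification $EO_2\simeq L_{K(2)}TMF$ \cite[Remark 6.6.11]{Beh20}; the substantive content is your Step 1 together with the injectivity of $\pi_\ast tmf\to\pi_\ast EO_2$ on torsion from Step 3. Given that, Step 2 is unnecessary, but if you keep it, degree matching resolves the placeholder as $\beta_1^{i-1}\beta_{9t+1}\mapsto\Delta^{6t}\beta^i$, $\beta_1^{i-1}\beta_{9t+6/3}\mapsto\Delta^{6t+3}\beta^i$, $\alpha_1\beta_{9t+3/3}\mapsto\Delta^{6t}\beta b$ and $\alpha_1\beta_{9t+7}\mapsto\Delta^{6t+3}\beta b$, which is exactly \eqref{hurewiczeo2}.
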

\end{mysubsection}

\vspace*{0.2cm}

\begin{mysubsection}{The homotopy groups of $EO_2 \wedge C(\alpha_1)$}
Let $C(\alpha_1)$ be the $p$-local spectrum given by the mapping cone of $\alpha_1 : S^{2p-3} \to S^0$. This is a cellular spectrum with a cell in degree $0$, and a cell in degree $2p-2$, with $\alpha_1$ as the attaching map. For a spectrum $X$, we use the notation $\EE_\ast^{s,t,r}(EO_{p-1}\wedge X)$ for the filtration spectral sequence (Theorem \ref{FSS}), and $E_\ast^{s,t}(EO_{p-1}\wedge X)$ for the homotopy fixed point spectral sequence. At the prime $3$, the filtration spectral sequence for $EO_2 \wedge C(\alpha_1)$ has the second page
\begin{myeq}\label{eo2calphfss}
\EE_2^{s,t,r}(EO_2 \wedge C(\alpha_1))= H^s(G;\pi_{t+s} E_2)\{\iota_0, \iota_4\}, \quad |\iota_0|= (0,0,0), \quad |\iota_4| = (0,4,-4).
\end{myeq}
The filtration spectral sequence $\EE_{2}^{s,t,r}$ is concentrated at $r=0,-4$ and the only possible differential is $d_{4}$. As the $EO_2$-Hurewicz image of $\alpha_1$ is $\alpha$, we must have $d_4(\iota_4)= \alpha \iota_0$ in the $EO_2$-Atiyah-Hirzebruch spectral sequence for $EO_2 \wedge C(\alpha_1)$.  Now Proposition \ref{diffl-att} determines $d_{4}$.
	\begin{prop}\label{d4eo2calphfss}
		The differential $d_{4}$ in $\EE_2^{s,t,r}(EO_2 \wedge C(\alpha_1))$ is given by $d_{4}(\iota_{4})=\alpha \iota_{0}$.
	\end{prop}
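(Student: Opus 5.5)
The plan is to deduce the statement directly from Proposition \ref{diffl-att}, applied with $X=C(\alpha_1)$ (in its cohomological/homological form with the evident sign conventions), $E=E_2$, $G$ the maximal finite subgroup, and $m=4$. The hypotheses of Theorem \ref{FSS} hold: ${E_2}_\ast$ is concentrated in even degrees and $C(\alpha_1)$ at $p=3$ has cells only in dimensions $0$ and $4$. So the filtration spectral sequence \eqref{eo2calphfss} exists and is multiplicative as a module over $H^\ast(G;{E_2}_\ast)$.

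\textbf{Step 1: the Atiyah--Hirzebruch differential.} In the $EO_2$-Atiyah--Hirzebruch spectral sequence of $C(\alpha_1)$, the only differential from the top cell to the bottom cell is induced by the attaching map $\alpha_1$. It is multiplication by its $EO_2$-Hurewicz image, which is $\alpha$ by Theorem \ref{hureo2} (equivalently, by the fact that $\alpha_1\mapsto\alpha$ under the map from the Adams--Novikov spectral sequence). Hence $d_4(\iota_4)=\alpha\iota_0$ there. Here $\iota_4$ is represented in $\EE_2^{0,4,-4}$, a $G$-invariant generator coming from $H^0$, and $\alpha\iota_0$ lies in filtration $1$, i.e.\ in $\EE_2^{1,3,0}$. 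This matches the bidegree shape required by Proposition \ref{diffl-att}: the source has $s=0$ and the target has $s=1$.

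\textbf{Step 2: vanishing of shorter differentials.} Since $\EE_2$ is concentrated in $r\in\{0,-4\}$, all $d_i$ with $i<4$ vanish on $\iota_4$ for degree reasons. The same holds in the AHSS, because there are only two cells.

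\textbf{Step 3: apply Proposition \ref{diffl-att}.} This yields $d_4(\iota_4)=\alpha\iota_0$ in the filtration spectral sequence. Multiplicativity over $H^\ast(G;{E_2}_\ast)$ then determines $d_4$ on all of $\EE_4$, namely $d_4(x\iota_4)=x\alpha\iota_0$, and $d_4(\iota_0)=0$.

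\textbf{Main obstacle.} The delicate point is confirming that $\alpha\iota_0$ is genuinely represented in $s=1$ of the filtration spectral sequence, as Proposition \ref{diffl-att} requires, rather than arising from $\EE_2^{0,\ast}$. This holds because $\alpha\in H^1(G;\pi_4E_2)$ and $\alpha$ restricts to $0$ in ${E_2}_\ast$ since it has positive filtration. I would also check that the unit in $\F_3$ relating the attaching map to $\alpha_1$ does not matter. It only rescales $\iota_4$, which is consistent with $C^{[1]}$ being defined only up to unit multiples of $\alpha_1$.
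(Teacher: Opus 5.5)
Your proposal is correct and follows the paper's argument. The paper likewise notes that $\EE_2$ is concentrated in $r\in\{0,-4\}$, so $d_4$ is the only possible differential. It then obtains $d_4(\iota_4)=\alpha\iota_0$ in the $EO_2$-Atiyah--Hirzebruch spectral sequence from the fact that the $EO_2$-Hurewicz image of $\alpha_1$ is $\alpha$, and concludes by Proposition \ref{diffl-att}; your check that $\alpha\iota_0$ lies in $s=1$ is a detail the paper leaves implicit.
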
    

\vspace*{0.2cm}	

We conclude  (see Figure \ref{eo2calphfiltspseq})
\[\EE_\infty(EO_2 \wedge C(\alpha_1)) \cong \EE_5(EO_2\wedge C(\alpha_1)) \cong \frac{H^{*}(G,{E_{2}}_{*})\{\iota_0, [\alpha\iota_4], [3\iota_4]\}}{(\alpha \iota_0,\alpha  [\alpha\iota_4], 3 [\alpha\iota_4],\beta  [3\iota_4],\alpha  [3\iota_4])} \]  
	%Here $\ba:= [\alpha.b], \bb:=[3b]$.
In this expression, and also henceforth in the rest of the section, we suppress the multiples of elements of $M_3$ in the formulas. From this formula we may write down the $E_2$-page of the homotopy fixed point spectral sequence by solving the extension problems. In this case we have 
\begin{myeq} \label{extfsscalpha}
\begin{aligned}
\alpha  [\alpha\iota_4] & = \alpha \langle \iota_0, \alpha, \alpha \rangle \\
                                 & = \iota_0 \langle \alpha, \alpha, \alpha \rangle \\
                                 & = \beta \iota_0.
\end{aligned}
\end{myeq}

\vspace*{0.2cm}

\begin{figure}
\centering
% [inline block 0: 1 envs, 21025 chars -> data_tex | \begin{tikzpicture} 		\begin{scriptsize}...]

\caption{ The filtration spectral sequence of $EO_2\wedge C(\alpha_1)$. The blue arrows describe $d_4$, and the green entries denote the multiples of $\iota_4$.}
\label{eo2calphfiltspseq}
\end{figure}

\vspace*{0.2cm}

\begin{exam}\label{chcxfss} 
The filtration spectral sequence is essentially an algebraic computation of the $E_2$-term of the homotopy fixed point spectral sequence arising from the cellular filtration on $C^\ast(G;\pi_\ast(E_2\wedge C(\alpha_1)))$. This chain complex may be described as 
\[C^\ast(G;\pi_\ast(E_2))\{\iota_0,\iota_4\}, \quad d(\iota_0)=0, \quad d(\iota_4)=\oalpha\iota_0,\]
where $\oalpha\in C^1(G;\pi_4E_2)$ is a cocycle representing $\alpha$. Let $y \in C^1(G;\pi_8 E_2)$ be such that $d(y)=\oalpha^2$. Note that the class $\beta$ is represented by  $\obeta=y\oalpha+\oalpha y$. One can easily check that $[\alpha \iota_4]$ is represented by $\oalpha\iota_4 + y\iota_0$. Thus $\alpha [\alpha \iota_4]$ is represented by $\oalpha^2\iota_4 + \oalpha y \iota_0 $.  We now check 
\[
 d(y \iota_4) = \oalpha^2 \iota_4 - y \oalpha \iota_0  = (\oalpha^2 \iota_4 + \oalpha y \iota_0) - \obeta \iota_0 \]
\[\implies \alpha [\alpha \iota_4] = \beta \iota_0,\]
proving \eqref{extfsscalpha}.
\end{exam}

\vspace*{0.2cm}

%\begin{exam} 
%We may also derive \eqref{extfsscalpha} directly from the Adams-Novikov spectral sequence using the map 
%		\[{\Ext_{BP_*BP}}(BP_*,BP_*C(\alpha_1)) \to H^*(G,{E_2}_*C(\alpha_1)).\]
%		We check that the relation holds in the Adams-Novikov spectral sequence by considering explicit relations in the cobar-complex. The element $t_1|\iota_4+\frac{1}{2}{t_1}^2|\iota_0$ is an element of ${\Ext_{BP_*BP}}(BP_*,BP_*C(\alpha_1))$ which maps to $[\alpha \iota_4]$ in the right hand side. The class $\alpha_1$ is represented by $t_1$ and $\beta_1$ is represented by  $\frac{1}{2}(t_1|{t_1}^2+{t_1}^2|t_1)$  in ${\Ext_{BP_*BP}}(BP_*,BP_*)$.
%Then, the following relation holds in the cobar-complex up to coboundaries
%		\[t_1|t_1|\iota_4+\frac{1}{2} t_1|{t_1}^2|\iota_0=\frac{1}{2}[t_1|{t_1}^2+{t_1}^2|t_1]|\iota_0.\] 
%To see this,  consider the element ${t_1}^2|\iota_4+{t_1}^3|\iota_0$. We have,
%		\begin{align*}
%			\overline{\Delta}({t_1}^2|\iota_4+{t_1}^3|\iota_0)&=2t_1|t_1|\iota_4-{t_1}^2|t_1|\iota_0+3t_1|{t_1}^2|\iota_0+3{t_1}^2|t_1|\iota_0 \mbox{ as } \overline{\Delta}(\iota_4)= t_1|\iota_0 \\
%			&=2(t_1|t_1|\iota_4+\frac{1}{2}t_1|{t_1}^2|\iota_0)+2[t_1|{t_1}^2+{t_1}^2|t_1]|\iota_0.
%		\end{align*}
%Mapping to the $E_2$-page of homotopy fixed point spectral sequence we have, 
%$$2\alpha [\alpha\iota_4]+4\beta\iota_0=0 \implies \alpha [\alpha \iota_4]= \beta \iota_0. $$
%\end{exam}
%	
%\vspace*{0.2cm}	
	
We now compute the homotopy fixed point spectral sequence for $EO_2\wedge C(\alpha_1)$ (see Figure \ref{eo2calphhtpyfixspseq}). This is a module over the homotopy fixed point spectral sequence for $EO_2$. We write $\kappa_7= [\alpha \iota_4]$, $\chi_4=[3\iota_4]$, and using \eqref{extfsscalpha} we have,
	\[E_{2}(EO_2 \wedge C(\alpha_1))\cong\frac{H^{*}(G;{E_2}_{*}) \{\iota_0,\kappa_7,\chi_4 \}}{(\alpha \iota_0,\alpha \kappa_7-\beta \iota_0,3\kappa_7,\alpha \chi_4, \beta \chi_4)}, \quad |\kappa_7|= (1,7), |\chi_4|=(0,4).\]
Now we determine the differentials. The cofiber sequence
	\begin{myeq} \label{cofcalpha1}
S^{0}\xrightarrow{\iota_0} C(\alpha_1) \xrightarrow{j} S^4\xrightarrow{\alpha_1} S^1
\end{myeq}
implies that $\iota_0$ and $\chi$ are permanent cycles, as they lie in the Hurewicz image $\pi_\ast (C(\alpha_1)) \to {EO_2}_\ast(C(\alpha_1))$. As the other generating classes are in positive grading, it suffices to work $\pmod{M_3}$ (see \eqref{einftyhfpsseo}). Note that 
\[E_{2}(EO_2 \wedge C(\alpha_1))\cong \frac{\F_3[\Delta^{\pm 1}, \alpha,\beta] \{\iota_0,\kappa_7\}}{(\alpha^2,\alpha \iota_0,\alpha \kappa_7-\beta \iota_0)} \pmod{(M_3,\chi_4)}.\]
 Also from the cofiber sequence \eqref{cofcalpha1}, we note that $[\alpha \iota_4]$ represents a class in $\pi_7(EO_2\wedge C(\alpha_1))$, and thus, $\kappa_7$ is a permanent cycle in $E_{\ast}(EO_2 \wedge C(\alpha_1))$. From \eqref{dhfpsseo2}, we know that $d_5(\Delta)=\alpha \beta^2$, so we have
\begin{myeq} \label{d5hfpsscalph}
d_5 (\Delta \kappa_7) = \alpha \beta^2 \kappa_7 = \beta^3 \iota_0.
\end{myeq}
This implies 
\[d_5(\Delta^t \kappa_7)= t\alpha \beta^2 \Delta^{t-1} \kappa_7= t\Delta^{t-1} \beta^3 \iota_0, \implies d_5(\Delta^{3t} \kappa_7)=0.\]
The classes that survive to the $6^{th}$-page are $\Delta^{\pm t} \beta^s \iota_0=\Delta^{\pm t}\alpha \beta^s \kappa_7$ for $0\leq s\leq 2$, $t\in \Z$, and $t \equiv 0 \mbox{ or } 1 \pmod{3}$, $\Delta^{\pm 3t+2} \beta^s \iota_0$ for $s\geq 0$ and $t\in \Z$,  and $\Delta^{\pm 3t} \beta^s \kappa_7$ for $s\geq 0$ and $t\in \Z$. It follows that 
\[E_{6}(EO_2 \wedge C(\alpha_1))\cong \frac{\F_3[\Delta^{\pm 1}] \{ \iota_0, \beta \iota_0, \beta^2 \iota_0\}\oplus \F_3[\beta]\{\Delta^{3t+2}\iota_0\}\oplus \frac{\F_3[\Delta^{\pm 3}, \alpha,\beta] \{\kappa_7\}}{(\alpha^2, \alpha \beta^2)}}{(\alpha \kappa_7-\beta \iota_0)} \pmod{(M_3,\chi_4)}.\]
The next non-trivial differential is $d_9$. From \eqref{dhfpsseo2}, we know that $d_9(\Delta^2 \alpha)=\beta^5$. We thus have
\begin{myeq}\label{d9hfpsscalph}
d_9(\Delta^2 \beta \iota_0) = d_9(\Delta^2 \alpha \kappa_7) = \beta^5 \kappa_7.
\end{myeq}
Therefore,  we must have 
\[d_9(\Delta^2 \iota_0)=\beta^4 \kappa_7.\] 
This implies 
\[d_9(\Delta^{3t+2} \iota_0)= \Delta^{3t} \beta^4 \kappa_7.\]
Now we compute the $10^{th}$ page and observe that for degree reasons, the spectral sequence has no further non-trivial differentials.
\begin{myeq}\label{einftyhfpsseo2calph}
 E_\infty(EO_2 \wedge C(\alpha_1))\cong \frac{\F_3\{\Delta^{3t}, \Delta^{3t+1}\}\otimes \F_3 \{ \iota_0, \beta \iota_0, \beta^2 \iota_0\}\oplus \frac{\F_3[\Delta^{\pm 3}, \alpha,\beta] \{\kappa_7\}}{(\alpha^2, \alpha \beta^2,\beta^4)}}{(\alpha \kappa_7-\beta \iota_0)} \pmod{(M_3,\chi_4)}.
\end{myeq}
The homotopy groups $\pi_\ast(EO_2 \wedge C(\alpha_1))$ has one hidden multiplicative extension. We observe that $b\iota_0$ belongs to the cokernel of multiplication by $\alpha$, and so via the cofiber sequence \eqref{cofcalpha1}, it is a non-trivial class in $\pi_{27}(EO_2 \wedge C(\alpha_1))$. For degree reasons, we must have $b\iota_0 = \beta^2 \kappa_7$. 

\begin{figure}
\centering
% [inline block 1: 1 envs, 20384 chars -> data_tex | \begin{tikzpicture} 		\begin{scriptsize}...]

\caption{ The  homotopy fixed point spectral sequence of $EO_2\wedge C(\alpha_1)$ $\pmod{M_3,\chi_4}$. The blue arrows describe $d_5$, and the red arrows describe $d_9$.}
\label{eo2calphhtpyfixspseq}
\end{figure}

\vspace*{0.2cm}

\end{mysubsection}

\vspace*{0.2cm}

\begin{mysubsection}{The homotopy groups of $EO_2 \wedge C(\alpha_1,2 \alpha_1)$}
Let $C(\alpha_1)$ denote the cofiber of $\alpha_1:S^{3} \rightarrow S^0$, and  consider the cofiber sequence 
\[S^0 \xrightarrow{\iota_0} C(\alpha_1) \xrightarrow{j_4} S^4.\] 
 After attaching an $8$-cell to  $C(\alpha_1)$ using the  map $2\bar{\alpha}_1: S^7 \rightarrow C(2\alpha_1)$ defined via \eqref{defc2alph},  the resulting complex will be denoted by $C(\alpha_1,2\alpha_1)$. 
	% https://q.uiver.app/#q=WzAsNyxbMCwxLCJTXnswfSJdLFsxLDEsIkMoMlxcYWxwaGEpIl0sWzIsMSwiU157NH0iXSxbMiwzXSxbMiwyLCJTXns3fSJdLFszLDEsIlNeezF9Il0sWzEsMCwiQygyXFxhbHBoYSxcXGFscGhhKSJdLFswLDFdLFsxLDJdLFs0LDIsIlxcYWxwaGEiLDJdLFsyLDUsIjJcXGFscGhhIl0sWzQsNSwiMCIsMl0sWzQsMSwiXFxiYXJcXGFscGhhIiwyLHsic3R5bGUiOnsiYm9keSI6eyJuYW1lIjoic3F1aWdnbHkifX19XSxbMSw2XV0=
	\begin{myeq}\label{defc2alph}
\begin{tikzcd}
		& {C(\alpha_1,2\alpha_1)} \\
		{S^{0}} & {C(\alpha_1)} & {S^{4}} & {S^{1}} \\
		&& {S^{7}} \\
		&& {}
		\arrow[from=2-1, to=2-2]
		\arrow[from=2-2, to=1-2]
		\arrow[from=2-2, to=2-3]
		\arrow["{\alpha_1}", from=2-3, to=2-4]
		\arrow["{2\bar\alpha_1}"', from=3-3, to=2-2]
		\arrow["2\alpha_1"', from=3-3, to=2-3]
		\arrow["0"', from=3-3, to=2-4]
	\end{tikzcd}
\end{myeq}
	
	\begin{rmk}
		The lift $2\bar{\alpha}_1$ defined in \eqref{defc2alph} is not unique as $\pi_{7}(C(\alpha_1))$ is an extension of $\pi_{3}(S^0)$ by the group $\pi_{7}(S^0)$, both of which are non-zero as they are generated by $\alpha_1$ and $\alpha_{2}$ respectively. Nonetheless all such spectra will be denoted by $C(\alpha_1,2\alpha_1)$ even though they are not homotopy equivalent. Also note that we have analogous definitions of $C(i\alpha_1, j\alpha_1)$ for all $i,j \in \{1,2\}$. 
	\end{rmk}
	
	\vspace*{0.2cm}

The spectra $C(\alpha_1,2\alpha_1)$ have cells in degrees $0,4,8$, hence the $2^{nd}$-page of the filtration spectral sequence is given by 
\[\EE_2^{s,t,r}(EO_2 \wedge C(\alpha_1,2\alpha_1))= H^s(G;\pi_{t+s} E_2)\{\iota_0, \iota_4,\iota_8\},\] 
\[ |\iota_0|= (0,0,0), \quad |\iota_4| = (0,4,-4), \quad |\iota_8|=(0,8,-8).\]
The non-trivial  differentials are $d_{4}$ and $d_{8}$ (see Figure \ref{eo2calph2alphfiltspseq}). Applying Proposition \ref{diffl-att} as in the case of $C(\alpha_1)$ we obtain 
\[d_4(\iota_4)=\alpha \iota_0, \quad d_4(\iota_8)=2 \alpha \iota_4.\]
We now obtain 
\[\EE_5(EO_2\wedge C(\alpha_1,2\alpha_1)) \cong \frac{H^{*}(G;{E_{2}}_{*})\{\iota_0, [3\iota_4],[\alpha\iota_8], [3\iota_8]\}}{(\alpha \iota_0,\alpha  [\alpha\iota_8], 3 [\alpha\iota_8],\beta  [3\iota_4],\alpha  [3\iota_4],\beta  [3\iota_8],\alpha  [3\iota_8])} \]  
The next differential is $d_8$ which is computed in the following proposition.	
	\begin{prop}\label{d8fsseo}
		In the filtration spectral sequence for $C(\alpha_1,2\alpha_1)$ we have $d_{8}([\alpha \iota_8])=\beta \iota_{0}$. 
			\end{prop}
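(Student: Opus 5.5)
The plan is to mimic Example \ref{chcxfss} at the cochain level. Recall that the filtration spectral sequence is the spectral sequence of the filtered DGA-module $C^\ast(G;\pi_\ast E_2)\{\iota_0,\iota_4,\iota_8\}$. Its differentials come from Proposition \ref{diffl-att} as
\[d(\iota_0)=0,\quad d(\iota_4)=\oalpha\iota_0,\quad d(\iota_8)=2\oalpha\iota_4+c\,\iota_0 .\]
Here $c\in C^1(G;\pi_8E_2)$ is whatever cochain makes $d^2(\iota_8)=0$. Using $d(y)=\oalpha^2$ (with $y$ as in Example \ref{chcxfss}) we get
\[d(2\oalpha\iota_4)=-2\oalpha^2\iota_0,\]
so we may take $c=2y+z$ with $z$ a $1$-cocycle in $C^1(G;\pi_8E_2)$. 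The ambiguity $z$ reflects the non-uniqueness of the lift $2\bar\alpha_1$, since $\pi_7$ involves $\alpha_2$.

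The first step is to choose a cochain representative of $[\alpha\iota_8]\in\EE_5$. I would take $\oalpha\iota_8+2y\iota_4+w\iota_0$ and solve for $w$ so that the $\iota_4$-component of the coboundary vanishes. The $\iota_4$-coefficient of $d(\oalpha\iota_8+2y\iota_4)$ is
\[-\oalpha\cdot 2\oalpha + 2\oalpha^2 = 0\]
up to the sign conventions of the Leibniz rule. The only remaining term is the $\iota_0$-component, namely
\[-\oalpha(2y+z)+2y\oalpha+d(w).\]
The second step is to identify this $\iota_0$-component with the value of $d_8([\alpha\iota_8])$ modulo boundaries and $\alpha$-multiples. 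Using $\obeta=y\oalpha+\oalpha y$, the $\iota_0$-part is cohomologous to
\[2y\oalpha-2\oalpha y-\oalpha z = 2\obeta-4\oalpha y-\oalpha z.\]
Working mod $3$ and up to coboundaries, the term $-4\oalpha y\equiv -\oalpha y$ must be recombined with $d$ of some cochain. The cleanest check is the symmetric identity $\oalpha y\simeq -y\oalpha+\obeta$. The upshot is that this component is a nonzero multiple of $\beta\iota_0$ plus $\alpha z\iota_0$. The term $\alpha z\iota_0$ is killed in $\EE_5$, because $\alpha\iota_0=0$ there. So we get $d_8([\alpha\iota_8])=\lambda\beta\iota_0$ with $\lambda\in\F_3^\times$, and after normalizing the generator (or the unit in $\beta$) this gives $\beta\iota_0$.

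Alternatively, the same conclusion follows more conceptually from Toda brackets and juggling, as in \eqref{extfsscalpha}. The class $[\alpha\iota_8]$ corresponds to the Massey product $\langle\iota_0,\alpha,2\alpha,\alpha\rangle$-type expression. By Proposition \ref{diffl-att} applied to the AHSS, its $d_8$ is the Massey product $\langle\alpha,2\alpha,\alpha\rangle\iota_0=2\langle\alpha,\alpha,\alpha\rangle\iota_0=2\beta\iota_0$. The $d_8$ differential is then the obstruction given by the $2$-step extension through the $4$-cell, which is the same computation.

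The main obstacle is the bookkeeping of signs and the factor $2$, to make sure the coefficient $\lambda$ is nonzero mod $3$ rather than cancelling. I would verify this first at the cochain level using the explicit formulas of Example \ref{chcxfss}. I would also double-check that the indeterminacy $z$ (from the choice of $2\bar\alpha_1$) only contributes $\alpha$-multiples of $\iota_0$, which vanish in $\EE_5$.
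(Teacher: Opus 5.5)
Your approach is the paper's: the cochain model $C^\ast(G;\pi_\ast E_2)\{\iota_0,\iota_4,\iota_8\}$, with $[\alpha\iota_8]$ represented by $\oalpha\iota_8+2y\iota_4$. The gap is in the $\iota_0$-component, which is the whole content of the proposition: you compute it with the wrong sign. To cancel the $\iota_4$-terms you use the Leibniz rule $d(a\iota)=d(a)\iota-a\,d(\iota)$ for $a$ of cohomological degree $1$, as in Example \ref{chcxfss}. The same rule applied to $a=2y$ gives $d(2y\iota_4)=2\oalpha^2\iota_4-2y\oalpha\iota_0$. Hence the $\iota_0$-component is
\[-\oalpha(2y+z)-2y\oalpha+d(w)=-2\obeta-\oalpha z+d(w),\]
not $-\oalpha(2y+z)+2y\oalpha+d(w)$.

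Your expression differs from the correct one by $4y\oalpha$, and $d(y\oalpha)=\oalpha^3$. So it is not a cocycle unless $\oalpha^3=0$ on the nose. The step ``cohomologous to $2\obeta-4\oalpha y-\oalpha z$'' therefore has no meaning, and no choice of $w$ or auxiliary coboundary repairs it. Your ``symmetric identity'' is just the definition of $\obeta$ rearranged; substituting it still leaves the non-cocycle term $4y\oalpha$. Your stated upshot is asserted, not derived.

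Once the sign is corrected, nothing is left to recombine. Take $w=0$: the $\iota_0$-component is exactly $-2\obeta-\oalpha z$. Since $-2\equiv 1\pmod 3$ and the $\oalpha z$ term contributes nothing, you get $d_8([\alpha\iota_8])=\beta\iota_0$ exactly. No renormalization is needed, and renormalizing would not prove the stated equality anyway. This corrected computation is the paper's proof.

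Two smaller remarks. First, you may take $z=0$ from the outset. $H^1(G;\pi_8E_2)=0$, since there is no class with $s=1$ in stem $7$, so $z=d(w_0)$. Replacing $\iota_8$ by $\iota_8-w_0\iota_0$ gives the paper's $d(\iota_8)=2\oalpha\iota_4+2y\iota_0$. Your alternative argument, that $\alpha z\iota_0$ dies in $\EE_5$, is also valid.

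Second, your Massey-product route is the conceptual version of the same computation, but the justification needs fixing. It rests on the Massey-product description of differentials in a filtered DGA (\cite{May69}, which the paper invokes), not on Proposition \ref{diffl-att}, which only treats classes on the $s=0$ line. As written it also yields $2\beta\iota_0=-\beta\iota_0$, so it determines $d_8$ only up to a unit. That is enough for the later use (killing $\beta\iota_0$ so that $\EE_9$ is concentrated on $s=0$), but it does not give the exact sign in the statement.
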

	
	\begin{proof}
		The diagram given below is the exact couple that leads to the filtration spectral sequence of $C(\alpha_1, 2\alpha_1)$. 
		\[\begin{tikzcd}
			{E_{2}(EO_2\wedge S^0)} & {E_{2}(EO_2\wedge C(\alpha_1))} & {E_2(EO_2 \wedge C(\alpha_1,2\alpha_1))} \\
			{E_{2}(EO_2 \wedge S^4)} && {E_2(EO_2\wedge S^8)}
			\arrow["\delta", from=1-2, to=2-1]
			\arrow["{i_1}", from=1-1, to=1-2]
			\arrow["\delta", from=1-3, to=2-3]
			\arrow["{i_2}", from=1-2, to=1-3]
			\arrow[from=2-3, to=1-2]
			\arrow[from=2-1, to=1-1]
		\end{tikzcd}\]
 Following the notation of Example \ref{chcxfss}, consider the chain complex of $C^\ast(G;\pi_\ast E_2)$-modules 
\[ C^\ast(G;\pi_\ast(E_2))\{\iota_0,\iota_4,\iota_8\}, \quad d(\iota_0)=0, \quad d(\iota_4)=\oalpha\iota_0, \quad d(\iota_8)=2\oalpha\iota_4+2y\iota_0.\]
The filtration spectral sequence $\EE_\ast(EO_2\wedge C(\alpha_1,2\alpha_1))$ is formed by filtering this chain complex via the inclusions $\iota_0 \in \{\iota_0,\iota_4\} \subset \{\iota_0,\iota_4,\iota_8\}$. We now find a representative for $[\alpha \iota_8]$ in the $5^{th}$-page of the filtration spectral sequence. Note 
\[ d(\oalpha \iota_8)= -2\oalpha^2 \iota_4 - 2\oalpha y \iota_0.\]
In order to make this a cycle under $d_4$, one adds a copy of $2y\iota_4$. Therefore the class $[\alpha \iota_8]$ is represented by $\oalpha \iota_8 + 2y \iota_4$ in the $5^{th}$-page. Now we compute 
\[ d(\oalpha \iota_8 + 2y \iota_4)= -2\oalpha y \iota_0  -2y\oalpha \iota_0 = -2 \obeta \iota_0.\] 
As the class $\beta$ has order $3$, we have $d_8([\alpha \iota_8])= -2 \beta \iota_0= \beta \iota_0$.	
	\end{proof}

\vspace{0.2cm}

Proposition \ref{d8fsseo} shows that the filtration spectral sequence for $EO_2 \wedge C(\alpha_1,2\alpha_1)$ is concentrated at the $s=0$ line from the $9^{th}$-page onwards. This implies that there are no non-trivial differentials for the homotopy fixed point spectral sequence as well, and this gives a computation of $E_\infty(EO_2\wedge C(\alpha_1, 2\alpha_1))$.
\begin{myeq}\label{einftyhfpsseo2c[2]}
E_\infty(EO_2\wedge C(\alpha_1,2\alpha_1)) \cong \frac{H^{*}(G;{E_{2}}_{*})}{(\alpha,\beta)}\{\iota_0, [3\iota_4], [3\iota_8]\} 
\end{myeq}

\vspace*{0.2cm}

\begin{figure}
\centering
% [inline block 2: 1 envs, 29161 chars -> data_tex | \begin{tikzpicture} 		\begin{scriptsize}...]

\caption{ The filtration spectral sequence of $EO_2\wedge C(\alpha_1,2\alpha_1)$. The green dots represent multiples of $\iota_4$, and the maroon dots represent multiples of $\iota_8$. The blue arrows describe $d_4$, and the red arrows describe $d_8$.}
\label{eo2calph2alphfiltspseq}
\end{figure}

\vspace*{0.2cm}

\end{mysubsection}

\vspace*{0.2cm}

	\begin{mysubsection}{Computation of ${EO_{2}^{*}(\C P^{m})}$}
%	We compute the $EO_{2}$-cohomology of $\C P^m$. 
As $EO_2$ is $3$-complete, we may replace $\C P^m$ by its $3$-completion ${\C P^m}^\wedge_{3}$, and for this section, all the relevant spectra will be $3$-complete.
	
We follow the method in \cite{Bau03} which computes the additive structure of $tmf^\ast(\C P^\infty)$, and adapt the arguments for $EO_2$. The complex conjugation of line bundles gives a map $c : \C P^\infty \to \C P^\infty$. Now consider $\frac{1+c}{2}: \Sigma^{\infty}_+ \C P^{\infty} \to  \Sigma^{\infty}_+ \C P^{\infty}$ which is an idempotent. This splits  $\Sigma^{\infty}_+ (\C P^{\infty})$  as the wedge $P_{0} \vee P_{1}$, where these two spectra are given by $P_0=\Tel(\frac{1+c}{2})$ and $P_1=\Tel(\frac{1-c}{2})$ ($\Tel$ is the mapping telescope functor). From the action of $c$ on $H^\ast(\C P^\infty;\F_3) \cong \F_3[x]$ as $c(x)=-x$, we may read off $H^\ast(P_0;\F_3)\subset H^\ast(\C P^\infty;\F_3)$ and $H^\ast(P_1;\F_3) \subset H^\ast(\C P^\infty; \F_3)$. 
\[H^{*}(P_0;\F_3)=\F_3 \{1, x^2,\cdots,x^{2k},\cdots \} , \quad H^{*}(P_1;\F_3)= \F_3 \{x,x^3,\cdots,x^{2k+1},\cdots\} \] 
We will write $P^{m}_{i}$ for the $2m$-th skeleta of $P_{i}$. 
	
	We will filter both $P_0$ and $P_1$ by blocks of $C(2\alpha_1,\alpha_1)$. The action of the Steenrod powers $\PP^i$ on the cohomology classes $x^n$ are given by 
\[\PP^{i}(x^m)= \binom{m}{i} x^{m+2i}.\]
Combining this with the fact that $\PP^1$ detects the class $\alpha_1$, we obtain
\[P^{6}_{0}\simeq \Sigma^{4} C(2\alpha_1,\alpha_1), \quad P^{6k}_{0}/P^{6(k-1)}_{0} \simeq \Sigma^{12k-8}C(2\alpha_1,\alpha_1) \mbox{ for } k\geq 2.\] 
The analogous result for $P_1$ is
 \[P^{3}_{1} \simeq \Sigma^{2} C(\alpha_1) , \quad P^{6k+3}_{1}/ P^{6k-3}_{1} \simeq \Sigma^{12k-2} C(2\alpha_1,\alpha_1) \mbox{ for } k\geq 1.\]

\vspace*{0.2cm}

We now compute $EO_2^\ast(\C P^m)$ for $1\leq n \leq \infty$ using the homotopy fixed point spectral sequence 
\[E_2(EO_2^\ast(\C P^m)) = H^s(G; E_2^{-s-t}(\C P^m)) \implies EO_2^{-t}(\C P^m).\]
The filtration spectral sequence takes the form 
\[\EE_2(EO_2^\ast(\C P^m)) = H^s(G;\pi_{s+t}(E_2)[x]/(x^{n+1}), \quad |x|= (0,-2,2)  \implies H^s(G; E_2^{s+t}(\C P^m)). \]
Theorem \ref{FSS} implies that the spectral sequence $\EE_\ast(EO_2^\ast(\C P^m))$ is multiplicative for $1\leq m \leq \infty$, and as the canonical inclusions $\C P^m \to \C P^{m+k}$ respect the filtration, this induces a map of spectral sequences $\EE_\ast(EO_2^\ast(\C P^{m+k})) \to \EE_\ast(EO_2^\ast(\C P^m))$. The following proposition computes the first differential. 
\begin{prop}\label{d4eo2fss}
The differentials $d_2$ and $d_3$ are $0$ in $\EE_\ast(EO_2^\ast(\C P^m))$, and $d_4$ is determined by $d_4(x)=\alpha x^3$ via the multiplicative structure.
\end{prop}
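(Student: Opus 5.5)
The plan is to handle degrees first and then the one nontrivial differential, following the pattern of Proposition \ref{diffl-fss-ku}.

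\emph{Vanishing of $d_2$ and $d_3$.} The $\EE_2$-page is $H^\ast(G;\pi_\ast E_2)[x]/(x^{m+1})$ with $|x|=(0,-2,2)$. A differential $d_r$ has tridegree $(1,-1,r)$. Modulo the transfer image $M_3$, all classes of positive cohomological degree $s$ come from $\F_3[\alpha,\beta,\Delta^{\pm1}]/(\alpha^2)$. In this ring every class of odd $s$ is an $\alpha$-multiple, so its internal degree is $\equiv 3 \pmod 4$ once the $x$-power is accounted for.

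A differential on a generator $x^k$ (with $s=0$) lands in $s=1$. The target is an $\alpha$-multiple of some $x^{k+j}$, with $j$ the filtration jump in units of $2$. Comparing total degrees, $\alpha$ contributes $2p-2=4$ to the cell degree. Hence the only possible nonzero jump is $r=4$, i.e.\ $j=2$. The same argument applies to $M_3$-multiples: these are concentrated in $s=0$ and in internal degrees divisible by $4$, so they cannot produce the needed odd shifts. Consequently $d_2=d_3=0$.

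This parallels the $KU$ case, where the relevant degree was governed by $\eta$. Here the governing degree is that of $\alpha$, equivalently the periodicity $2p-2$ of $H^\ast(G;\pi_\ast E_2)$.

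\emph{Computing $d_4(x)$.} Apply Proposition \ref{diffl-att} to $\C P^3$, whose cells lie in degrees $2,4,6$. The Steenrod operation satisfies $\PP^1(x)=\binom11 x^3=x^3$. Since $\PP^1$ detects $\alpha_1$, the attaching map of the $6$-cell onto the $2$-cell in $\C P^3/\C P^1$ is a unit multiple of $\alpha_1$, after $3$-completion and collapsing the $4$-cell. Equivalently, one can use the splitting $P_1^3\simeq \Sigma^2C(\alpha_1)$.

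In the Atiyah--Hirzebruch spectral sequence for $EO_2$, the class represented by $x$ therefore supports $d_4(x)=u\,\alpha x^3$ for a unit $u$, because $h_\iota(\alpha_1)=\alpha\neq 0$ by Theorem \ref{hureo2}. The lower differentials on $x$ vanish by the previous step. Proposition \ref{diffl-att} then transfers this to the filtration spectral sequence: $d_4(x)=\alpha x^3$ up to a unit. This is exactly the same computation as Proposition \ref{d4eo2calphfss}, read through $\Sigma^2C(\alpha_1)$, and the unit may be normalized to $1$ by rescaling the choice of $\alpha_1$.

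\emph{Extension to all of $\EE_4$.} By Theorem \ref{FSS} the spectral sequence is multiplicative. The classes of $H^\ast(G;\pi_\ast E_2)$, sitting on the $0$-cell, are permanent, so $d_4$ is determined on all of $\EE_4$ by its value on $x$ through the Leibniz rule:
\[
d_4(a x^k)=k\,\alpha a x^{k+2}.
\]
The naturality maps $\EE_\ast(\C P^{m+k})\to \EE_\ast(\C P^m)$ make this formula compatible across all $m\le\infty$.

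The main obstacle I expect is the first step. One must rule out $d_2$ and $d_3$ hitting $M_3$-classes, or being nonzero because of odd-degree transfer classes. The plan is to check directly that $M_3$ is concentrated in $s=0$, so any such differential would have to increase $s$ from a class that is invisible mod $M_3$.
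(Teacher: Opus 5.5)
Your proposal is correct and is essentially the paper's proof: $d_2,d_3$ vanish for degree reasons, and $d_4(x)=\alpha x^3$ is read off from the $3$-local splitting $\C P^3\simeq S^4\vee\Sigma^2C(\alpha_1)$ via Proposition \ref{d4eo2calphfss}. The only difference is packaging: the paper dualizes to $F(\C P^3,EO_2)\simeq\Sigma^{-4}EO_2\vee\Sigma^{-6}EO_2\wedge C(\alpha_1)$ and identifies $x$ with $\Sigma^{-6}\iota_4$, while you apply Proposition \ref{diffl-att} to $\C P^3$ directly, which amounts to the same thing.

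There are some slips that do not affect the argument. First, $\C P^3/\C P^1$ has no $2$-cell; the splitting is what you actually need. Second, two of your side claims are wrong in general: odd-$s$ classes need not have degree $\equiv 3 \pmod 4$ (consider $\alpha\beta$), and $r=4$ is not the only possible jump by degrees (a jump of $28$ onto $\alpha\Delta$ is allowed). All that is needed is that $H^\ast(G;\pi_tE_2)=0$ unless $4\mid t$. Then $d_2(x)\in H^1(G;\pi_2E_2)\,x^2=0$, and $d_3$ has no target because there are no odd cells.
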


\begin{proof}
We directly have $d_2(x)=0$ and $d_3(x)=0$ by degree reasons, and this implies that $\EE_2(EO_2^\ast(\C P^m)) = \EE_4(EO_2^\ast(\C P^m))$. For computing $d_4$, it suffices to consider $m=3$, where we note that the formula for $\PP^i$ implies that 
\[ {\C P^3}_3^\wedge \simeq S^4 \vee P_1^3 \simeq S^4 \vee \Sigma^2 C(\alpha_1).\]
We thus have 
\[F(\C P^3, EO_2) \simeq EO_2 \wedge D(\C P^3) \simeq \Sigma^{-4} EO_2 \vee \Sigma^{-2} EO_2 \wedge D(C(\alpha_1)),\]
and then, we use $D(C(\alpha_1)) \simeq \Sigma^{-4} C(\alpha_1)$, to obtain 
\[F(\C P^3, EO_2) \simeq \Sigma^{-4} EO_2 \vee \Sigma^{-6} EO_2 \wedge C(\alpha_1).\] 
The class $x \in \pi_{-2}F(\C P^3, EO_2)$ is the class $\Sigma^{-6} \iota_4$ \eqref{eo2calphfss} in the above equivalence. Now the proof is complete by Proposition \ref{d4eo2calphfss}.
\end{proof}

\vspace*{0.2cm}

Following Proposition \ref{d4eo2fss}, we have
\[d_4(x^j)= j \alpha x^{j+2} = \begin{cases} 
\alpha x^{j+2} & \mbox{if } j \equiv 1 \pmod{3} \\
2 \alpha x^{j+2} & \mbox{if } j \equiv 2 \pmod{3}\\
0 & \mbox{if } j \equiv 0 \pmod{3}. 
\end{cases} \]
We thus have the following formula for the $5^{th}$-page
\begin{myeq} \label{e5fsseo2cp}
\begin{aligned}
\EE_5\Big((EO_2^\ast(\C P^m)\Big) & \cong  H^{*}(G;{E_{2}}_{*})\{ 1 \} \oplus  \frac{H^{*}(G;{E_{2}}_{*})\{[3x],[\alpha x]\}}{(\alpha  [\alpha x], 3 [\alpha x],\beta  [3x],\alpha  [3x])} \\
& \oplus  \frac{H^{*}(G;{E_{2}}_{*})\{ [3x^{3j+1}]\}}{(\beta  [3x^{3j+1}],\alpha  [3x^{3j+1}])} \mbox{  if } 3(j+1) \leq m \\ 
& \oplus \frac{H^{*}(G;{E_{2}}_{*})\{[3x^{3j+2}],[\alpha x^{3j+2}]\}}{(\alpha  [\alpha x^{3j+2}], 3 [\alpha x^{3j+2}],\beta  [3x^{3j+2}],\alpha  [3x^{3j+2}])} \mbox{  if }  3j+4 \leq m \\
& \oplus \frac{H^{*}(G;{E_{2}}_{*})\{x^{3j}\}}{(\alpha x^{3j})} \mbox{  if } 3j \leq m \\ 
& \oplus  \frac{H^{*}(G;{E_{2}}_{*})\{x^{3j+1}\}}{(\alpha x^{3j+1})} \mbox{  if } 3j+1 = m \mbox{ or } m-1\\ 
& \oplus  H^{*}(G;{E_{2}}_{*})\{ x^{3j+2} \} \mbox{  if } 3j+2 = m \mbox{ or } m-1.
\end{aligned}
\end{myeq}

\vspace*{0.2cm}

From the splitting of $\C P^\infty$ into $P_0$ and $P_1$, we observe that the differentials in the filtration spectral sequence $\EE_\ast(EO_2^\ast(\C P^\infty))$ must respect the skeletal filtration of the $P_i$. Using the map $\EE_\ast EO_2^\ast(\C P^\infty) \to \EE_\ast(EO_2^\ast(\C P^m))$, we see that this must also hold in the latter case. This means that the next possible non-trivial differential is $d_8$.   
\begin{prop}\label{d8eo2fss}
The differential $d_8$ is given by  $d_8([\alpha x^{3j+2}]) = \beta x^{3(j+2)}$ for $3(j+2) \leq m$, and that it is trivial on the other generators. 
\end{prop}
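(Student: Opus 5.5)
The plan is to reduce the computation of $d_8$ to the block computation of Proposition \ref{d8fsseo}, using the splitting $\Sigma^\infty_+\C P^\infty\simeq P_0\vee P_1$ and the identification of consecutive filtration quotients with suspensions of $C(2\alpha_1,\alpha_1)$.

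First, locate the generators $[\alpha x^{3j+2}]$ by parity of the exponent. Since $3j+2$ and $3j+6$ have the same parity as $j$, both $x^{3j+2}$ and $x^{3(j+2)}$ lie in the same summand $P_0$ or $P_1$. Among $x^{3j+2},x^{3j+4},x^{3j+6}$, the three cells are spaced by $4$, which is exactly the $\PP^1$ pattern detecting $\alpha_1$. Specifically, $\PP^1(x^k)=kx^{k+2}$ gives coefficients $2$ and $1$ in the two steps.

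Dualizing, $F(-,EO_2)$ of the block $P_i^{3j+6}/P_i^{3j}$ (respectively $P_i^{3j+6}/P_i^{3j+2}$, after regrading by the skeleta) is a suspension of $EO_2\wedge C(\alpha_1,2\alpha_1)$, up to the units $1,2$. This uses $D(C(i\alpha_1,j\alpha_1))\simeq\Sigma^{-8}C(j\alpha_1,i\alpha_1)$. Under this identification, the cohomology classes $x^{3j+6},x^{3j+4},x^{3j+2}$ correspond, after suspension, to $\iota_0,\iota_4,\iota_8$. The class $[\alpha x^{3j+2}]$ then corresponds to $[\alpha\iota_8]$.

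Next, compare spectral sequences. The map $\C P^m\to$ the block (collapse the lower skeleton, then project onto $P_i$) induces a map of filtration spectral sequences by Theorem \ref{FSS} and the naturality used for $\C P^m\subset\C P^{m+k}$. Applying Proposition \ref{d8fsseo} to the block gives $d_8([\alpha\iota_8])=\beta\iota_0$ up to a unit. Transporting back yields $d_8([\alpha x^{3j+2}])=\pm\beta x^{3(j+2)}$, and the sign is absorbed by a choice of $[\alpha x^{3j+2}]$ or checked directly from the cochain representative $\oalpha\,x^{3j+2}+2y\,x^{3j+4}$ as in the proof of Proposition \ref{d8fsseo}. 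The condition $3(j+2)\le m$ is exactly what is needed for the target cell to exist in $\C P^m$. I would double check compatibility with the quotient to the block, since classes in $\EE_8$ of $\C P^m$ must map injectively in the relevant tridegrees; this is the main technical point.

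Finally, for the other generators ($1$, $x^{3j}$, $[3x^{k}]$, and the top classes near $m$), argue vanishing of $d_8$ in three ways:
\begin{itemize}
\item $[3x^k]$, $1$ and $x^{3j}$ are annihilated by $\alpha$ or lie on the $s=0$ line where targets in $s=1$ would have to be $\alpha$- or $\beta$-multiples. Degree counting in the tridegree $(s+1,t-1,r+8)$ against \eqref{e5fsseo2cp} shows the targets are zero, the odd-$s$ part being only $\alpha$- or $\kappa$-type classes of the wrong internal degree.
\item $x^{3j}$ and $[3x^{3j+1}]$ are images of permanent cycles from $\C P^\infty$, or lie in the Hurewicz image of the blocks (e.g.\ $[3\iota_4],[3\iota_8]$ are permanent by \eqref{einftyhfpsseo2c[2]}).
\item Multiplicativity of Theorem \ref{FSS} then handles products.
\end{itemize}

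The main obstacle is carefully matching the block identification with the skeleta of $P_0,P_1$ (especially the bottom block $P_1^3\simeq\Sigma^2C(\alpha_1)$ and the top truncation near $m$), so that $d_8$ in the block really equals $d_8$ in $\C P^m$.
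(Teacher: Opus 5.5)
Your proposal is correct and follows essentially the same route as the paper. You identify the relevant three-cell piece of $P_0$ or $P_1$ with $\Sigma^{N}C(2\alpha_1,\alpha_1)$, dualize to a suspension of $EO_2\wedge C(\alpha_1,2\alpha_1)$, transport Proposition \ref{d8fsseo} along the quotient map, and dispose of the remaining generators by degree reasons. The technical point you flag is handled in the paper as you suggest: naturality along the inclusions $\C P^{3j+6}\subset \C P^m$, which is injective on the tridegree of $\beta x^{3(j+2)}$ in $\EE_8$, reduces to the case where the block is an honest top quotient via the maps $q_{6k}$ and $q_{6k+3}$.
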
	

\begin{proof}
Using the maps $\C P^m \to \C P^{m+k}$ we note that it suffices to compute $d_8$ on a class supported on $\C P^m$ by considering the spectral sequence for $\C P^{m+8}$. Now, we have the maps  
\[q_{6k}: \C P^{6k} \to P_0^{6k} \to P_0^{6k}/P_0^{6k-6} \simeq \Sigma^{12k - 8} C(2\alpha_1, \alpha_1),\]
\[q_{6k+3}: \C P^{6k+3} \to P_1^{6k+3} \to P_1^{6k+3}/P_1^{6k-3} \simeq \Sigma^{12k - 2} C(2\alpha_1, \alpha_1).\]
%Using these identifications, we readily deduce 
%\[ \C P^{n+8}/\C P^{n-1} \simeq \begin{cases}
%\Sigma^{2n} C(2\alpha_1,\alpha_1) \vee  S^{2n+2} \vee S^{2n+4} & \mbox{if } n\equiv -1 \pmod{3} \\
%\Sigma^{2n} C(2\alpha_1,\alpha_1) \vee  S^{2n+2} \vee S^{2n+4} & \mbox{if } n\equiv 1 \pmod{3} \\
%\Sigma^{2n} C(2\alpha_1,\alpha_1) \vee  S^{2n+2} \vee S^{2n+4} & \mbox{if } n\equiv 0 \pmod{3}.\end{cases} \]
This induces $q_{3k}^\ast$ 
\[ F(\C P^{3k}, EO_2) \leftarrow F(\Sigma^{6k-4} C(2\alpha_1, \alpha_1), EO_2) \simeq EO_2 \wedge \Sigma^{4-6k} C(\alpha_1, 2 \alpha_1). \]
Now Proposition \ref{d8fsseo} applies to show that $d_8([\alpha x^{3j+2}])= \beta x^{3(j+2)}$ if $m\geq 3j+6$.  The $d_8$ is $0$ on the other generators by degree reasons as the relevant filtration degree does not have any classes in positive $s$-degree.  
\end{proof}

\vspace*{0.2cm}

We now apply Proposition \ref{d8eo2fss} to \eqref{e5fsseo2cp} to obtain the following formula for the $9^{th}$-page
\begin{myeq} \label{e9fsseo2cp}
\begin{aligned}
\EE_9(EO_2^\ast(\C P^m) & \cong  H^{*}(G;{E_{2}}_{*})\{ 1 \} \oplus  \frac{H^{*}(G;{E_{2}}_{*})\{[ \alpha x]\}}{(\alpha  [\alpha x], 3 [\alpha x])} \oplus \frac{H^{*}(G;{E_{2}}_{*})\{x^{3}\}}{(\alpha x^{3})} \\
& \oplus  \frac{H^{*}(G;{E_{2}}_{*})\{ [3x^{j}]\}}{(\beta  [3x^{j}],\alpha  [3x^{j}])} \mbox{  if }  j \leq m-3 \mbox{ and } 3\nmid j \\ 
& \oplus \frac{H^{*}(G;{E_{2}}_{*})\{x^{3j}\}}{(\alpha x^{3j}, \beta x^{3j})} \mbox{  if } 6 \leq 3j \leq m \\ 
& \oplus \frac{H^{*}(G;{E_{2}}_{*})\{[\alpha x^{3j+2}]\}}{(\alpha  [\alpha x^{3j+2}], 3 [\alpha x^{3j+2}])} \mbox{  if }  3j+2 = m-2 \mbox{ or } m-3\\
& \oplus  \frac{H^{*}(G;{E_{2}}_{*})\{x^{3j+1}\}}{(\alpha x^{3j+1})} \mbox{  if } 3j+1 = m \mbox{ or } m-1\\ 
& \oplus  H^{*}(G;{E_{2}}_{*})\{ x^{3j+2} \} \mbox{  if } 3j+2 = m \mbox{ or } m-1.
\end{aligned}
\end{myeq}

\vspace*{0.2cm}

In the case $m=\infty$, we have the following expression 
\[
\begin{aligned}
\EE_9(EO_2^\ast(\C P^\infty)) & \cong  H^{*}(G;{E_{2}}_{*})\{ 1 \} \oplus  \frac{H^{*}(G;{E_{2}}_{*})\{[ \alpha x]\}}{(\alpha  [\alpha x], 3 [\alpha x])} \oplus \frac{H^{*}(G;{E_{2}}_{*})\{x^{3}\}}{(\alpha x^{3})} \\
& \oplus  \frac{H^{*}(G;{E_{2}}_{*})\{ [3x^{j}]\}}{(\beta  [3x^{j}],\alpha  [3x^{j}])} \mbox{  if }  3\nmid j \oplus \frac{H^{*}(G;{E_{2}}_{*})\{x^{3j}\}}{(\alpha x^{3j}, \beta x^{3j})} \mbox{  if } j\geq 2.
\end{aligned}
\]
Using the multiplicative structure, we may write $z=x^{3}$ and obtain  
\[ \EE_9(EO_2^\ast(\C P^\infty))  \cong    \frac{\frac{H^\ast(G;{E_2}_\ast)[z]}{(\alpha z, \beta z^2)} \{[3x], [3x^2], [\alpha x] \}}{((\alpha,3,z) [\alpha x],  (\alpha, \beta)[3x], (\alpha,\beta)[3x^2])}.\]
In this expression the only terms that are in positive $s$-degree are $\beta^t z$ and $\beta^t [\alpha x]$. It follows that all the other differentials in the filtration spectral sequence are $0$, and so, this is the $\EE_\infty$-page. For finite $n$, it follows that all classes in the first $3$ lines of \eqref{e9fsseo2cp} survive to the $\EE_\infty$-page. For the other classes, note that $d_{\geq 9}$ is $0$ as the image lies in $r$-filtration $> 2m$. Therefore, $\EE_9(EO_2^\ast(\C P^m)) = \EE_\infty(EO_2^\ast(\C P^m))$. 

\begin{prop}\label{einffsseo2cp}
Define $A_{3,k}$ as the algebra 
\[A_{3,k}= \frac{\frac{H^\ast(G;{E_2}_\ast)[z]}{(\alpha z, \beta z^2, z^{k+1})} \{[3x], [3x^2], [\alpha x]\}}{((\alpha,3,z) [\alpha x], (\alpha, \beta)[3x], (\alpha,\beta)[3x^2])}.\]
In this notation, the $\EE_\infty$-page of the filtration spectral sequence is given by 
\[\begin{aligned} 
&\EE_\infty(EO_2^\ast(\C P^{3k}))  \cong  \frac{\big(A_{3,k}\oplus H^\ast(G;{E_2}_\ast)\{[x^{3k-1}]\}\big)}{(z^{k-1} [3x^2]-3[x^{3k-1}], z[x^{3k-1}], z^k[3x], z^k[3x^2])},\\
&\EE_\infty(EO_2^\ast(\C P^{3k+1}))  \cong  \frac{A_{3,k}\oplus \frac{H^\ast(G;{E_2}_\ast)\{[\alpha x^{3k-1}]\}}{((\alpha,3)[\alpha x^{3k-1}])} \oplus \frac{H^\ast(G;{E_2}_\ast)\{[x^{3k+1}]\}}{(\alpha [x^{3k+1}])}} {(z^{k} [3x]-3[x^{3k+1}], z[\alpha x^{3k-1}], z [x^{3k+1}], z^k[3x^2])},\\
& \EE_\infty(EO_2^\ast(\C P^{3k+2}))  \cong \frac{A_{3,k}\oplus \frac{H^\ast(G;{E_2}_\ast)\{[\alpha x^{3k-1}]\}}{((\alpha,3)[\alpha x^{3k-1}])} \oplus \frac{H^\ast(G;{E_2}_\ast)\{[x^{3k+1}]\}}{(\alpha[x^{3k+1}])}\oplus H^\ast(G;{E_2}_\ast)\{[x^{3k+2}]\}}{\begin{pmatrix} 
z^k[3x]-3[x^{3k+1}], & z^{k} [3x^2]-3[x^{3k+2}], & z[x^{3k+1}],\\
 z[x^{3k+2}], & z[\alpha x^{3k-1}]-\alpha[x^{3k+2}]
\end{pmatrix}}, \\
 &\EE_\infty(EO_2^\ast(\C P^\infty))\cong   A_{3,\infty} \cong \frac{\frac{H^\ast(G;{E_2}_\ast)[z]}{(\alpha z, \beta z^2)} \{[3x], [3x^2], [\alpha x]\}}{((\alpha,3,z) [\alpha x], (\alpha, \beta)[3x], (\alpha,\beta)[3x^2])}.
\end{aligned}\]
 \end{prop}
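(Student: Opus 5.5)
The plan is to read off Proposition \ref{einffsseo2cp} from the $\EE_9$-page \eqref{e9fsseo2cp}, which was already shown to equal the $\EE_\infty$-page. What remains is to repackage the additive description as a module over $H^\ast(G;{E_2}_\ast)$ with generators and relations. The relations come from the multiplicative structure of the filtration spectral sequence (Theorem \ref{FSS}).

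\textbf{The case $m=\infty$.} First I will check that the algebra $A_{3,\infty}$ has exactly the summands listed for $m=\infty$. Set $z=x^3$; this is a permanent $d_4$-cycle because $d_4(x^3)=3\alpha x^5=0$. Then the classes $z^j$, $z^j[3x]$, $z^j[3x^2]$ and $[\alpha x]$ reproduce the summands $x^{3j}$, $[3x^{3j+1}]$, $[3x^{3j+2}]$ and $[\alpha x]$.

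\textbf{Products.} The relations among products are computed in $\EE_5$ or $\EE_9$ using $d_4(x)=\alpha x^3$ and $d_8([\alpha x^{3j+2}])=\beta x^{3j+6}$.
\begin{itemize}
\item $\alpha z=0$, since $\alpha x^3=d_4(x)$.
\item $\beta z^2=0$, since $\beta x^6=d_8([\alpha x^2])$.
\item $z[\alpha x]=[\alpha x^4]=0$, since $\alpha x^4=d_4(x^2)/2$ is a boundary.
\item $(\alpha,\beta)[3x^i]=0$, since $3x^i$ is in the image of the multiplication-by-$3$ lift and the products are boundaries, as in the $C(\alpha_1)$ computation.
\end{itemize}
I will also note that $z^j[3x]$ is the class $[3x^{3j+1}]$, because multiplication is compatible with the filtration.

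\textbf{Finite $m$.} Here I truncate by $z^{k+1}$ and then add the top classes that survive only because their $d_4$- or $d_8$-targets lie above filtration $2m$.
\begin{itemize}
\item For $m=3k$: the class $x^{3k-1}$ survives without being multiplied by $3$, because $d_4(x^{3k-1})=2\alpha x^{3k+1}=0$. So $3[x^{3k-1}]=z^{k-1}[3x^2]$, and its products with $z$ vanish for filtration reasons. The classes $z^k[3x]$ and $z^k[3x^2]$ vanish because they lie above $x^{3k}$.
\item For $m=3k+1$: the extra classes are $[\alpha x^{3k-1}]$, whose $d_8$-target $\beta x^{3k+3}$ is zero, and $x^{3k+1}$, with $\alpha x^{3k+1}=0$ because $\alpha x^{3k+1}=d_4(x^{3k-1})/2$. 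The relation $3[x^{3k+1}]=z^k[3x]$ follows as before.
\item For $m=3k+2$: $x^{3k+2}$ also survives. The relation $z[\alpha x^{3k-1}]=\alpha[x^{3k+2}]$ is forced because both classes represent $\alpha x^{3k+2}$ on the nose in $\EE_2$.
\end{itemize}

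\textbf{Main obstacle.} The step I expect to be hardest is this extension bookkeeping: showing that relations such as $3[x^{3k-1}]=z^{k-1}[3x^2]$ hold exactly rather than up to higher-filtration terms. I would settle this by using the explicit cochain representatives, as in Example \ref{chcxfss}, together with naturality under $\C P^m\to\C P^{m+k}$. Since all of these surviving classes sit in $s=0$, the comparison map to $\C P^\infty$ and the restriction maps are injective there, so the relations can be checked by restriction.
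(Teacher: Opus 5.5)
Your proposal is correct and follows the paper's own route: take the $\EE_9=\EE_\infty$ page and repackage it with $z=x^3$ using the multiplicative structure of the filtration spectral sequence, with the relations coming from $d_4(x)=\alpha x^3$, $d_8([\alpha x^{3j+2}])=\beta x^{3j+6}$, and truncation at $x^{m+1}$. Two small corrections. First, $(\alpha,\beta)[3x^i]=0$ holds on the nose in $\EE_2$ because $\alpha$ and $\beta$ are $3$-torsion, not because the products are boundaries. Second, the worry in your last paragraph does not arise, since products in $\EE_\infty$ are inherited exactly from $\EE_2=H^\ast(G;{E_2}_\ast)[x]/(x^{m+1})$; this matters because the injectivity of restriction maps you invoke there is false, and $[x^{3k-1}]$ in $\C P^{3k}$ does not even come from $\C P^\infty$.
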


\vspace*{0.2cm}

Let $E_2(EO_2^\ast(\C P^m))$ denote the $2^{nd}$-page of the homotopy fixed point spectral sequence. 

\begin{notation} \label{noteo2cpn}
We use the following notations
\[ x_{(\alpha)} := [\alpha x] \in E_2^{1, 1}(EO_2^\ast(\C P^m)), \quad x_{(j,3)}:= [3 x^j] \in E_2^{0,-2j}(EO_2^\ast(\C P^m)) \mbox{ for } j\in \{1,2\} ,\]
\[ x_{(j)}:= [x^j] \in E_2^{0,-2j}(EO_2^\ast(\C P^m)) \mbox{ if } j=n \mbox{ or } n-1, 3\nmid j,\]
 \[ x_{(3k-1, \alpha)}:= [\alpha x^{3k-1}] \in E_2^{1,5-6k}(EO_2^\ast(\C P^{3k+2})) \mbox{ and } E_2^{1,5-6k}(EO_2^\ast(\C P^{3k+1})),\]
\end{notation}

\vspace*{0.2cm}

 As in \eqref{extfsscalpha}, we have the multiplicative extensions 
\[\alpha x_{(\alpha)} = \beta z, \alpha x_{(3k-1, \alpha)}= \beta x_{(3k+1)}. \]
We first compute the spectral sequence for $n=\infty$. In the above notation 
\[E_2(EO_2^\ast(\C P^\infty))  \cong    \frac{\frac{H^\ast(G;{E_2}_\ast)[z]}{(\alpha z, \beta z^2)} \{x_{(1,3)}, x_{(2,3)}, x_{(\alpha)} \}}{(\alpha x_{(\alpha)}-\beta z, (3,z) x_{(\alpha)}, (\alpha, \beta)x_{(1,3)}, (\alpha,\beta) x_{(2,3)})}.\]
Here, we note that the only classes with positive $s$-degree are multiples of $1$, $x_{(\alpha)}$, and $z$, and that due to degree reasons, $z$, $x_{(\alpha)}$, $x_{(1,3)}$ and $x_{(2,3)}$ are permanent cycles. We note that the first non-trivial differential is $d_5$, and from \eqref{dhfpsseo2} and \eqref{d5hfpsscalph}, we have 
\[ d_5(\Delta)= \alpha \beta^2, \quad d_5(\Delta x_{(\alpha)})= \beta^3 z.\]
Thus, we have 
\begin{myeq}\label{e6hfpsseo2cpinf}
E_6(EO_2^\ast(\C P^\infty))  \cong  \frac{\frac{ \F_3[\alpha, \beta,\Delta^{\pm 3},z]}{(\alpha^2, \alpha \beta^2, \alpha z, \beta z^2, \beta^3 z) }{\resizebox{.3\hsize}{!}{$\{1,[\Delta \alpha], [\Delta^2 \alpha], [\Delta z], [\Delta^2 z], x_{(\alpha)}\}$}}}{\resizebox{.5\hsize}{!}{$\begin{pmatrix} 
\alpha x_{(\alpha)}-\beta z, & (\alpha,z,\beta^2) [\Delta \alpha] , & (\alpha, z)[\Delta^2 \alpha], \\
(\alpha, \beta z, \beta^3) [\Delta z],& (\alpha, \beta z, \beta^3)[\Delta^2 z], & zx_{(\alpha)}
\end{pmatrix}$}} \pmod{(M_3,x_{(1,3)}, x_{(2,3)})}. 
\end{myeq}
The next possible differential is $d_9$. We use \eqref{dhfpsseo2} and \eqref{d9hfpsscalph} to obtain 
\[d_9([\Delta^2 \alpha]=\beta^5, \quad d_9([\Delta^2 z])=\beta^4 x_{(\alpha)}. \]
Using this we compute the $10^{th}$-page as (recall the notation $b=[\Delta \alpha]$)
\begin{myeq}\label{e10hfpsseo2cpinf}
E_{10}(EO_2^\ast(\C P^\infty))  \cong  \frac{\frac{ \F_3[\alpha, \beta,\Delta^{\pm 3},z]}{(\alpha^2, \alpha \beta^2, \beta^5, \alpha z, \beta z^2, \beta^3 z) }\{1,b, [\Delta z],  x_{(\alpha)}\}}{\resizebox{.4\hsize}{!}{ $\begin{pmatrix}
\alpha x_{(\alpha)}-\beta z,& \beta^4 x_{(\alpha)}, &(\alpha,z,\beta^2) b , \\
 (\alpha, \beta z, \beta^3) [\Delta z],& zx_{(\alpha)}
\end{pmatrix}$}} \pmod{(M_3,x_{(1,3)}, x_{(2,3)})}.
\end{myeq}
We have no further room for differentials, and so $E_{10}(EO_2^\ast(\C P^\infty)) \cong E_\infty(EO_2^\ast \C P^\infty))$. There is a multiplicative extension given by $\alpha b = \beta^3$. We summarize this computation in the following theorem using the notation
\[z_{(\Delta)} := [\Delta z] \in EO_2^{-18}(\C P^\infty).\] 
\begin{thm} \label{eo2cpinfty}
The $EO_2$-cohomology of $\C P^\infty$ is described by 
\[EO_2^\ast(\C P^\infty) \cong \frac{\frac{{EO_2}_\ast [z, z_{(\Delta)}]}{((\alpha,\beta^3,\beta z) z, (\alpha, \beta z, \beta^3) z_{(\Delta)}, z_{(\Delta)}^3-\Delta^3z^3 )}\{1, x_{(\alpha)}, x_{(1,3)}, x_{(2,3)}\}}{ (\alpha x_{(\alpha)} - \beta z, bz-\beta^2 x_{(\alpha)}, b x_{(\alpha)} - \beta z_{(\Delta)}, (\beta^4,z) x_{(\alpha)}, (\alpha,\beta)x_{(1,3)}, (\alpha, \beta)x_{(2,3)}  )} ,\]
where the cohomological degree of the generators are given by $|z|=6$, $|x_{(\alpha)}|=-1$, $|x_{(1,3) }|=2$, $|x_{(2,3)}|=4$, and $|z_{(\Delta)}|=-18$. 
\end{thm}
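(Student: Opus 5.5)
The plan is to read off $EO_2^\ast(\C P^\infty)$ from the $E_\infty$-page \eqref{e10hfpsseo2cpinf}. We then solve the remaining additive and multiplicative extension problems, and reinsert the classes lying in $M_3$ together with $x_{(1,3)}$ and $x_{(2,3)}$, which were suppressed in that computation.

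\textbf{Convergence and identification of generators.} The first step is to confirm that the spectral sequence converges strongly. After $E_{10}$ the classes of positive $s$-degree are concentrated in $s\le 8$ (a horizontal vanishing line), so conditional convergence from Theorem \ref{HFPSS} upgrades to strong convergence. Next we name lifts to $EO_2^\ast(\C P^\infty)$ of $z$, $x_{(\alpha)}$, $x_{(1,3)}$, $x_{(2,3)}$ and $z_{(\Delta)}=[\Delta z]$, all of which are permanent cycles. Since $\Delta^3$ is a permanent cycle in $E_\infty(EO_2)$, the ${EO_2}_\ast$-module generated by these lifts surjects onto the associated graded. The claimed degrees follow directly: for example $[\Delta z]$ has stem $24-6=18$, so it has cohomological degree $-18$.

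\textbf{Relations.} The relations $(\alpha,\beta^3,\beta z)z$, $(\alpha,\beta z,\beta^3)z_{(\Delta)}$, $(\beta^4,z)x_{(\alpha)}$ and $(\alpha,\beta)x_{(1,3)}$, $(\alpha,\beta)x_{(2,3)}$ can be read directly from \eqref{e10hfpsseo2cpinf}. For each we must check that no element of higher filtration could correct it; in each case the target group in higher filtration is zero in the relevant bidegree. The relation $\alpha x_{(\alpha)}=\beta z$ is already present on $E_2$, coming from \eqref{extfsscalpha}.

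The remaining relations are hidden extensions, which I would derive as follows.

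\begin{itemize}
\item For $bz=\beta^2x_{(\alpha)}$: both sides are detected in the same bidegree, and $\alpha bz=\beta^3 z\neq 0$, whereas $b z$ is zero on $E_\infty$. I would mimic the argument for $b\iota_0=\beta^2\kappa_7$ in $EO_2\wedge C(\alpha_1)$, pulling it back along the map $q_3^\ast$ to $\Sigma^{-6}EO_2\wedge C(\alpha_1)$ used in Proposition \ref{d4eo2fss}. Under this map, $\iota_0\mapsto z$ and $\kappa_7\mapsto x_{(\alpha)}$.
\item For $b x_{(\alpha)}=\beta z_{(\Delta)}$: I would use the Toda-bracket juggling $b=\langle\alpha,\alpha,\beta^2\rangle$ together with $\alpha x_{(\alpha)}=\beta z$. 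This gives $b x_{(\alpha)}=\langle \alpha,\alpha,\beta^2\rangle x_{(\alpha)}$, which contains $\beta\langle\alpha,\beta z,\ldots\rangle$; this is then matched to $\beta[\Delta z]$ via the $d_5$ relation $d_5(\Delta x_{(\alpha)})=\beta^3 z$.
\item For $z_{(\Delta)}^3=\Delta^3 z^3$: this holds already on $E_\infty$ by multiplicativity, since $[\Delta z]^3=[\Delta^3 z^3]$. Any correction would lie in higher filtration, and such classes must be $\beta$-multiples; these are killed by the relations $\beta z^2=0$ and $\beta^3 z=0$.
\end{itemize}

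Finally, we restore the $M_3$ and $x_{(j,3)}$ summands. These lie in filtration $0$, are transfer images, and support no extensions, by the same argument as for $EO_2$ itself.

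\textbf{Main obstacle.} I expect the hidden extension $b x_{(\alpha)}=\beta z_{(\Delta)}$, and more generally making the Toda-bracket juggling rigorous in $EO_2^\ast(\C P^\infty)$, to be the hardest step. My first attempt would be to reduce it to the cell complex $\Sigma^{-6}EO_2\wedge C(\alpha_1)$ via the skeletal map, where the analogous relation follows from the cofiber sequence \eqref{cofcalpha1}. If needed, I would work on $\C P^{6}$, a finite skeleton, and then pass to the limit.
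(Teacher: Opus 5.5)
Your route is the paper's route made explicit: read $E_{10}=E_\infty$ off \eqref{e10hfpsseo2cpinf}, then resolve extensions, partly by restricting to the $\Sigma^{-6}EO_2\wedge C(\alpha_1)$ summand of $F(\C P^3,EO_2)$. The gap is in the step where you read off $(\alpha,\beta z,\beta^3)z_{(\Delta)}$ because ``the target group in higher filtration is zero''. This fails for $\alpha z_{(\Delta)}$, and that relation is in fact false.

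The class $\alpha z_{(\Delta)}$ lies in stem $21$, where $\beta^2x_{(\alpha)}$ survives in filtration $5$. Take two relations you are proving, $bx_{(\alpha)}=\beta z_{(\Delta)}$ and $\alpha b=\beta^3$. They give $\beta\cdot\alpha z_{(\Delta)}=\alpha b\,x_{(\alpha)}=\beta^3x_{(\alpha)}$. This is nonzero, since only $\beta^4x_{(\alpha)}$ is hit (by $d_9([\Delta^2 z])$). So the displayed relations are inconsistent: together they force $\beta^3x_{(\alpha)}=0$.

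This does not depend on trusting $bx_{(\alpha)}=\beta z_{(\Delta)}$. In $EO_2\wedge C(\alpha_1)$, exactness for \eqref{cofcalpha1} gives $j_\ast\kappa_7=\pm\alpha$, and $j_\ast[\Delta\iota_0]\equiv\pm\beta^2$ modulo $M_3$. Hence $j_\ast(b\kappa_7)=\pm\beta^3=\pm j_\ast(\beta[\Delta\iota_0])$. The difference $b\kappa_7\mp\beta[\Delta\iota_0]$ therefore lies in $\iota_0\cdot\pi_{34}EO_2\subset\iota_0 M_3$, which $\alpha$ annihilates. So $\alpha\beta[\Delta\iota_0]=\pm\beta^3\kappa_7\neq 0$ by \eqref{einftyhfpsseo2calph}. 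On that summand $z_{(\Delta)}$ restricts to a unit multiple of $[\Delta\iota_0]$, so $\alpha z_{(\Delta)}\neq 0$.

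The correct relation is $\alpha z_{(\Delta)}=bz$ $(=\beta^2x_{(\alpha)})$. It is the natural one, since both sides are ``$\Delta\alpha z$''. The paper reads $\alpha z_{(\Delta)}=0$ off $E_{10}$ exactly as you do, so the statement cannot be proved verbatim. A complete extension analysis would have caught this.

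Two further repairs are needed.

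First, your argument for $bz=\beta^2x_{(\alpha)}$ rests on ``$\alpha bz=\beta^3z\neq0$'', which is false. We have $\beta^3z=d_5(\Delta x_{(\alpha)})$ and nothing lies above it in stem $24$, so $\beta^3z=0$; it is one of the relations you list. The $C(\alpha_1)$ comparison is the right tool, but restriction along $\C P^3\subset\C P^\infty$ only shows that $bz-\beta^2x_{(\alpha)}$ restricts to zero. You must add that restriction is injective on positive filtration in stem $21$. This holds because that part is spanned by $\beta^2x_{(\alpha)}$, which maps to $\beta^2\kappa_7\neq0$, and $b\iota_0=\beta^2\kappa_7$ is the paper's extension in $EO_2\wedge C(\alpha_1)$.

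The same argument works in stem $28$, using $b\kappa_7=\pm\beta[\Delta\iota_0]$ from above and $\beta z_{(\Delta)}\mapsto\pm\beta[\Delta\iota_0]\neq0$; it gives $bx_{(\alpha)}=\pm\beta z_{(\Delta)}$. This replaces your Toda-bracket sketch, which appeals to $d_5(\Delta x_{(\alpha)})=\beta^3z$. The relevant input there would instead be $d_5(\Delta)=\alpha\beta^2$ together with $\alpha z=0$.

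Second, you verify that the listed relations hold but never that they are complete, and they are not. Consider $bx_{(1,3)}$ (stem $25$) and $bz_{(\Delta)}$ (stem $45$). Both lie in positive filtration in stems with no positive-filtration classes on $E_\infty$, so both vanish, yet the display does not kill them. Compare the ideal $([\Delta^j\alpha],\beta)$ used in Theorem \ref{eoncpinfty}.
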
  

\vspace*{0.2cm}

For $m<\infty$, one may proceed similarly and derive the analogues of \eqref{e6hfpsseo2cpinf} and \eqref{e10hfpsseo2cpinf}. The map $\C P^m \to \C P^\infty$ gives a multiplicative map $E_\ast(EO_2^\ast(\C P^\infty))\to E_\ast(EO_2^\ast(\C P^m))$, and this allows us to read off the differentials on the multiples of $x_{(\alpha)}$ and $z$. For the other differentials, we consider maps $\C P^m \to \C P^m/\C P^{m-4}$, which induces $E_2(EO_2^\ast(\C P^m/\C P^{m-4}))\to E_2(EO_2^\ast(\C P^m))$. Now $\C P^m/\C P^{m-4}$ is a wedge of $C(\alpha_1)$s and spheres, so we can understand the differentials from \eqref{d5hfpsscalph} and \eqref{d9hfpsscalph}. We obtain following Proposition \ref{einffsseo2cp} (see Notation \ref{noteo2cpn})
\[ x_{(3k-1)}, \mbox{ } x_{(3k+1)}, \mbox{ and } x_{(3k+2)}  \mbox{  are permanent cycles}, \]
\[  \alpha x_{(3k-1,\alpha)} = \beta x_{(3k+1)}, \quad d_5(\Delta x_{(3k-1, \alpha)})= \beta^3 x_{(3k+1)}, d_9 ([\Delta^2 x_{(3k+1)}])= \beta^4 x_{(3k-1, \alpha)}.\]
We denote $x_{(3k+1,\Delta)}:=[\Delta x_{(3k+1)}]\in EO_2^{6k-18}(\C P^{3k+e})$ for $e\in \{1,2\}$.  The relation $\alpha x_{(3k-1,\alpha)} = \beta x_{(3k+1)}$ also gives us the relation $b x_{(3k-1,\alpha)}=\beta x_{(3k+1,\Delta)}$.  
We now compute the $E_\infty$-page and deduce the following result
\begin{thm}\label{eo2cpn}
The $EO_2$-cohomology of $\C P^m$ for $m\geq 4$ is described by
\[ EO_2^\ast(\C P^{3k}) \cong \frac{\frac{EO_2^\ast(\C P^\infty)}{(z^{k+1},z^k x_{(1,3)}, z^k x_{(2,3)})}\{1,x_{(3k-1)}\}}{(z^{k-1}x_{(2,3)}-3x_{(3k-1)}, x_{(1,3)}x_{(3k-1)}-3z^k, (z,z_{(\Delta)}, x_{(\alpha)}, x_{(2,3)})x_{(3k-1)}) },\]
\[EO_2^\ast(\C P^{3k+1}) \cong \frac{\frac{EO_2^\ast(\C P^\infty)}{(z^{k+1}, z^k x_{(2,3)})}\{1,x_{(3k-1,\alpha)}, x_{(3k+1)},x_{(3k+1,\Delta)}\}}{\begin{pmatrix} 
\alpha x_{(3k-1, \alpha)}-\beta x_{(3k+1)}, &  (\beta^4,z, I) x_{(3k-1,\alpha)}, &  (\alpha,  z, \beta^3,I) x_{(3k+1)},  \\  
bx_{(3k-1,\alpha)} - \beta x_{(3k+1,\Delta)}, &z^k x_{(1,3)} - 3 x_{(3k+1)}, &(\alpha,z,b,\beta^3,I)x_{(3k+1,\Delta)}, \\
bx_{(3k+1)}-\beta^2 x_{(3k-1,\alpha)}
\end{pmatrix}} ,\]
\[EO_2^\ast(\C P^{3k+2}) \cong \frac{\frac{EO_2^\ast(\C P^\infty)}{(z^{k+1})}\{1,x_{(3k-1,\alpha)}, x_{(3k+1)},x_{(3k+1,\Delta)}, x_{(3k+2)} \}}{\resizebox{.8\hsize}{!}{$\begin{pmatrix} 
\alpha x_{(3k-1, \alpha)}-\beta x_{(3k+1)}, &  (\beta^4,I) x_{(3k-1,\alpha)}, &  (\alpha,  z, \beta^3, z_{(\Delta)}, x_{(2,3)}) x_{(3k+1)},  \\  
bx_{(3k-1,\alpha)} - \beta x_{(3k+1,\Delta)}, &z^k x_{(1,3)} - 3 x_{(3k+1)}, &(\alpha,z,b,\beta^3, z_{(\Delta)}, x_{(2,3)})x_{(3k+1,\Delta)},\\
 zx_{(3k-1,\alpha)}-\alpha x_{(3k+2)}, & (z,I) x_{(3k+2)}, & z^kx_{(2,3)}-3 x_{(3k+2)}, \\
x_{(\alpha)}x_{(3k+1)}-\alpha x_{(3k+2)}, & x_{(\alpha)} x_{(3k+1,\Delta)} - b x_{(3k+2)}, & x_{(1,3)} x_{(3k+1)}-3 x_{(3k+2)}  \\
bx_{(3k+1)}-\beta^2 x_{(3k-1,\alpha)}
\end{pmatrix}$}}  ,\]
where $I=(x_{(\alpha)}, x_{(1,3)},x_{(2,3)}, z_{(\Delta)}) \subset EO_2^\ast(\C P^\infty)$. 
\end{thm}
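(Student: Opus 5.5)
The plan is to run the homotopy fixed point spectral sequence
\[E_2(EO_2^\ast(\C P^m)) = H^s(G;E_2^{-s-t}(\C P^m)) \implies EO_2^{-t}(\C P^m),\]
using as its $E_2$-page the $\EE_\infty$-page from Proposition \ref{einffsseo2cp}, after resolving the additive extensions of the filtration spectral sequence into named classes. For each residue $m=3k+e$ I would first rewrite Proposition \ref{einffsseo2cp} in Notation \ref{noteo2cpn}. The summand $A_{3,k}$ is the image of $E_2(EO_2^\ast(\C P^\infty))$ under the restriction $\C P^m\to\C P^\infty$, cut down by $z^{k+1}$ and by the relations killing $z^k x_{(1,3)}$ and $z^k x_{(2,3)}$ whenever the corresponding power exceeds $m$. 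The new top classes are $x_{(3k-1)}$, $x_{(3k-1,\alpha)}$, $x_{(3k+1)}$ and $x_{(3k+2)}$. The relations of the form $z^{k-1}x_{(2,3)}=3x_{(3k-1)}$ and $z^kx_{(1,3)}=3x_{(3k+1)}$ are additive extensions already visible in the filtration spectral sequence. The extension $\alpha x_{(3k-1,\alpha)}=\beta x_{(3k+1)}$ is proved exactly as in \eqref{extfsscalpha}, through the chain-level computation of Example \ref{chcxfss} applied to the top $C(\alpha_1)$-block.

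Next I determine the differentials. Restriction $E_\ast(EO_2^\ast(\C P^\infty))\to E_\ast(EO_2^\ast(\C P^m))$ is multiplicative, so Theorem \ref{eo2cpinfty} transports the differentials on multiples of $1$, $z$ and $x_{(\alpha)}$, namely $d_5(\Delta)=\alpha\beta^2$, $d_5(\Delta x_{(\alpha)})=\beta^3z$ and $d_9([\Delta^2 z])=\beta^4x_{(\alpha)}$. For the top classes I use the quotient map $\C P^m\to \C P^m/\C P^{m-4}$. After $3$-completion, the relevant part of $\C P^m/\C P^{m-4}$ splits via $P_0,P_1$ as a wedge of spheres and (suspended) copies of $C(\alpha_1)$; by duality $F(-,EO_2)$ of this wedge is a sum of shifts of $EO_2$ and $EO_2\wedge C(\alpha_1)$. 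Then \eqref{d5hfpsscalph} and \eqref{d9hfpsscalph} give
\[d_5(\Delta x_{(3k-1,\alpha)})=\beta^3x_{(3k+1)}, \qquad d_9([\Delta^2x_{(3k+1)}])=\beta^4x_{(3k-1,\alpha)},\]
and show that $x_{(3k-1)}$, $x_{(3k+1)}$ and $x_{(3k+2)}$ are permanent, since they are pulled back from permanent classes. Degree sparsity (only $d_5$ and $d_9$ can be nonzero, as in \eqref{dhfpsseo2}) then gives the $E_{10}=E_\infty$ page, and $x_{(3k+1,\Delta)}:=[\Delta x_{(3k+1)}]$ survives.

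Finally I read off the ring structure and solve the multiplicative extensions. The relation $\alpha b=\beta^3$ together with Toda-bracket juggling (as used for $\alpha b=\beta^3$ in \eqref{eo2htpygp}) gives $bx_{(3k-1,\alpha)}=\beta x_{(3k+1,\Delta)}$ and $bx_{(3k+1)}=\beta^2x_{(3k-1,\alpha)}$; the latter is analogous to the hidden extension $b\iota_0=\beta^2\kappa_7$ for $EO_2\wedge C(\alpha_1)$, which I would transport along the quotient map. The cross relations with $I$, such as $x_{(\alpha)}x_{(3k+1)}=\alpha x_{(3k+2)}$, $zx_{(3k-1,\alpha)}=\alpha x_{(3k+2)}$ and $x_{(1,3)}x_{(3k+1)}=3x_{(3k+2)}$, come from products in $\EE_\infty$ computed on representatives $x^j$, with products landing above degree $2m$ set to zero.

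I expect the main obstacle to be the hidden extensions and the products between top classes and the classes of $I$. In particular, one has to rule out extra corrections by $M_3$ or by higher-filtration classes, and that control is exactly what fixes the listed relations and distinguishes the case $m=3k+1$ from $m=3k+2$. For these I would first try comparing with $C(\alpha_1)$ via the quotient maps. If that fails, I would fall back on the chain-level representatives in $C^\ast(G;E_{2\ast})$.
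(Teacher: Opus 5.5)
Your proposal follows essentially the same route as the paper. You pull back the differentials on multiples of $1$, $z$, $x_{(\alpha)}$ along $\C P^m\to\C P^\infty$. You get the $d_5$ and $d_9$ on the top classes, and the extension $bx_{(3k+1)}=\beta^2x_{(3k-1,\alpha)}$, from the quotient $\C P^m\to\C P^m/\C P^{m-4}$ (a $3$-complete wedge of spheres and copies of $C(\alpha_1)$), using the $C(\alpha_1)$ computations. You then read off $E_{10}=E_\infty$ and the products from the filtration spectral sequence representatives.

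One small point: $bx_{(3k-1,\alpha)}=\beta x_{(3k+1,\Delta)}$ does not need Toda-bracket juggling with $\alpha b=\beta^3$. It is already an $E_2$-page product, obtained by multiplying $\alpha x_{(3k-1,\alpha)}=\beta x_{(3k+1)}$ by $\Delta$ (recall $b=[\Delta\alpha]$), which is how the paper obtains it.
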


	\end{mysubsection}

\vspace*{0.2cm}

\begin{mysubsection}{The $EO_2$-cohomology of $\Hyp P^m$} The cohomology $EO_2^\ast(\Hyp P^m)$ of the quaternionic projective space may be directly read off from $EO_2^\ast(\C P^m)$. In the case $n=\infty$, the spectrum $\Sigma^\infty \Hyp P^\infty$ is the summand $P_0$ of $\Sigma^\infty \C P^\infty$. It follows that $EO_2^\ast \Hyp P^m$ is precisely the subring of $EO_2^\ast \C P^m$ comprising the even generators. We use the following notations
\[
\begin{aligned} 
& y \in EO_2^{12}(\Hyp P^\infty) \mbox{ corresponds to the class } z^2 \in EO_2^{12}(\C P^\infty)\\
&y_{(\Delta)} \in EO_2^{-12}(\Hyp P^\infty) \mbox{ corresponds to the class } zz_{(\Delta)} \in EO_2^{-12}(\C P^\infty)\\
&y_{(\Delta^2)} \in EO_2^{-36}(\Hyp P^\infty) \mbox{ corresponds to the class } z_{(\Delta)}^2 \in EO_2^{-36}(\C P^\infty)\\
& x_{(4,3)} \in EO_2^{8}(\Hyp P^\infty) \mbox{ corresponds to the class } zx_{(1,3)} \in EO_2^{8}(\C P^\infty)\\
&x_{(6k+2)} \in EO_2^{12k+4}(\Hyp P^{3k+1}) \mbox{ corresponds to the class } x_{(6k+2)} \in EO_2^{12k+4}(\C P^{6k+2})\\
&x_{(6k+4)} \in EO_2^{12k+8}(\Hyp P^{3k+2}) \mbox{ corresponds to the class } x_{(6k+4)} \in EO_2^{12k+8}(\C P^{6k+4})\\
&x_{(6k+4,\Delta)} \in EO_2^{12k-16}(\Hyp P^{3k+2}) \mbox{ corresponds to the class } x_{(6k+4,\Delta)} \in EO_2^{12k-16}(\C P^{6k+4})\\
\end{aligned}
\]

\vspace*{0.2cm}

In terms of this notation, we have the following result that may be read off from Theorem \ref{eo2cpn} and Theorem \ref{eo2cpinfty}.
\begin{thm}\label{eo2hpn}
The $EO_2$-cohomology of $\Hyp P^\infty$ is described by 
\[EO_2^\ast(\Hyp P^\infty) \cong \frac{\frac{{EO_2}_\ast [y]}{((\alpha, \beta) y)}\{1, y_{(\Delta)},y_{(\Delta^2)}, x_{(2,3)}, x_{(4,3)}\}}{  ((\alpha, \beta ) y_{(\Delta)},(\alpha,\beta) y_{(\Delta^2)},  (\alpha,\beta)x_{(2,3)}, (\alpha, \beta)x_{(4,3)}  )} .\]

The $EO_2$-cohomology of $\Hyp P^m$ for $n\geq 3$ is described by
\[ EO_2^\ast(\Hyp P^{3k}) \cong\frac{EO_2^\ast(\Hyp P^\infty)}{(y^{k+1},y^k x_{(2,3)}, y^k x_{(4,3)})},\]
\[ EO_2^\ast(\Hyp P^{3k+1}) \cong \frac{\frac{EO_2^\ast(\Hyp P^\infty)}{(y^{k+1}, y^k x_{(4,3)})}\{1,x_{(6k+2)}\}}{((y,I)x_{(6k+2)}, y^k x_{(2,3)} - 3 x_{(6k+2)})} ,\]
\[EO_2^\ast(\Hyp P^{3k+2}) \cong \frac{\frac{EO_2^\ast(\Hyp P^\infty)}{(y^{k+1}, y^k x_{(2,3)})}\{1,x_{(6k+2,\alpha)}, x_{(6k+4)},x_{(6k+4,\Delta)}\}}{\begin{pmatrix} 
\alpha x_{(6k+2, \alpha)}-\beta x_{(6k+4)}, &  (\beta^4,y, I) x_{(6k+2,\alpha)}, &  (\alpha,   \beta^3,y, I) x_{(6k+4)},  \\  
bx_{(6k+2,\alpha)} - \beta x_{(6k+4,\Delta)}, &y^k x_{(2,3)} - 3 x_{(6k+2)}, &(\alpha,b,\beta^3, y, I)x_{(6k+4,\Delta)},\\
 y^k x_{(4,3)}-3 x_{(6k+4)} , & bx_{(6k+4)}-\beta^2 x_{(6k+2,\alpha)} 
\end{pmatrix}}  ,\]
where $I=(y_{(\Delta)}, y_{(\Delta^2)}, x_{(2,3)},x_{(4,3)} ) \subset EO_2^\ast(\Hyp P^\infty)$. 
\end{thm}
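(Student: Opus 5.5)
We deduce Theorem \ref{eo2hpn} from Theorem \ref{eo2cpinfty} and Theorem \ref{eo2cpn} via the stable splitting $\Sigma^\infty_+\C P^\infty\simeq P_0\vee P_1$. The basic input is that the $3$-completion of $\Sigma^\infty_+\Hyp P^\infty$ is equivalent to $P_0$. The canonical map $\C P^{2m+1}\to \Hyp P^m$ induces on $H^\ast(-;\F_3)$ the inclusion $\F_3[y]\to \F_3[x]$ with $y\mapsto x^2$ (up to unit), whose image is exactly $H^\ast(P_0;\F_3)$. Hence the composite $\Sigma^\infty_+\C P^\infty\to \Sigma^\infty_+\Hyp P^\infty$ restricted to $P_0$ is a $3$-adic homology isomorphism of $3$-complete connective spectra, so an equivalence. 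The same argument on skeleta gives $\Sigma^\infty_+\Hyp P^m\simeq P_0^{2m}$ (the $4m$-skeleton of $P_0$). Therefore $EO_2^\ast(\Hyp P^m)$ is the image of the idempotent $\frac{1+c}{2}$ acting on $EO_2^\ast(\C P^{2m})$, or on $EO_2^\ast(\C P^{2m+1})$, whose $P_0$-part agrees in degrees $\le 4m$.

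Next we identify the $c$-invariant part. On the $E_2$- and $\EE_\infty$-pages, $c^\ast x=-x$ modulo higher filtration, so $c$ acts on the generator $[\ldots x^j]$ by $(-1)^j$. Thus $P_0$ picks out the classes of even $x$-degree: $1$, $x_{(2,3)}$, $z^2$, $zx_{(1,3)}$, $zz_{(\Delta)}$, $z_{(\Delta)}^2$, the products of these with $x_{(\alpha)}$ carrying even total weight, and, for finite $m$, the top classes $x_{(6k+2)}$, $x_{(6k+2,\alpha)}$, $x_{(6k+4)}$, $x_{(6k+4,\Delta)}$ in the appropriate $\C P^{2m}$. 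For $m=\infty$ we take the relations of Theorem \ref{eo2cpinfty} and restrict to the even subring. The key simplifications are these.
\begin{itemize}
\item $\alpha z^2=0$, and $\beta z^2=0$ follows from $\beta z\cdot z=0$.
\item $x_{(\alpha)}$ has odd weight, so it drops out. Even products such as $zx_{(\alpha)}$ vanish.
\item The relation $z_{(\Delta)}^3=\Delta^3z^3$ becomes $y_{(\Delta)}\cdot y_{(\Delta^2)}=\Delta^3 y^2$ type relations, which are absorbed by ${EO_2}_\ast$-multiples.
\end{itemize}
This yields the module description generated by $1,y_{(\Delta)},y_{(\Delta^2)},x_{(2,3)},x_{(4,3)}$ over ${EO_2}_\ast[y]$, all annihilated by $(\alpha,\beta)$. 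Checking that $zz_{(\Delta)}$ is killed by $\beta$ uses $\beta z\cdot z_{(\Delta)}=0$, and checking it is killed by $\alpha$ uses $\alpha z=0$.

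For finite $m$ we split into the residues $m=3k,3k+1,3k+2$, corresponding to $\C P^{6k}$, $\C P^{6k+2}$ and $\C P^{6k+4}$, that is, $\C P^{3k'+e}$ with $e=0,2,1$. We then transcribe the relations of Theorem \ref{eo2cpn}, replacing $z^{2k}$ by $y^k$ and keeping only the even generators and relations. For example, for $m=3k$ we take $\C P^{6k}=\C P^{3(2k)}$. Its extra class $x_{(6k-1)}$ is odd and disappears, leaving the quotient by $y^{k+1}$, $y^kx_{(2,3)}$ and $y^kx_{(4,3)}$.

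The main obstacle is to verify that the even part really is closed and complete. Specifically, one must check that no extension in $\pi_\ast$ mixes the $P_0$ and $P_1$ summands, and that the multiplicative extensions (such as $bx_{(6k+4)}=\beta^2x_{(6k+2,\alpha)}$) stay within $P_0$. This follows because the splitting is one of spectra, hence of $EO_2$-modules, and $c$ is a ring map. One must also check that the naming of the top classes matches the degrees. I would verify the degrees directly against the cell structure $P_0^{6j}/P_0^{6j-6}\simeq\Sigma^{12j-8}C(2\alpha_1,\alpha_1)$.
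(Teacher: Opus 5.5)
Your proposal is the paper's own argument. The paper likewise uses that $\Sigma^\infty_+\Hyp P^\infty$ is the summand $P_0$ of $\Sigma^\infty_+\C P^\infty$ and reads off $EO_2^\ast(\Hyp P^m)$ as the even-weight part of Theorems \ref{eo2cpinfty} and \ref{eo2cpn}, with exactly your identifications $y=z^2$, $y_{(\Delta)}=zz_{(\Delta)}$, $y_{(\Delta^2)}=z_{(\Delta)}^2$, $x_{(4,3)}=zx_{(1,3)}$; your homology-isomorphism argument via $\C P^{2m+1}\to\Hyp P^m$ and the matching $\Hyp P^{3k},\Hyp P^{3k+1},\Hyp P^{3k+2}\leftrightarrow\C P^{6k},\C P^{6k+2},\C P^{6k+4}$ only make explicit what the paper leaves implicit.

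One caution for the transcription. For $k'=2k+1$ one has $z^{k'}=zy^k$, not $y^k$, and even classes also arise as products of two odd ones (e.g.\ $zx_{(6k-1,\alpha)}=\alpha x_{(6k+2)}$ in $EO_2^\ast(\C P^{6k+2})$). Done carefully for $m=3k+2$, your method gives $y^kx_{(2,3)}=z^{2k}x_{(2,3)}\neq 0$, since the $\C P^{3k'+1}$ case of Theorem \ref{eo2cpn} kills only $z^{k'}x_{(2,3)}$. This indicates that, in that case, the printed quotient by $y^kx_{(2,3)}$ and the relation involving $x_{(6k+2)}$ are misprints rather than something you need to reproduce.
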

\end{mysubsection}

\vspace*{0.2cm}

\section{Higher Real-K theory of complex projective spaces at primes $\geq 5$} \label{higherkthgeq5}

We now compute $EO_{p-1}^\ast(\C P^m)$ for $p\geq 5$. The method is analogous to the $p=3$ case, in the sense that we apply the filtration spectral sequence to compute $H^\ast(C_p ; E_{p-1}(\C P^m))$, and then deduce the differentials of the homotopy fixed point spectral sequence from those of the homotopy fixed point spectral sequence of $S^0$. As in the $p=3$ case, we first analyze the $EO_{p-1}$-homology of complexes formed via iterated cofibers of $\alpha_1$. Throughout this section, we use $n=p-1$, and $\simeq$ for equalities up to a unit in $\F_p$. We recall the homotopy fixed point spectral sequence for $EO_n$
\[ E_2(EO_n) \cong H^\ast(G;{E_n}_\ast) \cong \F_p[\alpha,\beta,\Delta^{\pm 1}]/(\alpha^2) \pmod{M_p},\]
\[d_{2n+1}(\Delta)= \alpha \beta^n, \quad d_{2n^2 +1}(\Delta^{p-1}\alpha)=\beta^{n^2+1},\]
\[E_\infty^{\even, \ast}(EO_n) \cong \frac{\F_p[\Delta^{\pm p}][\beta]}{(\beta^{n^2+1})} \pmod{M_p}, \quad E_\infty^{\odd, \ast}(EO_n) \cong \frac{\alpha \F_p[\Delta^{\pm p}]\{1,\Delta,\dots, \Delta^{p-2}\}[\beta]}{(\beta^n)}.\]
The map from the Adams Novikov spectral sequence to the homotopy fixed point spectral sequence of $EO_n$ sends $\beta_{p/p}$ to $\Delta \beta$. Now from the arguments in \cite[Page 139]{Rav86}, we note $\epsilon^{(i)}=\alpha_1 \beta_{p/p}^i \mapsto [\Delta^i \alpha]\beta^i$. The equation $\alpha_1 \epsilon^{(p-2)}\simeq \beta_1^{1+(p-2)p}$ implies the multiplicative extension
\begin{myeq} \label{extnalpheon}
\alpha \beta^{n-1} [\Delta^{p-2}\alpha] \simeq \beta^{n^2} \implies \alpha [\Delta^{p-2} \alpha] \simeq \beta^{n^2 - n +1}.
\end{myeq} 
Further for degree reasons, $\alpha [\Delta^j \alpha]=0$ for $j\leq p-3$. Analogously the calculation of the Massey products (with $p-i-1$ $\alpha_1$s)
\[\langle \alpha_1, \dots, \alpha_1 \epsilon^{(i)}\rangle \simeq \beta_1^{1+ip},\]
yields the following formula with Toda brackets in $\pi_\ast EO_n$
\begin{myeq}\label{todabreon}
\langle \alpha, \dots, \alpha, [\Delta^{p-j}\alpha]\rangle \simeq \beta^{n^2 - (j-1)n+1}, \quad 1\leq j \leq p-3, 
\end{myeq}
where there are ($j-1$) terms equal to $\alpha$ in the expression above. 
\vspace{0.2cm}

\begin{mysubsection}{The homotopy groups of $EO_n\wedge C(\alpha_{1})$}
	The $\EE_{2}$-page of the filtration spectral sequence for $C(\alpha_{1})$ is given by,
	\[\EE_{2}(EO_n\wedge C(\alpha_{1}))\cong H^*(G;{E_n}_*)\{\iota_{0},\iota_{2n}\}\]
	The only possible non trivial differential is given by, $d_{2n}(\iota_{2n})=\alpha \iota_0$. This implies that the $\EE_{\infty}$-page of the filtration spectral sequence is given by
	\[\EE_{\infty}(EO_n \wedge C(\alpha_1)) \cong \frac{H^*(G;{E_n}_*)\{[\iota_{0}],[p\iota_{2n}],[\alpha \iota_{2n}]\}}{\big(\alpha[\iota_{0}],\alpha[\alpha\iota_{2n}],(\alpha,\beta)[p\iota_{2n}]\big)}.\]
	Unlike the case when $p=3$, for $p \geq 5$, there are no multiplicative extensions for degree reasons. 
% such as $\alpha[\alpha \iota_{2n}]=\beta \iota_{0}$ in the $E_2$-page of the homotopy fixed point spectral sequence of $C(\alpha_{1})$. 
Let $\kappa_{2n,\alpha}:=[\alpha \iota_{2n}]$, $\chi_{2n}:=[p \iota_{2n}]$ with $ |\kappa_{2n,\alpha}|=(1,4n-1)$, and $|\chi_{2n}|=(0,2n) $.  Thus the $E_2$-page of the homotopy fixed point spectral sequence can be written as
	\[E_{2}(EO_n\wedge C(\alpha_{1})) \cong \frac{H^*(G;{E_n}_*)\{\iota_{0}, \kappa_{2n,\alpha},\chi_{2n}\}}{\big((\alpha,\beta)\chi_{2n}, \alpha \kappa_{2n,\alpha}, \alpha \iota_{0} \big)} \cong \F_p[\Delta^{\pm},\beta]\{\iota_{0},\kappa_{2n,\alpha}\} \pmod {M_p, \chi_{2n}}. \] 
	From the cofiber sequence
	\[S^0 \xrightarrow{\iota_{0}} C(\alpha_{1}) \xrightarrow{j} S^{2n}\xrightarrow{\alpha_1}S^1, \]
	it follows that $\iota_{0}, \chi_{2n}$ are permanent cycles as they lie in the Hurewicz image $\pi_{*}C(\alpha_{1})\to {EO_n}_*C(\alpha_{1})$. A direct calculation using $\alpha^2=0$ and \eqref{extnalpheon} in $\pi_\ast EO_n$ yields $\pmod{M_p}$
\[\Ker( EO_n \wedge S^{2n} \xrightarrow{\alpha_1} EO_n \wedge S^1)= \F_p[\Delta^{\pm p}]\otimes\big[\F_p \{\beta^n,\dots, \beta^{n^2}\} \oplus \frac{\F_p[\beta]}{(\beta^n)}\{\alpha,[\Delta \alpha], \dots, [\Delta^{p-3}\alpha]\}\big], \]
\[\Coker( EO_n \wedge S^{2n-1} \xrightarrow{\alpha_1} EO_n)= \F_p[\Delta^{\pm p}]\otimes\big[\frac{\F_p[\beta]}{(\beta^{n^2-n+1})} \oplus \frac{\F_p[\beta]}{(\beta^n)}\{[\Delta \alpha], \dots, [\Delta^{p-2}\alpha]\}\big]. \]
This shows that the following are permanent cycles
\[ \iota_0, \kappa_{2n,\alpha}, \Delta^j \kappa_{2n,\alpha} \mbox{ for } 1\leq j \leq p-3. \]
For degree reasons, the class $\beta^n \iota_{2n}$ in the $\Ker$ part is represented by the class $\Delta \iota_0$ up to a unit of $\F_p$, which implies that it must be a permanent cycle as well. 
%
%
%$\kappa_{2n,\alpha}$ is also a permanent cycle. Applying the multiplicative extension \eqref{extnalpheon}, we see that $\Delta^j \kappa_{2n,\alpha}$ is a permanent cycle for $j\leq p-3$ and $\Delta^{p-2}\kappa_{2n,\alpha}$ must support a differential.  In order to figure out the differentials in $E_\ast(EO_n\wedge C(\alpha_1))$ it suffices to work modulo $M_p$. Note that,
%	\[E_{2}(EO_n\wedge C(\alpha_{1})) \cong \F_p[\Delta^{\pm},\beta]\{\iota_{0},\kappa_{2n,\alpha}\} \pmod {M_p, \chi_{2n}}.\]
As $\Delta^{\pm p}$, $\alpha$ and $\beta$ are permanent cycles in $E_2(EO_n)\cong H^\ast(G;{E_n}_\ast)$, and $E_2(EO_n\wedge C(\alpha_1))$ is a module over $E_2(EO_n)$, the products of these classes with $\iota_0$ and $\Delta^j \kappa_{2n,\alpha}$ for $j\leq p-3$, are permanent cycles as well. 

We now note that $\beta^{2n} \Delta^j \kappa_{2n,\alpha}$ cannot survive to $E_\infty$. This is because, the class represented by $\Delta^j\kappa_{2n,\alpha}$ is an odd degree class, and in $\pi_\ast(EO_n)$, $\beta^n (\mbox{odd degree class} ) =0$. Therefore, $\beta^n \Delta^j\kappa_{2n,\alpha}$ belongs to the image of $\pi_\ast EO_n \to \pi_\ast EO_n \wedge C(\alpha_1)$, and thus becomes $0$ after a further multiplication by $\beta^n$. Hence, $\beta^{2n}\kappa_{2n,\alpha}$ is the target of a differential, and for degree reasons the only possibility is 
\[ d_{4n+1}(\Delta^{j+2}\iota_0)\simeq \beta^{2n} \Delta^j \kappa_{2n,\alpha}.\] 
We obtain 
\[\begin{aligned}
E_{4n+2}(EO_n\wedge C(\alpha_1))= \F_p[\Delta^{\pm p}, \beta] \otimes \Big[ & \F_p\{ \iota_0, \Delta \iota_0, \Delta^{p-2}\kappa_{2n,\alpha}, \Delta^{p-1}\kappa_{2n,\alpha}\} \oplus \\
& \frac{\F_p\{\Delta^j \kappa_{2n,\alpha} \mid 0\leq j \leq p-3 \}}{(\beta^{2n})}\Big].
\end{aligned} \]
From the formula of the cokernel of $\alpha_1$, we see that $\beta^{n^2-n+1}\iota_0$ must be killed by a differential, and so must $\beta^{n^2 - n+1}\Delta \iota_0$. Again by degree reasons, the only way this can happen is 
\[ d_{2n^2-2n+1}(\Delta^{p-2} \kappa_{2n,\alpha})\simeq \beta^{n^2 - n +1}\iota_0, \quad d_{2n^2-2n+1}(\Delta^{p-1}\kappa_{2n,\alpha})\simeq\beta^{n^2-n+1}\Delta \iota_0.\]
This implies 
\begin{myeq}\label{compc[k]at1}
\resizebox{13.5cm}{!}{$ E_{2n^2-2n+2}(EO_n\wedge C(\alpha_1))= \F_p[\Delta^{\pm p}, \beta] \otimes \Big[ \frac{\F_p\{ \iota_0, \Delta \iota_0\}}{(\beta^{n^2-n+1})} \oplus  \frac{\F_p\{\Delta^j \kappa_{2n,\alpha} \mid 0\leq j \leq p-3 \}}{(\beta^{2n})}\Big]$}.
 \end{myeq}
There cannot be any further differentials, so this is also the $E_\infty$-page. From this expression, we deduce the structure of $\pi_\ast(EO_n \wedge C(\alpha_1))$ by figuring out the multiplicative extensions. These involve multiplications by $\alpha$ and $[\Delta^j \alpha]$ for $1\leq j \leq p-2$. We readily note that $[\Delta^j\alpha]\iota_0=[\Delta^{j-1} \alpha]\Delta \iota_0$ for $1\leq j \leq p-2$. From the computation of the $E_\infty$-page, we note that $\beta^n \Delta^j\kappa_{2n,\alpha}\neq 0$ for $0\leq j \leq p-3$ and belongs to the image $EO_n \to EO_n\wedge C(\alpha_1)$, so that from degree reasons, it must equal an unit multiple of $[\Delta^{j+1}\alpha]\iota_0$. 

Note that $\alpha [\Delta^{p-3}\alpha]=0$ implies that $\alpha \Delta^{p-3}\kappa_{2n,\alpha}$ belongs to the image $EO_n \to EO_n \wedge C(\alpha_1)$, and is the Toda bracket $\langle \alpha, \alpha, [\Delta^{p-3}\alpha]\rangle \iota_0$, which in turn equals an unit multiple of $\beta^{n^2-2n+1}$ by \eqref{todabreon}.  

On the other hand under the map $EO_n \wedge C(\alpha_1)\to \Sigma^{2n} EO_n$, the class $[\Delta \alpha] \Delta^{p-3}\kappa_{2n,\alpha}$ maps to $\alpha [\Delta^{p-2}\alpha]\iota_{2n}$ which equals an unit multiple of $\beta^{n^2-n+1}\iota_{2n}$ by \eqref{extnalpheon}. As $\beta^n \iota_{2n}$ is represented by $\Delta \iota_0$, this implies $[\Delta \alpha] \Delta^{p-3}\kappa_{2n,\alpha}\simeq \beta^{n^2-2n+1}\Delta\iota_0$. We summarize all these multiplicative extensions in the equation below.
\begin{myeq}\label{multcalph}
\begin{aligned}
[\Delta^j \alpha]\Delta \iota_0 = [\Delta^{j+1}\alpha] \iota_0 \simeq \beta^n \Delta^j \kappa_{2n,\alpha} \mbox{ for } 0 \leq j \leq p-3, \\
\alpha \Delta^{p-3}\kappa_{2n,\alpha} \simeq \beta^{n^2 -2n+1}\iota_0, \quad [\Delta\alpha] \Delta^{p-3}\kappa_{2n,\alpha} \simeq \beta^{n^2 -2n+1} \Delta \iota_0.
\end{aligned}
\end{myeq}

\end{mysubsection}

\vspace{0.2cm}

\begin{mysubsection}{The $EO_n$-homology of iterated cones of $\alpha_1$} \label{itercones}
We next compute the $EO_n$-homology of finite spectra obtained as iterated cones of $\alpha_1$. These spectra have a cell complex structure with cells in dimensions $0, 2n, 4n, \dots , 2kn$ for $1\leq k \leq p-1$ such that the attaching map of the $2jn$-cell onto the $2(j-1)n$-cell is a unit multiple of $\alpha_1$. We denote such a finite spectrum by $C^{[k]}$. As in the $p=3$ case, these spectra are not unique and our computational results depend only on $k$. Their existence may be inductively deduced from $\delta \circ \alpha_1 =0$ in Figure \ref{C(alpha)-diagram} using the fact that $\langle \alpha_1, \dots, \alpha_1 \rangle = 0$ when the number of $\alpha_1$ in the expression is $\leq p-1$. It follows that $\alpha_1$ lifts to a map $S^{2kn} \to C^{[k-1]}$ and taking the homotopy cofiber we obtain a $C^{[k]}$.

\footnotesize	
\begin{figure}[h]
	\centering
	\begin{tikzpicture} [
		>=Stealth,
		every node/.style={font=\small},
		scale=1.1
		]
		
		%------------------------------------------------
		% Top row
		%------------------------------------------------
		
		\node (A) at (0,0)
		{$C^{[k-2]}$};
		
		\node (B) at (3,0)
		{$C^{[k-1]}$};
		
		\node (C) at (6,0)
		{$S^{2(k-1)n}$};
		
		\node (D) at (9,0)
		{$\Sigma C^{[k-2]}$};
		
		\node (E) at (11,1.2)
		{$\Sigma C^{[k-3]}$};
		
		\node (F) at (13,2.2)
		{$S^{1}$};
		
		%------------------------------------------------
		% Bottom row
		%------------------------------------------------
		
		\node (G) at (6,-2)
		{$S^{2kn-1}$};
		
		\node (H) at (9,-2)
		{$S^{2(k-2)n+1}$};
		
		\node (I) at (11,-1.3)
		{$S^{2(k-3)n-1}$};
		
		%------------------------------------------------
		% Horizontal arrows
		%------------------------------------------------
		
		\draw[->] (A) -- (B);
		
		\draw[->] (B) -- (C);
		
		\draw[->] (C) -- node[below] {$\delta$} (D);
		
		\draw[->] (G) -- node[above]{$\simeq \alpha_1^2$}(H);
		
		%------------------------------------------------
		% Vertical arrows
		%------------------------------------------------
		
		\draw[->] (G) -- node[left] {$\alpha_1$} (C);
		
		\draw[->] (D) -- (H);
		
		\draw[->] (E) -- (I);
		
		%------------------------------------------------
		% Diagonal arrows
		%------------------------------------------------
		
		\draw[->] (E) -- (D);
		
		%\draw[->,dashed] (H) -- (I);
		
		\draw[->,dashed] (F) -- (E);
		
		%------------------------------------------------
		% Labels on lower horizontal map
		%------------------------------------------------

		%------------------------------------------------
		% Dashed curved brackets
		%------------------------------------------------
		
		\draw[->,dashed]
		(G)
		.. controls (7.5,-4) and (10,-4) ..
		node[above]{$\simeq \langle \alpha_1,\alpha_1,\alpha_1\rangle$}
		(I);
		
		\draw[->,dashed]
		(G)
		.. controls (8,-5.2) and (13,-5.7) ..
		node[right]{$\simeq \langle \alpha_1,\dots,\alpha_1\rangle$}
		(F);
		
	\end{tikzpicture}
	\caption{}
	\label{C(alpha)-diagram}
\end{figure}\normalsize	

\vspace{0.2cm}

\begin{rmk}
The spectra $X_k$ of \cite[\S 2]{Cha20} are examples of $C^{[k]}$. The $X_k$ satisfy an additional hypothesis specifying the comodule structure over the dual Steenrod algebra $\pmod{p}$, which allows one to prove the uniqueness of $X_k$ in the category of $p$-complete spectra. 
\end{rmk}

\vspace{0.2cm}

The filtration spectral sequence has $\EE_2$-term 
\[\EE_2(EO_n \wedge C^{[k]}) \cong H^\ast(G;{E_n}_\ast)\{\iota_0,\iota_{2n},\dots, \iota_{2kn}\}\]
with differential 
\[d_{2n}(\iota_{jn})\simeq \alpha \iota_{(j-1)n}, \mbox{ for } 1\leq j \leq k.\]
For $k<p-1$, there are no further differentials. It follows that 
\[\EE_\infty(EO_n\wedge C^{[k]}) \cong \frac{H^*(G;{E_n}_*)\{[\iota_{0}],[p\iota_{2n}],\dots, [p\iota_{2kn}], [\alpha \iota_{2kn}]\}}{\big(\alpha[\iota_{0}],\alpha[\alpha\iota_{2kn}],(\alpha,\beta)[p\iota_{2jn}]\big)}.\]
As in the $k=1$ case, we use the notation 
\[ \kappa_{2nk,\alpha} := [\alpha \iota_{2nk}] \in E_2^{1,2nk+2n-1}(EO_n\wedge C^{[k]}), \quad \chi_{2jn} := [p\iota_{2jn}] \in E_2^{0,2jn}(EO_n \wedge C^{[k]}).\]
Denote by $I_k$ the ideal $(M_p, \chi_{2jn}\mid 1\leq j \leq k)$, which is a submodule of the $0$-line of the homotopy fixed point spectral sequence consisting of permanent cycles. We work modulo $I_k$, and so 
\[E_2(EO_n \wedge C^{[k]}) \cong \F_p[\Delta^{\pm},\beta]\{\iota_0, \kappa_{2nk,\alpha}\} \pmod{I_k}.\] 
The following proposition describes the differentials in the homotopy fixed point spectral sequence. 
\begin{prop}\label{hfpssc[k]}
For $k<p-1$, the differentials in the homotopy fixed point spectral sequence of $EO_n\wedge C^{[k]}$ is given by 
\[d_{2(k+1)n+1}(\Delta^j \iota_0)\simeq \Delta^{j-k-1} \beta^{(k+1)n} \kappa_{2nk,\alpha} \mbox{ for } k+1 \leq j \leq p-1, \]
\[d_{2n^2-2kn+1}(\Delta^{j+p-k-1}\kappa_{2nk,\alpha}) \simeq \beta^{n^2-kn+1}\Delta^j \iota_0 \mbox{ for } 0\leq j \leq k.\]
As a consequence the $E_\infty$-page is described as 
\[E_\infty(EO_n\wedge C^{[k]}) \cong \F_p[\Delta^{\pm p},\beta] \otimes  \Big[ \frac{\F_p\{\Delta^j \iota_0, 1\leq j \leq k\}}{(\beta^{n^2-kn+1})} \oplus  \frac{\F_p\{\Delta^j \kappa_{2nk,\alpha} \mid 0\leq j \leq p-k-2 \}}{(\beta^{(k+1)n})}\Big].\]
The multiplicative extensions are given by 
\begin{align*}
[\Delta^j \alpha]\Delta^k \iota_0= [\Delta^{j+1}\alpha]\Delta^{k-1}\iota_0=\dots = [\Delta^{j+k}\alpha] \iota_0 \simeq \beta^{kn} \Delta^j \kappa_{2nk,\alpha} \mbox{ for } 0 \leq j \leq p-k-2, \\
\alpha \Delta^{p-k-2}\kappa_{2nk,\alpha} \simeq \beta^{n^2 -(k+1)n+1}\iota_0, \quad [\Delta^j \alpha] \Delta^{p-k-2}\kappa_{2nk,\alpha} \simeq \beta^{n^2 -(k+1)n+1} \Delta^j \iota_0 \mbox{ for } 1\leq j \leq k.
\end{align*}
In addition, $\alpha \Delta^j \kappa_{2nk,\alpha}=0$ for $j\leq p-k-3$. 
\end{prop}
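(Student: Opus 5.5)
We generalize the $k=1$ computation for $EO_n\wedge C(\alpha_1)$ essentially verbatim, inducting on $k$. The cofiber sequence $C^{[k-1]}\to C^{[k]}\to S^{2kn}$ and the dual sequence $S^0\to C^{[k]}\to \Sigma^{2n}C'^{[k-1]}$ give long exact sequences in $\pi_\ast EO_n$-homology. Here $C'^{[k-1]}$ is the top $k$ cells; it is again a $C^{[k-1]}$. Working modulo $I_k$, the $E_2$-page is $\F_p[\Delta^{\pm},\beta]\{\iota_0,\kappa_{2nk,\alpha}\}$. The task is to locate the differentials on the two $\F_p[\beta]$-towers generated by $\Delta^j\iota_0$ and $\Delta^j\kappa_{2nk,\alpha}$.

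\textbf{Step 1 (permanent cycles).} The class $\iota_0$ is in the Hurewicz image. Next we compute the cokernel of the map induced by the attaching data on the bottom cell, using $\pi_\ast EO_n$ from \eqref{einftyhfpsseo}, the extension \eqref{extnalpheon} and the Toda brackets \eqref{todabreon}. The composite of the $k$ attaching $\alpha_1$'s is controlled by the bracket $\langle\alpha,\dots,\alpha,[\Delta^{p-j}\alpha]\rangle$ with $k$ copies of $\alpha$. From this we get that the image of $\pi_\ast EO_n\to \pi_\ast EO_n\wedge C^{[k]}$ is $\F_p[\Delta^{\pm p}][\beta]/(\beta^{n^2-kn+1})$ together with the odd classes $[\Delta^{j}\alpha]\iota_0$ for $j\ge k$.

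Dually, we compute the kernel part coming from the top cell. The classes $\Delta^j\kappa_{2nk,\alpha}$ for $j\le p-k-2$ lift $[\Delta^{j}\alpha]\iota_{2kn}$ precisely when the iterated bracket vanishes, which by degree reasons happens for $j\le p-k-2$. This shows those classes are permanent cycles. Finally, the top-cell class $\beta^{n}\iota_{2(j)n}$-type elements are identified with $\Delta^j\iota_0$ for $1\le j\le k$, exactly as $\beta^n\iota_{2n}\leftrightarrow\Delta\iota_0$ for $k=1$. This uses induction through the filtration $C^{[1]}\subset\cdots\subset C^{[k]}$ and makes $\Delta^j\iota_0$ permanent for $0\le j\le k$.

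\textbf{Step 2 (forced differentials).} The classes $\Delta^j\kappa_{2nk,\alpha}$ represent odd-degree elements. Each element of the top-cell image is killed by $\beta^n$ in $\pi_\ast EO_n$, and the induction on cells adds one further factor of $\beta^n$ per cell. Hence $\beta^{(k+1)n}\Delta^j\kappa_{2nk,\alpha}$ must die. By sparseness, the only possible source is $\Delta^{j+k+1}\iota_0$, which gives the first family $d_{2(k+1)n+1}$.

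Similarly, the cokernel computation in Step 1 forces $\beta^{n^2-kn+1}\Delta^j\iota_0=0$ for $0\le j\le k$. The only classes in the correct bidegree are $\Delta^{j+p-k-1}\kappa_{2nk,\alpha}$, which gives the second family. We check the lengths by bidegree arithmetic using $|\Delta|=(0,2pn^2)$ and $|\beta|=(2,2pn-2)$. After these two families, the remaining classes on the $E_\infty$-page are those listed in the statement, and a degree count rules out any further differentials.

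\textbf{Step 3 (extensions).} The relation $[\Delta^{j}\alpha]\Delta^{i}\iota_0=[\Delta^{j+1}\alpha]\Delta^{i-1}\iota_0$ comes from the $E_2$-module structure. The identification of these classes with $\beta^{kn}\Delta^j\kappa_{2nk,\alpha}$ follows because the left-hand side lies in the image from the bottom cell and is nonzero, and degrees leave a unique candidate. For $\alpha\Delta^{p-k-2}\kappa_{2nk,\alpha}$, we use that it lies in the bottom-cell image and equals a Toda bracket $\langle\alpha,\dots,\alpha,[\Delta^{p-k-2}\alpha]\rangle\iota_0$ with $k+1$ copies of $\alpha$. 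By \eqref{todabreon} this is $\simeq\beta^{n^2-(k+1)n+1}$. The $[\Delta^j\alpha]$-multiples are handled by projecting to the top cell, exactly as in \eqref{multcalph}. The vanishing of $\alpha\Delta^j\kappa_{2nk,\alpha}$ for $j\le p-k-3$ holds for degree reasons.

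\textbf{Main obstacle.} The main obstacle is Step 1: pinning down the kernel and cokernel for $k>1$. This requires controlling iterated Toda brackets and their indeterminacy uniformly in the non-unique choice of $C^{[k]}$. I would attempt this inductively on $k$, using the long exact sequences for $C^{[k-1]}\to C^{[k]}\to S^{2kn}$ together with the already-established answer for $C^{[k-1]}$.
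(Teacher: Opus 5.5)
Your strategy is the paper's: induction on $k$ through the sequences $C^{[k-1]}\to C^{[k]}\to S^{2kn}$ and $S^0\to C^{[k]}\to\Sigma^{2n}C^{[k-1]}$. In both, permanent cycles come from $\Ker(\delta_k)$, the first family of differentials is forced because $\beta^{(k+1)n}$ annihilates odd-degree classes, and the extensions come from Toda brackets and projection to the top cells.

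The gap is in the second family. To force $d_{2n^2-2kn+1}(\Delta^{j+p-k-1}\kappa_{2nk,\alpha})\simeq\beta^{n^2-kn+1}\Delta^j\iota_0$, you need $\beta^{n^2-kn+1}\Delta^j\iota_0=0$ in homotopy for every $0\leq j\leq k$. You attribute this to the bottom-cell cokernel computation of Step 1, but that computation only gives $j=0$. For $1\leq j\leq k$, the class $\Delta^j\iota_0$ is not in the image of $\pi_\ast EO_n$, since nothing survives in that stem modulo $M_p$. Because $\Delta$ is not a permanent cycle, there is no element of $\pi_\ast EO_n$ that transports the relation $\beta^{n^2-kn+1}\iota_0=0$ to $\Delta^j\iota_0$.

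The missing input is the multiplicative extensions for $C^{[k-1]}$. These must be part of the inductive hypothesis used in Step 2, not something proved only at the end in Step 3. For $0\leq j\leq k-1$ the paper computes
\[\delta_k([\Delta^{p-k-1+j}\alpha]\iota_{2nk})=[\Delta^j\alpha]\Delta^{p-k-1}\kappa_{2n(k-1),\alpha}\simeq\beta^{n^2-kn+1}\Delta^j\iota_0,\]
where the last step is exactly the extension formula of the proposition for $k-1$. So these classes lie in $\Im(\delta_k)$ and vanish in $C^{[k]}$.

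For $j=k$, the class $\Delta^k\iota_0$ detects a lift of $\beta^{kn}\iota_{2nk}\in\Ker(\delta_k)$. Multiplying by $\beta^{n^2-kn+1}$ gives $\beta^{n^2+1}\iota_{2nk}=0$ on the top cell, so the product comes from $C^{[k-1]}$. That group is zero in this stem modulo $M_p$, since $\Delta^k\iota_0$ supports a $d_{2kn+1}$ there.

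Alternatively, use $q_0$. A lift detected by $\Delta^j\iota_0$ maps to $\beta^n\Delta^{j-1}\iota_{2n}$. Its $\beta^{n^2-kn+1}$-multiple therefore maps to $\beta^{n^2-(k-1)n+1}\Delta^{j-1}\iota_{2n}=0$ by induction. Hence it lies in the image of $\pi_\ast EO_n$, which vanishes in that stem.

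Relatedly, in Step 1 the element detected by $\Delta^j\iota_0$ is a lift of $\beta^{jn}\iota_{2jn}$, not of $\beta^{n}\iota_{2jn}$; the stems of the latter do not match $|\Delta^j|$ for $j>1$.
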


\begin{proof}
We proceed by induction on $k$ observing that the $k=1$ case has been proved in \eqref{compc[k]at1} and \eqref{multcalph}. Assume that the result has been proved for $k-1$. We use the cofiber sequence
\begin{myeq} \label{cofk1}
EO_n\wedge C^{[k-1]}\xrightarrow{\tau_{k-1}} EO_n\wedge C^{[k]} \xrightarrow{q_k} EO_n\wedge S^{2nk}\xrightarrow{\delta_k} \Sigma EO_n \wedge C^{[k-1]},
\end{myeq}
and observe that $\Delta^j \iota_0$ is a permanent cycle for $j<k$ as it lies in the image of $\tau_{k-1}$. Also note $\delta_k(\iota_{2nk})=\kappa_{2n(k-1),\alpha}$ so that 
\[
 \Ker(\delta_k) = \F_p[\Delta^{\pm p}]\otimes \big[ \F_p\{\beta^{kn}\iota_{2nk},\dots, \beta^{n^2}\iota_{2nk}\} \oplus  \frac{\F_p[\beta]}{(\beta^n)}\{\alpha \iota_{2nk}, [\Delta \alpha]\iota_{2nk}, \dots, [\Delta^{p-k-2}\alpha]\iota_{2nk} \} \big].
\]
As $\Delta^j \kappa_{2nk,\alpha}$ represents the class $[\Delta^j\alpha]\iota_{2nk}$, it follows that $\Delta^j\kappa_{2nk,\alpha}$ is a permanent cycle if $0\leq j \leq p-k-2$. The class $\beta^{kn}\iota_{2nk} \in \Ker(\delta_k)$ is an even degree element of $\pi_\ast(EO_n\wedge C^{[k]})$ and is thus represented in $E_2(EO_n\wedge C^{[k]})$ by a multiple of $\iota_0$. For degree reasons, this must be a unit multiple of $\Delta^k \iota_0$, which is therefore, a permanent cycle. This is directly observed from the fact that $\frac{|\beta|}{2}$ and $\frac{|\Delta|}{2}$ are coprime, and so other entries in the $E_2$-page that can represent the class are in higher $s$-filtration than $2kn$. Note that this also means $\beta^{n^2 - kn +1}\Delta^k \iota_0=0$. 
Next we observe 
\[\delta_k([\Delta^{p-k-1+j}\alpha] \iota_{2nk})= [\Delta^j\alpha] \Delta^{p-k-1}\kappa_{2n(k-1),\alpha}= \beta^{n^2-kn+1}\Delta^j \iota_0 \mbox{ for } 0\leq j\leq k-1.\]
Thus, $\beta^{n^2-kn+1}\Delta^j \iota_0$ must be killed by a differential for $0\leq j\leq k$. For degree reasons, the only possibility is 
\begin{myeq}\label{diff1}
d_{2n^2-2kn+1}(\Delta^{j+p-k-1} \kappa_{2nk,\alpha})\simeq\beta^{n^2-kn+1}\Delta^j \iota_0 \mbox{ for } 0\leq j \leq k.
\end{myeq}
We also observe inductively that in $EO_n \wedge C^{[k]}$, any odd degree class multiplied by $\beta^{(k+1)n}$ is $0$. This is true for $k=0$ as $\beta^n[\Delta^j \alpha]=0$ in $\pi_\ast EO_n$. Now consider \eqref{cofk1} and an odd degree class $\chi$ in $\pi_\ast (EO_n \wedge C^{[k]})$. We note that $\beta^nq_k(\chi)=0$ which implies $\beta^n \chi$ is an odd degree class in the image of $\tau_{k-1}$. It follows by the induction hypothesis  that 
\[\beta^{(k+1)n} \chi = \beta^{kn} \tau_{k-1}(\chi')=\tau_{k-1}(\beta^{kn}\chi') =0.\]
Thus, $\beta^{(k+1)n}\Delta^j \kappa_{2nk,\alpha}$ must be killed by a differential for $0\leq j\leq p- k-2$. For degree reasons, the only possibility is 
\begin{myeq}\label{diff2}
d_{2(k+1)n+1}(\Delta^j \iota_0)\simeq \beta^{(k+1)n} \Delta^{j-k-1} \kappa_{2nk,\alpha} \mbox{ for } k+1\leq j \leq p-1.
\end{myeq}
From \eqref{diff1} and \eqref{diff2}, observe that all classes in the $E_r$-page for $r\geq \mbox{max}\{2(k+1)n+2, 2n^2-2kn+2\}$ are permanent cycles. Therefore, the $E_\infty$-page is as described in the Proposition. It remains to prove the multiplicative extensions.

Note that for degree reasons, $\alpha \Delta^j \kappa_{2nk,\alpha}=0$ if $j\leq p-k-3$. The inclusion of the $0$-cell in $C^{[k]}$ induces the cofiber sequence
 \begin{myeq} \label{cofk2}
EO_n\xrightarrow{\tau_0} EO_n\wedge C^{[k]} \xrightarrow{q_0} \Sigma^{2n} EO_n\wedge C^{[k-1]}\xrightarrow{\delta_0} \Sigma EO_n,
\end{myeq}
where we observe that $\delta_0(\iota_{2n})= \alpha \iota_0$ and $q_0(\kappa_{2nk,\alpha})=\kappa_{2nk,\alpha}$. Note that by the induction hypothesis, $q_0 (\alpha \Delta^{p-k-2}\kappa_{2nk,\alpha})=0$ and so, $\alpha \Delta^{p-k-2}\kappa_{2nk,\alpha}$ belongs to the image of $\tau_0$ of \eqref{cofk2}. The identification of higher matric Massey products with higher Toda brackets recognizes this class as an element of $\langle \alpha, \dots, \alpha, [\Delta^{p-k-2}\alpha]\rangle \iota_0$ with $k+1$ terms equal to $\alpha$, which in turn equals an unit multiple of $\beta^{n^2-(k+1)n+1}$ by \eqref{todabreon}.  This implies 
\[\alpha \Delta^{p-k-2}\kappa_{2nk,\alpha}\simeq \beta^{n^2-(k+1)n+1}\iota_0.\]
Further observe that $\delta_0(\beta^n \Delta^j \iota_{2n})=0$ for $0\leq j\leq k-1$, and thus, belongs to the image of $q_0$. For degree reasons, the corresponding element in $\pi_\ast(EO_n \wedge C^{[k]})$ is represented in the $E_\infty$-page by $\Delta^{j+1} \iota_0$. 
On the other hand under the map $q_0$ of \eqref{cofk2},
\begin{align*}
q_0({[\Delta^j \alpha] \Delta^{p-k-2}\kappa_{2nk,\alpha}}) &= {[\Delta^{j-1} \alpha] \Delta^{p-k-1}\kappa_{2nk,\alpha}}\\
                                                                                       & \simeq \beta^{n^2-kn+1}\Delta^{j-1}\iota_{2n} \\
                                                                                      &\simeq q_0(\beta^{n^2-(k+1)n+1}\Delta^j \iota_0) \mbox{ for } 1\leq j \leq k.
\end{align*}
In the degree where this equality occurs, $q_0$ is injective from our computation of the $E_\infty$-term, so the equation $[\Delta^j \alpha]\Delta^{p-k-2}\kappa_{2nk,\alpha}\simeq \beta^{n^2-(k+1)n+1}\Delta^j\iota_0$ follows. 

Finally, consider the class $\beta^{kn}\Delta^j\kappa_{2nk,\alpha}\neq 0$ in the $E_\infty$-page. The map $q_0$ of \eqref{cofk2} sends this class to $0$, as $\beta^{kn}$ annihilates every odd degree class in $EO_n\wedge C^{[k-1]}$. Therefore, it represents a non-zero class in the image of $\tau_0$, which for degree reasons is forced to be an unit multiple of $[\Delta^{j+k}\alpha]\iota_0$. This completes the proof of the proposition. 
\end{proof}

\vspace{0.2cm}

Now we consider the case $k=p-1$. We use the cofiber \eqref{cofk1} and note that Proposition \ref{hfpssc[k]} implies that in $EO_n\wedge C^{[p-2]}$, $\alpha  \kappa_{2n(p-2),\alpha}=\beta \iota_0$. This is demonstrated in Figure \ref{C(p-1)-diagram}, which implies in the filtration spectral sequence
\[d_{2n(p-1)}(\alpha\iota_{2n(p-1)})=\beta \iota_0.\]
The Figure \ref{C(p-1)-diagram} demonstrates the differential in the $EO_n$-Atiyah Hirzebruch spectral sequence, which must appear in the filtration spectral sequence, as in the homotopy fixed point spectral sequence, the differentials change the $s$-degree by at least $2$. 
\begin{figure}[h]
	\centering
	\begin{tikzpicture} [
		>=Stealth,
		every node/.style={font=\small},
		scale=1.1
		]
		
		%------------------------------------------------
		% Top row
		%------------------------------------------------
		
		\node (A) at (0,0)
		{$\Sigma EO_n \wedge C^{[p-2]}$};
		
		\node (B) at (3,0)
		{$\Sigma EO_n \wedge C^{[p-3]}$};
		
		\node (C) at (6,0)
		{$\dots $};
		
		\node (D) at (9,0)
		{$\Sigma EO_n \wedge S^0$};

		%------------------------------------------------
		% Bottom row
		%------------------------------------------------
		
		\node (G) at (0,-2)
		{$EO_n \wedge S^{2n(p-1)}$};

                \node(I) at (1.5,-2)		
              {$\ni$};

		\node (H) at (2.3,-2.05)
		{$ \alpha \iota_{2n(p-1)}$};
		
               \node(J) at (1,-.5)
              {$\delta_{p-1}(\alpha \iota_0)$};

 \node(K) at (9,-.5)
              {$\beta \iota_{2n}$};
		
		%------------------------------------------------
		% Horizontal arrows
		%------------------------------------------------
		
		\draw[->] (B) -- (A);
		
		\draw[->] (C) -- (B);
		
		\draw[->] (D) -- (C);
		
		\draw[->] (G) -- node[left] {$\delta_{p-1}$} (A);
 
\draw[|->] (H) -- (J);
		
		%------------------------------------------------
		% Vertical arrows
		%------------------------------------------------
		
		\draw[|->] (K) --  (J);

	\end{tikzpicture}
	\caption{}
	\label{C(p-1)-diagram}
\end{figure}

\vspace{0.2cm}

As a consequence, we have
	\[\EE_{\infty}(EO_n \wedge C^{[p-1]})\cong \frac{H^*(G;{E_n}_\ast)}{(\alpha,\beta)}\{\iota_{0},[p\iota_{2nk}]\} \text{ where } 1 \leq k \leq n.\]   
There are no non-trivial differentials in the homotopy fixed point spectral sequence, and this is also the $E_\infty$-page. 
\end{mysubsection}

\vspace{0.2cm}

\begin{mysubsection}{The $EO_n$-cohomology of complex projective spaces}
We use the ``Adams splitting" of the spectrum $\Sigma^\infty_+ {\C P^\infty}^\land_p$ which we first recall. The space ${\C P^\infty}_p^\wedge \simeq K(\Z_p,2)$, and an element $\nu\in \Z_p$ induces a map $f_\nu : {\C P^\infty}_p^\wedge \to {\C P^\infty}_p^\wedge$ which is multiplication by $\nu$ in $\pi_2$. For an odd prime $p$, the group of units in the $p$-adic integers is,
\[\Z_p^{\times}=\Z/(p-1) \times \Z_p.\]
Fix a $(p-1)^{th}$-root of unity $\zeta_p$ of $\Z_p$, and let $e_p^{(0)}$ denote the stable self map (on $\Sigma^\infty_+  {\C P^\infty}_p^\wedge $)
\[e_p^{(0)}= \frac{1+f_{\zeta_p}+f_{\zeta_p}^2 +\dots + f_{\zeta_p}^{p-2}}{p-1},\]
which is an idempotent. Using the fact that the self maps of $\Sigma^\infty {\C P^\infty}_p^\wedge$ is a module over $\pi_0(\Sp_p^\wedge)\cong \Z_p$, we also have idempotents $e_p^{(i)}$ for $1\leq i \leq p-2$ given by the formula
\[e_p^{(i)}= \frac{1+\zeta_p^{-i}f_{\zeta_p}+\zeta_p^{-2i}f_{\zeta_p}^2 +\dots + \zeta_p^{-(p-2)i}f_{\zeta_p}^{p-2}}{p-1}.\]
These idempotents satisfy
\[e_p^{(i)}e_p^{(j)}=0 \mbox{ for } i\neq j, \quad \sum_{i=0}^{p-2} e_p^{(i)}=1.\]
This implies a splitting
\begin{myeq}\label{splitcpodd}
\Sigma^\infty_+ {\C P^\infty}_p^\wedge \simeq P_0 \vee P_1 \vee \dots \vee P_{p-2},
\end{myeq}
where $P_j = \Tel(e_p^{(j)})$. The $\F_p$-cohomology of these summands as a subset of $H^\ast(\C P^\infty; \F_p)$ are given by
\[H^*(P_i;\F_p)=\F_p\{x^{i+kn}, k \in \Z_{\geq 0}\}.\]
The skeletal filtration of $\Sigma^\infty_+ {\C P^\infty}_p^\wedge$ also induces a filtration on the $P_i$. We now use the fact that the action of $\PP^1$ on the classes $x^i$ is given by $\PP^1(x^i)=ix^{i+p-1}$, and that $\PP^1$ detects $\alpha_{1}$. This implies the following identifications 
\begin{myeq}\label{skelPi}
\begin{aligned}
&P_{i}^{(2ip)}\simeq \Sigma^{2i}C^{[i]},\quad \frac{P_i^{(2(i+(r+1)n)p)}}{P_i^{(2(i+rn)p)}}\simeq\Sigma^{2((i+rn)p+n)}C^{[p-1]}, \\
&\mbox{For } r\geq 0, \mbox{ } \frac{P_i^{(2m)}}{P_i^{(2(i+rn)p)}}\simeq\Sigma^{2((i+rn)p+n)}C^{\big[[\frac{m-ip}{n}]-rp\big]} \mbox{ if } i+rn\leq \frac{m}{p} < i+(r+1)n,\\
& \mbox{For } m< ip, \mbox{ } P_i^{(2m)} \simeq \Sigma^{2i} C^{\big[[\frac{m-i}{n}]\big]}.
\end{aligned}
\end{myeq}
We also have that the Spanier-Whitehead dual of a $C^{[i]}$ is also a $C^{[i]}$ in the sense that $D(C^{[i]})\simeq \Sigma^{-2in} C^{[i]}$.  

\vspace{0.2cm}

The filtration spectral sequence for the $EO_n$-cohomology of $\C P^m$ has the form,
\[\EE_2(EO_n^\ast(\C P^m))\cong H^\ast (G;\pi_\ast E_n)[x]/(x^{m+1}) \implies H^\ast (G;E^\ast_n(\C P^m)),\]
for $1\leq m \leq \infty$. The formulas for the differentials in the filtration spectral sequences of $C^{[j]}$ for $1\leq j \leq p-1$ implies the following differentials 
\[d_{2n}(x)=\alpha x^p, \quad d_{2n^2}(\alpha x^{pj+n})\simeq\beta x^{p(j+n)}.  \]
Analogous arguments as in the $p=3$ case (see the discussion before Proposition \ref{einffsseo2cp}) imply that there cannot be any further differentials in the filtration spectral sequence. For $m=\infty$, we have the following formula
\begin{myeq}\label{fsscpinfty}
\begin{aligned}
\EE_{\infty}(EO_n^*(\C P^\infty)) &= H^*(G;{E_n}_*)\{1\} \bigoplus_{k=1}^{n-1} \frac{H^*(G;{E_n}_*)}{(\alpha,p)}\{[\alpha x^k]\}\bigoplus_{k=1}^{n-1}\frac{H^*(G;{E_n}_*)}{(\alpha)}\{x^{kp}\}\\
&\bigoplus \frac{H^*(G;{E_n}_*)}{(\alpha,\beta)} \{x^{pj} \mbox{ for } j\geq n, [p x^j] \mbox{ for } p\nmid j\},	
\end{aligned}
\end{myeq}
with the evident multiplicative structure. If $m<\infty$, we can apply \eqref{skelPi} to deduce the expression of $\EE_\infty(EO_n^\ast(\C P^m))$. 
 For $2n \leq m < \infty$, we have
%\[\EE_\infty(EO_n^\ast(\C P^m)) \cong \sum_{i=1}^n \Big(\EE_\infty(\Sigma^{2i}EO_n \wedge C^{[i]}) + \EE_\infty(\Sigma^{2((i+[\frac{m-i}{n}]n)p +n)} EO_n \wedge C^{\big[[\frac{m-ip}{n}]-[\frac{m-i}{n}]p\big]})+  \sum_{r=0}^{[\frac{m-pi}{n}]} \EE_\infty(\Sigma^{2((i+rn)p+n)} EO_n \wedge C^{[p-1]})    \Big) \] 
\begin{myeq}\label{fsscpmlarge}
\begin{aligned}
\EE_{\infty}(EO_n^*(\C P^m)) &= \frac{\EE_\infty(EO_n^\ast(\C P^\infty))}{(x^{m+1})} \bigoplus  H^*(G;{E_n}_*)\{x^{[\frac{m+1}{p}]p-1} \mbox{ if } m \neq -2 \bmod{p}\}\\
%H^*(G;{E_n}_*)\{1\} \bigoplus_{k=1}^{n-1} \frac{H^*(G;{E_n}_*)}{(\alpha,p)}\{[\alpha x^k]\}\bigoplus_{k=1}^{n-1}\frac{H^*(G;{E_n}_*)}{(\alpha)}\{x^{kp}\}\\
& \bigoplus_{k=[\frac{m}{p}]-n +1}^{[ \frac{m-2n}{p}]} \frac{H^*(G;{E_n}_*)}{(\alpha,p)}\{[\alpha x^{kp+n}]\}\bigoplus_{\substack{m-n< k \leq m,\\ k \neq 0,-1 \bmod{p}}}\frac{H^*(G;{E_n}_*)}{(\alpha)}\{x^{k}\},
%& \bigoplus  H^*(G;{E_n}_*)\{x^{[\frac{m+1}{p}]p-1} \mbox{ if } m \neq -2 \bmod{p}\},
%&\bigoplus \frac{H^*(G;{E_n}_*)}{(\alpha,\beta)} \{x^{pj} \mbox{ for } \frac{m}{p}\geq j\geq n, [p x^j] \mbox{ for } p\nmid j \mbox{ and } j\leq m\},	
\end{aligned}
\end{myeq}
and for $m<2n$ (leaving out terms of the form $[px^j]$), 
\begin{myeq}\label{fsscpmsmall}
\begin{aligned}
\EE_{\infty}(EO_n^*(\C P^m)) &= H^*(G;{E_n}_*)\{1, x^{s } \mbox{ for } m-n<s\leq n\}\\
&\bigoplus_{k\leq m-n} \frac{H^*(G;{E_n}_*)}{(\alpha,p)}\{[\alpha x^k]\} \bigoplus_{n< k\leq m}\frac{H^*(G;{E_n}_*)}{(\alpha)}\{x^{k}\}.
\end{aligned}
\end{myeq}

\vspace{0.2cm}

The next step is to work out the homotopy fixed point spectral sequence in the case $m=\infty$ using the computation of the filtration spectral sequence \eqref{fsscpinfty}. As in the $p=3$ case, the formula for the differentials follow from those of $E_\ast(EO_n \wedge C^{[i]})$ which are described in Proposition \ref{hfpssc[k]}. Working modulo $M_p$, we see that the following are permanent cycles in $E_\ast (EO_n^\ast(\C P^\infty))$.
\begin{align*}
&[px^j] \mbox{ and }  x^{jp} \mbox{ for } j\geq 1, \quad [\Delta^i x^{kp}] \mbox{ for } 1\leq k\leq n-1, 0\leq i\leq k, \\
&[\Delta^i \alpha x^k] \mbox{ for } 1\leq k\leq n-1, 0\leq i \leq p-k-2 .
\end{align*}

%\begin{notation} \label{noteoncp}
We use the following notations
\begin{myeq}\label{noteoncp}
 x_{(k,\Delta^i\alpha)} := [\Delta^i\alpha x^k], \quad x_{(j,p)}:= [p x^j] , \quad x_{(kp,\Delta^i)}:=[\Delta^i x^{kp}],
\end{myeq}
%\[ x_{(j)}:= [x^j] \in E_2^{0,-2j}(EO_2^\ast(\C P^m)) \mbox{ if } j=n \mbox{ or } n-1, 3\nmid j,\]
% \[ x_{(3k-1, \alpha)}:= [\alpha x^{3k-1}] \in E_2^{1,5-6k}(EO_2^\ast(\C P^{3k+2})) \mbox{ and } E_2^{1,5-6k}(EO_2^\ast(\C P^{3k+1})),\]
%\end{notation}
and describe the $EO_n$-cohomology in the theorem below.
\begin{thm} \label{eoncpinfty}
The $EO_n$-cohomology of $\C P^\infty$ is described (modulo $M_p$) by 
\begin{align*}
EO_n^\ast(\C P^\infty) & \cong {EO_n}_\ast\{1\} + \frac{{EO_n}_\ast}{([\Delta^j\alpha],\beta)} \{x^{jp} \mbox{ for } j\geq n, x_{(j,p)} \mbox{ } p\nmid j \} +\\
&\sum_{k=1}^{n-1}\Big( \frac{\F_p[\Delta^{\pm p},\beta]}{(\beta^{n^2-kn+1} )}\{x^{kp}\} +\frac{\F_p[\Delta^{\pm p},\beta]}{(\beta^{n^2-kn+1} )}\{x_{(kp,\Delta^i)} \mbox{ for } 1\leq i\leq k\}\Big) + \\
&\sum_{k=1}^{n-1} \frac{\F_p[\Delta^{\pm p},\beta]}{(\beta^{(k+1)n} )}\{x_{(k,\Delta^i\alpha)} \mbox{ for } 0\leq i\leq p-k-2\}.
\end{align*}
The $\beta$-multiplication is evident from the expression above. The non-trivial $[\Delta^j\alpha]$ products on these classes are given by
\begin{align*}
\resizebox{\linewidth}{!}{$[\Delta^j \alpha]x_{(kp,\Delta^k)}= [\Delta^{j+1}\alpha]x_{(kp,\Delta^{k-1})}=\dots = [\Delta^{j+k}\alpha]x^{kp} \simeq \beta^{kn} x_{(k,\Delta^j \alpha)}~ 1\leq k <n, 0 \leq j \leq p-k-2,$} \\
\resizebox{\linewidth}{!}{$\alpha x_{(k,\Delta^{p-k-2}\alpha)} \simeq \beta^{n^2 -(k+1)n+1}x^{kp}, \quad [\Delta^j \alpha] x_{(k,\Delta^{p-k-2}\alpha)} \simeq \beta^{n^2 -(k+1)n+1} x_{(kp,\Delta^j)} 1\leq k <n, 1\leq j \leq k.$}
\end{align*}
\end{thm}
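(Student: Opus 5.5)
The plan is to reduce the computation to the already-analysed iterated cones via the Adams splitting \eqref{splitcpodd}, and then read off the answer summand by summand. Since $EO_n$ is $p$-complete, we have
\[ F(\C P^\infty_+, EO_n)\simeq \prod_{i=0}^{p-2} F(P_i,EO_n). \]
By \eqref{skelPi}, each $P_i$ is filtered with bottom block $\Sigma^{2i}C^{[i]}$ for $1\le i\le n-1$, followed by blocks $\Sigma^{2((i+rn)p+n)}C^{[p-1]}$. The summand $P_0$ has the bottom cell $S^0$, giving ${EO_n}_\ast\{1\}$, followed by $C^{[p-1]}$-blocks.

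The first step is to dualize these blocks. Using $D(C^{[i]})\simeq\Sigma^{-2in}C^{[i]}$, the bottom block contributes $\Sigma^{-2i-2in}EO_n\wedge C^{[i]}$; since $i+in=ip$, this is $\Sigma^{-2ip}EO_n\wedge C^{[i]}$. By Proposition \ref{hfpssc[k]} with $k=i$, its $E_\infty$-page is
\[ \F_p[\Delta^{\pm p},\beta]\otimes\Big[\tfrac{\F_p\{\Delta^j\iota_0\}_{0\le j\le k}}{(\beta^{n^2-kn+1})}\oplus\tfrac{\F_p\{\Delta^j\kappa_{2nk,\alpha}\}_{0\le j\le p-k-2}}{(\beta^{(k+1)n})}\Big]. \]
Under the dual cell identification, the class $\iota_0$ of the dual corresponds to the top cell $x^{kp}$, and $\kappa_{2nk,\alpha}=[\alpha\iota_{2nk}]$ corresponds to $[\alpha x^k]$. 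Comparing with the filtration spectral sequence \eqref{fsscpinfty} confirms this, via the differentials $d_{2n}(x)=\alpha x^p$ restricted to $P_k$. So $\Delta^j\iota_0\mapsto x_{(kp,\Delta^j)}$ and $\Delta^i\kappa\mapsto x_{(k,\Delta^i\alpha)}$, with the annihilators $\beta^{n^2-kn+1}$ and $\beta^{(k+1)n}$ as stated.

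Next, for the $C^{[p-1]}$-blocks, the $E_\infty$ computation for $EO_n\wedge C^{[p-1]}$ gives $H^\ast(G;{E_n}_\ast)/(\alpha,\beta)$ on the classes $\iota_0$ and $[p\iota_{2nk}]$. These produce the summand $\frac{{EO_n}_\ast}{([\Delta^j\alpha],\beta)}\{x^{jp},x_{(j,p)}\}$. There are no differentials, and since everything is concentrated in $s=0$, there are no extension problems across blocks either. We must still check that the filtration of $P_i$ by these blocks splits after smashing with $EO_n$. I would argue this as follows. The boundary maps between blocks land in groups where the $C^{[p-1]}$-pieces are $E_{n}^{h(G/C_p)}$-modules with no $\alpha,\beta$-torsion. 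Also, the attaching maps between blocks are detected by $\alpha_1$-multiples, which vanish on $EO_n\wedge C^{[p-1]}$. Alternatively, one can use the $E_\infty$ computation of the filtration spectral sequence \eqref{fsscpinfty}, which shows all differentials are accounted for, and then observe degree sparseness in the homotopy fixed point spectral sequence. In this approach the permanent cycles listed before \eqref{noteoncp} are exactly the generators, and the differentials are those of Proposition \ref{hfpssc[k]} transported along $q^\ast$ from $\C P^\infty\to P_k\to\Sigma^{2k}C^{[k]}$.

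Finally, the multiplicative extensions come directly from those in Proposition \ref{hfpssc[k]}, transported via the same identification. One sets $\Delta^{k}\iota_0\mapsto x_{(kp,\Delta^k)}$, and the relations $[\Delta^j\alpha]\Delta^k\iota_0\simeq\beta^{kn}\Delta^j\kappa$ and $\alpha\Delta^{p-k-2}\kappa\simeq\beta^{n^2-(k+1)n+1}\iota_0$ become the stated formulas. The main obstacle I expect is justifying that the module structure over $\pi_\ast EO_n$ computed in the dual $EO_n\wedge C^{[k]}$ agrees with the one in $EO_n^\ast(\C P^\infty)$, rather than just up to filtration. For this I would use that $q^\ast$ is a map of ${EO_n}_\ast$-modules, together with injectivity of $q^\ast$ in the relevant degrees, which is read off from the $E_\infty$-pages.
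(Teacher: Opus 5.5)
Your overall strategy is the paper's. You use the Adams splitting \eqref{splitcpodd}, the blocks of \eqref{skelPi}, Proposition \ref{hfpssc[k]} for the dualized bottom block $\Sigma^{2k}C^{[k]}$ of $P_k$, the $C^{[p-1]}$-computation for the other blocks, and the filtration spectral sequence \eqref{fsscpinfty} as input to the homotopy fixed point spectral sequence. Your identifications $\Delta^j\iota_0\leftrightarrow x_{(kp,\Delta^j)}$ and $\Delta^i\kappa_{2nk,\alpha}\leftrightarrow x_{(k,\Delta^i\alpha)}$ are the right ones.

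What does not work is your first justification for assembling the blocks. The pieces $EO_n\wedge C^{[p-1]}$ are \emph{annihilated} by $\alpha$ and $\beta$; what is true is that their homotopy is even and sits in filtration $s=0$. The attaching maps between consecutive blocks are precisely those \emph{not} detected by $\alpha_1$, since the blocks break where $\PP^1x^{jp}=0$. Moreover, in the long exact sequence for $P_k^{(2kp)}\to P_k\to P_k/P_k^{(2kp)}$, evenness of the quotient kills the connecting map only on even-degree classes, not on odd ones such as $x_{(k,\Delta^i\alpha)}$. The $EO_n$-module splitting of $EO_n\wedge P_k$ is true, but the paper obtains it only afterwards (Theorem \ref{spliteoncpinfty}), from Chatham's sparseness theorem together with the present computation. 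You do not need it.

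Your ``alternatively'' is exactly the paper's argument, and it suffices once stated precisely. Since $P_k/P_k^{(2kp)}$ is built from $C^{[p-1]}$-blocks, \eqref{fsscpinfty} gives $H^s(G;E_n^\ast(P_k/P_k^{(2kp)}))=0$ for $s>0$. Hence restriction along the skeletal inclusion $\Sigma^{2k}C^{[k]}=P_k^{(2kp)}\hookrightarrow P_k$ is an isomorphism on $E_2$ in positive $s$. At $s=0$ it is surjective, and its kernel consists of permanent cycles by naturality along $P_k\to P_k/P_k^{(2kp)}$. So the differentials of Proposition \ref{hfpssc[k]} transfer page by page.

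This is also the right direction for your ``$q^\ast$''. No map $P_k\to\Sigma^{2k}C^{[k]}$ is available a priori, since a retraction onto the bottom skeleton would already be a stable splitting. With restriction, injectivity is exactly what you need to transport relations:
\begin{itemize}
\item In odd degrees, restriction is injective on $EO_n^\ast$ because the quotient contributes only even degrees. This gives $[\Delta^j\alpha]x_{(kp,\Delta^k)}\simeq\beta^{kn}x_{(k,\Delta^j\alpha)}$.
\item For the even relations and the annihilators of $x^{kp}$ and $x_{(kp,\Delta^i)}$, a degree count suffices. The relevant exponents $n^2-(k+1)n+1$ and $n^2-kn+1$ are both $\equiv 1\pmod{n}$, which excludes quotient classes in those degrees.
\end{itemize}

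One caveat on your reading of the upper blocks. Modulo $M_p$ one has $H^\ast(G;{E_n}_\ast)/(\alpha,\beta)\cong\F_p[\Delta^{\pm1}]$, whereas ${EO_n}_\ast/([\Delta^j\alpha],\beta)\cong\F_p[\Delta^{\pm p}]$. So those blocks also contribute the classes $[\Delta^ix^{jp}]$ for $0<i<p$, just as $y_{(\Delta)}=zz_{(\Delta)}$ and $y_{(\Delta^2)}=z_{(\Delta)}^2$ appear at $p=3$. Identifying the two rings drops these classes. The corresponding summand is really $\pi_\ast(E_n^{h(G/C_p)})\{x^{jp}\}$, as in Theorem \ref{spliteoncpinfty}.
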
  

\vspace*{0.2cm}

The expression in Theorem \ref{eoncpinfty} may be used to write $EO_n^\ast(\C P^\infty)$ in terms of generators and relations as in the $p=3$ case. We do not provide these here to avoid further notational complexities. We use Theorem \ref{eoncpinfty} to point out a splitting of $EO_n\wedge \C P^\infty_+$.
\begin{thm}\label{spliteoncpinfty}
There is a splitting
\[EO_n \wedge \C P^\infty_+ \simeq EO_n \vee \Big(\bigvee_{i=1}^{p-2} \Sigma^{2i} EO_n \wedge C^{[i]} \Big) \vee \bigvee_{j\geq 0} \Sigma^{2jp+2(p-1)} E_{n}^{h(G/C_p)}, \]
in the category of $EO_n$-modules. 
\end{thm}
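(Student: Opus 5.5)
The splitting comes from the Adams splitting \eqref{splitcpodd}, $\Sigma^\infty_+ {\C P^\infty}_p^\wedge \simeq P_0\vee \dots \vee P_{p-2}$, after smashing with $EO_n$; it then suffices to split each $EO_n\wedge P_i$ as an $EO_n$-module. The cellular identifications \eqref{skelPi} say each $P_i$ (for $1\le i\le p-2$) has bottom piece $P_i^{(2ip)}\simeq \Sigma^{2i}C^{[i]}$. Above that it is filtered with subquotients $\Sigma^{2((i+rn)p+n)}C^{[p-1]}$. For $P_0$ the bottom piece is $S^0$, so $EO_n\wedge P_0$ contributes the summand $EO_n$ plus $C^{[p-1]}$-blocks.

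By the analogue of Wood's theorem quoted from \cite[Corollary 3.5]{Cha20}, $EO_n\wedge C^{[p-1]}\simeq E_n^{h(G/C_p)}$, so each subquotient block becomes a suspension of $E_n^{h(G/C_p)}$. Reindexing the blocks over all $i$ and $r$ gives cells in degrees $2jp+2(p-1)$, $j\ge 0$. The gradings should be checked carefully: $(i+rn)p+n$ runs over $jp+n$ as $i+rn$ runs over all $j\geq 0$, with $i=0$ included.

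The main step is to show that the resulting filtration of $EO_n\wedge P_i$ by $EO_n$-submodules splits. I would do this by showing that the relevant extension classes vanish, one cofiber sequence at a time. At each stage we have a cofiber sequence of $EO_n$-modules
\[EO_n\wedge P_i^{(\text{lower})}\to EO_n\wedge P_i^{(\text{upper})} \to \Sigma^{N} E_n^{h(G/C_p)},\]
and its connecting map lies in $[\Sigma^{N-1}E_n^{h(G/C_p)}, EO_n\wedge P_i^{(\text{lower})}]_{EO_n}$. Since $E_n^{h(G/C_p)}\simeq EO_n\wedge C^{[p-1]}$ and $C^{[p-1]}$ is finite and self-dual up to suspension, this group is $\pi_\ast$ of $EO_n\wedge D(C^{[p-1]})\wedge P_i^{(\text{lower})}$. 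It can therefore be computed with the filtration spectral sequence and the HFPSS exactly as in Proposition \ref{hfpssc[k]} and the $C^{[p-1]}$ computation.

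The key input is that $EO_n\wedge C^{[p-1]}$ has HFPSS concentrated in $s=0$, namely $H^\ast(G;{E_n}_\ast)/(\alpha,\beta)$ modulo transfers. Smashing with the lower pieces therefore gives homotopy concentrated in even degrees. The connecting map has odd degree, so it vanishes and each cofiber sequence splits. Inducting up the skeleta and passing to the colimit, where the $\lim^1$ terms vanish by the same evenness, gives $EO_n\wedge P_i\simeq \Sigma^{2i}EO_n\wedge C^{[i]}\vee\bigvee \Sigma^{N}E_n^{h(G/C_p)}$.

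The obstacle is making this evenness argument rigorous, because the lower piece contains the non-free summand $\Sigma^{2i}EO_n\wedge C^{[i]}$. The mixed group $\pi_{\mathrm{odd}}(EO_n\wedge C^{[p-1]}\wedge C^{[i]})$ must vanish. This holds because $EO_n\wedge C^{[p-1]}\wedge C^{[i]}$ is a module over $E_n^{h(G/C_p)}$, whose homotopy is even (the order of $G/C_p$ is prime to $p$). Hence its homotopy is computed by a $p$-local HFPSS for the prime-to-$p$ group $G/C_p$, which collapses to invariants of even groups. I would use this prime-to-$p$ group argument directly, rather than the cellular computation, to settle all extension questions at once.

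As a consistency check, the resulting decomposition matches the generators in Theorem \ref{eoncpinfty}. The $x^{kp}$, $x_{(kp,\Delta^i)}$ and $x_{(k,\Delta^i\alpha)}$ come from the $C^{[i]}$-summands via Proposition \ref{hfpssc[k]}. The classes $x^{jp}$ for $j\ge n$ and $x_{(j,p)}$ come from the $E_n^{h(G/C_p)}$-summands.
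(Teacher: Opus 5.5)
Your argument is correct, but it takes a different route from the paper's.

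The paper does not analyze extensions at all. It observes that each $P_j$ is $(2p-2)$-sparse and cites \cite[Theorem 5.13]{Cha20} to conclude that $EO_n\wedge P_j$ is abstractly a wedge of suspensions of $EO_n\wedge C^{[k]}$'s. It then decides which $C^{[k]}$ occur by matching the smallest power of $\beta$ annihilating $x^{kp}$ against Theorem \ref{eoncpinfty}.

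You instead split the block filtration \eqref{skelPi} by hand. Every subquotient above the bottom piece becomes $\Sigma^N E_n^{h(G/C_p)}$ after smashing with $EO_n$, so each extension class lies in $\pi_{\mathrm{odd}}(E_n^{h(G/C_p)}\wedge Y)$ for a finite even complex $Y$. Some points on making this precise:
\begin{itemize}
\item The operative reason for the vanishing is that $E_n^{hH}\wedge Y\simeq (E_n\wedge Y)^{hH}$ for finite $Y$, where $H\cong G/C_p$ is a complement of $C_p$ of order prime to $p$. The homotopy fixed point spectral sequence then collapses to $({E_n}_\ast Y)^H$, which is even because ${E_n}_\ast Y$ is. Being a module over an even ring would not suffice on its own.
\item Your earlier sentence deducing evenness from the $s=0$ concentration of $EO_n\wedge C^{[p-1]}$ can be dropped.
\item You need the Wood-type equivalence for the $S$-dual blocks $D(C^{[p-1]})$ as well. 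The paper's assertion $D(C^{[i]})\simeq \Sigma^{-2in}C^{[i]}$ supplies this.
\item No $\lim^1$ term arises at the end. The sections are chosen compatibly, and homotopy commutes with sequential colimits.
\end{itemize}

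Your route is self-contained apart from the Wood-type equivalence, which the paper also uses. It does not depend on the cohomology computation in Theorem \ref{eoncpinfty}, and it reads the summands and their suspensions directly off the cell structure. Your reindexing $j=i+rn$ is right; the paper's displayed formula for $EO_n\wedge P_j$ writes $j+pi$ where $j+ni$ is meant.

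The paper's route buys generality. The same sparse-spectrum citation is reused for the finite skeleta in Theorem \ref{spliteoncpm}. There the top block is a $C^{[k]}$ with $k<p-1$, and your evenness argument gives no control over its extension against the bottom summand $\Sigma^{2i}EO_n\wedge C^{[i]}$.
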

\begin{proof}
The splitting \eqref{splitcpodd} implies that 
\[ EO_n \wedge \C P^\infty \simeq \bigvee_{j=0}^{p-1} EO_n \land P_j.\]
The spectra $P_j$ have a cell structure with cells in dimensions $2j + 2k(p-1)$ for $k\geq 0$, and therefore, $P_j$ is a $(2p-2)$-sparse spectrum in the sense of \cite[Theorem 5.13]{Cha20}. Thus, it has algebraic $EO_n$-theory which means that $EO_n\wedge P_j$ is a wedge of suspensions of $EO_n \wedge C^{[k]}$s. From the expression in Theorem \ref{eoncpinfty}, we directly observe (by comparing the smallest power of $\beta$ which annihilates $x^{kp}$), 
\[ EO_n \land P_j \simeq \Sigma^{2j} EO_n \land C^{[j]} \vee \bigvee_{i\geq 0} \Sigma^{2(j+pi)p +2(p-1)}E_n^{h(G/C_p)}.\]  
This implies the theorem. 
\end{proof}

\vspace{0.2cm}

Now we proceed to the $m<\infty$ case. We follow the same method as in the $m=\infty$ case, using the computations of the filtration spectral sequence \eqref{fsscpmlarge}, %\eqref{fsscpmmedium}, 
\eqref{fsscpmsmall} and the formulas of the differentials in the homotoply fixed point spectral sequence in Proposition \ref{hfpssc[k]}. Write $[k]_p=n-k+[\frac{k}{p}]p$ which equals $n-s$, where $s$ is the remainder when $k$ is divided by $p$. In terms of the notation \eqref{noteoncp} we get the following computation.   
\begin{thm} \label{eoncpm}
The $EO_n$-cohomology of $\C P^m$ for $m \geq pn$ is described (modulo $M_p$) by 
\begin{align*}
EO_n^\ast(\C P^m) & \cong \frac{{EO_n}^\ast(\C P^\infty)}{(x^{m+1})} + {EO_n}_\ast\{x^{[\frac{m+1}{p}]p-1} \mbox{ if } m \neq -2 \bmod{p}\} +\\
&\sum_{\substack{m-n< k \leq m,\\ k \neq 0,-1 \bmod{p}}}\Big( \frac{\F_p[\Delta^{\pm p},\beta]}{(\beta^{n^2-[k]_pn+1} )}\{x^{k}\} +\frac{\F_p[\Delta^{\pm p},\beta]}{(\beta^{n^2-[k]_pn+1} )}\{x_{(k,\Delta^i)} \mbox{ for } 1\leq i\leq [k]_p\}\Big) + \\
& \sum_{\substack{m-n< k \leq m,\\ k \neq 0,-1 \bmod{p}}} \frac{\F_p[\Delta^{\pm p},\beta]}{(\beta^{([k]_p+1)n} )}\{x_{(k-[k]_p n,\Delta^i\alpha)} \mbox{ for } 0\leq i\leq p-[k]_p-2\}.
\end{align*}
For $2n\leq m < pn$, we write $\{k\}_p = \min({\lceil\frac{k}{n}\rceil-1,[k]_p})$ and have
\begin{align*}
EO_n^\ast(\C P^m) & \cong \resizebox{12.5cm}{!}{$\frac{{EO_n}^\ast(\C P^\infty)}{(x^{m+1}, \beta^{([\frac{m-k}{n}]+1)n}x_{(k,\Delta^i\alpha)}, x_{(kp,\Delta^i)} \mbox{\small if } kp>m)} + {EO_n}_\ast\{x^{[\frac{m+1}{p}]p-1} \mbox{\small if } m \neq -2 \bmod{p}\} +$}\\
&\sum_{\substack{m-n< k \leq m,\\ k \neq 0,-1 \bmod{p}}}\Big( \frac{\F_p[\Delta^{\pm p},\beta]}{(\beta^{n^2-\{k\}_p n+1} )}\{x^{k}\} +\frac{\F_p[\Delta^{\pm p},\beta]}{(\beta^{n^2-\{k\}_pn+1} )}\{x_{(k,\Delta^i)} \mbox{ for } 1\leq i\leq \{k\}_p\}\Big) + \\
& \sum_{\substack{m-n< k \leq m,\\ k \neq 0,-1 \bmod{p},\\ k-[k]_p n >n}} \frac{\F_p[\Delta^{\pm p},\beta]}{(\beta^{([k]_p+1)n} )}\{x_{(k-[k]_p n,\Delta^i\alpha)} \mbox{ for } 0\leq i\leq p-[k]_p-2\}.
\end{align*}
For $m<2n$, we have
\begin{align*}
EO_n^\ast(\C P^m) & \cong {EO_n}_\ast\{1, x^{s } \mbox{ for } m-n<s\leq n\} +\sum_{k=1}^{m-n} \frac{\F_p[\Delta^{\pm p},\beta]}{(\beta^{2n} )}\{x_{(k,\Delta^i\alpha)} \mbox{ for } 0\leq i\leq p-3\}\\
&+\sum_{k=n+1}^{m} \frac{\F_p[\Delta^{\pm p},\beta]}{(\beta^{n^2-n+1} )}\{x^{k}, x_{(k,\Delta)}\}.
\end{align*}
The products with $\beta$ and $[\Delta^j\alpha]$ are analogous to Theorem \ref{eoncpinfty}.
\end{thm}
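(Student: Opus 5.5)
We reduce the computation of $EO_n^\ast(\C P^m)$ to the already computed $EO_n$-homology of the complexes $C^{[k]}$, treated one Adams summand at a time. First $p$-complete, so that $EO_n^\ast(\C P^m)$ is unchanged, and use the splitting \eqref{splitcpodd} restricted to skeleta, $\Sigma^\infty_+\C P^m \simeq \bigvee_{i=0}^{p-2} P_i^{(2m)}$. This gives
\[F(\C P^m_+,EO_n)\simeq \bigvee_i EO_n\wedge D(P_i^{(2m)}).\]
The identifications \eqref{skelPi} show that each $P_i^{(2m)}$ is a $(2p-2)$-sparse spectrum. It is filtered by a bottom block $\Sigma^{2i}C^{[i]}$ (or $\Sigma^{2i}C^{[\cdot]}$ when $m<ip$), then full blocks $C^{[p-1]}$, and finally one truncated top block $\Sigma^{2((i+rn)p+n)}C^{[\,[\frac{m-ip}{n}]-rp\,]}$.

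By \cite[Theorem 5.13]{Cha20}, used exactly as in the proof of Theorem \ref{spliteoncpinfty}, $EO_n\wedge D(P_i^{(2m)})$ splits as a wedge of suspensions of $EO_n\wedge C^{[j]}$, using $D(C^{[j]})\simeq\Sigma^{-2jn}C^{[j]}$. To identify which $C^{[j]}$'s occur, compare with the $\EE_\infty$-page \eqref{fsscpmlarge} of the filtration spectral sequence, together with the restriction map from $\C P^\infty$. The bottom cells reproduce the summands already present in $EO_n^\ast(\C P^\infty)/(x^{m+1})$ (Theorem \ref{eoncpinfty}). Each full $C^{[p-1]}$ block contributes a copy of $E_n^{h(G/C_p)}$, that is, the $(\alpha,\beta)$-torsion-free classes $x^{pj}$ and $x_{(j,p)}$. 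The top truncated block of $P_i$ contributes, after dualising, a shifted $EO_n\wedge C^{[[k]_p]}$. Its top cell corresponds to $x^k$ with $m-n<k\le m$, and its bottom cell corresponds to $x^{k-[k]_pn}$.

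Next, read off generators and annihilators from Proposition \ref{hfpssc[k]}, applied with $k$ replaced by $[k]_p$. In the $E_\infty$-page, the classes $\Delta^j\iota_0$ for $0\le j\le [k]_p$ become $x^k$ and $x_{(k,\Delta^j)}$, annihilated by $\beta^{n^2-[k]_pn+1}$. The classes $\Delta^i\kappa_{2n[k]_p,\alpha}$ for $0\le i\le p-[k]_p-2$ become $x_{(k-[k]_pn,\Delta^i\alpha)}$, annihilated by $\beta^{([k]_p+1)n}$. The dualisation interchanges the top and bottom cells, which is why the $\iota_0$-type classes sit at $x^k$ and the $\kappa$-type classes at the bottom cell $k-[k]_pn$. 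The residue cases $k\equiv 0$ and $k\equiv -1\pmod p$ need separate treatment. The case $k\equiv 0$ gives $[k]_p=n$, a full $C^{[p-1]}$ block with no torsion contribution. The case $k\equiv-1$ gives $[k]_p=0$, a sphere, which yields the free ${EO_n}_\ast\{x^{[\frac{m+1}{p}]p-1}\}$. The $[\Delta^j\alpha]$-products are then transported from the multiplicative extensions in Proposition \ref{hfpssc[k]}.

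For $2n\le m<pn$, the bottom block $C^{[i]}$ and the top truncated block may coincide. The relevant complex is then $C^{[\{k\}_p]}$ with $\{k\}_p=\min(\lceil k/n\rceil-1,[k]_p)$, and the truncation kills the $x_{(kp,\Delta^i)}$ with $kp>m$. It also lowers the $\beta$-annihilator of $x_{(k,\Delta^i\alpha)}$ to $\beta^{([\frac{m-k}{n}]+1)n}$. For $m<2n$, every $P_i^{(2m)}$ is a sphere or a single $\Sigma^{2i}C^{[1]}$, so the formula follows from the $k=1$ computation \eqref{compc[k]at1}.

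The main obstacle is the bookkeeping that matches each truncated block's cell indices to the correct $x^k$ and checks the length $[k]_p$. In particular, one must verify that the sparse-splitting from \cite{Cha20} is compatible with the skeletal truncation, so that no unexpected extensions occur between the $C^{[p-1]}$ pieces and the top block. I would first settle this on the $E_2$-page of the filtration spectral sequence, by comparing with \eqref{fsscpmlarge}, and only then invoke the module splitting.
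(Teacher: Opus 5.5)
Your argument is sound, but it runs in the opposite logical order from the paper.

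\textbf{The paper's route.} The paper proves Theorem \ref{eoncpm} by a direct spectral sequence computation. The $E_2$-page of the homotopy fixed point spectral sequence is read off from the filtration spectral sequence \eqref{fsscpmlarge}, \eqref{fsscpmsmall}. The differentials are imported block by block from Proposition \ref{hfpssc[k]}, transported along the maps between $\C P^m$ and the pieces in \eqref{skelPi}, with degree arguments ensuring nothing else happens. Only afterwards, in Theorem \ref{spliteoncpm}, is \cite[Theorem 5.13]{Cha20} used to package the answer as a wedge of $EO_n\wedge C^{[j]}$'s. There the summands are identified by comparing the $\beta$-annihilators just computed.

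\textbf{Your route.} You make the sparse splitting the input and read everything off Proposition \ref{hfpssc[k]} summand by summand.
\begin{itemize}
\item What this buys: the differentials, hidden extensions and $[\Delta^j\alpha]$-products come for free, and they cannot mix different blocks.
\item What it costs: Chatham's theorem becomes load-bearing for the computation itself. You also cannot identify the summands by their $\beta$-annihilators, since those are the output. You must identify them from algebraic data: the $\PP^1$-blocks of \eqref{skelPi}, equivalently the $d_{2n}$ and $d_{2n^2}$ differentials of the filtration spectral sequence.
\end{itemize}

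The ``main obstacle'' you name is not really one. Chatham's theorem applies directly to the finite sparse spectrum $D(P_i^{(2m)})$ and yields an $EO_n$-module splitting. So there is no compatibility with $\C P^\infty$ to check and no extension problem between blocks.

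\textbf{Bookkeeping to fix.}
\begin{itemize}
\item The full $C^{[p-1]}$ blocks give classes annihilated by $\alpha$ and $\beta$, not ``$(\alpha,\beta)$-torsion-free'' ones.
\item For $n\le m<2n$ one has $P_0^{(2m)}\simeq S^0\vee S^{2n}$, not a single sphere; this is where the free class $x^n$ comes from.
\item More substantively, take the range $2n\le m<pn$ and a truncated bottom block $\Sigma^{2i}C^{[j]}$ with $j<i$. By Proposition \ref{hfpssc[k]} it carries $\kappa$-type classes $x_{(i,\Delta^l\alpha)}$ for all $0\le l\le p-j-2$. That is more than the $l\le p-i-2$ inherited from $EO_n^\ast(\C P^\infty)$.
\item Likewise, the truncated block of $P_0$ with bottom cell $x^n$ contributes classes $x_{(n,\Delta^l\alpha)}$. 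These are excluded by the condition $k-[k]_pn>n$ in the displayed formula.
\end{itemize}
So truncation does more than lower an annihilator. Compare what your method actually produces with the middle-range formula, rather than simply asserting agreement.
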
  

\vspace{0.2cm}

As in the $m=\infty$ case, we have an induced splitting of $EO_n \wedge \C P^m_+$ and $F(\C P^m, EO_n)\simeq EO_n \wedge D(\C P^m)$. We note this in the following theorem. 
\begin{thm}\label{spliteoncpm}
 As a $EO_n$-module, we have the following splittings for $m=pt+r$ and $ 0\leq r \leq p-1$. If $t\geq p-2$, we have
\[
F(\C P^m, EO_n)\simeq  \bigvee_{i=1}^{p-2} \Sigma^{-2pi} EO_n \wedge C^{[i]} \vee \bigvee_{i=p-1}^t \Sigma^{-2pi} E_n^{h(G/C_p)} \vee  \bigvee_{\substack{j=0\\ j\neq r}}^{p-2} \Sigma^{-2(m-j)}EO_n \wedge C^{[[m-j]_p]}.  
\]
If $2\leq t<p-2$, we have 
\[F(\C P^m, EO_n)\simeq \bigvee_{i=1}^{t} \Sigma^{-2pi} EO_n \wedge C^{[i]} \vee \bigvee_{\substack{j=0\\ j\neq r}}^{p-3} \Sigma^{-2(m-j)}EO_n \wedge C^{[\{m-j\}_p]}.  \]
Finally if $t= 0 $ or $1$, 
 \[F(\C P^m, EO_n)\simeq \bigvee_{\substack{s=m-n+1,\\ 1\leq s\leq m}}^{n} \Sigma^{-2s} EO_n \vee \bigvee_{s=n+1}^{m} \Sigma^{-2s}EO_n \wedge C^{[1]}.  \]
\end{thm}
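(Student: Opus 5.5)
The plan is to derive the splitting of $F(\C P^m, EO_n)\simeq EO_n\wedge D(\C P^m)$ from a skeletal splitting of $EO_n\wedge \C P^m_+$, followed by Spanier--Whitehead duality. We work $p$-completely and use the Adams splitting \eqref{splitcpodd}. It is compatible with the cellular filtration and restricts to $\C P^m$, so that
\[
\Sigma^\infty_+ {\C P^m}^\wedge_p \simeq \bigvee_{j=0}^{p-2} P_j^{(2m)}.
\]
Each $P_j^{(2m)}$ has cells only in dimensions congruent to $2j$ modulo $2(p-1)$, so it is $(2p-2)$-sparse. By \cite[Theorem 5.13]{Cha20}, as in the proof of Theorem \ref{spliteoncpinfty}, each $EO_n\wedge P_j^{(2m)}$ is therefore a wedge of suspensions of $EO_n\wedge C^{[k]}$ for $0\le k\le p-1$, where $EO_n\wedge C^{[p-1]}\simeq E_n^{h(G/C_p)}$. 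So the only work is to identify which $k$ occur and with which suspensions.

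To identify the summands, I compare $EO_n$-cohomology. Theorem \ref{eoncpm} gives $EO_n^\ast(\C P^m)$ as a sum of pieces, and Proposition \ref{hfpssc[k]} gives the homotopy of each candidate summand. A summand $\Sigma^{a} EO_n\wedge C^{[k]}$ with $k<p-1$ is detected by the order of $\beta$-torsion on its bottom even class, namely $\beta^{n^2-kn+1}$, together with the number of $\Delta^i$-shifted generators ($k$ of them) and the $\alpha$-classes annihilated by $\beta^{(k+1)n}$. The summand $E_n^{h(G/C_p)}$ is detected by classes killed by $(\alpha,\beta)$. The cases then go as follows.

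\begin{itemize}
\item For $t\ge p-2$, the low part $EO_n^\ast(\C P^\infty)/(x^{m+1})$ contributes the classes $x^{ip}$ with their $x_{(ip,\Delta^i)}$ and $x_{(i,\Delta^j\alpha)}$ for $i\le p-2$, which give the blocks $C^{[i]}$. The classes $x^{ip}$ with $p-1\le i\le t$ give the copies of $E_n^{h(G/C_p)}$.
\item The top classes $x^k$ with $m-n<k\le m$, $k\ne 0,-1 \bmod p$, together with the extra generator $x^{[\frac{m+1}{p}]p-1}$, give the top-cell blocks. Their torsion exponents $\beta^{n^2-[k]_pn+1}$ identify them as $C^{[[k]_p]}$; the indexing by $j=m-k$ with $j\neq r$ accounts for the omitted residue.
\item The middle ranges $2\le t<p-2$ and $t\le 1$ are handled in the same way, using $\{k\}_p$ and the small-$m$ formula of Theorem \ref{eoncpm}.
\end{itemize}

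Geometrically, these blocks are exactly the top pieces $P_j^{(2m)}/P_j^{(2(i+rn)p)}\simeq \Sigma^{\ast}C^{[\cdot]}$ of \eqref{skelPi}. Finally, $D(C^{[i]})\simeq\Sigma^{-2in}C^{[i]}$ converts each homology summand into the stated cohomological suspensions $\Sigma^{-2pi}$ and $\Sigma^{-2(m-j)}$.

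The main obstacle is justifying that the summands read off from cohomology are really the $EO_n\wedge C^{[k]}$ claimed. Two things must be checked. First, the dimension bookkeeping: matching the bottom-cell degree of each block in $P_j^{(2m)}$ with $-2pi$ or $-2(m-j)$ after dualizing. Second, that $C^{[k]}$ for different $k$ are distinguished by their $EO_n$-homotopy, so the Chatham decomposition is forced. For the latter I would use Proposition \ref{hfpssc[k]}: the $\beta$-torsion exponent $n^2-kn+1$ is strictly monotone in $k$, which pins down $k$ uniquely. For the degree matching I would check directly against the cell structures in \eqref{skelPi}, treating the boundary residues $k\equiv 0,-1 \bmod p$ separately, since there the block degenerates to a sphere or merges into a $C^{[p-1]}$.
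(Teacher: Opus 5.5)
Your proposal is correct and follows the paper's own argument. It restricts the Adams splitting \eqref{splitcpodd} to $2m$-skeleta and invokes $(2p-2)$-sparseness with \cite[Theorem 5.13]{Cha20} to get a decomposition into suspensions of $EO_n\wedge C^{[k]}$. It then pins down each $k$ by the $\beta$-torsion exponents in Theorem~\ref{eoncpm} and passes to $F(\C P^m,EO_n)$ using compactness and $D(C^{[i]})\simeq\Sigma^{-2in}C^{[i]}$.

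The degree bookkeeping you defer is worth carrying out, because it does not reproduce the statement exactly as printed. Each of the $p-1$ summands $P_j^{(2m)}$ has exactly one top cell among $x^m,\dots,x^{m-p+2}$. So for $2\le t<p-2$ the last wedge should run over $0\le j\le p-2$, $j\neq r$, not $j\le p-3$. Also, for $m=2p-1$ the top cell $x^{2p-1}$ gives a free summand $\Sigma^{-2(2p-1)}EO_n$ rather than a copy of $C^{[1]}$.
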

\begin{proof}
The splitting of $\C P^\infty$ in \eqref{splitcpodd} is cellular in the sense that $\C P^m$ splits as the sum of the $2m$-skeleta of the $P_j$s. Again by the arguments in \cite[Theorem 5.13]{Cha20} implies that $\C P^m$ has algebraic $EO_n$-theory. Now comparing the power of $\beta$ which annihilates $x^{kp}$ and $x^s$ for $m-n < s < m$ implies the splitting of $EO_n\wedge \C P^m$ analogous to Theorem \ref{spliteoncpinfty}. As these are compact $EO_n$-modules, we obtain an analogous splitting for $EO_n \wedge D\C P^m$.  
\end{proof}

\vspace{0.2cm}

Let $q_m : \C P^m \to S^{2m}$ be the degree $1$ map obtained by quotienting out a copy of $\C P^{m-1}$. From Theorem \ref{eoncpm}, we deduce the following corollary.
\begin{cor}\label{powbetanonzero}
Let $m=pt+r$ with $0\leq r \leq p-1$. Write $EO_n^\ast(S^{2m})={EO_n}_\ast\{1,\epsilon_{2m}\}$ where $\epsilon_{2m} \in \widetilde{ EO_n}^{2m}(S^{2m})$ is a generator. Then, 
\[q_m^\ast(\beta^s \epsilon_{2m})= \beta^s x^m \neq 0 \mbox{  if } \begin{cases} s\leq rn & \mbox{if } t \geq n-r \\
      s\leq (n-t)n & \mbox{if } t< n-r. \end{cases} \]
\end{cor}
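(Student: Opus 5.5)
The plan is to reduce the question to the $\beta$-torsion exponent of the top class $x^m$ in $EO_n^\ast(\C P^m)$, which Theorem \ref{eoncpm} computes.

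\textbf{Step 1 (naturality).} The quotient $q_m$ is a map of $EO_n$-modules and induces a map of filtration and homotopy fixed point spectral sequences. On the top cell, the generator $\epsilon_{2m}$ of $\EE_2(EO_n^\ast(S^{2m}))$ pulls back to the class $x^m$ in filtration $r=2m$. The map is ${EO_n}_\ast$-linear, so
\[q_m^\ast(\beta^s\epsilon_{2m})=\beta^s q_m^\ast(\epsilon_{2m})=\beta^s x^m .\]
Hence it suffices to show $\beta^s x^m\neq 0$ in $EO_n^\ast(\C P^m)$ in the stated range.

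\textbf{Step 2 (detection on $E_\infty$).} I would show that $\beta^s x^m$ survives as a nonzero class on the $E_\infty$-page of the homotopy fixed point spectral sequence. Since $x^m$ lies in the top filtration, nothing in higher filtration can absorb it, so nonvanishing on $E_\infty$ gives nonvanishing in homotopy.

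\textbf{Step 3 (read off the annihilator).} Theorem \ref{eoncpm} describes $x^k$ for $m-n<k\le m$ with $k\not\equiv 0,-1 \pmod p$. Taking $k=m$:
\begin{itemize}
\item For $m\ge pn$, the summand is $\F_p[\Delta^{\pm p},\beta]/(\beta^{n^2-[m]_pn+1})\{x^m\}$, with $[m]_p=n-r$. So $\beta^s x^m\ne0$ for $s\le n^2-(n-r)n=rn$.
\item For smaller $m$, $[m]_p$ is replaced by $\{m\}_p=\min(\lceil m/n\rceil-1,[m]_p)$. When $t<n-r$, the binding term is governed by $t$. This gives the exponent $n^2-\{m\}_pn$, which should reduce to $(n-t)n$ using $m=pt+r$, i.e.\ $\lceil m/n\rceil-1$ behaving like $t$ in the relevant range.
\end{itemize}
Equivalently, via Theorem \ref{spliteoncpm}, $x^m$ generates the bottom class of a summand $\Sigma^{-2m}EO_n\wedge C^{[j]}$ with $j=[m]_p$ (resp.\ $\{m\}_p$). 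Proposition \ref{hfpssc[k]} says $\iota_0$ there is $\beta$-torsion of exponent exactly $n^2-jn+1$, so I would cite that directly.

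\textbf{Main obstacle.} The hard part is the edge cases where $m\equiv0$ or $-1 \pmod p$ (i.e.\ $r=0$, or $r=p-1$ with $r\le n$ forcing subtleties), which are excluded in Theorem \ref{eoncpm}.
\begin{itemize}
\item For $r=0$, the claimed range is $s\le0$, which is trivial: $x^m$ itself is nonzero.
\item For $m\equiv-1$, $x^m$ is the extra free generator $x^{[\frac{m+1}{p}]p-1}$, which is not $\beta$-torsion beyond the ${EO_n}_\ast$ relation $\beta^{n^2+1}=0$. This comfortably covers $s\le rn=n^2$.
\item The $t<n-r$ bookkeeping with $\{k\}_p$ versus $\lceil m/n\rceil$ also needs checking. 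I expect the matching of $\{m\}_p$ with $n-t$ to take the most care.
\end{itemize}
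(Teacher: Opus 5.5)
Your route is the paper's: reduce to the $\beta$-annihilator of the top class $x^m$ and read it off from Theorem~\ref{eoncpm}. The paper's proof consists only of the arithmetic you defer.
\begin{itemize}
\item Write $m=tn+(t+r)$. Then $[m]_p=n-r$ and $\lceil m/n\rceil=t+\lceil (t+r)/n\rceil$, which equals $t+1$ when $1\le t+r<n$.
\item For $t<n-r$ this gives $\{m\}_p=\min(t,n-r)=t$, hence the bound $n^2-tn=(n-t)n$.
\item For $t\ge n-r$ one has $\lceil m/n\rceil-1\ge t\ge n-r$, so $\{m\}_p=[m]_p$ and the bound is $rn$.
\item The $m\ge pn$ formula only enters the first clause, since $m\ge pn$ forces $t\ge n$.
\end{itemize}

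There is, however, an actual error in your treatment of $r=0$. You say the claimed range is then $s\le 0$, but that is only the first clause, which applies when $t\ge n-r=n$. For $r=0$ and $1\le t\le n-1$ you are in the second clause, since $t<n=n-r$. There the statement asserts $\beta^s x^{tp}\neq 0$ for all $s\le (n-t)n$, which is not trivial. Your Step~3 does not reach this case for $t\ge 2$: the sums in Theorem~\ref{eoncpm} exclude $k\equiv 0\pmod p$, and $x^{tp}$ enters only through the quotient of $EO_n^\ast(\C P^\infty)$.

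The missing input is as follows. The class $x^{tp}$ is the class $x^{kp}$ with $k=t$ of Theorem~\ref{eoncpinfty}, annihilated exactly by $\beta^{n^2-tn+1}$, and this persists in $EO_n^\ast(\C P^{tp})$. To see this, note that for $m=tp$ the summand $P_t^{(2m)}$ is exactly $P_t^{(2tp)}\simeq\Sigma^{2t}C^{[t]}$ by \eqref{skelPi}. Its dual is $\Sigma^{-2tp}EO_n\wedge C^{[t]}$ with bottom class $x^{tp}$; this is the summand $\Sigma^{-2pt}EO_n\wedge C^{[t]}$ of Theorem~\ref{spliteoncpm}. Proposition~\ref{hfpssc[k]}, applied with $k=t<p-1$, then gives the exponent $n^2-tn+1$. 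For $t=1$ this is the $\beta^{n^2-n+1}$-torsion class $x^{n+1}$ in the $m<2n$ formula of Theorem~\ref{eoncpm}.

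With this case supplied the argument is complete, and your handling of $r=n$ via the free generator $x^{[\frac{m+1}{p}]p-1}$ is correct.
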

\begin{proof}
This follows from Theorem \ref{eoncpm} via the fact that $[m]_p = n-r$, $\lceil \frac{m}{n}\rceil = t + \lceil \frac{t+r}{n} \rceil$, and the latter equals $t+1$ if $t+r <n$. 
\end{proof}

\vspace{0.2cm}

The classes $\beta^s=\iota(\beta_1^s)$ belong to the Hurewicz image $\pi_\ast \Sp \xrightarrow{\iota} \pi_\ast EO_n$. The following corollary detects when $\beta_1^s\in \pi_{2s(pn-1)}\Sp$ maps non-trivially under the map 
\[ \C P^{s(pn-1)} \xrightarrow{q_{s(pn-1)}} S^{|\beta^s|} \xrightarrow{\beta_1^s} \Sp \xrightarrow{\iota} EO_n.\]
\begin{cor}\label{detectbeta}
The image $q_{\frac{|\beta^s|}{2}}^\ast(\iota(\beta_1^s))$ is $\neq 0$ if and only if 
\[s=r+kp, \mbox{ where } \begin{cases} k\in \{1,\dots, r-1\} & \mbox{ if } r\leq \frac{p-1}{2}\\
 k\in \{1,\dots r-2\} & \mbox{ if } r> \frac{p-1}{2}. \end{cases} \]  
\end{cor}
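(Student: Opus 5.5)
The plan is to reduce the statement to Corollary \ref{powbetanonzero} by computing, for the dimension $m=\frac{|\beta^s|}{2}$, the residue and quotient of $m$ modulo $p$. Since $\iota(\beta_1^s)=\beta^s$ and $q_m$ is the collapse map, $q_m^\ast(\iota(\beta_1^s))$ is the class $q_m^\ast(\beta^s\epsilon_{2m})=\beta^s x^m$ in $EO_n^\ast(\C P^m)$. So the question is exactly when $\beta^s x^m\neq 0$. For the ``only if'' direction I need the sharp version of Corollary \ref{powbetanonzero}: the annihilator $\beta^{(\cdot)}$ of $x^m$ in Theorem \ref{eoncpm} is exact, not just an upper bound. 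This comes from the $E_\infty$-description, where $x^m$ generates a summand $\F_p[\Delta^{\pm p},\beta]/(\beta^{N})$ with $N$ given explicitly. The degenerate case $p\mid m$ should be separated out: there $x^m$ lies in a summand killed by $\beta$, so the class vanishes.

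Next I carry out the arithmetic. Since $|\beta|=2(pn-1)$, we have $m=s(pn-1)\equiv -s \pmod p$. Write $s=a+kp$ with $0\le a\le p-1$ (this $a$ is the ``$r$'' of the statement).
\begin{itemize}
\item If $a=0$, then $p\mid m$ and the class vanishes, which excludes these $s$.
\item If $a\geq 1$, the residue of $m$ in the notation of Corollary \ref{powbetanonzero} is $r_m=p-a$, and $t=an+kpn-k-1$.
\end{itemize}
Except for the smallest values of $s$, $t\ge n-r_m$, and the criterion becomes $s\le r_m n=(p-a)(p-1)$. I will substitute $s=a+kp$ into this and solve for the admissible range of $k$. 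The small-$t$ branch $t<n-r_m$ (which occurs only for $k=0$ and small $a$) is handled with the second case of Corollary \ref{powbetanonzero} and with the $2n\le m<pn$ formula of Theorem \ref{eoncpm} involving $\{k\}_p$. I expect this branch to explain why $k=0$ is excluded.

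Finally I intersect with the range where $\beta^s\neq 0$ in $\pi_\ast EO_n$, namely $s\le n^2$ from \eqref{einftyhfpsseo}. I also check that $\beta^s x^m$ is not accidentally hit by a multiplicative extension or by an $M_p$-term in that degree. It is not, because $x^m$ sits in positive $\beta$-filtration with no transfer classes in the relevant bidegree.

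The main obstacle is matching the resulting inequality on $(a,k)$ with the stated dichotomy. The stated cases are $k\le a-1$ for $a\le \frac{p-1}{2}$ and $k\le a-2$ otherwise. The naive bound from $s\le (p-a)n$ gives a range in terms of $p-1-a$, so a careful recheck is needed of which residue governs the annihilator. That residue is $[m]_p$ versus $[k]_p$ in Theorem \ref{eoncpm}, i.e.\ whether the relevant exponent is $n^2-[m]_pn+1$ or $rn$. I would recompute $[m]_p$ directly from the definition $[k]_p=n-k+[\frac{k}{p}]p$ for $m=s(pn-1)$. I would then redo the comparison with the annihilator $\beta^{n^2-[m]_pn+1}$ of $x^m$. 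I expect the symmetric threshold at $\frac{p-1}{2}$ to emerge from the two competing constraints $s\le n^2-[m]_pn$ and $s\le n^2$ together with $k\ge 1$.
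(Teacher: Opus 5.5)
Your reduction to Corollary~\ref{powbetanonzero} is the same one the paper uses, and your arithmetic is correct. That is exactly why the final step you defer to a ``careful recheck'' cannot be carried out.

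With $s=a+kp$ and $1\le a\le p-1$, you have $m=s(pn-1)\equiv -s\pmod p$. So the residue of $m$ is $p-a$, and $[m]_p=n-(p-a)=a-1$. Theorem~\ref{eoncpm} then says $x^m$ is annihilated by exactly $\beta^{n^2-(a-1)n+1}=\beta^{(p-a)n+1}$. Recomputing $[m]_p$ therefore just returns your bound $s\le (p-a)n$, i.e.\ $k\le p-1-a$.

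Nothing else can produce the stated dichotomy. The condition $s\le n^2$ follows from $s\le (p-a)n$ because $a\ge 1$, so it gives no second constraint and no threshold at $\frac{p-1}{2}$. Nothing excludes $k=0$ either: your small-$t$ branch never occurs, since $t=sn-k-1\ge a-1=n-r_m$ for every $s\ge 1$.

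So the paper's own results give: the class is nonzero if and only if $p\nmid s$ and $k\le p-1-a$. This contradicts the statement. Take $p=5$.
\begin{itemize}
\item $s=9=4+5$ is allowed by the statement. But $m=171\equiv 1\pmod 5$, so $x^{171}$ is killed by $\beta^5$ and $\beta^9x^{171}=0$.
\item $s=2$ and $s=6=1+5$ are excluded by the statement. But $m=38\equiv 3$, so $x^{38}$ is killed only by $\beta^{13}$, and $\beta^2x^{38}\neq 0$. Likewise $m=114\equiv -1$, so $x^{114}$ generates a free ${EO_n}_\ast$-summand and $\beta^6x^{114}\neq 0$.
\end{itemize}
As a consistency check, at $p=3$ your rule gives $s\in\{1,2,4\}$. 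These are exactly the classes $\beta,\beta^2,\beta^4$ on $\C P^{5},\C P^{10},\C P^{20}$ used in Theorem~\ref{inertcpn}(a) with $t=0$, whereas the stated ranges would be empty.

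The paper's proof reaches the stated ranges by writing $s=r+kp$ and then applying the criterion $s\le rn$ of Corollary~\ref{powbetanonzero} with $r$ the residue of $s$. The criterion requires the residue $p-r$ of $m$. Indeed, $r+kp\le rn$ is equivalent to $k\le r-\frac{2r}{p}$, which yields exactly the stated upper bounds $r-1$ and $r-2$.

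So the gap in your proposal is not a missing idea you could supply. The statement as written does not follow from Theorem~\ref{eoncpm}, and it is contradicted by it. You should trust your inequality $k\le p-1-a$, together with $p\nmid s$, rather than try to reconcile it with the stated ranges.
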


\begin{proof}
We will apply Corollary \ref{powbetanonzero} in the case $s=m$. Observe that $|\beta^s|=2spn-2s > 2pn$ if $s\geq 2$.  Let $s=r+kp$, so that the condition under which $q_{s(pn-1)}^\ast(\beta^s) \neq 0$ is $s\leq rn$. This implies the given restrictions on $k$. 
\end{proof}

\end{mysubsection}

\vspace{0.2cm}

\begin{mysubsection}{The $EO_n$-cohomology of quaternionic projective spaces}
Analogously as in the $p=3$ case, the $EO_n$-cohomology of $\Hyp P^m$ may be deduced from $EO_n^\ast(\C P^{2m})$ by looking at the even powers of $x$ therein.  The analogue of \eqref{splitcpodd} is 
\[ \Sigma^\infty_+ {\Hyp P^\infty}_p^\land \simeq P_0 \vee P_2 \vee \dots \vee P_{p-3}.\]
We write $y \in E_n^4(\Hyp P^\infty)$ which maps to $x^2$ under the map $\C P^\infty \to \Hyp P^\infty$. Using the notations
\[y_{(k,\Delta^i\alpha)} := [\Delta^i\alpha y^k], \quad y_{(j,p)}:= [p y^j] , \quad y_{(kp,\Delta^i)}:=[\Delta^i y^{kp}],\]
we describe the $EO_n$-cohomology of $\Hyp P^m$, which follows from the computations in Theorem \ref{eoncpinfty} and Theorem \ref{eoncpm}.
\begin{thm} \label{eonhpm}
The $EO_n$-cohomology of $\Hyp P^\infty$ is described (modulo $M_p$) by 
\begin{align*}
EO_n^\ast(\Hyp P^\infty) & \cong {EO_n}_\ast\{1\} + \frac{{EO_n}_\ast}{([\Delta^j\alpha],\beta)} \{y^{jp} \mbox{ for } j\geq \frac{n}{2}, y_{(j,p)} \mbox{ } p\nmid j \} +\\
&\sum_{k=1}^{\frac{n}{2}-1}\Big( \frac{\F_p[\Delta^{\pm p},\beta]}{(\beta^{n^2-2kn+1} )}\{y^{kp}\} +\frac{\F_p[\Delta^{\pm p},\beta]}{(\beta^{n^2-2kn+1} )}\{y_{(kp,\Delta^i)} \mbox{ for } 1\leq i\leq 2k\}\Big) + \\
&\sum_{k=1}^{\frac{n}{2}-1} \frac{\F_p[\Delta^{\pm p},\beta]}{(\beta^{(2k+1)n} )}\{y_{(k,\Delta^i\alpha)} \mbox{ for } 0\leq i\leq p-2k-2\}.
\end{align*}
The $EO_n$-cohomology of $\Hyp P^m$ for $2m \geq pn$ is described (modulo $M_p$) by 
\begin{align*}
EO_n^\ast(\Hyp P^m) & \cong \frac{{EO_n}^\ast(\Hyp P^\infty)}{(y^{m+1})} + {EO_n}_\ast\{y^{\frac{[\frac{2m+1}{p}]p-1}{2}} \mbox{ if } m \neq -1 \bmod{p}\} +\\
&\sum_{\substack{m-\frac{n}{2}< k \leq m,\\ 2k \neq 0,-1 \bmod{p}}}\Big( \frac{\F_p[\Delta^{\pm p},\beta]}{(\beta^{n^2-[2k]_pn+1} )}\{y^{k}\} +\frac{\F_p[\Delta^{\pm p},\beta]}{(\beta^{n^2-[2k]_pn+1} )}\{y_{(k,\Delta^i)} \mbox{ for } 1\leq i\leq [2k]_p\}\Big) + \\
& \sum_{\substack{m-\frac{n}{2}< k \leq m,\\ 2k \neq 0,-1 \bmod{p}}} \frac{\F_p[\Delta^{\pm p},\beta]}{(\beta^{([2k]_p+1)n} )}\{y_{(k-[2k]_p \frac{n}{2},\Delta^i\alpha)} \mbox{ for } 0\leq i\leq p-[2k]_p-2\}.
\end{align*}
For $2n\leq 2m < pn$, we have
\begin{align*}
EO_n^\ast(\Hyp P^m) & \cong \resizebox{12.5cm}{!}{$\frac{{EO_n}^\ast(\Hyp P^\infty)}{(y^{m+1}, \beta^{([\frac{2m-2k}{n}]+1)n}y_{(k,\Delta^i\alpha)}, y_{(kp,\Delta^i)} \mbox{\small if } kp>2m)} + {EO_n}_\ast\{y^{\frac{[\frac{2m+1}{p}]p-1}{2}} \mbox{\small if } m \neq -1 \bmod{p}\} +$}\\
&\sum_{\substack{m-\frac{n}{2}< k \leq m,\\ 2k \neq 0,-1 \bmod{p}}}\Big( \frac{\F_p[\Delta^{\pm p},\beta]}{(\beta^{n^2-\{2k\}_p n+1} )}\{y^{k}\} +\frac{\F_p[\Delta^{\pm p},\beta]}{(\beta^{n^2-\{2k\}_pn+1} )}\{y_{(k,\Delta^i)} \mbox{ for } 1\leq i\leq \{2k\}_p\}\Big) + \\
& \sum_{\substack{m-\frac{n}{2}< k \leq m,\\ 2k \neq 0,-1 \bmod{p},\\ 2k-[2k]_p n >n}} \frac{\F_p[\Delta^{\pm p},\beta]}{(\beta^{([2k]_p+1)n} )}\{y_{(k-[2k]_p \frac{n}{2},\Delta^i\alpha)} \mbox{ for } 0\leq i\leq p-[2k]_p-2\}.
\end{align*}
For $m<n$, we have
\begin{align*}
EO_n^\ast(\Hyp P^m) & \cong {EO_n}_\ast\{1, y^{s } \mbox{ for } m-\frac{n}{2}<s\leq \frac{n}{2}\} +\sum_{k=1}^{m-\frac{n}{2}} \frac{\F_p[\Delta^{\pm p},\beta]}{(\beta^{2n} )}\{y_{(k,\Delta^i\alpha)} \mbox{ for } 0\leq i\leq p-3\}\\
&+\sum_{k=\frac{n}{2}+1}^{m} \frac{\F_p[\Delta^{\pm p},\beta]}{(\beta^{n^2-n+1} )}\{y^{k}, y_{(k,\Delta)}\}.
\end{align*}

\end{thm}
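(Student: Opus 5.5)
The plan is to deduce Theorem \ref{eonhpm} from the complex case rather than redo the spectral sequences. The key input is the quaternionic analogue of \eqref{splitcpodd}. The map $\C P^{2m+1}\to \Hyp P^m$ (the $S^2$-bundle $\C P^{2m+1}\to\Hyp P^m$) induces an injection in mod $p$ cohomology with image $\F_p[x^2]/(x^{2m+2})$. After $p$-completion, $\Sigma^\infty_+\Hyp P^\infty$ is identified with the summand $P_0\vee P_2\vee\dots\vee P_{p-3}$ of $\Sigma^\infty_+\C P^\infty$, namely the summands whose cohomology consists of even powers of $x$. Since $p-1$ is even, $i$ and $i+kn$ have the same parity, so these are exactly the $P_i$ with $i$ even.

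I would justify this with the idempotents: $e_p^{(i)}$ for $i$ even are the idempotents commuting with the action of the $(p-1)/2$-th power of $f_{\zeta_p}$, which is $f_{-1}$ (complex conjugation). This mirrors the argument using $\frac{1+c}{2}$ at $p=3$. The identification is compatible with skeleta: the $4m$-skeleton of $\Hyp P^\infty$ corresponds to the wedge of the $2(2m)$-skeleta of the even $P_i$. Equivalently, $\Hyp P^m$ stably splits off $\C P^{2m}$ as the even part.

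Given this, $EO_n^\ast(\Hyp P^m)$ is the direct summand of $EO_n^\ast(\C P^{2m})$ spanned by the contributions of the even $P_i$. I would read it off from Theorem \ref{eoncpinfty} and Theorem \ref{eoncpm} with $m$ replaced by $2m$, keeping only generators built from even powers $x^{2k}=y^k$, and reindexing as follows:
\begin{itemize}
\item $x^{kp}$ with $k$ even becomes $y^{(k/2)p}$;
\item the annihilating exponent $n^2-kn+1$ becomes $n^2-2k'n+1$ with $k=2k'$;
\item $x_{(k,\Delta^i\alpha)}$ with $k$ even becomes $y_{(k/2,\Delta^i\alpha)}$, with bounds $0\le i\le p-2k'-2$ and exponent $(2k'+1)n$;
\item the top-cell generators $x^k$ with $m'-n<k\le m'$ (where $m'=2m$) restrict to even $k$, i.e.\ $m-\frac{n}{2}<k/2\le m$, with $[k]_p$ becoming $[2k]_p$;
\item the range condition $m'\ge pn$ becomes $2m\ge pn$.
\end{itemize}
The extra generator $x^{[\frac{m'+1}{p}]p-1}$ appears precisely when its exponent is even. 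One checks $[\frac{2m+1}{p}]p-1$ is even automatically, since $p$ is odd and the bracket is odd/even accordingly. The condition $2m\ne -2 \bmod p$ translates to $m\ne -1\bmod p$.

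The small cases $2n\le 2m<pn$ and $m<n$ are read off in the same way from the corresponding parts of Theorem \ref{eoncpm}, with $\{k\}_p$ replaced by $\{2k\}_p$.

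The main obstacle is the first step. I need that the stable splitting of $\Hyp P^\infty$ as the even summands is compatible with skeleta, and that the restriction $EO_n^\ast(\C P^{2m})\to$ summand is induced by a genuine map, so that the module structure, including the $\beta$ and $[\Delta^j\alpha]$ products, is preserved. My first attempt would be to check that $\C P^{2m}\to\C P^{2m+1}\to\Hyp P^m$, composed with the inclusion of the even summands, is a $p$-local equivalence on mod $p$ cohomology. Then \eqref{skelPi} restricted to even $i$ gives the cell structures directly. The $EO_n$ computation then transfers verbatim, since Proposition \ref{hfpssc[k]} only depends on these $C^{[k]}$ blocks.
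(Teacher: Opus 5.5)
Your route is the paper's. You identify $(\Sigma^\infty_+\Hyp P^m)^\wedge_p$ with the $4m$-skeleta of the even summands $P_0\vee P_2\vee\dots\vee P_{p-3}$, then read the answer off Theorems \ref{eoncpinfty} and \ref{eoncpm} with $m$ replaced by $2m$. The paper only asserts the splitting. Your check supplies it correctly: the composite of the even summands of $\C P^{2m}$ with $\C P^{2m}\to\Hyp P^m$ is a mod $p$ cohomology isomorphism of finite spectra. One imprecision there: every $e_p^{(i)}$ commutes with $f_{-1}=f_{\zeta_p}^{(p-1)/2}$. The even ones are instead characterized by $f_{-1}e_p^{(i)}=e_p^{(i)}$, i.e.\ $\sum_{i\ \mathrm{even}}e_p^{(i)}=\frac{1+f_{-1}}{2}$.

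The step that fails is your parity claim for the extra free generator. Since $p$ is odd, $[\frac{2m+1}{p}]p-1$ is even if and only if $[\frac{2m+1}{p}]$ is odd, and this need not hold. Take $p=5$, $m=10$, so $2m=pn$; the exponent is $19$. The lone top sphere $x^{19}$ of $\C P^{20}$ lies in the odd summand $P_3$. The even part of $\C P^{20}$ has no lone top sphere: its top pieces are the $C^{[1]}$ on $x^{14},x^{18}$ in $P_2$ and the full block $x^4,\dots,x^{20}$ in $P_0$. This happens even though $10\not\equiv -1 \bmod 5$. In general, write $m=ap+b$ with $0\le b\le p-1$; then $[\frac{2m+1}{p}]=2a+[\frac{2b+1}{p}]$. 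So the summand ${EO_n}_\ast\{y^{([\frac{2m+1}{p}]p-1)/2}\}$ occurs exactly when $\frac{p-1}{2}\le b\le p-2$, not for every $m\not\equiv -1\bmod p$. This is consistent with the class $x_{(6k+2)}$ for $\Hyp P^{3k+1}$ in Theorem \ref{eo2hpn}. Outside this range the displayed exponent is not even an integer. Your reindexing therefore proves the formula only once this extra condition is attached to that term. Done carefully, the same reindexing also shows that in the range $2n\le 2m<pn$ one must kill $y_{(kp,\Delta^i)}$ when $kp>m$, not when $kp>2m$.
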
  

\vspace{0.2cm}

We also have the following splittings of $EO_n \wedge \Hyp P^\infty_+$ and $F(\Hyp P^m,EO_n)$ analogous to Theorem \ref{spliteoncpinfty} and Theorem \ref{spliteoncpm} using the expressions in Theorem \ref{eonhpm}.

\begin{thm}\label{spliteonhpinfty}
There is a splitting
\[EO_n \wedge \Hyp P^\infty_+ \simeq EO_n \vee \Big(\bigvee_{i=1}^{\frac{p-3}{2}} \Sigma^{4i} EO_n \wedge C^{[2i]} \Big) \vee \bigvee_{j\geq 0} \Sigma^{4jp+2(p-1)} E_{n}^{h(G/C_p)}, \]
in the category of $EO_n$-modules. For $m=pt+r$, with $t\geq \frac{n}{2}-1$, we have
\[
F(\Hyp P^m, EO_n)\simeq  \bigvee_{i=1}^{\frac{p-3}{2}} \Sigma^{-4pi} EO_n \wedge C^{[2i]} \vee \bigvee_{i=\frac{n}{2}}^t \Sigma^{-4pi} E_n^{h(G/C_p)} \vee  \bigvee_{\substack{j=0\\ j\neq r}}^{\frac{n}{2}-1} \Sigma^{-4(m-j)}EO_n \wedge C^{[[2(m-j)]_p]}.  
\]
If $1\leq t<\frac{n}{2}-1$, we have 
\[F(\Hyp P^m, EO_n)\simeq \bigvee_{i=1}^{t} \Sigma^{-4pi} EO_n \wedge C^{[2i]} \vee \bigvee_{\substack{j=0\\ j\neq r}}^{\frac{n}{2}-1} \Sigma^{-4(m-j)}EO_n \wedge C^{[\{2(m-j)\}_p]}.  \]
Finally if $m=r < p $, 
 \[F(\Hyp P^m, EO_n)\simeq \bigvee_{\substack{s=m-\frac{n}{2}+1,\\ 1\leq s\leq m}}^{\frac{n}{2}} \Sigma^{-4s} EO_n \vee \bigvee_{s=\frac{n}{2}+1}^{m} \Sigma^{-4s}EO_n \wedge C^{[1]}.  \]
\end{thm}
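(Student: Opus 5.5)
The plan mirrors the proofs of Theorem \ref{spliteoncpinfty} and Theorem \ref{spliteoncpm}, restricted to the even Adams summands. The starting point is the splitting
\[ \Sigma^\infty_+ {\Hyp P^\infty}_p^\wedge \simeq P_0\vee P_2\vee \dots \vee P_{p-3}. \]
It comes from the fact that $\Sigma^\infty_+\Hyp P^\infty_p$ is a summand of $\Sigma^\infty_+\C P^\infty_p$, with $H^\ast(\Hyp P^\infty)$ spanned by the even powers $x^{2k}$. The cohomology $H^\ast(P_i;\F_p)$ is spanned by the $x^{i+kn}$, and for $i$ even all of these exponents are even.

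Smashing with $EO_n$ gives $EO_n\wedge \Hyp P^\infty_+\simeq \bigvee_{i \text{ even}} EO_n\wedge P_i$. Each $P_i$ has cells only in dimensions $\equiv 2i \pmod{2(p-1)}$, so it is $(2p-2)$-sparse. By \cite[Theorem 5.13]{Cha20}, $EO_n\wedge P_i$ is then a wedge of suspensions of $EO_n\wedge C^{[k]}$. The summands are determined by the skeletal identifications \eqref{skelPi}:
\begin{itemize}
\item the bottom block of $P_{2i}$ is $\Sigma^{4i}C^{[2i]}$;
\item the higher blocks are suspensions of $C^{[p-1]}$.
\end{itemize}
By the analogue of Wood's theorem, $EO_n\wedge C^{[p-1]}\simeq E_n^{h(G/C_p)}$. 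Reindexing the degrees $2((2i+rn)p+n)$ of these blocks over all even $i$ should produce the stated wedge $\bigvee_{j\ge 0}\Sigma^{4jp+2(p-1)}E_n^{h(G/C_p)}$.

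To confirm that the algebraic splitting has exactly these summands, I will compare annihilators in Theorem \ref{eonhpm}. The class $y^{kp}$ is killed by $\beta^{n^2-2kn+1}$, which matches the bottom class of $\Sigma^{4kp}EO_n\wedge C^{[2k]}$ in Proposition \ref{hfpssc[k]}. The classes killed by $(\alpha,\beta)$ match the free $E_n^{h(G/C_p)}$ summands.

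For finite $m$, the splitting of $\Hyp P^m$ is cellular: $\Hyp P^m$ is the wedge of the $4m$-skeleta of the $P_{2i}$. The identifications \eqref{skelPi} then describe each truncated summand as a wedge consisting of:
\begin{itemize}
\item full $C^{[2i]}$ blocks at the bottom;
\item $C^{[p-1]}$ blocks in the middle;
\item one truncated top block $C^{[[2(m-j)]_p]}$, or $C^{[\{2(m-j)\}_p]}$ when the bottom blocks are themselves truncated, i.e. $t$ is small.
\end{itemize}
Sparseness again gives algebraic $EO_n$-theory, so $EO_n\wedge\Hyp P^m$ splits accordingly.

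Next I apply $F_{EO_n\text{-mod}}(-,EO_n)$. Using $D(C^{[i]})\simeq\Sigma^{-2in}C^{[i]}$ and the self-duality of the $E_n^{h(G/C_p)}$ summands up to suspension, the summands are dualised and the shifts recomputed. The top truncated blocks become the summands $\Sigma^{-4(m-j)}EO_n\wedge C^{[\cdot]}$, indexed by the top cell degree. The three ranges of $t$ are then handled by the three regimes already separated in Theorem \ref{eonhpm}. For $m<p$, only single cells and $C^{[1]}$'s occur.

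The main obstacle is bookkeeping rather than homotopy theory. It has two parts. First, the truncation index of each top block must agree with the $\beta$-annihilator exponent $n^2-[2k]_pn+1$ listed in Theorem \ref{eonhpm}; I will check this via Proposition \ref{hfpssc[k]}, since $C^{[k]}$ is detected by its bottom class being annihilated precisely by $\beta^{n^2-kn+1}$. Second, the suspension degrees must be tracked correctly through Spanier–Whitehead duality.
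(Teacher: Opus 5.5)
Your proposal is correct and follows essentially the paper's route. The paper proves this theorem by direct analogy with Theorems \ref{spliteoncpinfty} and \ref{spliteoncpm}: restrict the Adams splitting to the even summands $P_0\vee P_2\vee\dots\vee P_{p-3}$, use sparseness and \cite[Theorem 5.13]{Cha20} to get algebraic $EO_n$-theory, identify the summands by comparing the $\beta$-annihilators in Theorem \ref{eonhpm}, and dualize for finite $m$. One minor correction: in homotopy grading, $y^{kp}$ is the bottom class of $\Sigma^{-4kp}EO_n\wedge C^{[2k]}$, which is the $EO_n$-dual of the block $\Sigma^{4k}EO_n\wedge C^{[2k]}$, not of $\Sigma^{4kp}EO_n\wedge C^{[2k]}$.
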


\vspace{0.2cm}

The following corollary to Theorem \ref{eonhpm} is analogous to Corollary \ref{detectbeta} and detects when $\beta_1^s\in \pi_{2s(pn-1)}\Sp$ for even $s$ maps non-trivially under the map 
\[ \Hyp P^{\frac{s(pn-1)}{2}} \xrightarrow{q_{\frac{s(pn-1)}{2}}} S^{2s(pn-1)} \xrightarrow{\beta_1^s} \Sp \xrightarrow{\iota} EO_n.\]
\begin{cor}\label{detectbetahpm}
For $s$ even, the image $q_{\frac{s(pn-1)}{2}}^\ast(\iota(\beta_1^s))$ is $\neq 0$ if and only if 
\[s=r+kp, \mbox{ where }  k\in \{1,\dots r-2\}, \mbox{ and } k+r \mbox{ is even}.  \]  
\end{cor}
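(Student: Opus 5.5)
Our plan is to mimic the proof of Corollary \ref{detectbeta}, now using the quaternionic computation of Theorem \ref{eonhpm} in place of Theorem \ref{eoncpm}. Write $s$ even, $m=\frac{s(pn-1)}{2}$, so that $|\beta^s| = 2s(pn-1)=4m$. The top cell of $\Hyp P^m$ corresponds to the class $y^m$, which maps to $x^{2m}$ under $\C P^{2m}\to \Hyp P^{m}$. We have $q^\ast(\iota(\beta_1^s))=\beta^s y^m$, so the problem reduces to deciding when $\beta^s y^m\neq 0$ in $EO_n^\ast(\Hyp P^m)$.

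First we check that we are in the large range $2m\ge pn$ of Theorem \ref{eonhpm}. For $s\geq 2$ we have $2m=s(pn-1)\geq 2(pn-1)>pn$, so the first finite-$m$ formula applies. Since $2m\equiv -s \pmod p$, we have $2m\not\equiv 0,-1 \pmod p$ exactly when $p\nmid s$ and $s\not\equiv 1\pmod p$. In this case $y^m$ appears as a generator with annihilator $\beta^{n^2-[2m]_p n+1}$. Otherwise $y^m$ lies in $EO_n^\ast(\Hyp P^\infty)/(y^{m+1})$; there it is either $\beta$-torsion of order one (an $E_n^{h(G/C_p)}$-summand, so $\beta y^m=0$) or a low-degree class, which is excluded for large $m$. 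Either way $\beta^s y^m=0$ for $s\geq 2$, so these residues are ruled out.

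Next we compute the exponent. Write $s=r+kp$ with $0\le r<p$. The residue of $2m$ mod $p$ is $p-r$ (for $r\ge 1$), so $[2m]_p=n-(p-r)=r-1$. The condition $\beta^s y^m\ne 0$ becomes
\[
s\le n^2-(r-1)n,\quad\text{that is,}\quad r+kp\le n(n-r+1).
\]
Mirroring the complex case, we expect this to reduce, after the substitution, to $k\le r-2$ together with $k\geq 1$. The requirement $k\ge 1$ comes from $s\ge 2$ combined with the constraint $r\ge 2$ needed for the range to be nonempty; note that $k=0$ would give a small $m$, outside the stated range. Since $s=r+kp$ and $p$ is odd, $s$ is even if and only if $k+r$ is even, which accounts for the parity condition.

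The main obstacle is this arithmetic translation between the annihilator exponent and the inequality $k\le r-2$. In the complex case the paper used the bound $s\le rn$ with $[m]_p=n-r$. Here the doubling $2m$ shifts the residue from $r$ to $p-r$, and I would recheck carefully that the resulting inequality matches exactly $k\in\{1,\dots,r-2\}$ with no extra split according to $r\lessgtr\frac{p-1}{2}$. If the naive bound is off, I would go back to the $C^{[\,\cdot\,]}$-summand in Theorem \ref{spliteonhpinfty} that contains the top cell, identify its index $[2m]_p$ directly, and read off the $\beta$-annihilator of the corresponding generator from Proposition \ref{hfpssc[k]} via Spanier–Whitehead duality.
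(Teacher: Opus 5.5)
\textbf{The proposal has a genuine gap: the step you defer does not go through, and it cannot be repaired by more careful bookkeeping.} Your bound $r+kp\le n(n-r+1)=(p-1)(p-r)$ is equivalent to $kp\le p(p-1-r)$, i.e.\ $k\le p-1-r$. That is not the condition $1\le k\le r-2$, and the two disagree in both directions.
\begin{itemize}
\item Take $p=5$ and $s=2$ ($r=2$, $k=0$). Then $m=19$ and $[38]_5=1$, so Theorem \ref{eonhpm} says $y^{19}$ is annihilated by $\beta^{13}$ but not by $\beta^{12}$. Hence $\beta^2y^{19}\neq 0$, although the statement excludes $s=2$.
\item Take $s=14$ ($r=4$, $k=2$), which the statement includes. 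Here $[266]_5=3$, so already $\beta^5y^{133}=0$.
\end{itemize}

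Two auxiliary claims are also wrong.
\begin{itemize}
\item The residue $2m\equiv -1$ (that is, $r=1$) is not ruled out. In that case $2m-n\equiv 0\pmod p$, so $\mathcal{P}^1(x^{2m-n})=0$ and the top cell is not $\alpha_1$-attached. It forms a $C^{[0]}$-block, and $y^m$ spans a free ${EO_n}_\ast$-summand (the extra ${EO_n}_\ast\{y^{\cdots}\}$ term in Theorem \ref{eonhpm}). So $\beta^sy^m\neq0$ for all $s\le n^2$, for example $p=5$, $s=6$. The paper's own $p=3$ computation shows the same thing: $\beta^4x_{(20)}\neq 0$ in $EO_2^\ast(\Hyp P^{10})$, with $s=4=1+1\cdot 3$.
\item The requirement $k\ge 1$ does not follow from $s\ge 2$. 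With $k=0$ and $r$ even you get $2m=r(pn-1)>pn$, which is squarely in the large range, and your inequality is satisfied.
\end{itemize}

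\textbf{Comparison with the paper's route.} The paper does not compute directly. It restricts Corollary \ref{detectbeta} to even $s$, which is legitimate because $EO_n^\ast(\Hyp P^m)$ is the even-power summand of $EO_n^\ast(\C P^{2m})$ and contains the top class. Parity then discards $k=r-1$ when $r\le\frac{p-1}{2}$, and that is where the uniform range $k\le r-2$ comes from.

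However, the ranges in Corollary \ref{detectbeta} are obtained by substituting $s=r+kp$ into the bound $s\le rn$ of Corollary \ref{powbetanonzero}. In that corollary, $r$ is the residue of the \emph{dimension} $s(pn-1)\equiv -s$, i.e.\ $p-r$, not the residue of $s$. Your computation $[2m]_p=r-1$ uses the correct residue, which is exactly why you cannot reach $k\le r-2$. Passing to the splitting and Proposition \ref{hfpssc[k]}, as you suggest as a fallback, reproduces the same bound.

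What Theorem \ref{eonhpm} actually gives is this: for even $s=r+kp$ with $0\le r<p$, the class is nonzero if and only if $1\le r\le p-1$ and $0\le k\le p-1-r$ (the parity condition $k+r$ even is automatic). At $p=3$ this yields exactly $s\in\{2,4\}$, matching Theorem \ref{inerthpn}. So your direct route is the sound one, but it proves this corrected criterion rather than the statement as written. The statement is incompatible with Theorem \ref{eonhpm} itself.
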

\begin{proof}
The result follows by applying the condition $s$ is even to the criterion in Corollary \ref{detectbeta}.
\end{proof}
\end{mysubsection}

\vspace*{0.2cm}

	\section{Consequences for exotic smooth structures} \label{applnexotic}
We now point out some consequences for exotic smooth structures derived from the computations of $EO_n^\ast(\C P^m)$. We examine the inertia group of $\C P^m$, and then observe that these provide  exotic smooth structures on complex hyperbolic manifolds. We also deduce some free smooth $S^1$ and $S^3$ actions on exotic spheres. 

\begin{mysubsection}{The inertia group of $\C P^m$} 
Recall that the inertia group $I(M)$ of a smooth manifold $M$ of dimension $m$ is the subgroup of the group of exotic spheres $\Theta_m$, which on connected sum do not change the smooth structure on $M$. The inertia group possesses various subgroups of importance formed according to the structure of the identification $M\# \Sigma \cong M$ for $[\Sigma] \in I(M)$. The concordance inertia group $I_c(M)$ is the subgroup of those $\Sigma$ such that the diffeomorphism $M\# \Sigma \cong M$ is concordant to the identity. Via the identification $\Theta_m \cong [S^m, \Top/O]$ \cite{KM63}, we have \cite{KS77}
\[ I_c(M) = \Ker([S^m, \Top/O] \xrightarrow{q^\ast} [M,\Top/O]),\]
where $M$ is  a closed $m$-manifold, and $q : M \to S^m$ is the map which quotients out the lower skeleton. For $\C P^m$, we have $I(\C P^m) = I_c(\C P^m)$\cite[Theorem 4.2]{Kas15}. The inertia group of $\C P^m$ is largely unknown, and explicit computations are available only in low dimensional cases (\cite{Kaw68}, \cite{BK18}). 

Our approach towards the inertia group involves using the map $\Top/O\to G/O$, and the fact \cite[Lemma 2.6]{BK18} 
\[ [\C P^m , \Top/O] \to [\C P^m, G/O]\]
is injective.  Combining this with the fact that the map $\Theta_{2m} \cong [S^{2m},\Top/O] \to [S^{2m}, G/O]$ is also injective, we have the following proposition. 
\begin{prop}\label{cokerjinertcpn}
Let $\xi \in \pi_{2m}^s/\Im(J)$ correspond to an exotic sphere $\Sigma$ of dimension $2m$ where $2m \not \in \{2,6,14,30, 62, 126\}$. Let $\tilde{\xi}$ be a lift of $\xi$ in $\pi_{2m}^s$. Suppose that the image of $\tilde{\xi}$ under the map 
\[ q^\ast  : \pi_{2m}^s \to [\C P^m, \Sp]\]
is non-zero. Then, $\Sigma \not\in I(\C P^m)$.  
\end{prop}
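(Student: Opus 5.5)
We argue by contraposition: assuming $\Sigma\in I(\C P^m)$, we show that $q^\ast(\tilde\xi)=0$ in $[\C P^m,\Sp]$. The route runs through the formula \eqref{ichtp}, the map $\Top/O\to G/O$, and the fibration $G\to G/O\to BO$. The spectral sequence computations of the earlier sections play no role here; the statement only translates the geometric question into stable homotopy.

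\textbf{Step 1: reduce to a map into $\Top/O$.} By \cite[Theorem 4.2]{Kas15}, $I(\C P^m)=I_c(\C P^m)$. By \eqref{ichtp}, $\Sigma\in I(\C P^m)$ therefore means that the class $[\Sigma]\in[S^{2m},\Top/O]$ satisfies $q^\ast[\Sigma]=0$ in $[\C P^m,\Top/O]$.

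\textbf{Step 2: push forward to $G/O$.} Compose with $\Top/O\to G/O$. The image of $[\Sigma]$ in $[S^{2m},G/O]=\pi_{2m}(G/O)$ then pulls back to zero in $[\C P^m,G/O]$. The injectivity statement from \cite[Lemma 2.6]{BK18} is not needed for this direction, since vanishing in $[\C P^m,\Top/O]$ already gives vanishing after composing.

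\textbf{Step 3: identify $\xi$.} Use the exact sequence $\pi_{2m}(O)\xrightarrow{J}\pi_{2m}(G)\to\pi_{2m}(G/O)$, where $\pi_{2m}(G)=\pi_{2m}^s$. The Kervaire–Milnor identification sends $\Sigma$ to the class of $\xi$ in $\pi_{2m}^s/\Im(J)\subset \pi_{2m}(G/O)$. The exclusion of the dimensions $2m\in\{2,6,14,30,62,126\}$ is where this identification is clean: in those dimensions Kervaire invariant elements cause $\Theta_{2m}\to \Cok(J)$ to fail to be onto or well behaved. So the image of $[\Sigma]$ in $\pi_{2m}(G/O)$ is the image of $\tilde\xi$ under $G\to G/O$, and Step 2 gives $q^\ast(\tilde\xi)\mapsto 0$ in $[\C P^m,G/O]$.

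\textbf{Step 4: lift back to $[\C P^m,\Sp]$.} This is the main obstacle. Apply $[\C P^m,-]$ to the fibration $O\to G\to G/O$ (equivalently $G\to G/O\to BO$). From the exact sequence
\[ [\C P^m,O]\xrightarrow{J_\ast}[\C P^m,G]\to[\C P^m,G/O], \]
we get that $q^\ast\tilde\xi=J_\ast(\omega)$ for some $\omega\in[\C P^m,O]=\widetilde{KO}^{-1}(\C P^m)$. Here $[\C P^m,G]=\pi^0_s(\C P^m)$ (reduced), which is the group $[\C P^m,\Sp]$ up to the usual identification of $G$-valued maps with stable cohomotopy.

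Now use Theorem \ref{compko}. In the reduced part, $\widetilde{KO}^{-1}(\C P^m)$ vanishes when $m$ is even. When $m$ is odd it is at most $\eta$-torsion from the class $z$, namely the extra summand $\Z[t^{\pm2},\alpha,\eta]/(\ldots)\{z\}$ in degree $-1$, which arises from the top cell. So $\omega$ either vanishes, or is pulled back from the top cell along $q$.

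In the second case, $J_\ast(\omega)=q^\ast(J(\omega'))$ with $J(\omega')\in\Im J\subset\pi_{2m}^s$. Hence $q^\ast(\tilde\xi-J(\omega'))=0$. Since $\tilde\xi$ was an arbitrary lift of $\xi$, we may replace it by $\tilde\xi-J(\omega')$. The hypothesis, read as "for the chosen lift", then needs the statement that the nonvanishing of $q^\ast$ is independent of the lift. I would check this by verifying that $q^\ast(\Im J)=0$ in $[\C P^m,\Sp]$ for $m$ even. This holds because $q^\ast J$ factors through $\widetilde{KO}^{-1}(\C P^m)=0$, using naturality of $J$: the relevant composite $\C P^m\to S^{2m}\to O$ lies in $\widetilde{KO}^{-1}(\C P^m)$, which is zero for $m$ even.

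So for $m$ even, $\omega=0$ and $q^\ast\tilde\xi=0$ directly, contradicting the hypothesis. For $m$ odd, the adjustment above applies, and I expect the delicate verification to be exactly this top-cell analysis of $\widetilde{KO}^{-1}(\C P^m)$ and its compatibility with $J$. In practice I would phrase the conclusion as: $q^\ast(\tilde\xi)\neq0$ modulo $q^\ast(\Im J)$ implies $\Sigma\notin I(\C P^m)$. In the applications, $\tilde\xi$ is detected by $EO_n$, which kills $\Im J$ in the relevant degrees, so the stronger nonvanishing holds there automatically.
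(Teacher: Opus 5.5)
Your overall route is the right one: Kasilingam's $I(\C P^m)=I_c(\C P^m)$, the Kirby--Siebenmann formula, pushing to $G/O$, and then pulling back to $[\C P^m,G]$ through the fibre sequence $O\to G\to G/O$. You are also correct that injectivity of $[\C P^m,\Top/O]\to[\C P^m,G/O]$ plays no role in this direction.

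What this direction needs is that $[\C P^m,G]\to[\C P^m,G/O]$ has trivial kernel. This is Brumfiel's monicity, which the paper quotes in the proof of Theorem \ref{smactexotic}. Your Step 4 does not establish it for odd $m$. As a result the proposal does not prove the proposition as stated, and you end with the weaker claim ``$q^\ast(\tilde\xi)\neq 0$ modulo $q^\ast(\Im J)$''.

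That weakening is not cosmetic. If $\widetilde{KO}^{-1}(\C P^m)$ really contained a nonzero class pulled back from the top cell, $q^\ast J(\omega')$ could be nonzero. A lift $\tilde\xi$ of a sphere lying in $I(\C P^m)$ could then have $q^\ast\tilde\xi\neq 0$, so the statement itself would be in doubt. Your lift-independence check is also misplaced: you verify $q^\ast(\Im J)=0$ for $m$ even, where nothing is needed, and not for $m$ odd, where it is.

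The missing point is a degree count. In Theorem \ref{compko} the class $z$ lies in $KO^{2m}(\C P^m)$. So the $z$-summand contributes $KO^{-1-2m}(\mathrm{pt})\cdot z=KO_{2m+1}\cdot z$ in degree $-1$. For $m$ odd, $2m+1\equiv 3 \pmod 4$, so $KO_{2m+1}=0$. Equivalently, the top cell of $\C P^{2t+1}$ contributes $\widetilde{KO}^{-1}(S^{4t+2})=KO_{4t+3}=0$.

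Hence $[\C P^m,O]=\widetilde{KO}^{-1}(\C P^m)=0$ for every $m$, and $J_\ast=0$ there. By exactness of $[\C P^m,O]\to[\C P^m,G]\to[\C P^m,G/O]$, the nonzero class $q^\ast\tilde\xi$ has nonzero image in $[\C P^m,G/O]$. By naturality, that image is $q^\ast$ of the image of $[\Sigma]$ in $\pi_{2m}(G/O)$. Therefore $q^\ast[\Sigma]\neq 0$ in $[\C P^m,\Top/O]$, and $\Sigma\notin I_c(\C P^m)=I(\C P^m)$.

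The same vanishing gives $q^\ast(\Im J)\subseteq J_\ast\big(\widetilde{KO}^{-1}(\C P^m)\big)=0$ for all $m$. So the hypothesis is independent of the chosen lift, and your case split and top-cell detour can be deleted.
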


\vspace*{0.2cm}

In order to leverage Proposition \ref{cokerjinertcpn}, we use the unit map $\iota: \Sp \to EO_n$ and detect elements in the image 
\[ h_\iota: [\C P^m, \Sp] \to [\C P^m, EO_n].\]
For the prime $3$, we have the following result recalling the expression of Theorem \ref{eo2cpn}. 
\begin{thm}\label{hurcp3}
The image of the map 
\[ h_\iota: [\C P^m, \Sp]_\ast \to [\C P^m, EO_2]_\ast\cong EO_2^{-\ast}(\C P^m)\]
contains the classes (for $t\geq 0$)
\[\resizebox{\linewidth}{!}{$\begin{cases} 
\F_3\{x_{(3k-1)},\alpha x_{(3k-1)},\Delta^{3t}\alpha\beta x_{(3k-1)},\Delta^{3t}\beta b x_{(3k-1)}, \Delta^{3t} \beta^i x_{(3k-1)}, 1\leq i \leq 4 \} & \mbox{ if } m=3k,\\
\\
\F_3\{x_{(3k+1)}, \Delta^{3t}\beta x_{(3k+1)}, \Delta^{3t} \beta^2 x_{(3k+1)},x_{(3k-1,\alpha)}, \Delta^{3t}\beta^i x_{(3k-1,\alpha)}, 1\leq i \leq 3\} & \mbox{ if } m= 3k+1,\\
\\
\F_3\{x_{(3k+2)},\alpha x_{(3k+2)}, \Delta^{3t}\alpha\beta x_{(3k+2)},\Delta^{3t}\beta b x_{(3k+2)},\Delta^{3t} \beta^i x_{(3k+2)}, 1\leq i \leq 4 \} + \\
\F_3\{x_{(3k+1)}, \Delta^{3t}\beta x_{(3k+1)}, \Delta^{3t} \beta^2 x_{(3k+1)},x_{(3k-1,\alpha)}, \Delta^{3t}\beta^i x_{(3k-1,\alpha)}, 1\leq i \leq 3\} 
&\mbox{ if } m=3k+2.\end{cases} $}\]
\end{thm}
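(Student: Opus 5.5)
The plan is to exploit the fact that $h_\iota$ is a map of modules over $\pi_\ast\Sp$: for any $f\in[\C P^m,\Sp]$ and $\gamma\in\pi_\ast\Sp$ we have $h_\iota(\gamma f)=h_\iota(\gamma)\cdot h_\iota(f)$, where on the right the product is the action of $\pi_\ast EO_2$ on $EO_2^{-\ast}(\C P^m)$. So it suffices to do two things. First, produce a small set of ``base'' classes in $EO_2^\ast(\C P^m)$ that lie in the image of $h_\iota$. Second, multiply them by the Hurewicz image of $EO_2$ listed in Theorem \ref{hureo2}, namely $\alpha$, $\Delta^{3t}\beta^i$ ($1\le i\le 4$), $\Delta^{3t}\alpha\beta$ and $\Delta^{3t}\beta b$. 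Finally, discard the products that vanish by the relations of Theorem \ref{eo2cpn}.

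For the base classes I would use the top cells of $\C P^m$, localized at $3$, as given by the splitting into $P_0$ and $P_1$ and its skeleta.
\begin{itemize}
\item For $m=3k$, the top cell $x^{3k}$ is a multiple of $z^k$ and is not the interesting class. Instead, $x_{(3k-1)}=[x^{3k-1}]$ is the bottom class of the quotient $\C P^{3k}/\C P^{3k-2}$. By the Steenrod operation formula this quotient is a wedge of spheres $S^{6k-2}\vee S^{6k}$, because $\PP^1 x^{3k-2}=(3k-2)x^{3k}\neq 0$ pairs the other cells. So there is a map $\C P^{3k}\to S^{6k-2}$ realizing $x^{3k-1}$ as a spherical class, and composing with the unit $S^{6k-2}\to\Sigma^{6k-2}\Sp$ gives an element of $[\C P^m,\Sp]$ mapping to $x_{(3k-1)}$.
\item For $m=3k+2$, the same argument applies to $x_{(3k+2)}$, the top cell split off by the pinch $\C P^{3k+2}\to S^{6k+4}$.
\item For $m\equiv 1,2\pmod 3$, I would use the pinch onto a subquotient $\Sigma^{6k-2}C(\alpha_1)$ carrying $x^{3k-1}$ and $x^{3k+1}$. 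Via $D(C(\alpha_1))\simeq\Sigma^{-4}C(\alpha_1)$, this identifies $x_{(3k+1)}$ and $x_{(3k-1,\alpha)}$ with the classes $\iota_0$ and $\kappa_7$ of ${EO_2}_\ast(C(\alpha_1))$, up to suspension.
\end{itemize}
From the cofibre sequence \eqref{cofcalpha1}, $\iota_0$ is in the Hurewicz image, and $\kappa_7$ is as well, since it is the image of a Toda-bracket lift $\langle \iota_0,\alpha_1,\alpha_1\rangle$ of $\alpha_1$ over the top cell. Pulling back along the pinch map then puts both base classes in the image of $h_\iota$.

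Next I would take products. For the spherical classes $x_{(3k-1)}$ and $x_{(3k+2)}$, the relevant summand is a free ${EO_2}_\ast$-module, so each product with a Hurewicz class from Theorem \ref{hureo2} is nonzero. This gives exactly the listed $\alpha$, $\Delta^{3t}\alpha\beta$, $\Delta^{3t}\beta b$ and $\Delta^{3t}\beta^i$ multiples. For the $C(\alpha_1)$ summand I would use the relations $\alpha x_{(3k+1)}=0$, $\beta^3x_{(3k+1)}=0$ and $\beta^4 x_{(3k-1,\alpha)}=0$, together with $\alpha x_{(3k-1,\alpha)}=\beta x_{(3k+1)}$, to see that the surviving products are $\Delta^{3t}\beta x_{(3k+1)}$, $\Delta^{3t}\beta^2x_{(3k+1)}$ and $\Delta^{3t}\beta^i x_{(3k-1,\alpha)}$ for $i\le 3$. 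The multiples $\alpha\beta$ and $\beta b$ collapse into these classes via the extensions $bx_{(3k+1)}=\beta^2x_{(3k-1,\alpha)}$ and \eqref{extfsscalpha}.

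I expect the main obstacle to be the base step for the $C(\alpha_1)$ summand. One has to check that the stable splitting of $\C P^m/\C P^{m-4}$ into spheres and $C(\alpha_1)$'s is compatible with the names $x_{(3k+1)}$ and $x_{(3k-1,\alpha)}$ from Theorem \ref{eo2cpn}. One also has to check that $\kappa_7$ genuinely comes from $\pi_7 C(\alpha_1)$, with the indeterminacy in the lift of $\alpha_1$ only changing it by multiples of $\iota_0$. To handle this, I would first compare the FSS representatives under the pinch map, using Proposition \ref{diffl-att}.
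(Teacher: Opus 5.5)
Your proposal is correct and is essentially the paper's argument: pinch onto the $3$-locally split subquotient $\C P^m/\C P^{3k-2}$, which is a wedge of spheres together with $\Sigma^{6k-2}C(\alpha_1)$. Then identify $x_{(3k+1)}$ and $x_{(3k-1,\alpha)}$ with $\iota_0$ and $\kappa_7$ via $D(C(\alpha_1))\simeq\Sigma^{-4}C(\alpha_1)$, and multiply by the Hurewicz image of Theorem \ref{hureo2}.

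Two small corrections, neither affecting the result. First, $\C P^{3k}/\C P^{3k-2}$ splits simply because the only possible attaching map lies in $\pi_1^s$, which vanishes $3$-locally; the cell $x^{3k-2}$ is not in this quotient, so your $\PP^1$ remark is irrelevant. Second, $\Delta^{3t}\beta b\,x_{(3k-1,\alpha)}=\Delta^{3t}\beta^2 x_{(3k+1,\Delta)}$ gives an extra class rather than collapsing onto one on the list, which is harmless for a containment statement.
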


\begin{proof}
We first consider the case $m=3k$. Note that after $3$-completion, $\C P^{3k}/\C P^{3k-2} \simeq S^{6k} \vee S^{6k-2}$. Now we consider 
\[\xymatrix{[S^{6k-2}, \Sp]_\ast \ar[d]^{h_\iota}  \ar@/^1.5pc/[r]^{r}& [\C P^{3k}/\C P^{3k-2}, \Sp]_\ast \otimes \Z^\wedge_3 \ar[l] \ar[r] \ar[d]^{h_\iota} & [\C P^{3k}, \Sp]_\ast \otimes \Z_3^\wedge \ar[d]^{h_\iota}   \\
[S^{6k-2}, EO_2]_\ast \ar@/^1,5pc/[r]^{r}   & [\C P^{3k}/\C P^{3k-2}, EO_2]_\ast \ar[l] \ar[r] & [\C P^{3k}, EO_2]_\ast, }\]
where $r$ is the retraction of $\C P^{3k}/\C P^{3k-2}$ onto $S^{6k-2}$. We readily observe that $x_{(3k-1)} \xi$ where $\xi$ belongs to the Hurewicz image $\pi_\ast(\Sp) \to \pi_\ast(EO_2)$, lies in the image of the composite 
\[ [S^{6k-2},\Sp]_\ast \xrightarrow{h_\iota} [S^{6k-2}, EO_2]_\ast \xrightarrow{r} [\C P^{3k}/\C P^{3k-2},EO_2]_\ast \to [\C P^{3k}, EO_2]_\ast.\]
This directly implies the statement for $m=3k$. 

For $m=3k+1$, we have $\Big(\C P^{3k+1}/\C P^{3k-2}\Big)^{\wedge}_3 \simeq \Sigma^{6k-2} C(\alpha_1)\vee \Big(S^{6k}\Big)^\wedge_3$.  We compute the map $h_\iota$ for $C(\alpha_1)$ via the following diagram 
\begin{myeq}\label{eqcalph3}
\xymatrix{ [C(\alpha_1), \Sp]_\ast\otimes \Z^{\wedge}_3 \ar[d]^{h_\iota} \ar[r]^{\cong} & \pi_\ast(\Sigma^{-4} C(\alpha_1))\otimes \Z^{\wedge}_3  \ar[d]^{h_\iota}\\
[C(\alpha_1), EO_2]_\ast\ar[r]^{\cong} & {EO_2}_\ast(\Sigma^{-4} C(\alpha_1)) }
\end{myeq}
using the fact that the Spanier Whitehead dual $D(C(\alpha_1))\simeq \Sigma^{-4} C(\alpha_1)$. Using Theorem \ref{hureo2} and \eqref{einftyhfpsseo2calph}, the following classes belong to the image of $h_\iota$ (the right vertical arrow of \eqref{eqcalph3})
\begin{myeq}\label{hurcalph3}
\begin{aligned}
[\Sigma^{-4}\iota_0], \Delta^{3t}\beta[\Sigma^{-4}\iota_0], \Delta^{3t}\beta^2[\Sigma^{-4}\iota_0], [\Sigma^{-4}\kappa_7], \\
 \Delta^{3t}\beta[\Sigma^{-4}\kappa_7], \Delta^{3t}\beta^2[\Sigma^{-4}\kappa_7], \Delta^{3t}\beta^3[\Sigma^{-4}\kappa_7], & \mbox{ for } t\geq 0.
\end{aligned} 
\end{myeq}
We now consider the diagram
\[\xymatrix{[\Sigma^{6k-2}C(\alpha_1), \Sp]_\ast \ar[d]^{h_\iota}  \ar@/^1.5pc/[r]^{r}& [\C P^{3k+1}/\C P^{3k-2}, \Sp]_\ast \otimes \Z^\wedge_3 \ar[l] \ar[r] \ar[d]^{h_\iota} & [\C P^{3k+1}, \Sp]_\ast \otimes \Z_3^\wedge \ar[d]^{h_\iota}   \\
[\Sigma^{6k-2} C(\alpha_1), EO_2]_\ast \ar@/^1,5pc/[r]^{r}   & [\C P^{3k+1}/\C P^{3k-2}, EO_2]_\ast \ar[l] \ar[r] & [\C P^{3k+1}, EO_2]_\ast, }\]
where $r$ is the retraction of $\Big(\C P^{3k+1}/\C P^{3k-2}\Big)^{\wedge}_3$ onto $\Sigma^{6k-2}C(\alpha_1)$. The image of the composite from the top left corner along the bottom row 
 \[ [\Sigma^{6k-2} C(\alpha_1),\Sp]_\ast \xrightarrow{h_\iota} [\Sigma^{6k-2}C(\alpha_1), EO_2]_\ast \xrightarrow{r} [\C P^{3k+1}/\C P^{3k-2},EO_2]_\ast \to [\C P^{3k+1}, EO_2]_\ast,\]
may be computed using the formula of $h_\iota$ from \eqref{hurcalph3}. We compute 
\[[\Sigma^{-6k-2}\iota_0]\mapsto x_{(3k+1)}, \quad \mbox{ and }  [\Sigma^{-6k-2}\kappa_7]\mapsto x_{(3k-1,\alpha)},\]
which allows us to conclude the result for $m=3k+1$. 

The remaining case for $m=3k+2$ follows from the decomposition 
\[\Big(\C P^{3k+2}/\C P^{3k-2}\Big)^{\wedge}_3 \simeq \Sigma^{6k-2} C(\alpha_1) \vee \Big(S^{6k} \vee S^{6k+4}\Big)^\wedge_3,\]
and the computations in the previous cases.
\end{proof}

\vspace*{0.2cm}

We now obtain consequences for the inertia group applying Theorem \ref{hurcp3} for $p=3$ and Corollary \ref{detectbeta} for $p\geq 5$. 
\begin{thm}\label{inertcpn}
a) Suppose $m=36t+5$, $m=36t +10$, or $m=36t+20$. There are $2$ exotic spheres $\Sigma_1$ and $\Sigma_2$ of dimension $2m$ such that $\Sigma_i \not\in I(\C P^m)$, and  $\{\C P^m, \C P^m\# \Sigma_1, \C P^m \# \Sigma_2\}$ are pairwise non-diffeomorphic, and in fact not concordant, to each other.\\
b) Suppose 
\[m=(r+kp)(p(p-1)-1), \mbox{ where } \begin{cases} k\in \{1,\dots, r-1\} & \mbox{ if } r\leq \frac{p-1}{2}\\
 k\in \{1,\dots r-2\} & \mbox{ if } r> \frac{p-1}{2}, \end{cases} \]  
for $p\geq 5$. There are $(p-1)$ exotic spheres $\Sigma_1, \dots, \Sigma_{p-1}$ of dimension $2m$ such that $\Sigma_i \not\in I(\C P^m)$ and  $\{\C P^m, \C P^m\# \Sigma_1, \dots,\C P^m \# \Sigma_{p-1}\}$ are pairwise non-diffeomorphic.
\end{thm}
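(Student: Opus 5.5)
The argument runs through Proposition \ref{cokerjinertcpn}. In each case I will exhibit a subgroup of $\pi_{2m}^s$, of order $3$ in part a) and $p$ in part b), whose nonzero elements $\chi$ satisfy three conditions: $\chi$ is not in $\Im(J)$, so it gives a nontrivial element of $\Theta_{2m}/bP_{2m+1}$; $\chi$ lies outside the $bP$-part; and $h_\iota(q^\ast\chi)\neq 0$ in $EO_n^{2m}(\C P^m)$. Nonvanishing of $h_\iota(q^\ast\chi)$ forces $q^\ast\chi\neq 0$ in $[\C P^m,\Sp]$. By Proposition \ref{cokerjinertcpn} the corresponding sphere $\Sigma_\chi$ is then not in $I(\C P^m)$.

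For the pairwise statements I will use that $q^\ast$ is a homomorphism and $I(\C P^m)=I_c(\C P^m)=\Ker(q^\ast)$, with the kernel computed through $G/O$ as in \cite[Lemma 2.6]{BK18}. Suppose $\Sigma_i$ and $\Sigma_j$ come from distinct elements $\chi_i,\chi_j$ of the same cyclic group. Then $\chi_i-\chi_j$ is again a nonzero element, so $q^\ast(\chi_i-\chi_j)\neq0$. Hence $\Sigma_i\#(-\Sigma_j)\notin I_c(\C P^m)$, and $\C P^m\#\Sigma_i$ and $\C P^m\#\Sigma_j$ are not concordant. Since $I=I_c$ for $\C P^m$ \cite{Kas15}, the same comparison shows they are not diffeomorphic. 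This yields $3-1=2$ spheres in part a) and $p-1$ spheres in part b).

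In part b) I take $\chi=c\beta_1^s$ with $s=r+kp$ and $c\in\F_p^\times$, in degree $2m=2s(p(p-1)-1)=|\beta^s|$. Toda's computations show $\beta_1^s\neq0$ for $s\le (p-1)^2$, and the stated ranges give $s<p^2-2p+1$. Being a product of $\beta$'s in even degree, $\beta_1^s$ lies in $\Cok(J)$ at $p$, since $\Im(J)$ at odd primes lives only in degrees $\equiv -1 \pmod{2p-2}$. Corollary \ref{detectbeta} then gives $q^\ast(\iota(\beta_1^s))\neq0$ exactly in the stated range, and multiplying by $c$ keeps it nonzero.

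In part a) I work at $p=3$ with the degree-$2m$ classes from \eqref{hurewiczeo2}:
\[\beta_1^{i-1}\beta_{9t+1},\qquad \beta_1^{i-1}\beta_{9t+6/3},\qquad \alpha_1\beta_{9t+3/3},\qquad \alpha_1\beta_{9t+7}.\]
By Theorem \ref{hureo2}, their Hurewicz images are $\Delta^{6t}\beta^i$, $\Delta^{6t+3}\beta^i$, $\Delta^{6t}\beta b$ and $\Delta^{6t+3}\beta b$. In $2m$ the degrees $|\Delta^{3t'}\beta^i|=72t'+10i$ with $i\le4$ must be matched against $2m\in\{72t+10,72t+20,72t+40\}$, which gives $i=1,2,4$ with $t'=2t$. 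I then need $q^\ast$ of the top class to be nonzero in $EO_2^\ast(\C P^m)$, read from Theorem \ref{eo2cpn}. For $m=36t+5=3(12t+1)+2$ the top class is $x_{(3k+2)}$, and Theorem \ref{hurcp3} lists $\Delta^{3t}\beta^i x_{(3k+2)}$ for $i\le4$ as Hurewicz images. For $m=36t+10\equiv1$ the top class is $x_{(3k+1)}$, detected by $\beta,\beta^2$, so $i=2$ and the $\beta^2$-multiple survives. For $m=36t+20\equiv2$, again with top class $x_{(3k+2)}$, $\beta^4$ is allowed.

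Identifying the top cell with $x_{(m)}$ under $q^\ast$ will come from the splittings $\C P^m/\C P^{m-3}\simeq$ (wedge of spheres and $C(\alpha_1)$) used in the proof of Theorem \ref{hurcp3}. The main obstacle is to justify that these lifted classes, for example $\beta_1\beta_{18t+1}$, are nonzero modulo $\Im(J)$ and that their $q^\ast$ image is exactly $\Delta^{6t}\beta^i x_{(m)}$ rather than being killed by the relations in Theorem \ref{eo2cpn}. I would check those relations degreewise and use that $\Im(J)$ vanishes in even degrees at $p=3$.
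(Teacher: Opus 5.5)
Your route is the paper's. You use Proposition \ref{cokerjinertcpn} together with detection by $h_\iota\circ q^\ast$. In a) you take the classes $\Delta^{3t}\beta^i x_{(m)}$, $i=1,2,4$, from Theorems \ref{hureo2}, \ref{hurcp3}, \ref{eo2cpn}, and in b) you cite Corollary \ref{detectbeta}. You count $2$ resp.\ $p-1$ spheres from the nonzero elements of $\F_3$ resp.\ $\F_p$. That count is valid for oriented diffeomorphism, which is also the paper's implicit convention: for odd $m$, complex conjugation identifies $\C P^m\#\Sigma$ with $\C P^m\#(-\Sigma)$ up to orientation.

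Part a) is fine apart from bookkeeping. The equation $72t'+10i=2m$ forces $t'=t$, not $t'=2t$, so the class you need is $\Delta^{3t}\beta^i$; your $\beta_1\beta_{18t+1}$ lives in stem $288t+20$. The $\alpha_1\beta$-classes you list are in odd stems and play no role. Since any $\chi$ with $h_\iota(\chi)=\Delta^{3t}\beta^i$ will do, none of this matters. The ``main obstacle'' you name is immediate: by naturality $h_\iota q^\ast\chi=\Delta^{3t}\beta^i x_{(m)}$. Theorem \ref{eo2cpn} puts no $\beta$-relation on $x_{(3k+2)}$, and the first one on $x_{(3k+1)}$ is $\beta^3x_{(3k+1)}=0$.

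The step that does not go through is in b), where you take Corollary \ref{detectbeta} to give $h_\iota q^\ast(\beta_1^s)\neq 0$ ``exactly in the stated range''. In Corollary \ref{powbetanonzero} the residue that matters is that of the complex dimension $m=s(p(p-1)-1)\equiv -s\pmod p$, not that of $s$.

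Write $s=r+kp$ with $1\le r\le p-1$. Then $m\equiv p-r$ and $[m]_p=r-1$. So $x^m$ is the top cell of a $C^{[r-1]}$-block, and it is annihilated by $\beta^{(p-r)(p-1)+1}$. Hence $\beta^s x^m\neq 0$ if and only if $k\le p-1-r$.

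This criterion reproduces the $t=0$ instances $s=1,2,4$ ($m=5,10,20$) of part a). It also covers the whole stated range when $r\le\frac{p-1}{2}$. For $r>\frac{p-1}{2}$, however, the values $p-r\le k\le r-2$ are not detected.

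For example, take $p=5$, $r=4$, $k=1$. Then $s=9$ and $m=171\equiv 1\pmod 5$. The class $x^{171}$ tops a $C^{[3]}$-block with $\beta^5x^{171}=0$, so $h_\iota q^\ast(\beta_1^9)=0$ and the argument yields nothing.

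The paper's one-line proof of b) rests on the same corollary, so this is not a departure from the paper. Still, the citation cannot be used as stated. As written, your argument establishes b) only for those stated $(r,k)$ with $k\le p-1-r$.
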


\begin{proof}
We apply Proposition \ref{cokerjinertcpn} and check that there are suitable $\xi \in \pi_{2m}^s/\Im(J)$ such that the image of $\tilde{\xi}$ in the composite
\[\pi_{2m}^s \xrightarrow{q^\ast} [\C P^m, \Sp] \xrightarrow{h_\iota} [\C P^m, EO_n]\]
is non-zero. The proof of b) follows directly from Corollary \ref{detectbeta} as in the given degrees $\F_p\{\beta_1^m\}\subset \pi_{2m}(\Sp)$ maps non-trivially to $EO_n^\ast(\C P^m)$ under the map $h_\iota\circ q^\ast$. We now apply Theorem \ref{hurcp3} to prove a). As the classes detected are all $3$-local, in the given degrees, the $\Im(J)$ is $0$. Recall from Theorem \ref{hureo2} that the classes $\Delta^{3t}\beta^i$ for $t\geq 0$ and $1\leq i \leq 4$, lie in the image of the Hurewicz homomorphism $h_\iota : \pi_{2m}(\Sp_3^\wedge) \to \pi_{2m}EO_2$. We denote the corresponding elements in $\pi_{2m}^s$ by $\widehat{\Delta^{3t} \beta^i}$. We now consider the class $\widehat{\Delta^{3t}\beta}$ which maps to the non-zero class $\Delta^{3t}\beta x_{(36t+5)}$,  the class $\widehat{\Delta^{3t}\beta^2}$  which maps to the non-zero class $\Delta^{3t}\beta^2 x_{(36t+10)}$, and the class $\widehat{\Delta^{3t}\beta^4}$  which maps to the non-zero class $\Delta^{3t}\beta^4 x_{(36t+20)}$. Therefore, in these degrees there is a copy of $\F_3 \subset \pi_{2m}^s$ which maps non-trivially into $[\C P^m, EO_2]$. The conclusion follows.  
\end{proof}

\end{mysubsection}

\vspace*{0.2cm}

\begin{mysubsection}{Exotic complex hyperbolic manifolds} Farrell and Jones \cite{FJ94} proved that there are compact manifolds $\MM^{8n+2}$ and $\NN^{8n+2}$ in dimensions $8n+2$ satisfying \\
(1) $\MM$ and $\NN$ are homeomorphic but not diffeomorphic. \\
(2) $\MM$ is a complex hyperbolic manifold with complex dimension $4n+1$. \\
(3) $\NN$ supports a negatively curved Riemannian metric. In fact, for any $\epsilon >0$, there is a Riemannian metric on $\NN$ whose sectional curvatures lie in the interval $[-4-\epsilon, -1+\epsilon]$. 

The manifold $\NN$ is constructed from $\MM$ by taking connected sum with an exotic sphere. The condition on the sectional curvature may be achieved for any $\MM\# \Sigma$ where $\Sigma\in \Theta_{2m}$, where $\MM$ is a complex hyperbolic manifold of complex dimension $m$ (Ref : Farrell Jones Proposition 3.19). Farell and Jones prove the following result which allows them to construct examples such that $\MM$ and $\NN$ are not diffeomorphic if $m=4k+1$. 
\begin{prop}(Farrell-Jones \cite[Corollary 3.14]{FJ94} (see also Okun \cite{Oku02})) \label{fjthm}
Let $M$ be a compact complex $m$-dimensional manifold. There exists a finite sheeted cover $\MM$ of $M$ such that for any further finite sheeted cover $N$ of $\MM$ and $\Sigma_1,\Sigma_2 \in \Theta_{2m}$, if $N\# \Sigma_1$ is diffeomorphic to $N\# \Sigma_2$, then $\C P^m \# \Sigma_1$ is concordant to either $\C P^m \# \Sigma_2$ or $\C P^m \# (- \Sigma_2)$.  
\end{prop}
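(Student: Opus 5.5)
This is the Farrell--Jones result, cited from \cite[Corollary 3.14]{FJ94} and \cite{Oku02}; the plan reconstructs their argument rather than proving anything new. The approach is to pass to a finite cover $\MM$ on which there is a degree one map $f\colon \MM \to \C P^m$ that is compatible with tangential data. The natural source is the ``tangential map'' construction: a complex hyperbolic manifold $M=\Gamma\backslash \mathbb{C}H^m$ has a dual symmetric space $\C P^m$, and the Gauss map $M\to \Gamma\backslash (\mathbb{C}H^m\times \C P^m)$ lands in a bundle of projective spaces. By residual finiteness of $\Gamma$ (Okun's argument), one can choose a finite cover $\MM$ over which this bundle restricts on a neighbourhood to a product. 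Collapsing the complement of a small ball then gives a map $f\colon \MM\to \C P^m$ of degree one. This map is tangential, meaning $f^\ast T\C P^m$ is stably isomorphic to $T\MM$, and the same holds for every further finite cover $N\to \MM$ after composing with the covering map. Here the degree is replaced by the covering degree, which I expect to handle by using a lift of a ball that is evenly covered.

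The comparison of smooth structures goes through smoothing theory. Connected sum with $\Sigma$ corresponds to the element of $[N,\Top/O]$ given by $q_N^\ast\Sigma$, where $q_N\colon N\to S^{2m}$ collapses the complement of a ball. First, suppose $N\#\Sigma_1\cong N\#\Sigma_2$. By topological rigidity in the Farrell--Jones sense, any such diffeomorphism is homotopic to a homeomorphism homotopic to an isometry-induced map, so we may assume it is homotopic to the identity (possibly composed with a self-map of $N$ coming from $\Gamma$'s normalizer). Orientation reversal accounts for the option $-\Sigma_2$. Then smoothing theory says $q_N^\ast(\Sigma_1-\Sigma_2)$, or $q_N^\ast(\Sigma_1+\Sigma_2)$, vanishes in $[N,\Top/O]$. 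Second, since $q_N$ factors up to homotopy as $q_{\C P^m}\circ f$ on the relevant ball, it suffices to show that $f^\ast\colon [\C P^m,\Top/O]\to[N,\Top/O]$ is injective on the image of $q^\ast_{\C P^m}$. Then $q^\ast_{\C P^m}(\Sigma_1\mp\Sigma_2)=0$, which by \eqref{ichtp} means $\C P^m\#\Sigma_1$ is concordant to $\C P^m\#(\pm\Sigma_2)$.

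The main obstacle is this injectivity. I would try the transfer/splitting approach. Because $f$ is a tangential degree one normal map, $\Sigma^\infty f$ admits a stable section via Spanier--Whitehead (Atiyah) duality: the stable normal bundles of $N$ and $\C P^m$ match under $f$, so $D(f)$ gives $\Sigma^\infty \C P^m_+\to \Sigma^\infty N_+$ with $f\circ D(f)\simeq \mathrm{id}$. Since $\Top/O$ is an infinite loop space, $[-,\Top/O]$ is a cohomology theory, so $f^\ast$ is split injective. This is the step where the tangentiality furnished by the finite cover is genuinely used.
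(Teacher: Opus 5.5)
The paper gives no proof of this proposition; it is quoted from \cite[Corollary 3.14]{FJ94} and \cite{Oku02}. Your argument therefore has to stand on its own, and it breaks at the degree-one tangential map.

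Your construction does not produce such a map. Collapsing the complement of a ball factors the map through $S^{2m}$, and every map $S^{2m}\to\C P^m$ has degree zero for $m\geq 2$. Also, having the bundle be a product on a neighbourhood is automatic and gives nothing. What is actually needed is that the flat bundle $\Gamma\backslash(\mathbb{C}H^m\times\C P^m)$ becomes globally trivial over a finite cover $\MM$; this is where Deligne--Sullivan enters. The Borel-embedding section followed by projection is then a map $f\colon\MM\to\C P^m$ with $T\MM\cong f^\ast T\C P^m$.

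Because this is an honest bundle isomorphism, comparing Euler classes gives $|\chi(\MM)|=|\deg f|\,(m+1)$. For a further cover $N\to\MM$, the composite $f_N\colon N\to\C P^m$ has degree $d=[N:\MM]\deg f$. So degree one fails exactly in the generality the proposition requires, and no choice of evenly covered ball changes a degree. With $|d|>1$ you only have $q_{\C P^m}\circ f_N\simeq d\,q_N$, not $q_N\simeq q_{\C P^m}\circ f_N$. Moreover, the composite of $f_N$ with its umkehr map acts by $\pm d$ on $H_0$ and $H_{2m}$, so it is not an equivalence. There is then no stable splitting and no reason for $f_N^\ast$ to be injective.

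The missing idea is that you never need injectivity of $f_N^\ast$, only the transfer. Let $g\colon N\to P$ be any tangential map of closed $n$-manifolds, and let $b\colon\nu_N\to\nu_P$ be a stable bundle map over $g$. It induces a map $T(b)$ of Thom spectra, whose Atiyah dual is a stable map $g^!\colon\Sigma^\infty P_+\to\Sigma^\infty N_+$; this, not $D(g)$, is the map you want. The collapse $q_N$ is dual to the inclusion $j_N$ of a fibre into the Thom spectrum of $\nu_N$, and $T(b)\circ j_N\simeq\pm j_P$. Hence
\[ q_N\circ g^!\simeq\pm q_P \]
for every tangential $g$, whatever its degree.

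Since $\Top/O$ is an infinite loop space, this gives $q_{\C P^m}^\ast(x)=\pm(f_N^!)^\ast q_N^\ast(x)$ for $x\in\Theta_{2m}$. That is, $I_c(N)\subseteq I_c(\C P^m)$ for every finite cover $N$ of $\MM$, because tangentiality survives composition with coverings.

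Your rigidity step then closes the argument once stated precisely. A diffeomorphism $N\#\Sigma_1\cong N\#\Sigma_2$ induces a self-homeomorphism of $N$. By Mostow rigidity it is homotopic to an isometry $\phi$, and hence, by Farrell--Jones, topologically pseudo-isotopic to $\phi$. Therefore $q_N^\ast\Sigma_1=\phi^\ast q_N^\ast\Sigma_2=q_N^\ast(\deg(\phi)\,\Sigma_2)$. The transfer converts this into the concordance of $\C P^m\#\Sigma_1$ with $\C P^m\#(\pm\Sigma_2)$.
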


\vspace*{0.2cm}

We now apply Proposition \ref{fjthm} and Theorem \ref{inertcpn} to produce exotic smooth structures on complex hyperbolic manifolds. 
\begin{thm}\label{exstrcxhyp}
a) Let $m=36t+5$. Then, there is a complex hyperbolic manifold $\MM$ of complex dimension $m$ and $\Sigma_1, \Sigma_2 \in \Theta_{2m}$ such that \\
(i) The manifolds $\{\MM, \MM\# \Sigma_1, \MM \# \Sigma_2, \MM \# \Sigma_1 \# \Sigma_2\}$ are pairwise non-diffeomorphic. \\
(ii) Each of the manifolds $\MM\# \Sigma_1$, $\MM \# \Sigma_2$, $\MM \# \Sigma_1 \# \Sigma_2$ support negatively curved Riemannian metrics whose sectional curvatures may be chosen to lie in $[-4-\epsilon, -1+\epsilon]$ (for any given $\epsilon >0$). \\
b) Let $m=36t +10$, or $m=36t+20$. Then, there is a complex hyperbolic manifold $\MM$ of complex dimension $m$ and $\Sigma \in \Theta_{2m}$ such that $\MM$ and $\MM \# \Sigma$ are not diffeomorphic, and $\MM\# \Sigma$ supports a negatively curved Riemannian metric whose sectional curvatures may be chosen to lie in $[-4-\epsilon, -1+\epsilon]$ (for any given $\epsilon >0$).\\
c) Let 
\[m=(r+kp)(p(p-1)-1), \mbox{ where } \begin{cases} k\in \{1,\dots, r-1\} & \mbox{ if } r\leq \frac{p-1}{2}\\
 k\in \{1,\dots r-2\} & \mbox{ if }  \frac{p-1}{2}< r \leq p-1, \end{cases} \]  
for $p\geq 5$.  Then, there is a complex hyperbolic manifold $\MM$ of complex dimension $m$ and $\Sigma_i \in \Theta_{2m}$ for $1\leq i \leq \frac{p-1}{2}$ such that $\{\MM,\MM \# \Sigma_1,\dots, \MM\# \Sigma_{\frac{p-1}{2}}\}$ are pairwise non-diffeomorphic, and $\MM\# \Sigma_i$ supports a negatively curved Riemannian metric whose sectional curvatures may be chosen to lie in $[-4-\epsilon, -1+\epsilon]$ (for any given $\epsilon >0$).

\end{thm}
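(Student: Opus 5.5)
Start from an arbitrary compact complex hyperbolic manifold $M$ of complex dimension $m$; such manifolds exist in every dimension by Borel's theorem on cocompact arithmetic lattices in $PU(m,1)$. Apply Proposition \ref{fjthm} to get the finite sheeted cover $\MM$, which is again complex hyperbolic. The curvature statements in (ii), b) and c) come from the Farrell--Jones result cited just before Proposition \ref{fjthm}: for any $\Sigma\in\Theta_{2m}$, the manifold $\MM\#\Sigma$ carries metrics with sectional curvatures in $[-4-\epsilon,-1+\epsilon]$. So what remains is non-diffeomorphism. By Proposition \ref{fjthm} (with $N=\MM$), if $\MM\#\Sigma\cong\MM\#\Sigma'$, then $\C P^m\#\Sigma$ is concordant to $\C P^m\#\Sigma'$ or to $\C P^m\#(-\Sigma')$.

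\textbf{Translating concordance.} By \eqref{ichtp} and $I(\C P^m)=I_c(\C P^m)$, concordance of $\C P^m\#\Sigma$ and $\C P^m\#\Sigma'$ amounts to $q^\ast(\Sigma-\Sigma')=0$ in $[\C P^m,\Top/O]$. We need exotic spheres whose differences and sums have nonzero image. Via Proposition \ref{cokerjinertcpn}, it suffices that a lift in $\pi_{2m}^s$ maps nontrivially under $h_\iota\circ q^\ast$ to $EO_n^\ast(\C P^m)$. The key point is that the proof of Theorem \ref{inertcpn} produces a subgroup $V\subset\pi_{2m}^s$, with $V\cong\F_3$ or $\F_p$ and trivial $\Im J$ intersection, on which $h_\iota\circ q^\ast$ is \emph{injective}. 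Under injectivity, $q^\ast(\Sigma\mp\Sigma')\neq 0$ for all distinct elements $\Sigma,\Sigma'$ of $V$ with $\Sigma\neq\pm\Sigma'$.

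\textbf{Cases b) and c).} In case b), take $\Sigma$ to be the sphere corresponding to a generator of the $\F_3$. Then $\Sigma\pm 0=\Sigma$ is detected, so $\MM\not\cong\MM\#\Sigma$.

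In case c), $V=\F_p\{\beta_1^{s}\}$ with $s=r+kp$, by Corollary \ref{detectbeta}. Choose representatives $\Sigma_i=i\cdot\Sigma_1$ for $1\le i\le\frac{p-1}{2}$ (together with $\Sigma_0=0$). These are pairwise distinct up to sign in $\F_p$, so $\Sigma_i\pm\Sigma_j\neq0$ in $V$ for $i\ne j$. This forces pairwise non-diffeomorphism. The halving to $\frac{p-1}{2}$ is exactly the cost of the $\pm$ ambiguity.

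\textbf{Case a), the main obstacle.} Here $m=36t+5$ and the statement needs four pairwise distinct manifolds $\MM\#\Sigma$ with $\Sigma\in\{0,\Sigma_1,\Sigma_2,\Sigma_1+\Sigma_2\}$. With only an $\F_3$, the elements $\pm$-identified collapse to two classes, $\{0\}$ and $\{\pm\Sigma_1\}$, which is not enough. So I would look for two independent classes in degree $2m=72t+10$ detected in $EO_2^\ast(\C P^m)$. Since $m=3(12t+1)+2$ has the form $3k+2$, Theorem \ref{hurcp3} gives detected classes on both summands: $\Delta^{3t}\beta x_{(3k+2)}$ and terms involving $x_{(3k+1)}$ and $x_{(3k-1,\alpha)}$. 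I would pair $\widehat{\Delta^{3t}\beta}$ with a second class, possibly a $2$-primary one detected by $KO$ (for instance $\mu$-type classes in degree $8j+2$, using Theorem \ref{compko} and $m$ odd, in the style of \cite{FJ94}). Then $\Sigma_1$ and $\Sigma_2$ would have coprime orders $3$ and $2$, giving $\Theta\supset\Z/6$ detected by $KO\times EO_2$.

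The check is that all six pairwise sums and differences among $\{0,\Sigma_1,\Sigma_2,\Sigma_1+\Sigma_2\}$ are nonzero in $\Z/2\oplus\F_3$ modulo sign. They are: $\Sigma_1$, $\Sigma_2$, $\Sigma_1+\Sigma_2$, $\Sigma_1\pm\Sigma_2$, and $2\Sigma_1+\Sigma_2$. Each has a nonzero component in some factor, so detection in that factor suffices. Verifying that the $2$-primary class really survives $q^\ast$ to $KO^\ast(\C P^m)$ for $m\equiv1\pmod 4$ is the step I expect to need the most care.
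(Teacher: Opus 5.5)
Your argument is essentially the paper's. Parts b) and c) come from Theorem \ref{inertcpn} together with Proposition \ref{fjthm}, and your $\pm$ bookkeeping is exactly why c) yields only $\frac{p-1}{2}$ spheres. Part a) pairs the order-$3$ sphere from $\widehat{\Delta^{3t}\beta}$ with an order-$2$ sphere, so that the resulting $\Z/6\subset\Theta_{2m}$ meets $I(\C P^m)$ trivially.

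The step you flag needs no new work. Since $m=36t+5=4(9t+1)+1$, the paper simply takes $\Sigma_2$ to be the order-$2$ sphere of \cite[Theorem 3.20]{FJ94}. Alternatively, Theorem \ref{compko} with $m$ odd shows that $z=x^m$ spans a free $KO^\ast(\pt)$-summand. Hence the $KO$-image of Adams' $\mu_{72t+10}$, the generator of $KO_{72t+10}\cong\Z/2$, survives $q^\ast$.
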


\begin{proof}
The statement in b) directly follows from Theorem \ref{inertcpn} and Proposition \ref{fjthm}. For the statement in a), we have a $\Sigma_1 \in \Theta_{2m}$ of order $3$ using Theorem \ref{inertcpn}. The sphere $\Sigma_2\in \Theta_{2m}$ is the one obtained in \cite[Theorem 3.20]{FJ94} which is of order $2$. It follows that $\Sigma_1$ and $\Sigma_2$ generate a subgroup of order $6$ in $\Theta_{2m}$ whose intersection with the inertia group of $\C P^m$ is $0$. The statement in a) now follows from Proposition \ref{fjthm}. 
\end{proof}

\end{mysubsection}

\vspace*{0.2cm}

\begin{mysubsection}{The inertia group of $\Hyp P^m$} 
We demonstrate that, as in the case of $\C P^m$, the computation of $EO_n$-cohomology of $\Hyp P^m$ may be used to construct exotic spheres lying outside the concordance inertia group. Let $q:\Hyp P^m \to S^{4m}$. The analogue of Proposition \ref{cokerjinertcpn} in this case is the following proposition. 
\begin{prop}\label{cokerjinerthpn}
Let $\xi \in \pi_{4m}^s/\Im(J)$ correspond to an exotic sphere $\Sigma$ of dimension $4m$. Let $\tilde{\xi}$ be a lift of $\xi$ in $\pi_{4m}^s$. Suppose that the image of $\tilde{\xi}$ under the map 
\[ q^\ast  : \pi_{4m}^s \to [\Hyp P^m, \Sp]\]
is non-zero. Then, $\Sigma \not\in I_c(\Hyp P^m)$.  
\end{prop}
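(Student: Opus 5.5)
The plan is to argue by contraposition through the formula \eqref{ichtp} for the concordance inertia group, following the same route as Proposition \ref{cokerjinertcpn}. Suppose $\Sigma \in I_c(\Hyp P^m)$. By \cite{KS77}, the class $[\Sigma]\in [S^{4m},\Top/O]$ then lies in the kernel of $q^\ast : [S^{4m},\Top/O]\to [\Hyp P^m,\Top/O]$. Composing with the forgetful map $\Top/O\to G/O$ and using naturality of $q^\ast$, we get that the image of $\Sigma$ in $[S^{4m},G/O]\cong \pi_{4m}(G/O)$ maps to zero in $[\Hyp P^m, G/O]$.

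The key step is to pass from $G/O$ to $G$, i.e. to stable homotopy, using the fibration $G\to G/O\to BO$. Since $\Sigma$ corresponds to $\xi\in \pi_{4m}^s/\Im(J)$, its image in $\pi_{4m}(G/O)$ is the image of $\tilde\xi\in \pi_{4m}(G)=\pi_{4m}^s$. The ambiguity in the choice of lift is exactly $\Im(J)$, the image of $\pi_{4m}(O)$, because the exact sequence $\pi_{4m}(O)\to\pi_{4m}(G)\to\pi_{4m}(G/O)$ holds (here $4m$ avoids the Kervaire-invariant dimensions, which are $\equiv 2 \bmod 4$, so the $bP$ and Kervaire issues of Proposition \ref{cokerjinertcpn} do not arise). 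The vanishing of $q^\ast$ of the $G/O$-class then means that $q^\ast(\tilde\xi)\in[\Hyp P^m,G]$ lies in the image of $[\Hyp P^m,O]\to[\Hyp P^m,G]$. One then has to show that this forces the element to vanish, possibly after changing the lift $\tilde\xi$ by an element of $\Im(J)$.

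I expect this last step to be the main obstacle. I would handle it as in \cite[Lemma 2.6]{BK18} and its quaternionic analogue in \cite{BK22}, using the exact sequence
\[ [\Hyp P^m, O]\to[\Hyp P^m,G]\to[\Hyp P^m,G/O].\]
Consider a class $\theta\in[\Hyp P^m,O]$ whose image equals $q^\ast(\tilde\xi)$. The image $q^\ast(\tilde\xi)$ restricts trivially to $\Hyp P^{m-1}$. The aim is then to show that $\theta$ may be chosen to factor through $q$. This uses the cofiber sequence $\Hyp P^{m-1}\to \Hyp P^m\to S^{4m}$ together with knowledge of $KO^{-1}(\Hyp P^{m-1})$, since $[X,O]=KO^{-1}(X)$.

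The $KO$-cohomology of $\Hyp P^{m-1}$ is concentrated in degrees $\equiv 0 \bmod 4$. This gives $KO^{-1}(\Hyp P^{m-1})=0$, so every $\theta$ restricting trivially on $\Hyp P^{m-1}$ factors through $q$. Writing $\theta=q^\ast(\theta')$, we get $q^\ast(\tilde\xi - J(\theta'))=0$. Replacing $\tilde\xi$ by the lift $\tilde\xi - J(\theta')$ contradicts the hypothesis. The hypothesis should therefore be read as holding for every lift. Equivalently, since in our applications the detecting classes are $p$-local with $p$ odd, away from the relevant $\Im(J)$, the choice of lift does not matter. Hence $\Sigma\notin I_c(\Hyp P^m)$.
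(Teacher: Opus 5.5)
\textbf{The gap: $KO^{-1}(\Hyp P^{m-1})\neq 0$.} Your outline (pass from $\Top/O$ to $G/O$, then compare with $G$ via $O\to G\to G/O$) is the same as the paper's, which simply transports the argument for Proposition \ref{cokerjinertcpn}. The problem is your key step. The assertion $KO^{-1}(\Hyp P^{m-1})=0$ is false for $m\geq 3$.

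Having cells only in degrees $\equiv 0 \bmod 4$ does make the Atiyah--Hirzebruch spectral sequence collapse. So $KO^\ast(\Hyp P^{m-1})$ is a free $KO^\ast$-module on classes of degree $4i$, $0\le i\le m-1$. But the coefficient ring $KO^\ast$ contains $\eta$ in degree $-1$, so
\[ \widetilde{KO}^{-1}(\Hyp P^{m-1})\cong\bigoplus_{i=1}^{m-1}\pi_{4i+1}(KO)\cong\bigoplus_{1\le j\le \lfloor (m-1)/2\rfloor}\Z/2, \]
with one copy for each $8j$-cell; the $8$-cell already contributes $\pi_9(KO)=\Z/2$. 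Hence from $J(\theta|_{\Hyp P^{m-1}})=0$ you cannot conclude $\theta|_{\Hyp P^{m-1}}=0$, and the factorization $\theta=q^\ast\theta'$ is unjustified.

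This is exactly where $\Hyp P^m$ differs from $\C P^m$. Theorem \ref{compko} gives $\widetilde{KO}^{-1}(\C P^m)=0$, which is what makes $[\C P^m,G]\to[\C P^m,G/O]$ monic. By contrast, $[\Hyp P^m,G]\to[\Hyp P^m,G/O]$ has kernel $J(\widetilde{KO}^{-1}(\Hyp P^m))\neq 0$ for $m\geq 2$. For example, $q^\ast(\eta\sigma)\neq 0$ in $[\Hyp P^2,\Sp]$ because $\pi_5^s=0$, while $\eta\sigma\in\Im(J)$. For $2$-primary $\tilde\xi$ your argument would need genuine information about $J$ on $\widetilde{KO}^{-1}(\Hyp P^{m-1})$, which you do not supply.

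\textbf{The repair: work at an odd prime.} In the generality in which the statement is actually used, fix an odd prime $p$. Since $\widetilde{KO}^{-1}(\Hyp P^m)$ is an $\F_2$-vector space, localizing the fibre sequence of infinite loop spaces $SO\to SG\to G/O$ at $p$ makes $[\Hyp P^m,SG_{(p)}]\to[\Hyp P^m,(G/O)_{(p)}]$ injective.

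In Theorem \ref{inerthpn}, the non-vanishing of $q^\ast(\tilde\xi)$ is certified by $h_\iota(q^\ast\tilde\xi)\neq 0$ in $EO_n^\ast(\Hyp P^m)$. This is a $\Z_p$-module, so the $p$-primary component of $q^\ast(\tilde\xi)$ is nonzero. Therefore the image of $q^\ast[\Sigma]$ in $[\Hyp P^m,G/O]$ is nonzero, and $\Sigma\notin I_c(\Hyp P^m)$.

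This also settles the lift question without reinterpreting the hypothesis. Since $\pi_{4m}(SO)$ is $\Z/2$ or $0$, the subgroup $\Im(J)\subset\pi_{4m}^s$ is $2$-primary. So the $p$-primary part of $q^\ast(\tilde\xi)$ does not depend on the choice of lift.
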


\vspace*{0.2cm}

We apply Proposition \ref{cokerjinerthpn} by detecting elements in the image 
\[ h_\iota: [\Hyp P^m, \Sp] \to [\Hyp P^m, EO_{n}].\]
For the prime $3$, we have the following result recalling the expression of Theorem \ref{eo2hpn}. 
\begin{thm}\label{hurhp3}
The image of the map 
\[ h_\iota: [\Hyp P^m, \Sp]_\ast \to [\Hyp P^m, EO_2]_\ast\cong EO_2^{-\ast}(\Hyp P^m)\]
contains the classes (for $t\geq 0$)
\[\begin{cases} 
\F_3\{y^k\} & \mbox{if } m=3k,\\
\F_3\{x_{(6k+2)},\alpha x_{(6k+2)}, \Delta^{3t}\alpha\beta x_{(6k+2)},\Delta^{3t}\beta b x_{(6k+2)},\Delta^{3t} \beta^i x_{(6k+2)}, 1\leq i \leq 4 \} & \mbox{if } m=3k+1,\\ 
\F_3\{x_{(6k+4)}, \Delta^{3t}\beta x_{(6k+4)}, \Delta^{3t} \beta^2 x_{(6k+4)},x_{(6k+2,\alpha)}, \Delta^{3t}\beta^i x_{(6k+2,\alpha)}, 1\leq i \leq 3\} & \mbox{if } m= 3k+2.
\end{cases} \]
\end{thm}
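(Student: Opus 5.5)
We will mirror the proof of Theorem \ref{hurcp3}, working $3$-completely and using that $\Sigma^\infty \Hyp P^\infty$ is the summand $P_0$ of $\Sigma^\infty\C P^\infty$. The idea is to realize each listed class as the image of a map through a retract of a top quotient $\Hyp P^m/\Hyp P^{m-j}$ onto a sphere or onto a suspension of $C(\alpha_1)$. Theorem \ref{hureo2} and \eqref{hurcalph3} already describe the Hurewicz image for these retracts. The key input is the $3$-complete homotopy type of the top cells of $\Hyp P^m$. The cells lie in degrees $4j$, and $\PP^1(y^j)=2j\,y^{j+1}$ (since $y\mapsto x^2$ and $\PP^1(x^{2j})=2j x^{2j+2}$). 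Hence the $\alpha_1$-attaching map between the $4j$- and $4j+4$-cells is nontrivial exactly when $3\nmid j$. Any other attaching maps between adjacent cells land in $\pi_{4l-1}^s$ or higher stems that vanish $3$-locally in the relevant low stems.

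We treat the cases $m=3k+1$, $m=3k+2$ and $m=3k$ in turn.

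\emph{Case $m=3k+1$.} We have $3\mid 3k$, so the top cell is unattached and $\big(\Hyp P^{3k+1}/\Hyp P^{3k}\big)\simeq S^{12k+4}$. Consider the composite
\[\Hyp P^{3k+1}\to S^{12k+4}\xrightarrow{\xi}\Sp.\]
As in the $m=3k$ case of Theorem \ref{hurcp3}, the class $x_{(6k+2)}\xi$, with $\xi$ in the image of Theorem \ref{hureo2}, lies in the image of $h_\iota$. The identification of $x_{(6k+2)}$ with the pullback of $\epsilon_{12k+4}$ comes from Theorem \ref{eo2hpn}, where $x_{(6k+2)}$ is the class $[x^{6k+2}]$ on the top cell.

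\emph{Case $m=3k+2$.} We use the splitting
\[\big(\Hyp P^{3k+2}/\Hyp P^{3k}\big)^\wedge_3\simeq \Sigma^{12k+4}C(\alpha_1),\]
which holds because $\PP^1(y^{3k+1})\neq 0$. The retraction and the dual $D(C(\alpha_1))\simeq\Sigma^{-4}C(\alpha_1)$ then give the diagram \eqref{eqcalph3}. The classes $[\iota_0]$ and $[\kappa_7]$ map to $x_{(6k+4)}$ and $x_{(6k+2,\alpha)}$ respectively, and the list \eqref{hurcalph3} produces the stated classes. We must check that these survive in $EO_2^\ast(\Hyp P^{3k+2})$, and Theorem \ref{eo2hpn} settles this: $x_{(6k+4)}$ is annihilated by $\beta^3$, and $x_{(6k+2,\alpha)}$ by $\beta^4$.

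\emph{Case $m=3k$.} Here $3\nmid 3k-1$, so the top cell is attached by $\alpha_1$ to the cell below and cannot be split off. The quotient $\Hyp P^{3k}/\Hyp P^{3k-2}$ is $\Sigma^{12k-4}C(\alpha_1)$ (up to a unit). The relevant class in Theorem \ref{eo2hpn} is $y^k$, which is pulled back from $\Hyp P^\infty$. We therefore argue instead that $y^k$ is in the image of $h_\iota$ via the unit on the $\Sigma^{12k-4}C(\alpha_1)$ piece, more precisely via the composite $\Hyp P^{3k}\to \Sigma^{12k-4}C(\alpha_1)\to S^{12k}$ paired with $1\in\pi_0\Sp$. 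This is the bottom-degree element $[\chi_4]$-type class, namely $3\iota_4$ dualized. We then identify it with a unit multiple of $y^k$ in the $E_\infty$-page, using that $y^k$ sits in filtration $0$ and is the unique class there in the appropriate degree modulo $3$-multiples.

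The main obstacle is the bookkeeping of the identifications of $E_\infty$-classes: that the generator on the $C(\alpha_1)$ summand maps exactly to the named classes of Theorem \ref{eo2hpn}, and, for $m=3k$, that $y^k$ rather than $3y^k$ is hit. For this we will try to compare with the corresponding $\C P^{6k}$ classes via $\C P^{2m}\to\Hyp P^m$.
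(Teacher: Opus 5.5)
Your cases $m=3k+1$ and $m=3k+2$ are exactly the paper's argument. The paper's proof just says ``same as Theorem \ref{hurcp3}'', using $\Hyp P^{3k+1}/\Hyp P^{3k}\simeq S^{12k+4}$ and $(\Hyp P^{3k+2}/\Hyp P^{3k})^\wedge_3\simeq\Sigma^{12k+4}C(\alpha_1)$. You get the suspensions right, which the paper's proof does not. The identifications $q^\ast\epsilon_{12k+4}=x_{(6k+2)}$, $[\iota_0]\mapsto x_{(6k+4)}$ and $[\kappa_7]\mapsto x_{(6k+2,\alpha)}$ are the right ones, and non-vanishing is read off from Theorem \ref{eo2hpn}. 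Only the quotient maps $\Hyp P^m\to\Hyp P^m/\Hyp P^{m-j}$ are used, and a $3$-local two-cell quotient with cells four apart is pinned down by $\PP^1$. So no splitting or retraction is needed, and your aside that the remaining attaching maps vanish $3$-locally is unnecessary. It is also false as stated, since $\alpha_2\in\pi_7^s$ is nonzero at $p=3$.

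The case $m=3k$, which the paper leaves implicit, has a gap: you use the right map, but your identification of its image fails.

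\emph{The $\chi_4$ identification is wrong.} The composite $\Hyp P^{3k}\to\Sigma^{12k-4}C(\alpha_1)\to S^{12k}$ is just the top-cell collapse $q$. Under $F(\Sigma^{12k-4}C(\alpha_1),EO_2)\simeq\Sigma^{-12k}EO_2\wedge C(\alpha_1)$, the pinch onto the top cell dualizes to the bottom-cell inclusion, so it corresponds to $\Sigma^{-12k}\iota_0$, not to a $\chi_4$-type class. The class $\chi_4=[3\iota_4]$ corresponds to $3$ times the class of the $(12k-4)$-cell and lives in cohomological degree $12k-4\neq|y^k|$.

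\emph{The uniqueness claim is false.} For $k\geq 6$, the classes $\Delta^{-3j}y^{k-6j}$ with $1\leq j\leq k/6$ lie in the same bidegree $(0,-12k)$ and are not multiples of $3$.

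\emph{The fix is immediate.} We have $h_\iota(q)=q^\ast(\epsilon_{12k})$, and $q$ induces a map of homotopy fixed point spectral sequences. On $E_2$ it sends $\epsilon_{12k}$ to the generator of the top cell. That generator is a unit multiple of $y^k=z^{2k}$, because $y^k$ has filtration-spectral-sequence representative $x^{6k}$ and nothing lies above the top cell. This is the same bookkeeping as $[\Sigma^{-6k-2}\iota_0]\mapsto x_{(3k+1)}$ in Theorem \ref{hurcp3}, and as your own $[\iota_0]\mapsto x_{(6k+4)}$. It shows that $y^k$ itself, not $3y^k$, is hit, and no comparison with $\C P^{6k}$ is needed.
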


\begin{proof}
The proof is exactly the same as Theorem \ref{hurcp3}, based on the equivalences 
\[ \Hyp P^{3k+1}/\Hyp P^{3k} \simeq S^{6k+2}, \quad \Big(\Hyp P^{3k+2}/\Hyp P^{3k}\Big)^\wedge_3 \simeq  C(\alpha_1), \]
and the results of Theorem \ref{eo2hpn}. 
\end{proof}

\vspace*{0.2cm}

The following consequences for the concordance inertia groups of $\Hyp P^m$ follow from Theorem \ref{hurhp3}. 
\begin{thm}\label{inerthpn}
a) Suppose $m=18t+5$, or $m=18t+10$. There are $2$ exotic spheres $\Sigma_1$ and $\Sigma_2$ of dimension $4m$ such that $\Sigma_i \not\in I_c(\Hyp P^m)$, and  $\{\Hyp P^m, \Hyp P^m\# \Sigma_1, \Hyp P^m \# \Sigma_2\}$ are pairwise non-concordant to each other. \\
b) Let $m=(\frac{r+kp}{2})(p(p-1)-1)$ for a prime $p\geq 5$ and $r<p$, where  $ k\in \{1,\dots r-2\}$, and $k+r$ is even.   There are $p-1$ exotic spheres $\Sigma_1, \dots, \Sigma_{p-1}$ of dimension $4m$ such that $\Sigma_i \not\in I_c(\Hyp P^m)$, and  $\{\Hyp P^m, \Hyp P^m\# \Sigma_1, \dots, \Hyp P^m \# \Sigma_{p-1}\}$ are pairwise non-concordant to each other.
\end{thm}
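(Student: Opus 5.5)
The plan mirrors the proof of Theorem \ref{inertcpn}, with Proposition \ref{cokerjinerthpn} in place of Proposition \ref{cokerjinertcpn}. It suffices to find, in each listed dimension, a class $\tilde\xi\in\pi^s_{4m}$ with three properties: it is not in the image of $J$, it represents an exotic sphere $\Sigma$, and its image under
\[\pi_{4m}^s\xrightarrow{q^\ast}[\Hyp P^m,\Sp]\xrightarrow{h_\iota}[\Hyp P^m,EO_n]\]
is non-zero. Proposition \ref{cokerjinerthpn} then gives $\Sigma\notin I_c(\Hyp P^m)$. Suppose the classes found span a copy of $\F_p$ (respectively $\F_3$) in $\pi^s_{4m}$ that maps injectively under $h_\iota\circ q^\ast$. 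Then every non-zero multiple $\Sigma_i$ ($p-1$ of them, respectively $2$) lies outside $I_c$. The same holds for the differences $\Sigma_i\#(-\Sigma_j)$, so the manifolds $\Hyp P^m\#\Sigma_i$ are pairwise non-concordant. This uses the standard fact that $\Hyp P^m\#\Sigma$ and $\Hyp P^m\#\Sigma'$ are concordant iff $\Sigma-\Sigma'\in I_c(\Hyp P^m)$, which follows from \eqref{ichtp}.

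\textbf{Part b).} Set $s=r+kp$, which is even, so that $4m=2s(pn-1)=|\beta^s|$. Corollary \ref{detectbetahpm} then says $q^\ast(\iota(\beta_1^s))\neq0$ in $EO_n^\ast(\Hyp P^m)$, so the subgroup $\F_p\{\beta_1^s\}\subset\pi_{4m}^s$ maps injectively. Two further checks are needed.
\begin{itemize}
\item The classes $\beta_1^s$ are $p$-torsion in positive stems of degree $\equiv -2s \pmod{2(p-1)}$, and $s\not\equiv0\pmod{p-1}$ in the given range. Hence they do not lie in the $p$-primary image of $J$, which is concentrated in degrees $\equiv -1\pmod{2(p-1)}$. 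Since $4m$ is even, the $p$-part of $\Im(J)$ in this degree is in fact $0$.
\item The $4m$ here is not a Kervaire dimension. Then $\Theta_{4m}\to\pi^s_{4m}/\Im(J)$ is an isomorphism, because $bP_{4m+1}=0$. So each $\beta_1^s$ does correspond to an exotic sphere.
\end{itemize}

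\textbf{Part a).} Work at $p=3$ with Theorem \ref{hurhp3}. For $m=18t+5=3(6t+1)+2$, the class $x_{(6k+4)}$ with $k=6t+1$ lives on the top cell. The degree count $4m=72t+20=|\Delta^{3t}\beta^2|=72t+20$ suggests using $\widehat{\Delta^{3t}\beta^2}$, whose image is $\Delta^{3t}\beta^2x_{(6k+4)}\neq0$. For $m=18t+10=3(6t+3)+1$ the top class is $x_{(6k+2)}$, and $4m=72t+40=|\Delta^{3t}\beta^4|$. So use $\widehat{\Delta^{3t}\beta^4}$, whose image is $\Delta^{3t}\beta^4x_{(6k+2)}\neq0$. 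Both non-vanishing statements are read off from Theorem \ref{eo2hpn}: the top-cell generators $x_{(6k+4)}$ and $x_{(6k+2)}$ are annihilated only by $\beta^3$, respectively only by $(y,I)$, so these powers of $\beta$ survive. By Theorem \ref{hureo2} these elements lie in the $EO_2$-Hurewicz image, and they come from $\beta$-family elements of $\pi_\ast^s$ (see \eqref{hurewiczeo2}). They are $3$-torsion in even degree, hence outside $\Im(J)$.

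\textbf{Main obstacle.} The delicate step is making sure that the stated class on the top cell really equals $q^\ast$ of the sphere element, rather than merely some class in the right degree. This requires the identification of the top generator as the image of the fundamental class under $q_m^\ast$, as in the proof of Theorem \ref{hurcp3} via the retraction onto the top summand. It also requires checking that the relevant $\beta$-power is non-zero in the $E_\infty$-page, i.e.\ not truncated by the annihilator relations of Theorem \ref{eo2hpn} and Theorem \ref{eonhpm}.
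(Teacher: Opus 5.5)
Your proposal reproduces the paper's proof. For (a) the paper uses exactly $\widehat{\Delta^{3t}\beta^2}\mapsto\Delta^{3t}\beta^2x_{(36t+10)}$ on $\Hyp P^{18t+5}$ and $\widehat{\Delta^{3t}\beta^4}\mapsto\Delta^{3t}\beta^4x_{(36t+20)}$ on $\Hyp P^{18t+10}$. For (b) it rests, as you do, on Corollary \ref{detectbetahpm}. Part (a) is fine as you wrote it. So is the bookkeeping that turns an injectively detected copy of $\F_p$ into $p-1$ spheres whose pairwise differences lie outside $I_c(\Hyp P^m)$. Your aside that $s\not\equiv 0 \pmod{p-1}$ in range (b) is false, e.g.\ $p=5$, $s=8$. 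It is harmless, since the odd-primary image of $J$ vanishes in even stems anyway.

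The genuine problem is in (b), at exactly the check you flag in your last paragraph and then hand off to Corollary \ref{detectbetahpm}. That corollary, and Corollary \ref{detectbeta} from which it is derived, contradicts Theorem \ref{eonhpm} for part of the stated range, so the paper's proof has the same gap.

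Write $s=r+kp$ with $1\le r\le p-1$. Since $2m=s(pn-1)\equiv -s \pmod p$, you get $2m\bmod p=p-r$, hence $[2m]_p=r-1$; the top chunk of cells is a $C^{[r-1]}$. Theorem \ref{eonhpm} then says the top class $y^m=q^\ast(\epsilon_{4m})$ is annihilated by $\beta^{n^2-(r-1)n+1}$. Thus $h_\iota q^\ast(\beta_1^s)=\beta^sy^m\neq 0$ if and only if $r+kp\le (p-r)n$, i.e.\ $k\le p-1-r$. The proof of Corollary \ref{detectbeta} inserts $r=s\bmod p$ into Corollary \ref{powbetanonzero}, where the relevant residue is that of $m$, namely $p-r$.

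Listed parameters with $k>p-1-r$ exist precisely when $r>\frac{p+1}{2}$. For instance take $p=5$, $r=4$, $k=2$, so $s=14$ and $m=133$. The top class $y^{133}$ sits on the chunk $y^{127},y^{129},y^{131},y^{133}$ and is killed by $\beta^5$, so $\beta^{14}y^{133}=0$. The argument then says nothing about whether $q^\ast(\beta_1^{14})\neq 0$.

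So, as written, your proof (like the paper's) establishes (b) only for $k\le p-1-r$. This covers all listed cases with $r\le \frac{p+1}{2}$; the remaining cases need a different detection argument.
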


\begin{proof}
The proof is exactly the same as that of Theorem \ref{inertcpn}, applying Proposition \ref{cokerjinerthpn}. We deduce that the composite 
\[\pi_{4m}^s \xrightarrow{q^\ast} [\Hyp P^m, \Sp] \xrightarrow{h_\iota} [\Hyp P^m, EO_2]\]
sends 
\[ \widehat{\Delta^{3t} \beta^2} \mapsto \Delta^{3t} \beta^2 x_{(36t+10)} \neq 0 \in EO_2^\ast(\Hyp P^{18t+5}),\]
and
\[ \widehat{\Delta^{3t} \beta^4} \mapsto \Delta^{3t} \beta^4 x_{(36t+20)} \neq 0 \in EO_2^\ast(\Hyp P^{18t+10}).\]
This implies the result. 
\end{proof}

\vspace*{0.2cm}

Theorem \ref{inerthpn} may now be used to construct exotic smooth structures on quaternionic hyperbolic manifolds. Specifically, we apply \cite{AF04} and \cite[Theorem 3.6]{Oku02} to deduce the following theorem. 
\begin{thm}\label{exstrquathyp}
a) Let $m=18t +5$, or $m=18t+10$. Then, there is a quaternionic hyperbolic manifold $\MM$ of quaternionic dimension $m$ and $\Sigma \in \Theta_{4m}$ such that $\MM$ and $\MM \# \Sigma$ are not diffeomorphic, and $\MM\# \Sigma$ supports a negatively curved Riemannian metric. \\
b) Let $m=(\frac{r+kp}{2})(p(p-1)-1)$ for a prime $p\geq 5$ and $r<p$, with  $ k\in \{1,\dots r-2\}$, and $k+r$ is even. Then, there is a quaternionic hyperbolic manifold $\MM$ of quaternionic dimension $m$ and $\Sigma_1, \dots, \Sigma_{\frac{p-1}{2}} \in \Theta_{4m}$ such that $\{\MM, \MM\# \Sigma_1, \dots, \MM \# \Sigma_{\frac{p-1}{2}}\}$ are not diffeomorphic, and $\MM\# \Sigma_j$ supports a negatively curved Riemannian metric for $1\leq j \leq \frac{p-1}{2}$. 
\end{thm}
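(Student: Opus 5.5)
The proof will mirror that of Theorem \ref{exstrcxhyp}: Theorem \ref{inerthpn} supplies exotic spheres $\Sigma\in\Theta_{4m}$ outside $I_c(\Hyp P^m)$, and the quaternionic analogue of Proposition \ref{fjthm} transfers this to a finite cover of a quaternionic hyperbolic manifold. That analogue is \cite[Theorem 3.6]{Oku02}, together with the curvature results of \cite{AF04}. Concretely, it says the following. For a closed quaternionic hyperbolic manifold $M$ of quaternionic dimension $m$, there is a finite sheeted cover $\MM$ such that $\MM\#\Sigma_1\cong\MM\#\Sigma_2$ forces $\Hyp P^m\#\Sigma_1$ to be concordant to $\Hyp P^m\#(\pm\Sigma_2)$. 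Moreover, each $\MM\#\Sigma$ carries a negatively curved Riemannian metric, obtained by the Farrell--Jones/Aravinda--Farrell gluing construction.

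\textbf{Step 1 (existence of $M$).} I would start from a closed quaternionic hyperbolic manifold $M$ of quaternionic dimension $m$. Such manifolds exist for every $m$ by Borel's theorem on cocompact torsion-free lattices in $Sp(m,1)$. I would then pass to the cover $\MM$ provided by \cite{Oku02}.

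\textbf{Step 2 (case a)).} For $m=18t+5$ or $18t+10$, Theorem \ref{inerthpn} a) gives $\Sigma$ whose class $q^\ast(\tilde\xi)$ is nonzero in $[\Hyp P^m,\Sp]$, detected by $\Delta^{3t}\beta^2$ or $\Delta^{3t}\beta^4$ in $EO_2^\ast(\Hyp P^m)$. So $\Hyp P^m\#\Sigma$ is not concordant to $\Hyp P^m$. Since $-\Sigma$ lies outside $I_c$ as well, $\Hyp P^m\#\Sigma$ is also not concordant to $\Hyp P^m\#(-0)=\Hyp P^m$. Hence $\MM\#\Sigma\not\cong\MM$.

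\textbf{Step 3 (case b)).} For $p\ge5$, Corollary \ref{detectbetahpm} shows that the subgroup $\F_p\{\beta_1^s\}\subset\pi^s_{4m}$ injects into $EO_n^\ast(\Hyp P^m)$ under $h_\iota\circ q^\ast$. This produces $\Sigma_c$, for $c\in\F_p^\times$, with $\Sigma_c-\Sigma_{c'}\notin I_c(\Hyp P^m)$ whenever $c\ne c'$. Because of the sign ambiguity $\pm\Sigma_2$ in the criterion, I would choose representatives $c=1,\dots,\frac{p-1}{2}$ of $\F_p^\times/\{\pm1\}$. For any two distinct such $c,c'$, both $c-c'$ and $c+c'$ are nonzero mod $p$, and each $c$ is itself nonzero. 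Hence no $\MM\#\Sigma_c$ is diffeomorphic to $\MM$ or to another $\MM\#\Sigma_{c'}$.

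\textbf{Main obstacle.} The step needing the most care is invoking the quaternionic transfer result precisely. I must confirm that \cite[Theorem 3.6]{Oku02} is phrased with concordance inertia, matching the fact that Theorem \ref{inerthpn} only controls $I_c(\Hyp P^m)$ and not $I(\Hyp P^m)$, since $I_c\ne I$ for $\Hyp P^m$ \cite{BK22}. I must also check that the negative-curvature conclusion from \cite{AF04} holds in all quaternionic dimensions, without dimension restrictions. I expect both to hold with the exotic sphere inserted at a point of large injectivity radius after passing to the cover.
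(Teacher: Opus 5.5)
Your proposal is correct and follows the paper's argument. The paper's proof consists exactly of feeding the exotic spheres from Theorem \ref{inerthpn} into \cite[Theorem 3.6]{Oku02} for non-diffeomorphism and into \cite{AF04} for the negatively curved metrics. Your choice of representatives of $\F_p^\times/\{\pm 1\}$ supplies the bookkeeping, left implicit in the paper, that explains why the $p-1$ spheres of Theorem \ref{inerthpn} yield only $\frac{p-1}{2}$ pairwise non-diffeomorphic manifolds here.
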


\end{mysubsection}

\vspace*{0.2cm}

\begin{mysubsection}{Free $S^1$ and $S^3$ actions on exotic spheres}
We now deduce results about the existence of free smooth  $S^1$-actions and $S^3$-actions on exotic spheres which do not bound parallelizable manifolds. Such an exotic sphere $\Sigma$ corresponds to $\xi \in \pi_m(\Top/O)$ such that  the map $\Theta_m\cong \pi_m( \Top/O)\to \pi_m(G/O)$ sends $\xi$ to an element in the image  \cite{KM63}
\begin{myeq}\label{imcokj}
 \Cok(J) \subset \pi_m^s \cong \pi_m(G) \to \pi_m(G/O).
\end{myeq} 
Here, we are assuming that $m\geq 5$ and $m\not \in \{5, 13, 29, 61, 125\}$. Let $\eta_{\C P^m} : S^{2m+1} \to \C P^m$ and $\nu_{\Hyp P^m} : S^{4m+3} \to \Hyp P^m$ be the usual quotient maps. We have the following result from \cite[Lemma 4.3]{BK18} and \cite{BQ26}.
\begin{lemma} \label{freeact}
a) Suppose that an $\alpha \in \pi_{2m+1}(Top/O)$ such that the image of $\alpha$ in $\pi_{2m+1}(G/O)$ lies in the image of \eqref{imcokj}, and that $\alpha$ lies in the image of  
\[[\C P^m, \Top/O] \xrightarrow{\eta_{\C P^m}^\ast} [S^{2m+1},\Top/O] \cong \pi_{2m+1}(\Top/O).\]
Then, there is a free smooth $S^1$-action on the exotic sphere  $\Sigma$ corresponding to $\alpha \in \Theta_{2m+1}$. \\
a) Suppose that a $\beta \in \pi_{4m+3}(Top/O)$ such that the image of $\beta$ in $\pi_{4m+3}(G/O)$ lies in the image of \eqref{imcokj}, and that $\beta$ lies in the image of  
\[[\Hyp P^m, \Top/O] \xrightarrow{\nu_{\Hyp P^m}^\ast} [S^{4m+3},\Top/O] \cong \pi_{4m+3}(\Top/O).\]
Then, there is a free smooth $S^3$-action on the exotic sphere  $\Sigma$ corresponding to $\beta \in \Theta_{4m+3}$. 
\end{lemma}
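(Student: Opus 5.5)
The approach is the standard surgery-theoretic one: realize the exotic sphere as the boundary of the sphere bundle of a free action, built from a smooth structure on the quotient. I will describe part a); part b) is identical with $S^1,\C P^m,\eta_{\C P^m}$ replaced by $S^3,\Hyp P^m,\nu_{\Hyp P^m}$.

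\textbf{Step 1: smooth structure on the quotient.} Choose $\phi\in[\C P^m,\Top/O]$ with $\eta_{\C P^m}^\ast\phi=\alpha$. Smoothing theory (Kirby--Siebenmann) applies since $2m\geq 5$. The class $\phi$ therefore determines a smooth structure on the topological manifold $\C P^m$, giving a smooth manifold $M_\phi$ with a homeomorphism $h:M_\phi\to\C P^m$. Equivalently, it determines a lift of the topological tangent microbundle to a vector bundle.

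\textbf{Step 2: pull back the Hopf circle bundle.} Pull back the Hopf bundle $S^{2m+1}\to\C P^m$ along $h$. This gives a smooth principal $S^1$-bundle $P\to M_\phi$. Its total space $P$ is a smooth manifold with a free smooth $S^1$-action, and $P$ is homeomorphic to $S^{2m+1}$. By the $h$-cobordism theorem and Smale, $P$ is a homotopy sphere, so $P\in\Theta_{2m+1}$.

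\textbf{Step 3: identify $P$.} The smooth structure on $P$ is the one pulled back from $M_\phi$ along the bundle projection. By naturality of smoothing theory, its classifying map in $[S^{2m+1},\Top/O]$ is the composite $\phi\circ\eta_{\C P^m}$, which is $\alpha$. Hence $P\cong\Sigma$. This naturality is exactly the content of the cited \cite[Lemma 4.3]{BK18}. The idea is that the projection is a submersion, so the tangent bundle of $P$ splits as the pullback of $TM_\phi$ plus a trivial line bundle. The lift of the topological tangent bundle classifying the smoothing then pulls back correspondingly.

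\textbf{Step 4: $\Sigma$ is not a $bP$-sphere.} The hypothesis that $\alpha$ maps into the image of $\Cok(J)$ in $\pi_{2m+1}(G/O)$ gives this. With the exclusions on dimension (Kervaire invariant dimensions), the class of $\Sigma$ in $\Theta/bP$ is nonzero exactly when its image in $\pi_{2m+1}(G/O)$ is nonzero. If $\alpha$ maps to zero there, $\Sigma$ bounds a parallelizable manifold. This hypothesis plays no role in constructing the action; it is only used to conclude that $\Sigma$ does not bound a parallelizable manifold.

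\textbf{Main obstacle.} The only delicate point is Step 3: checking that the $\Top/O$-class of the pulled-back smoothing on the total space is precisely $\eta^\ast\phi$, with no correction term. I would handle it by working with the concordance classification of smoothings, $\mathcal{S}(X)\cong[X,\Top/O]$. Smoothings pull back along the topological submersion $S^{2m+1}\to\C P^m$, and this pullback is compatible with the map $\eta^\ast$ on $[-,\Top/O]$. The identification is then completed by $\Theta_{2m+1}\cong\pi_{2m+1}(\Top/O)$.
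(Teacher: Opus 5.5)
Your argument is correct and matches the paper's approach. The paper does not prove this lemma itself; it cites \cite[Lemma 4.3]{BK18} and \cite{BQ26}, whose argument is the one you give: pull the Hopf $S^1$-bundle (resp.\ $S^3$-bundle) back to the smoothing of $\C P^m$ (resp.\ $\Hyp P^m$) determined by a lift $\phi$, then identify the total space's class as $\eta_{\C P^m}^\ast\phi$ (resp.\ $\nu_{\Hyp P^m}^\ast\phi$) by naturality of smoothing theory along the bundle projection. You are also right that the $\Cok(J)$ hypothesis is not needed to produce the action; it only serves to ensure that $\Sigma$ does not bound a parallelizable manifold.
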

 
\vspace*{0.2cm}

We now use Lemma \ref{freeact} and the computations of $EO_n^\ast(\C P^m)$ in Theorem \ref{eo2cpn} and $EO_n^\ast(\Hyp P^m)$  in Theorem \ref{eo2hpn} to deduce the following theorem about smooth actions on exotic spheres. 
\begin{thm}\label{smactexotic}
a) Let $l\in \{72t + 13, 72t+23, 72t+43\}$. There exists an exotic sphere $\Sigma$ of dimension $l$ which does not bound a parrellizable manifold, and has a free smooth $S^1$-action. 
\\
b) Let $l=72t + 23$ or $l=72t+43$. There exists an exotic sphere $\Sigma$ of dimension $l$ which does not bound a parrellizable manifold, and has a free smooth   $S^3$-action. 
\end{thm}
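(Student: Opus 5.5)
The plan is to translate the statement into Question III (respectively Question IV) and then answer it using the $3$-primary computations of \S\ref{higherkth3}. Write $l=2m+1$ for part a), so that $m\in\{36t+6,\,36t+11,\,36t+21\}$, and $l=4m'+3$ for part b), so that $m'\in\{18t+5,\,18t+10\}$. By Lemma \ref{freeact} it suffices to produce a class $\chi\in[\C P^m,\Top/O]$ (respectively $[\Hyp P^{m'},\Top/O]$) with two properties: $\eta_{\C P^m}^\ast\chi$ (respectively $\nu_{\Hyp P^{m'}}^\ast\chi$) is nonzero, and its image in $\pi_l(G/O)$ lies in $\Cok(J)$.

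\textbf{Construction of $\chi$.} I will build $\chi$ from a stable class factoring through a low cell. Take the collapse $r:\C P^m\to \C P^m/\C P^{m-2}\simeq (S^{2m}\vee S^{2m-2})^\wedge_3$, followed by the projection to $S^{2m-2}$. The wedge splitting holds $3$-adically because $\PP^1(x^{m-1})=(m-1)x^{m+1}$ does not involve $x^m$. Compose this with $\xi\in\pi^s_{2m-2}$ chosen so that $h_\iota(\xi)=\Delta^{3t}\beta^i$ is one of the Hurewicz classes of Theorem \ref{hureo2}. The resulting map $\C P^m\to\Sp\to G\to G/O$ has $3$-torsion image. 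Since $G/\Top$ is $3$-locally equivalent to $BO$, and $[\C P^m,BO]$ is detected rationally (torsion-free) in the relevant range, the composite to $G/\Top$ vanishes. Hence the map lifts to $[\C P^m,\Top/O]$, and the lift remains $3$-torsion after adjusting.

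\textbf{Computing $\eta^\ast$.} Precomposing with $\eta_{\C P^m}$ and using the cofibre $\C P^{m+1}/\C P^{m-2}$, the composite $S^{2m+1}\to S^{2m-2}$ is the attaching map of the top cell of $\C P^{m+1}$ onto the $(2m-2)$-cell. By the $\PP^1$ formula its coefficient is $(m-1)$, which is a unit mod $3$ in all three cases, so this attaching map is a unit multiple of $\alpha_1$. Thus $\eta^\ast\chi=\pm\xi\alpha_1$, and its $EO_2$-Hurewicz image is $\Delta^{3t}\alpha\beta^i$.
\begin{itemize}
\item For $m=36t+6$ take $i=1$: $\Delta^{3t}\alpha\beta\ne0$ in degree $72t+13$, as required.
\item For $m=36t+11$ and $m=36t+21$, the naive choice fails because $\alpha\beta^2=0$. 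Here I would instead use the $C(\alpha_1)$ summand of $\C P^{m}/\C P^{m-3}$ together with the classes $x_{(3k-1,\alpha)}$, $x_{(3k+1)}$ of Theorem \ref{hurcp3}. The relevant relations are $\alpha x_{(3k-1,\alpha)}=\beta x_{(3k+1)}$, $bx_{(3k+1)}=\beta^2x_{(3k-1,\alpha)}$, and the hidden extension $b\iota_0=\beta^2\kappa_7$ in ${EO_2}_\ast(C(\alpha_1))$. With these I aim to land on a nonzero class such as $\Delta^{3t}\beta b$ or $\Delta^{3t}\beta^2\kappa_7$-type products, which lie in the Hurewicz image $h_\iota(\alpha_1\beta_{9t+3/3})$, $h_\iota(\alpha_1\beta_{9t+7})$ of \eqref{hurewiczeo2}.
\end{itemize}
Concretely, I would detect $\eta^\ast\chi\ne0$ by showing that the class $h_\iota(\chi)\in EO_2^\ast(\C P^m)$ does not extend over $\C P^{m+1}$. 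To do this I compare Theorem \ref{eo2cpn} for $\C P^m$ and $\C P^{m+1}$ and read off the connecting map $EO_2^\ast(\C P^m)\to EO_2^{\ast+1}(S^{2m+2})$.

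\textbf{The quaternionic case.} Part b) is handled the same way with $\nu_{\Hyp P^{m'}}$, the cofibre $\Hyp P^{m'+1}/\Hyp P^{m'}$, and Theorem \ref{hurhp3} and Theorem \ref{eo2hpn}. Note $4m'+3=2(2m'+1)+1$, so the relevant cells are those of $\C P^{2m'+1}$ restricted to even cells. This explains why only $72t+23$ and $72t+43$ appear: for $72t+13$ the needed $C(\alpha_1)$-block does not occur among the even cells.

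\textbf{Non-$bP$ and the expected obstacle.} Since the detected classes are $3$-torsion, nonzero in $EO_2$, and $\Im(J)$ in these degrees has no such $3$-torsion (for $J$ the $3$-torsion is in degrees $\equiv -1\bmod 4$ with order detected by $\alpha$-family only), the element lies in $\Cok(J)$. Hence $\Sigma\notin bP_{l+1}$. The main obstacle I anticipate is the two cases $m=36t+11$ and $m=36t+21$. There the simple $\xi\alpha_1$ argument dies, and one must correctly identify the boundary in the $C(\alpha_1)$-block, including the hidden extension, to see that a Hurewicz class survives.
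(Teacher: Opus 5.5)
Your argument for $l=72t+13$ is correct and is the paper's. You pull $\Delta^{3t}\beta$ back along the $3$-local collapse $\C P^m\to S^{2m-2}$, giving the class $\Delta^{3t}\beta x_{(36t+5)}$. The composite $S^{2m+1}\to\C P^m\to S^{2m-2}$ is a unit multiple of $\alpha_1$ because $m-1\not\equiv 0\pmod 3$, so $\eta_{\C P^m}^\ast\chi=\pm\Delta^{3t}\alpha\beta\neq0$. Your lifting step via $G/\Top\simeq BO$ at $3$ is a fine substitute for the Brumfiel citation.

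The gap is in the cases $l\in\{72t+23,72t+43\}$, which make up all of part b), and your proposed repair cannot work for a degree reason. By \eqref{eo2htpygp} the odd-degree homotopy of $EO_2$ is spanned by $\Delta^{3s}\alpha$, $\Delta^{3s}\alpha\beta$, $\Delta^{3s}b$, $\Delta^{3s}\beta b$. It is therefore concentrated in stems $\equiv 3,13,27,37\pmod{72}$, and $\pi_l EO_2=0$. Hence $\eta_{\C P^m}^\ast\colon EO_2^0(\C P^m)\to EO_2^0(S^{2m+1})=\pi_{2m+1}EO_2$ is identically zero, and likewise $\nu_{\Hyp P^{m'}}^\ast$. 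Every class extends over $\C P^{m+1}$, so none of the tools you list can produce a nonzero detecting class: the $C(\alpha_1)$-block, the relations $\alpha x_{(3k-1,\alpha)}=\beta x_{(3k+1)}$ and $bx_{(3k+1)}=\beta^2x_{(3k-1,\alpha)}$, and the extension $b\iota_0=\beta^2\kappa_7$. The targets $\Delta^{3t}\beta b$ you name lie in stem $72t+37$. Nonvanishing of $\eta^\ast\chi$ in $\pi_l^s$ has to come from outside $EO_2$. For example, the Adams--Novikov spectral sequence shows $\alpha_1\beta_1^2\neq0$ in $\pi_{23}^s$ for $t=0$.

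Your non-$bP$ step also fails in exactly these degrees. Since $72t+23\equiv 72t+43\equiv 3\pmod 4$, the $3$-primary image of $J$ is nonzero there, so being $3$-torsion does not rule out $\Im(J)$; you would need something like Adams--Novikov filtration $\geq 2$. Separately, $72t+13$ is absent from part b) simply because it is not of the form $4m'+3$, not because a $C(\alpha_1)$-block is missing.

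The paper's printed proof does not supply the missing step either. It uses precisely the naive classes $\Delta^{3t}\beta^2x_{(36t+10)}$ and $\Delta^{3t}\beta^4x_{(36t+20)}$ and asserts $\eta_{\C P^m}^\ast\chi\neq0$. As you noticed, this restriction is $\pm\Delta^{3t}\alpha\beta^2$, resp.\ $\pm\Delta^{3t}\alpha\beta^4$, which vanishes. In the $72t+43$ case it lists $\beta_1^3\beta_{\frac{9t}{2}+1}\in\pi^s_{2m-2}$ (and an analogue for $t$ odd). For this class even the stable composite $\pm\alpha_1\beta_1^3\beta_{\frac{9t}{2}+1}$ is $0$, by Toda's relation $\alpha_1\beta_1^3=0$. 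As things stand, the $EO_2$ method establishes only the $72t+13$ case.
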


\begin{proof}
For a), consider the diagram 
\[ \xymatrix{  & & & & EO_2^0(\C P^{m+1})\ar[d] \\
[\C P^m, \Top/O] \ar[r] \ar[d]^{\eta_{\C P^m}^\ast}  & [\C P^m, G/O] \ar[d]^{\eta_{\C P^m}^\ast} & [\C P^m, G] \ar[l] \ar[d]^{\eta_{\C P^m}^\ast} \ar[r]^{\cong} & [\C P^m, \Sp] \ar[d]^{\eta_{\C P^m}^\ast} \ar[r]^{h_\iota} & EO_2^0 (\C P^m) \ar[d]^{\eta_{\C P^m}^\ast}   \\
[S^{2m+1}, \Top/O] \ar[r]  & [S^{2m+1}, G/O] & [S^{2m+1}, G] \ar[l] \ar[r]^{\cong} & [S^{2m+1}, \Sp]\ar[r]^{h_\iota} & EO_2^0(S^{2m+1}). } \]
The proof of a) will be deemed complete once we show that there is a class $\chi \in h_\iota([\C P^m, \Sp])$ such that $\eta^\ast_{\C P^m}(\chi)\neq 0$, and equals $h_\iota(\xi)$ for some $\xi \in \Cok(J)$. This is due to the fact that $[\C P^m, G] \to [\C P^m, G/O]$ is monic \cite[pg 77]{Bru69} and that 
\[ \Im([\C P^m, G] \to [\C P^m, G/O]) = \Im([\C P^m, \Top/O] \to [\C P^m, G/O]).\]
Now we verify the conditions for the given values of $l$ using the formula in Theorem \ref{hurcp3} and \eqref{hurewiczeo2}. 
\begin{myeq}\label{exotics1}
\begin{aligned} 
& l= 72t+13, m= 36t+6 : \quad \chi= \Delta^{3t}\beta x_{(36t+5)}, \quad \xi = \begin{cases} \beta_{\frac{9t}{2} + 1}  & t \mbox{ even,} \\
\beta_{\big(\frac{9(t-1)}{2}+3\big)/3} & t \mbox{ odd.}  \end{cases}  \\
& l= 72t+23, m= 36t+11 : \quad \chi= \Delta^{3t}\beta^2 x_{(36t+10)}, \quad \xi = \begin{cases} \beta_1\beta_{\frac{9t}{2} + 1}  & t \mbox{ even,} \\
\beta_1\beta_{\big(\frac{9(t-1)}{2}+3\big)/3} & t \mbox{ odd.}  \end{cases}  \\
& l= 72t+43, m= 36t+21 : \quad \chi= \Delta^{3t}\beta^4 x_{(36t+20)}, \quad \xi = \begin{cases} \beta_1^3\beta_{\frac{9t}{2} + 1}  & t \mbox{ even,} \\
\beta_1^3 \beta_{\big(\frac{9(t-1)}{2}+3\big)/3} & t \mbox{ odd.}  \end{cases}  \\
\end{aligned}
\end{myeq}
The fact that these values of $\xi$ lie in $\Cok(J)$ follows from the computations in \cite{Rav86}. The proof of b) is entirely analogous, where we apply Theorem \ref{hurhp3} to show that the second and third examples of \eqref{exotics1} are also present in the $\Hyp P^m$ case.
\end{proof}

\end{mysubsection}


\vspace*{0.2cm}
	
\begin{thebibliography}{10}

\bibitem{Ada66}
{\sc J.~F. Adams}, {\em On the groups {$J(X)$}. {IV}}, Topology, 5 (1966),
  pp.~21--71.

\bibitem{Ada74}
\leavevmode\vrule height 2pt depth -1.6pt width 23pt, {\em Stable homotopy and
  generalised homology}, Chicago Lectures in Mathematics, University of Chicago
  Press, Chicago, Ill.-London, 1974.

\bibitem{AF03}
{\sc C.~S. Aravinda and F.~T. Farrell}, {\em Exotic negatively curved
  structures on {C}ayley hyperbolic manifolds}, J. Differential Geom., 63
  (2003), pp.~41--62.

\bibitem{AF04}
\leavevmode\vrule height 2pt depth -1.6pt width 23pt, {\em Exotic structures
  and quaternionic hyperbolic manifolds}, in Algebraic groups and arithmetic,
  Tata Inst. Fund. Res., Mumbai, 2004, pp.~507--524.

\bibitem{BK18}
{\sc S.~Basu and R.~Kasilingam}, {\em Inertia groups of high-dimensional
  complex projective spaces}, Algebr. Geom. Topol., 18 (2018), pp.~387--408.

\bibitem{BK22}
\leavevmode\vrule height 2pt depth -1.6pt width 23pt, {\em Inertia groups and
  smooth structures on quaternionic projective spaces}, Forum Math., 34 (2022),
  pp.~369--383.

\bibitem{Bau03}
{\sc T.~Bauer}, {\em Elliptic cohomology and projective spaces - a
  computation}, 2003.

\bibitem{BQ26}
{\sc T.~Bauer and J.~D. Quigley}, {\em Infinite families of very exotic spheres
  with free $s^1$- and $s^3$-actions}, 2026.

\bibitem{Beh20}
{\sc M.~Behrens}, {\em Topological modular and automorphic forms}, in Handbook
  of homotopy theory, CRC Press/Chapman Hall Handb. Math. Ser., CRC Press, Boca
  Raton, FL, [2020] \copyright 2020, pp.~221--261.

\bibitem{BS23}
{\sc E.~Belmont and K.~Shimomura}, {\em Beta families arising from a {$v_2^9$}
  self-map on {$S/(3,v_1^8)$}}, Algebr. Geom. Topol., 23 (2023),
  pp.~4349--4378.

\bibitem{BC22}
{\sc P.~Bhattacharya and H.~Chatham}, {\em On the {EO}-orientability of vector
  bundles}, J. Topol., 15 (2022), pp.~2017--2044.

\bibitem{Bru69}
{\sc G.~Brumfiel}, {\em {D}ifferentiable {$S^1$}-actions on homotopy spheres},
  preprint, Princeton University,  (1969).

\bibitem{CE99}
{\sc H.~Cartan and S.~Eilenberg}, {\em Homological algebra}, vol.~19, Princeton
  university press, 1999.

\bibitem{Cha20}
{\sc H.~Chatham}, {\em An {O}rientation {M}ap for {H}eight p -1 {R}eal {E}
  {T}heory}, ProQuest LLC, Ann Arbor, MI, 2020.
\newblock Thesis (Ph.D.)--Massachusetts Institute of Technology.

\bibitem{DH04}
{\sc E.~S. Devinatz and M.~J. Hopkins}, {\em Homotopy fixed point spectra for
  closed subgroups of the {M}orava stabilizer groups}, Topology, 43 (2004),
  pp.~1--47.

\bibitem{Dug03}
{\sc D.~Dugger}, {\em Multiplicative structures on homotopy spectral sequences
  {II}}, preprint,  (2003).

\bibitem{FJ89c}
{\sc F.~T. Farrell and L.~E. Jones}, {\em Compact negatively curved manifolds
  (of dim {$\neq 3, 4$}) are topologically rigid}, Proc. Nat. Acad. Sci.
  U.S.A., 86 (1989), pp.~3461--3463.

\bibitem{FJ89}
\leavevmode\vrule height 2pt depth -1.6pt width 23pt, {\em Negatively curved
  manifolds with exotic smooth structures}, J. Amer. Math. Soc., 2 (1989),
  pp.~899--908.

\bibitem{FJ89b}
\leavevmode\vrule height 2pt depth -1.6pt width 23pt, {\em A topological
  analogue of {M}ostow's rigidity theorem}, J. Amer. Math. Soc., 2 (1989),
  pp.~257--370.

\bibitem{FJ90}
\leavevmode\vrule height 2pt depth -1.6pt width 23pt, {\em Rigidity and other
  topological aspects of compact nonpositively curved manifolds}, Bull. Amer.
  Math. Soc. (N.S.), 22 (1990), pp.~59--64.

\bibitem{FJ94}
\leavevmode\vrule height 2pt depth -1.6pt width 23pt, {\em Complex hyperbolic
  manifolds and exotic smooth structures}, Invent. Math., 117 (1994),
  pp.~57--74.

\bibitem{GHMR05}
{\sc P.~Goerss, H.-W. Henn, M.~Mahowald, and C.~Rezk}, {\em A resolution of the
  {$K(2)$}-local sphere at the prime 3}, Ann. of Math. (2), 162 (2005),
  pp.~777--822.

\bibitem{GH04}
{\sc P.~G. Goerss and M.~J. Hopkins}, {\em Moduli spaces of commutative ring
  spectra}, in Structured ring spectra, vol.~315 of London Math. Soc. Lecture
  Note Ser., Cambridge Univ. Press, Cambridge, 2004, pp.~151--200.

\bibitem{GM95}
{\sc J.~P.~C. Greenlees and J.~P. May}, {\em Generalized {T}ate cohomology},
  Mem. Amer. Math. Soc., 113 (1995), pp.~viii+178.

\bibitem{HMS17}
{\sc D.~Heard, A.~Mathew, and V.~Stojanoska}, {\em Picard groups of higher real
  {$K$}-theory spectra at height {$p-1$}}, Compos. Math., 153 (2017),
  pp.~1820--1854.

\bibitem{HS14}
{\sc D.~Heard and V.~Stojanoska}, {\em {$K$}-theory, reality, and duality}, J.
  K-Theory, 14 (2014), pp.~526--555.

\bibitem{Hen07}
{\sc H.-W. Henn}, {\em On finite resolutions of {$K(n)$}-local spheres}, in
  Elliptic cohomology, vol.~342 of London Math. Soc. Lecture Note Ser.,
  Cambridge Univ. Press, Cambridge, 2007, pp.~122--169.

\bibitem{Hen14}
{\sc A.~G. Henriques}, {\em The homotopy groups of tmf and of its
  localizations}, vol.~201 of Mathematical Surveys and Monographs, American
  Mathematical Society, Providence, RI, 2014, pp.~xxxii+318.

\bibitem{Her91}
{\sc L.~Hern\'andez}, {\em K\"ahler manifolds and {$1/4$}-pinching}, Duke Math.
  J., 62 (1991), pp.~601--611.

\bibitem{Hew95}
{\sc T.~Hewett}, {\em Finite subgroups of division algebras over local fields},
  J. Algebra, 173 (1995), pp.~518--548.

\bibitem{Hsi66}
{\sc W.-c. Hsiang}, {\em A note on free differentiable actions of {$S\sp{1}$}
  and {$S\sp{3}$} on homotopy spheres}, Ann. of Math. (2), 83 (1966),
  pp.~266--272.

\bibitem{Kaw68}
{\sc K.~Kawakubo}, {\em Inertia groups of low dimensional complex projective
  spaces and some free differentiable actions on spheres. {I}}, Proc. Japan
  Acad., 44 (1968), pp.~873--875.

\bibitem{KM63}
{\sc M.~A. Kervaire and J.~W. Milnor}, {\em Groups of homotopy spheres. {I}},
  Ann. of Math. (2), 77 (1963), pp.~504--537.

\bibitem{KS77}
{\sc R.~C. Kirby and L.~C. Siebenmann}, {\em Foundational essays on topological
  manifolds, smoothings, and triangulations}, vol.~No. 88 of Annals of
  Mathematics Studies, Princeton University Press, Princeton, NJ; University of
  Tokyo Press, Tokyo, 1977.
\newblock With notes by John Milnor and Michael Atiyah.

\bibitem{May69}
{\sc J.~P. May}, {\em Matric {M}assey products}, J. Algebra, 12 (1969),
  pp.~533--568.

\bibitem{Mcc01}
{\sc J.~McCleary}, {\em A user's guide to spectral sequences}, vol.~58 of
  Cambridge Studies in Advanced Mathematics, Cambridge University Press,
  Cambridge, second~ed., 2001.

\bibitem{Mos68}
{\sc G.~D. Mostow}, {\em Quasi-conformal mappings in {$n$}-space and the
  rigidity of hyperbolic space forms}, Inst. Hautes \'Etudes Sci. Publ. Math.,
  (1968), pp.~53--104.

\bibitem{Mos73}
\leavevmode\vrule height 2pt depth -1.6pt width 23pt, {\em Strong rigidity of
  locally symmetric spaces}, vol.~No. 78 of Annals of Mathematics Studies,
  Princeton University Press, Princeton, NJ; University of Tokyo Press, Tokyo,
  1973.

\bibitem{Nav99}
{\sc L.~S. Nave}, {\em The cohomology of finite subgroups of {M}orava
  stabilizer groups and {S}mith-{T}oda complexes}, ProQuest LLC, Ann Arbor, MI,
  1999.
\newblock Thesis (Ph.D.)--University of Washington.

\bibitem{Nav10}
{\sc L.~S. Nave}, {\em The {S}mith-{T}oda complex {$V((p+1)/2)$} does not
  exist}, Ann. of Math. (2), 171 (2010), pp.~491--509.

\bibitem{Oku02}
{\sc B.~Okun}, {\em Exotic smooth structures on nonpositively curved symmetric
  spaces}, Algebr. Geom. Topol., 2 (2002), pp.~381--389.

\bibitem{Pra73}
{\sc G.~Prasad}, {\em Strong rigidity of {${\bf Q}$}-rank {$1$} lattices},
  Invent. Math., 21 (1973), pp.~255--286.

\bibitem{Kas15}
{\sc K.~Ramesh}, {\em Farrell-{J}ones spheres and inertia groups of complex
  projective spaces}, Forum Math., 27 (2015), pp.~3005--3015.

\bibitem{Rav78}
{\sc D.~C. Ravenel}, {\em The non-existence of odd primary {A}rf invariant
  elements in stable homotopy}, Math. Proc. Cambridge Philos. Soc., 83 (1978),
  pp.~429--443.

\bibitem{Rav86}
\leavevmode\vrule height 2pt depth -1.6pt width 23pt, {\em Complex cobordism
  and stable homotopy groups of spheres}, vol.~121 of Pure and Applied
  Mathematics, Academic Press, Inc., Orlando, FL, 1986.

\bibitem{Rez98}
{\sc C.~Rezk}, {\em Notes on the {H}opkins-{M}iller theorem}, in Homotopy
  theory via algebraic geometry and group representations ({E}vanston, {IL},
  1997), vol.~220 of Contemp. Math., Amer. Math. Soc., Providence, RI, 1998,
  pp.~313--366.

\bibitem{Ser79}
{\sc J.-P. Serre}, {\em Local fields}, vol.~67 of Graduate Texts in
  Mathematics, Springer-Verlag, New York-Berlin, 1979.
\newblock Translated from the French by Marvin Jay Greenberg.

\bibitem{Tod67}
{\sc H.~Toda}, {\em An important relation in homotopy groups of spheres}, Proc.
  Japan Acad., 43 (1967), pp.~839--842.

\bibitem{Tod68}
\leavevmode\vrule height 2pt depth -1.6pt width 23pt, {\em Extended {$p$}-th
  powers of complexes and applications to homotopy theory}, Proc. Japan Acad.,
  44 (1968), pp.~198--203.

\bibitem{Yam07}
{\sc A.~Yamaguchi}, {\em Real {$K$}-homology of complex projective spaces}, J.
  Math. Kyoto Univ., 47 (2007), pp.~203--222.

\bibitem{YZ91}
{\sc S.-T. Yau and F.~Zheng}, {\em Negatively {$\frac14$}-pinched {R}iemannian
  metric on a compact {K}\"ahler manifold}, Invent. Math., 103 (1991),
  pp.~527--535.

\end{thebibliography}
\end{document}